\newtheorem{theorem}{Theorem}[section]
\newtheorem{proposition}[theorem]{Proposition}
\newtheorem{lemma}[theorem]{Lemma}
\newtheorem{corollary}[theorem]{Corollary}
\theoremstyle{definition}
\newtheorem{definition}[theorem]{Definition}
\newtheorem{example}[theorem]{Example}
\theoremstyle{remark}
\newtheorem{remark}[theorem]{Remark}
\DeclareMathOperator{\ad}{ad}
\DeclareMathOperator{\Aut}{Aut}
\DeclareMathOperator{\End}{End}
\DeclareMathOperator{\Ext}{Ext}
\DeclareMathOperator{\Hom}{Hom}
\DeclareMathOperator{\id}{id}
\DeclareMathOperator{\im}{im}
\let\Im\relax
\DeclareMathOperator{\Im}{Im}
\DeclareMathOperator{\Lie}{Lie}
\DeclareMathOperator{\pr}{pr}
\DeclareMathOperator{\Res}{Res}
\DeclareMathOperator{\SL}{SL}
\DeclareMathOperator{\Spec}{Spec}
\DeclareMathOperator{\UVIC}{UVIC}
\DeclareMathOperator{\Vect}{Vect}
\DeclareMathOperator{\VIC}{VIC}
\newcommand{\alg}{{\rm alg}}
\newcommand{\an}{{\rm an}}
\newcommand{\BL}{{\rm BL}}
\newcommand{\dR}{{\rm dR}}
\newcommand{\can}{{\rm can}}
\newcommand{\defeq}{\coloneqq}
\newcommand{\eqdef}{=\vcentcolon}
\newcommand{\KZ}{{\rm KZ}}
\newcommand{\KZB}{{\rm KZB}}
\newcommand{\To}{\longrightarrow}
\title{Elliptic KZB connections via universal vector extensions}
\author{Tiago J. Fonseca and Nils Matthes}
\date{\today}
\subjclass[2010]{}
\address{IMECC-UNICAMP, Departamento de matemática, Rua Sérgio Buarque de Holanda, 651, CEP 13083-859, Campinas, SP, Brazil}
\email{tfonseca@unicamp.br}
\address{Department of Mathematical Sciences, University of Copenhagen, Universitetsparken 5, 2100 Copenhagen Ø, Denmark}
\email{nils.oliver.matthes@gmail.com}
\begin{document}

\maketitle

\begin{abstract}
  Using the formalism of bar complexes and their relative versions, we give a new, purely algebraic, construction of the so-called \emph{universal elliptic KZB connection} in arbitrary level. We compute explicit analytic formulae, and we compare our results with previous approaches to elliptic KZB equations and multiple elliptic polylogarithms in the literature.
  
   Our approach is based on a number of results concerning logarithmic differential forms on universal vector extensions of elliptic curves. Let $S$ be a scheme of characteristic zero, $E \to S$ be an elliptic curve, $f:E^{\natural} \to S$ be its universal vector extension, and $\pi:E^{\natural} \to E$ be the natural projection. Given a finite subset of torsion sections $Z\subset E(S)$, we study the dg-algebra over $\mathcal{O}_S$ of relative logarithmic differentials $\mathcal{A} = f_*\Omega^{\bullet}_{E^{\natural}/S}(\log \pi^{-1}Z)$. In particular, we prove that the residue exact sequence in degree one splits canonically, and we derive the formality of $\mathcal{A}$. When $S$ is smooth over a field $k$ of characteristic zero, we also prove that sections of $\mathcal{A}^1$ admit canonical lifts to absolute logarithmic differentials in $f_*\Omega^1_{E^{\natural}/k}(\log \pi^{-1}Z)$, which extends a well known property for regular differentials given by the `crystalline nature' of universal vector extensions.
  \end{abstract}

\setcounter{tocdepth}{1}
\tableofcontents

\section{Introduction}

The main goal of this paper is to give a purely algebraic construction of the so-called universal elliptic Knizhnik--Zamolodchikov--Bernard (KZB) connection in arbitrary level \cite{bernard98, LR07, CEE09, CG20} (cf. \cite{hain20,luo19,hopper21}) in terms of universal vector extensions of elliptic curves. In doing so, we establish a number of new results concerning logarithmic differential forms on universal vector extensions. As an application, we shall also obtain new algebraic formulas for elliptic KZB connections, which in particular yield an explicit solution to some rationality questions concerning these equations.

Our principal motivation is to provide an algebraic approach to multiple elliptic polylogarithms \cite{BL94,levin97,LR07,BL11} and their closely related notions, such as elliptic multiple zeta values \cite{enriquez16}. In the literature, these objects are usually defined and studied using analytic versions of the elliptic KZB connection, and algebraicity is only shown a posteriori. This obscures the essentially algebraic nature of the elliptic KZB connection, and makes the relation to arithmetic algebraic geometry more indirect --- e.g., it is not clear how to write special values of multiple elliptic polylogarithms in terms of periods, in the sense of Kontsevich--Zagier (cf. \cite{FM20}). In this work, we use the universal vector extension of an elliptic curve to give a purely algebraic definition of the elliptic KZB connection and we also show how to retrieve the various versions found in the literature via `analytification'. Our theory is in complete analogy to the genus zero case, the Knizhnik--Zamolodchikov (KZ) connection \cite{KZ84}, which is most naturally defined using algebraic formulas.

\subsection{The elliptic KZB connection over $\mathbb{C}$}\label{par:intro-KZB}

The elliptic KZB connection on a complex elliptic curve $(E,O)$ can be defined as a pro-algebraic connection with logarithmic singularities at $O$
\[
  \nabla_E : \mathcal{V}_E \longrightarrow \Omega^1_{E}(\log O) \hat{\otimes} \mathcal{V}_E 
\]
satisfying the following universal property: given a base point $b \in E\setminus O$, there is a vector $v_E$ in the fibre $\mathcal{V}_{E}(b)$ such that, for every \emph{unipotent} connection $\nabla : \mathcal{V} \to \Omega^1_{E}(\log O)\otimes \mathcal{V}$ equipped with $v \in \mathcal{V}(b)$, there is a unique morphism $(\mathcal{V}_E,\nabla_E) \to (\mathcal{V},\nabla)$ sending $v_E$ to $v$ (cf. \cite[\S 1]{Kim09}). Recall that `unipotent' means that $(\mathcal{V},\nabla)$ can be written as a finite iterated extension of the trivial connection $(\mathcal{O}_E,d)$.

Alternatively, by Serre's GAGA and the Riemann--Hilbert correspondence, $(\mathcal{V}_E,\nabla_E)$ can be characterised by its pro-local system of horizontal sections $\mathbb{V}_E$, whose stalk at $x \in E\setminus O$ is the pro-unipotent completion over $\mathbb{C}$ of the fundamental torsor of paths $\pi_1(E\setminus O;b,x)$:
\[
  \mathbb{V}_{E,x} = \pi^{\rm un}_1(E\setminus O;b,x).
\]
Concretely, local sections of $\mathbb{V}_E$ are described by holomorphic functions, possibly multi-valued, given by homotopy-invariant linear combinations of iterated integrals à la Chen
\[
  x \longmapsto \int_b^x \omega_1\cdots \omega_n
\]
of 1-forms $\omega_i$ on the once-punctured elliptic curve $E\setminus O$. Thus, the elliptic KZB connection can be thought more concretely as the differential equations these iterated integrals satisfy. Note that the equations themselves do not depend on the choice of base point $b$.

The above description in terms of local systems immediately generalises to a family of elliptic curves
\[
  \begin{tikzcd}
    E \arrow{r} & S \arrow[bend right=40]{l}[swap]{O},
  \end{tikzcd}
\]
where $S$ is a complex manifold. Here, working locally over $S$, one can take $b$ to be a base section of $E\to S$ and consider the local system $\mathbb{V}_E$ whose stalk at a point $x \in E\setminus O$ above $s\in S$ is
\[
  \mathbb{V}_{E,x} = \pi^{\rm un}_1(E_s\setminus O(s); b(s),x).
\]
In particular, the elliptic KZB connection of the family is an integrable connection  $(\mathcal{V}_E,\nabla_E)$ on the total space of the family minus the identity section $E\setminus O$, which restricts to the previously defined $(\mathcal{V}_{E_s},\nabla_{E_s})$ on every fibre $E_s \setminus O(s)$. In other words, it can be regarded as an \emph{isomonodromic deformation}, parametrised by $S$, of the elliptic KZB connection at one fibre. It is also possible to characterise $(\mathcal{V}_E,\nabla_E)$ directly by a relative version of the universal property recalled in the first paragraph above (cf. \cite[Section 3]{CdPS19}).

The (level 1) \emph{universal} elliptic KZB connection corresponds to the universal family $\mathcal{E} \to \mathcal{M}_{1,1}$. Higher level elliptic KZB connections are defined analogously, with logarithmic singularities on torsion points $E[N]$, see \cite{CG20,hopper21}. For the purposes of this introduction, we focus on the level 1 case, although all of our results work more generally in arbitrary level. 

\subsection{Construction of KZB over the universal vector extension}\label{par:intro-construction-kzb}

We state some our results in simplified form. Let $S$ be a scheme of characteristic zero, $(E/S,O)$ be an elliptic curve over $S$, and
\[
  \pi: E^{\natural} \longrightarrow E
\]
be its universal vector extension. Formally, $E^{\natural}$ is given as an extension of $E$ by a certain vector group of rank 1, in the category of commutative $S$-group schemes, satisfying a suitable universal property (see \S\ref{par:def-uve} below for a precise definition), and $\pi$ is the natural projection. In this paper, the key property of $\pi:E^{\natural}\to E$ is that it is a principal $\mathbb{G}_a$-bundle over which every $S$-unipotent vector bundle trivialises.

We shall directly construct a connection on $E^{\natural}$ with logarithmic singularities along the vertical divisor $\pi^{-1}O$ which can be shown a posteriori to be the pullback of the elliptic KZB connection on $E$ by $\pi$. Our first result describes global relative differential forms on $E^{\natural}/S$ with logarithmic singularities along $\pi^{-1}O$. For simplicity, assume that
\[
  S= \Spec R
\]
is affine and small enough so that $\pi^{-1}O \cong \mathbb{G}_{a,S} = \Spec R[t]$. 

\begin{theorem}\label{thm:intro-log-forms}
  There exists a $\nu \in \Gamma(E^{\natural}, \Omega^1_{E^{\natural}/S})$ such that $\nu|_{\pi^{-1}O} = dt$. Given such a $\nu$, there is a unique family $(\omega^{(n)})_{n\ge 0}$ in $\Gamma(E^{\natural}, \Omega^1_{E^{\natural}/S}(\log \pi^{-1}O))$ such that
  \[
    \Gamma(E^{\natural}, \Omega^1_{E^{\natural}/S}) = R \nu \oplus R \omega^{(0)}
  \]
  and, for $n\ge 1$,
  \begin{itemize}
    \item[(a)] $\Res(\omega^{(n)}) = t^{n-1}/(n-1)!$,
    \item[(b)] $\omega^{(n)}\wedge \omega^{(0)} = 0$,
    \item[(c)] $d\omega^{(n)} = \nu \wedge \omega^{(n-1)}$.
\end{itemize}
Moreover,
\[
  \Gamma(E^{\natural}, \Omega^1_{E^{\natural}/S}(\log \pi^{-1}O)) = R \nu \oplus \bigoplus_{n\ge 0} R \omega^{(n)}
\]
and
\[
  \Gamma(E^{\natural}, \Omega^2_{E^{\natural}/S}(\log \pi^{-1}O)) = \bigoplus_{n\ge 0}R\nu \wedge \omega^{(n)}.
\]
\end{theorem}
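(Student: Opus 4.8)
The plan is to deduce everything from the residue exact sequence
\[
  0 \To \Omega^1_{E^{\natural}/S} \To \Omega^1_{E^{\natural}/S}(\log \pi^{-1}O) \xrightarrow{\Res} \mathcal{O}_{\pi^{-1}O} \To 0
\]
together with the description of \emph{regular} global forms coming from the group structure. First I would settle the regular forms. Because $\pi:E^{\natural}\to E$ is a principal $\mathbb{G}_a$-bundle and the extension class of the universal vector extension is a unit (equivalently $\Gamma(E^{\natural},\mathcal{O}_{E^{\natural}})=R$, as follows from the unipotent-trivialisation property), the projection and the vector-group direction give a short exact sequence of invariant differentials $0\to \pi^*\omega_{E/S}\to \omega_{E^{\natural}/S}\to \omega_{\mathbb{G}_a/S}\to 0$; by the crystalline nature of $E^{\natural}$ these invariant forms are exactly $\Gamma(E^{\natural},\Omega^1_{E^{\natural}/S})\cong H^1_{\dR}(E/S)$, free of rank $2$. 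Lifting the generator $dt$ of $\omega_{\mathbb{G}_a/S}$ produces $\nu$ with $\nu|_{\pi^{-1}O}=dt$, and a generator $\omega^{(0)}$ of the pulled-back line $\pi^*\omega_{E/S}$ satisfies $\omega^{(0)}|_{\pi^{-1}O}=0$; this yields existence of $\nu$ and the splitting $\Gamma(E^{\natural},\Omega^1_{E^{\natural}/S})=R\nu\oplus R\omega^{(0)}$. Since $E^{\natural}/S$ has relative dimension $2$ and $\nu\wedge\omega^{(0)}$ is a nowhere-vanishing invariant top form, $\Gamma(E^{\natural},\Omega^2_{E^{\natural}/S})=R\,\nu\wedge\omega^{(0)}$ is free of rank one, and every regular relative $1$-form, being an $R$-combination of invariant forms, is closed; in particular $d\nu=d\omega^{(0)}=0$ for any admissible $\nu$.

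Next I would build the $\omega^{(n)}$ by induction using the canonical splitting of the residue sequence. As $\pi^{-1}O\cong\Spec R[t]$, the functions $t^{n-1}/(n-1)!$ form an $R$-basis of $\mathcal{O}_{\pi^{-1}O}$, so surjectivity of $\Res$ produces a log form realising condition (a), with remaining freedom $a_n\nu+b_n\omega^{(0)}$. The key observation is that residues commute with $d$, namely $\Res\circ d=d_{\pi^{-1}O/S}\circ\Res$, and $\Res(\nu\wedge\omega^{(n-1)})=\nu|_{\pi^{-1}O}\wedge\Res(\omega^{(n-1)})=d_{\pi^{-1}O/S}(t^{n-1}/(n-1)!)$; hence both sides of (c) have equal residue and the obstruction $\theta_n\defeq d\omega^{(n)}-\nu\wedge\omega^{(n-1)}$ lies in the regular module $\Gamma(\Omega^2_{E^{\natural}/S})=R\,\nu\wedge\omega^{(0)}$. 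Now $\theta_n$ is insensitive to $b_n$ (because $d\omega^{(0)}=0$) but shifts by a multiple of $\nu\wedge\omega^{(0)}$ when the $\omega^{(0)}$-component $b_{n-1}$ of $\omega^{(n-1)}$ is altered; so at each step the previously undetermined parameter $b_{n-1}$ is used to cancel $\theta_n$, while condition (b) fixes $a_n$ (wedging with $\omega^{(0)}$, which vanishes on $\pi^{-1}O$, detects exactly the $\nu$-component through the non-zero-divisor $\nu\wedge\omega^{(0)}$). This triangular cascade forces every choice, giving simultaneously existence and uniqueness of the family.

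The delicate point, and the step I expect to be the main obstacle, is the base case $n=1$, where (c) reads $d\omega^{(1)}=\nu\wedge\omega^{(0)}$ and at the same time normalises $\omega^{(0)}$. Writing the canonical lift $\lambda$ of $1\in\mathcal{O}_{\pi^{-1}O}$, one has $d\lambda=\gamma\,\nu\wedge\omega^{(0)}$ for some $\gamma\in R$, and solvability forces a rescaling of $\omega^{(0)}$ making $\gamma=1$, which is possible precisely when $\gamma\in R^{\times}$. Establishing $\gamma\in R^{\times}$ is the crux: it is the same nondegeneracy that makes the extension class a unit, equivalently the perfectness of the cup product $H^1_{\dR}(E/S)\times H^1_{\dR}(E/S)\to H^2_{\dR}(E/S)\cong R$ on the basis $\{\nu,\omega^{(0)}\}$ (a Legendre-type relation), which can also be verified by a local computation of $d\lambda$ near $\pi^{-1}O$ using the canonical $\natural$-structure. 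Granting this, condition (b) at $n=1$ then kills the $\nu$-component of $\omega^{(0)}$ and fixes $a_1$, closing the induction.

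Finally I would read off the two module decompositions. For $\Omega^1_{E^{\natural}/S}(\log\pi^{-1}O)$, the forms $\{\nu,\omega^{(n)}\}_{n\ge 0}$ are $R$-linearly independent, since $\nu,\omega^{(0)}$ form the regular basis and the $\omega^{(n)}$ with $n\ge 1$ have the linearly independent residues $t^{n-1}/(n-1)!$, and they span by surjectivity of $\Res$ granted by the splitting; this gives $\Gamma(\Omega^1_{E^{\natural}/S}(\log\pi^{-1}O))=R\nu\oplus\bigoplus_{n\ge 0}R\omega^{(n)}$. For $\Omega^2_{E^{\natural}/S}(\log\pi^{-1}O)$, I would check that $\nu\wedge(-)$ induces an isomorphism $\Gamma(\Omega^1_{E^{\natural}/S}(\log\pi^{-1}O))/R\nu\xrightarrow{\ \sim\ }\Gamma(\Omega^2_{E^{\natural}/S}(\log\pi^{-1}O))$: it is injective because $\nu\wedge\omega^{(0)}$ frames the regular top forms, and surjective by matching residues in the degree-two residue sequence, where the classes $\nu\wedge\omega^{(n)}$ realise all of $\Omega^1_{\pi^{-1}O/S}=R[t]\,dt$. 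Hence $\Gamma(\Omega^2_{E^{\natural}/S}(\log\pi^{-1}O))=\bigoplus_{n\ge 0}R\,\nu\wedge\omega^{(n)}$, as claimed.
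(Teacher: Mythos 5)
Your construction follows the same skeleton as the paper's proof of Theorem \ref{thm:kronecker-differentials}: choose lifts $\alpha_n$ realising the residues $t^{n-1}/(n-1)!$, use (b) to kill the $\nu$-components, and absorb the obstruction to (c) at level $n$ (a multiple of $\nu\wedge\omega^{(0)}$) into the $\omega^{(0)}$-component of $\omega^{(n-1)}$ --- this is precisely the paper's correction $\omega^{(n)}=\alpha_n+r_n\alpha_0$, and your uniqueness cascade and the two final decompositions are argued in the same way. The genuine gap is exactly the point you flag and then skip: you never prove that $\gamma\in R^{\times}$, and the phrase ``Granting this'' carries the existence of the entire family. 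The paper closes this point with Lemma \ref{lemma:kronecker-differentials}: for any logarithmic $1$-form $\alpha$ with $\Res(\alpha)=1$, the map $d:\mathcal{O}_S\alpha\to f_*\Omega^2_{E^{\natural}/S}$ is an isomorphism, and the surjectivity there rests on the vanishing $H^2\bigl(f_*\Omega^{\bullet}_{E^{\natural}/S}(\log\pi^{-1}O)\bigr)=0$ of Proposition \ref{prop:model-derham}, i.e.\ on Coleman--Laumon (Theorem \ref{thm:coleman-laumon}) together with Deligne's theorem and the K\"unneth argument for the $\mathbb{G}_a$-bundle. Your first proposed substitute --- perfectness of the cup product on $H^1_{\dR}(E/S)$ --- is indeed an equivalent statement: via the boundary map of the residue sequence, $\gamma$ is the coefficient of the cycle class of $O$ against $[\nu\wedge\omega^{(0)}]$ in $H^2_{\dR}(E/S)$, and that class generates if and only if $H^2_{\dR}((E\setminus O)/S)=0$. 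But this identification is itself an argument of the same size as the one you are trying to avoid, and you do not give it. Your second suggestion, a ``local computation of $d\lambda$ near $\pi^{-1}O$'', cannot work as stated: $\gamma$ is a global invariant, there is no canonical local choice of $\lambda$, and the residue alone does not determine $d\lambda$; two lifts with the same residue differ by a regular form whose differential you cannot see locally along the divisor.

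There is also a quieter instance of the same omission. Throughout, you take for granted that the residue maps are surjective on \emph{global} sections --- to produce the $\alpha_n$ in the first place, and again to obtain the degree-two decomposition. The sheaf-level residue sequence only gives this after one knows $R^1f_*\Omega^n_{E^{\natural}/S}=0$, which is again a consequence of Coleman--Laumon via \eqref{eq:pushforward-regular-differentials}, and is what makes the rows of \eqref{eq:residue-es} exact. In short, your proposal reproduces the formal mechanism of the paper's proof faithfully, but omits the two cohomological facts about the universal vector extension (acyclicity of $f_*$, and $H^2$-vanishing for the punctured curve) on which that mechanism hangs; without them neither the base case nor the starting lifts exist.
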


We call $\omega^{(0)},\omega^{(1)},\ldots$ \emph{Kronecker differentials}, as they are purely algebraic variants of classical elliptic functions considered by Kronecker (see \S\ref{intro:par-analytic} below).

Under the above hypotheses, we can explicitly construct a \emph{relative KZB connection} on $E^{\natural}/S$ by setting
\[
  \nabla_{E^{\natural}/S} : \mathcal{O}_{E^{\natural}}\hat{\otimes} R\langle \! \langle a,b \rangle\! \rangle \longrightarrow \Omega^1_{E^{\natural}/S}(\log \pi^{-1}O) \hat{\otimes} R\langle \! \langle a,b \rangle\!\rangle \text{, } \qquad \nabla_{E^{\natural}/S} =  d +\omega_{E^{\natural}/S}
\]
with
\[
  \omega_{E^{\natural}/S} = -\nu \otimes a  - \sum_{n\ge 0}\omega^{(n)}\otimes \ad_a^nb.
\]
Here, $R\langle \! \langle a, b\rangle \! \rangle$ denotes a ring of non-commutative power series with the $( a,b)$-adic topology, and $\ad_a$ is the operator $x\mapsto ax-xa$. In the above formula for $\omega_{E^{\natural}/S}$, an element of $R\langle \! \langle a,b\rangle \!\rangle$ acts  on $R\langle \! \langle a,b\rangle \!\rangle$ by left multiplication. The integrability of $\nabla_{E^{\natural}/S}$, which amounts to the equation
\[
  d\omega_{E^{\natural}/S} + \omega_{E^{\natural}/S}\wedge \omega_{E^{\natural}/S} = 0,
\]
follows from (b) and (c) in Theorem \ref{thm:intro-log-forms}, together with the fact that $\nu$ and $\omega^{(0)}$ are closed 1-forms (Proposition \ref{prop:closed}).

\begin{remark}
  The above explicit formula for the relative elliptic KZB connection is actually derived from a natural construction involving the bar complex of the dg-algebra $\Gamma(E^{\natural}, \Omega^{\bullet}_{E^{\natural}/S}(\log \pi^{-1}O))$, which holds for arbitrary $S$ of characteristic zero (see \S\ref{par:intro-what-we-do}). This construction also commutes with arbitrary base change in $S$. In Proposition \ref{prop:univ-property-kzb}, we characterise it by a universal property as in \S\ref{par:intro-KZB}.
\end{remark}

From now on, assume moreover that $S$ is smooth over a field $k$ of characteristic zero. The next step is to lift the relative KZB connection to an absolute integrable $k$-connection, the `isomonodromic deformation':
\[
  \nabla_{E^{\natural}/S/k}: \mathcal{O}_{E^{\natural}}\hat{\otimes}R\langle \!\langle  a,b \rangle \! \rangle \longrightarrow \Omega^1_{E^{\natural}/S/k}(\log \pi^{-1}O)\hat{\otimes}R\langle \!\langle  a,b \rangle \! \rangle\text{, }\qquad \nabla_{E^{\natural}/S/k} =  d + \omega_{E^{\natural}/S/k}.
\]
In this simplified situation, this amounts to the construction of the absolute connection form $\omega_{E^{\natural}/S/k}$, which is a suitable lift of the relative connection form $\omega_{E^{\natural}/S}$ satisfying the integrability equation
\[
  d\omega_{E^{\natural}/S/k} + \omega_{E^{\natural}/S/k}\wedge \omega_{E^{\natural}/S/k} = 0.
\]

Our next result shows that relative logarithmic differentials on $E^{\natural}/S$ admit canonical lifts to absolute differentials. This extends a well-known property for regular differentials on universal vector extensions reflecting their `crystalline nature' (cf. \cite[Section 6]{bost13}, \cite{FM22}). 

\begin{theorem}\label{intro:thm-canonical-lift}
  The relative differentials $\nu,\omega^{(0)},\omega^{(1)},\ldots \in \Gamma(E^{\natural}, \Omega^1_{E^{\natural}/S}(\log \pi^{-1}O))$ lift uniquely to absolute differentials $\widetilde{\nu},\widetilde{\omega}^{(0)},\widetilde{\omega}^{(1)},\ldots \in \Gamma(E^{\natural},\Omega^1_{E^{\natural}/k}(\log \pi^{-1}O))$ such that:
  \[
    e^*\widetilde{\nu} = e^*\widetilde{\omega}^{(0)} = 0,
  \]
  where $e \in E^{\natural}(S)$ denotes the identity section, and, for $n\ge 1$,
  \[
    \widetilde{\omega}^{(n)}\wedge \widetilde{\nu}\wedge \widetilde{\omega}^{(0)} \equiv n \alpha_{21}\wedge \widetilde{\nu} \wedge \widetilde{\omega}^{(n+1)} \mod \Omega^2_{R/k}\wedge \Gamma(E^{\natural},\Omega^1_{E^{\natural}/k}),
  \]
  where $\alpha_{21} \in \Omega^1_{R/k}$ is a coefficient of the Gauss--Manin connection matrix (cf. \cite[Remark 3.7]{FM22})
  \begin{align*}
    d\widetilde{\omega}^{(0)} &=\alpha_{11}\wedge \widetilde{\omega}^{(0)} + \alpha_{21}\wedge \widetilde{\nu}\\
    d\widetilde{\nu} &=\alpha_{12}\wedge \widetilde{\omega}^{(0)} + \alpha_{22}\wedge \widetilde{\nu}.
  \end{align*}
\end{theorem}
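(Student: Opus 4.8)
The plan is to analyse everything through the relative-to-absolute exact sequence of logarithmic differentials
\[
0 \to f^*\Omega^1_{S/k} \to \Omega^1_{E^{\natural}/k}(\log \pi^{-1}O) \to \Omega^1_{E^{\natural}/S}(\log \pi^{-1}O) \to 0,
\]
whose kernel carries no logarithmic poles because $\pi^{-1}O$ is vertical. Pushing forward along $f$ and using the standard property $f_*\mathcal{O}_{E^{\natural}} = \mathcal{O}_S$ of universal vector extensions (no nonconstant vertical functions), we get $\Gamma(E^{\natural}, f^*\Omega^1_{S/k}) = \Omega^1_{R/k}$; hence the set of absolute lifts of any fixed relative form is a torsor under the constant base forms $\Omega^1_{R/k}$. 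Existence of at least one lift of each $\omega^{(n)}$ and of $\nu$ inside $\Omega^1_{E^{\natural}/k}(\log \pi^{-1}O)$ is exactly the canonical lifting property for sections of $\mathcal{A}^1$; for the regular forms $\nu,\omega^{(0)}$ this is precisely the crystalline lift of \cite{FM22}, for which the Gauss--Manin relations displayed in the statement already hold. The remaining content is therefore (i) to normalise the lifts uniquely and (ii) to establish the recursive congruence.

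For uniqueness I would first observe that, since $f \circ e = \id_S$, the pullback $e^*$ is a left inverse of $f^*$ and so restricts to the identity on the ambiguity space $\Omega^1_{R/k}$; thus $e^*\widetilde{\nu} = e^*\widetilde{\omega}^{(0)} = 0$ kills the ambiguity and pins down $\widetilde{\nu}$ and $\widetilde{\omega}^{(0)}$. For $n \ge 1$ the identity section $e$ lies on $\pi^{-1}O$, so $e^*$ is unavailable and I would use the congruence instead. Replacing $\widetilde{\omega}^{(n)}$ by $\widetilde{\omega}^{(n)} + \eta$ with $\eta \in \Omega^1_{R/k}$ changes its left-hand side by $\eta \wedge \widetilde{\nu} \wedge \widetilde{\omega}^{(0)} \equiv \eta \wedge \nu \wedge \omega^{(0)}$ modulo $\Omega^2_{R/k} \wedge \Gamma(E^{\natural}, \Omega^1_{E^{\natural}/k})$, while the right-hand side depends on $\widetilde{\omega}^{(n+1)}$ only up to $\Omega^1_{R/k}$, since a shift there alters it by $n\alpha_{21} \wedge \widetilde{\nu} \wedge \eta'$, which has two base legs and lies in the modulus. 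As $\nu \wedge \omega^{(0)}$ is one of the free generators of $\Gamma(E^{\natural}, \Omega^2_{E^{\natural}/S}(\log \pi^{-1}O))$ by Theorem \ref{thm:intro-log-forms}, the map $\eta \mapsto \eta \wedge \nu \wedge \omega^{(0)}$ is injective; hence each congruence determines $\widetilde{\omega}^{(n)}$ uniquely and the equations decouple.

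It remains to prove the congruence, which is the genuinely new point. Here I would compute the absolute exterior derivative of the canonical lifts and show that it refines the relative recursion of Theorem \ref{thm:intro-log-forms}(c): lifting $d_{E^{\natural}/S}\omega^{(n)} = \nu \wedge \omega^{(n-1)}$ gives $d\widetilde{\omega}^{(n)} = \widetilde{\nu} \wedge \widetilde{\omega}^{(n-1)} + \rho_n$ with $\rho_n$ a mixed term having exactly one base leg, and the whole difficulty is to identify $\rho_n$. I expect its $\alpha_{21}$-component to be proportional to $n\,\widetilde{\omega}^{(n+1)}$; this index raising $n \mapsto n+1$ together with the factor $n$ is the algebraic shadow of the heat equation $\partial_\tau \omega^{(n)} \sim n\, \partial_z \omega^{(n+1)}$ satisfied by the Kronecker function, the Gauss--Manin direction $\alpha_{21}$ playing the role of $\partial_\tau$ (note that the leading term $\widetilde{\nu}\wedge\widetilde{\omega}^{(n-1)}$ instead lowers the index, consistently with $d$ sending $\Res(\omega^{(n)}) = t^{n-1}/(n-1)!$ to $t^{n-2}/(n-2)!$). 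Granting the formula for $\rho_n$, the asserted $3$-form identity follows by wedging $d\widetilde{\omega}^{(n+1)}$ (equivalently $d(\widetilde{\omega}^{(0)} \wedge \widetilde{\omega}^{(n+1)})$) with $\widetilde{\nu}$ and $\widetilde{\omega}^{(0)}$, rewriting $\alpha_{21} \wedge \widetilde{\nu} = d\widetilde{\omega}^{(0)} - \alpha_{11} \wedge \widetilde{\omega}^{(0)}$, and discarding all terms with at least two base legs; the relative vanishing $\omega^{(m)} \wedge \omega^{(0)} = 0$ of Theorem \ref{thm:intro-log-forms}(b) together with the Gauss--Manin formulae for $d\widetilde{\omega}^{(0)}$ and $d\widetilde{\nu}$ annihilate every contribution except the one claimed. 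The main obstacle is thus the explicit determination of the base-leg correction $\rho_n$: this is the only step that truly requires the crystalline structure of $E^{\natural}$ and the precise normalisation of the Kronecker differentials, rather than formal manipulation of the exact sequence and the Gauss--Manin matrix.
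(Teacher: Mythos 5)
Your scaffolding and your uniqueness argument are correct, and they coincide with the paper's own (Theorem \ref{thm:lift-kronecker}): the set of lifts is a torsor under $\Omega^1_{R/k}$ by Coleman--Laumon, $e^*$ pins down $\widetilde{\nu}$ and $\widetilde{\omega}^{(0)}$, shifts of $\widetilde{\omega}^{(n+1)}$ only move the right-hand side of the congruence by two-base-leg terms, and $\eta \mapsto \eta\wedge\nu\wedge\omega^{(0)}$ is injective because $f_*\Omega^2_{E^{\natural}/S}(\log \pi^{-1}O)$ is free on the $\nu\wedge\omega^{(i)}$ (Theorem \ref{thm:kronecker-differentials}, via Proposition \ref{prop:koszul-log}). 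The genuine gap is the existence half, and you acknowledge it yourself: everything after ``granting the formula for $\rho_n$'' is conditional on a computation you never perform, and the ``explicit determination of the base-leg correction'' that you single out as the main obstacle \emph{is} the content of the theorem. A proof that defers exactly this step has not proved the statement.

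The paper closes this gap by a residue computation along $\pi^{-1}O$, a device absent from your proposal. From the Gauss--Manin equations and $e^*\widetilde{\nu}=e^*\widetilde{\omega}^{(0)}=0$ one first computes (Lemma \ref{lem:restric-D}) the restrictions $\widetilde{\omega}^{(0)}|_{\pi^{-1}O}=-\alpha_{21}t$ and $\widetilde{\nu}|_{\pi^{-1}O}=dt-\alpha_{22}t$. Then, for \emph{arbitrary} lifts $\varphi_n$ of $\omega^{(n)}$, one expands
\[
  \varphi_n\wedge\widetilde{\nu}\wedge\widetilde{\omega}^{(0)}\;\equiv\;\sum_{i\ge 0}\gamma_{n,i}\wedge\widetilde{\nu}\wedge\varphi_i \mod f_*\mathcal{F}^2,
\]
takes $\Res$ of both sides, and compares powers of $t$: the left-hand side contributes $\tfrac{t^n}{(n-1)!}\alpha_{21}\wedge dt+\cdots$, the right-hand side $\sum_{i\ge 1}\tfrac{t^{i-1}}{(i-1)!}\gamma_{n,i}\wedge dt+\cdots$, forcing $\gamma_{n,n+1}=n\alpha_{21}$ (note $t^n/(n-1)! = n\cdot t^n/n!$, which is where the factor $n$ and the index shift come from) and $\gamma_{n,i}=0$ for $i\notin\{0,n+1\}$; the leftover $\gamma_{n,0}$ is absorbed by setting $\widetilde{\omega}^{(n)}\defeq\varphi_n-\gamma_{n,0}$. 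No heat equation or further crystalline input is needed beyond the normalisation $\Res(\omega^{(n)})=t^{n-1}/(n-1)!$ and Lemma \ref{lem:restric-D}. Finally, a warning about your guiding heuristic: it is not the case that the mixed term $\rho_n$ in $d\widetilde{\omega}^{(n)}=\widetilde{\nu}\wedge\widetilde{\omega}^{(n-1)}+\rho_n$ has $\alpha_{21}$-component proportional to $n\widetilde{\omega}^{(n+1)}$; for the correctly normalised lifts one checks analytically (this is precisely the mixed heat equation, cf.\ the computation of $d\widetilde{\omega}_N$ in Section \ref{sec:analytic-formuli}) that $\rho_n$ vanishes. The term $n\alpha_{21}\wedge\widetilde{\nu}\wedge\widetilde{\omega}^{(n+1)}$ is produced not by $d\widetilde{\omega}^{(n)}$ but by the wedge $\widetilde{\omega}^{(n)}\wedge\widetilde{\nu}\wedge\widetilde{\omega}^{(0)}$ reading off the base-leg component that the lift $\widetilde{\omega}^{(n)}$ itself carries (analytically, $n\varphi^{(n+1)}_{\tau}\,d\tau/2\pi i$). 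So even the outline of your final step would need to be reorganised, not merely completed.
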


Now it is natural to consider the canonical lift of the relative KZB connection:
\[
  \widetilde{\nabla}_{E^{\natural}/S} : \mathcal{O}_{E^{\natural}}\hat{\otimes}R\langle \!\langle  a,b \rangle \! \rangle \longrightarrow \Omega^1_{E^{\natural}/k}(\log \pi^{-1}O)\hat{\otimes}R\langle \!\langle  a,b \rangle \! \rangle\text{, }\qquad \widetilde{\nabla}_{E^{\natural}/S} =  d + \widetilde{\omega}_{E^{\natural}/S},
\]
with
\[
  \widetilde{\omega}_{E^{\natural}/S} = -\widetilde{\nu} \otimes a  - \sum_{n\ge 0}\widetilde{\omega}^{(n)}\otimes \ad_a^nb.
\]
Crucially, this $k$-connection is \emph{not} integrable. The next result computes its curvature.

\begin{theorem}
  There is a unique 1-form over $S$ with coefficients in $k$-derivations of $R\langle \! \langle a, b\rangle\! \rangle$
  \[
    \Phi \in \Omega^1_{R/k}\hat{\otimes} \operatorname{Der}_{k}R\langle \! \langle a, b\rangle\! \rangle
  \]
  such that
  \[
    d\widetilde{\omega}_{E^{\natural}/S} + \widetilde{\omega}_{E^{\natural}/S}\wedge \widetilde{\omega}_{E^{\natural}/S} + \Phi(\widetilde{\omega}_{E^{\natural}/S}) = 0 
  \]
  in $\Gamma(E^{\natural},\Omega^2_{E^{\natural}/k}(\log \pi^{-1}O))\hat{\otimes}R\langle \! \langle a, b\rangle\! \rangle$. In particular, the connection on $\mathcal{O}_{E^{\natural}}\hat{\otimes}R\langle \!\langle  a,b \rangle \! \rangle$ defined by
  \[
   \nabla_{E^{\natural}/S/k} = d + \omega_{E^{\natural}/S/k}\text{, }\qquad  \omega_{E^{\natural}/S/k} = \widetilde{\omega}_{E^{\natural}/S} + \Phi,
  \]
  is integrable.
\end{theorem}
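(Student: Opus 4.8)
The plan is to compute the curvature $\widetilde F := d\widetilde\omega_{E^{\natural}/S} + \widetilde\omega_{E^{\natural}/S}\wedge\widetilde\omega_{E^{\natural}/S}$ of the naive lift $\widetilde\nabla_{E^{\natural}/S}$ and to recognise its failure of integrability as a \emph{derivation} applied to $\widetilde\omega_{E^{\natural}/S}$. Write $\widetilde\omega := \widetilde\omega_{E^{\natural}/S}$. The organising principle is the filtration of absolute by relative forms, $0\to f^*\Omega^1_{S/k}\to\Omega^1_{E^{\natural}/k}(\log\pi^{-1}O)\to\Omega^1_{E^{\natural}/S}(\log\pi^{-1}O)\to 0$, which decomposes $\Omega^2_{E^{\natural}/k}(\log\pi^{-1}O)$ into a relative, a mixed, and a base piece. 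I would compute $\widetilde F$ against this decomposition, and separately record the purely algebraic identity $\widetilde\omega\wedge\Phi + \Phi\wedge\widetilde\omega = \Phi(\widetilde\omega)$, valid for any $\Phi = \sum_j\zeta_j\otimes D_j$ with $D_j$ a derivation, which follows from $[D, L_x] = L_{D(x)}$ where $L_x$ denotes left multiplication. Consequently the full curvature of $d + \widetilde\omega + \Phi$ is $\widetilde F + \Phi(\widetilde\omega) + (d\Phi + \Phi\wedge\Phi)$, so the theorem amounts to finding $\Phi$ with $\widetilde F + \Phi(\widetilde\omega) = 0$ and then checking $d\Phi + \Phi\wedge\Phi = 0$.

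The relative part of $\widetilde F$ vanishes: reducing $\widetilde\omega$ modulo $f^*\Omega^1_{S/k}$ recovers $\omega_{E^{\natural}/S}$, so this part is the relative curvature, which is zero by the integrability of $\nabla_{E^{\natural}/S}$ recorded after Theorem \ref{thm:intro-log-forms} (a consequence of (b), (c), and the closedness of $\nu, \omega^{(0)}$ from Proposition \ref{prop:closed}). Since $\Phi(\widetilde\omega)$ has no relative component (each term $\zeta_j\wedge\eta_i$ lies in the first filtration step), the relative part of the target equation holds automatically.

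The heart of the argument is the mixed part. I would substitute the Gauss--Manin equations for $d\widetilde\nu$ and $d\widetilde\omega^{(0)}$ together with the absolute relations for $d\widetilde\omega^{(n)}$, whose mixed components are governed by the canonical lift of Theorem \ref{intro:thm-canonical-lift}, and collect $\widetilde F_{\mathrm{mix}}$ as a sum $\sum_j \zeta_j\wedge\eta\otimes c$ with $\zeta_j\in\Omega^1_{R/k}$, $\eta$ a relative Kronecker form, and $c\in R\langle\!\langle a,b\rangle\!\rangle$. The claim is that $\widetilde F_{\mathrm{mix}} = -\Phi(\widetilde\omega)_{\mathrm{mix}}$ for a unique $\Phi\in\Omega^1_{R/k}\hat\otimes\operatorname{Der}_k R\langle\!\langle a,b\rangle\!\rangle$. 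Since a $k$-derivation of $R\langle\!\langle a,b\rangle\!\rangle$ is determined by its values on $a$ and $b$, I would read off $\Phi(a)$ and $\Phi(b)$ from the coefficients of $\zeta_j\wedge\nu$ and $\zeta_j\wedge\omega^{(0)}$, and then verify that the \emph{same} derivation reproduces the coefficient of every $\zeta_j\wedge\omega^{(n)}$. This simultaneous compatibility is exactly the content of the index-shifting relation $\widetilde\omega^{(n)}\wedge\widetilde\nu\wedge\widetilde\omega^{(0)} \equiv n\,\alpha_{21}\wedge\widetilde\nu\wedge\widetilde\omega^{(n+1)}$ of Theorem \ref{intro:thm-canonical-lift}: it forces the Gauss--Manin corrections of the $d\widetilde\omega^{(n)}$ to match the Leibniz expansion of $D(\ad_a^n b)$ for one fixed $D$. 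Uniqueness follows from the freeness of $\Gamma(E^{\natural},\Omega^1_{E^{\natural}/S}(\log\pi^{-1}O))$ over $R$ on $\nu$ and the $\omega^{(n)}$ (Theorem \ref{thm:intro-log-forms}) and the faithfulness of left multiplication.

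It remains to verify the base--base part of $\widetilde F + \Phi(\widetilde\omega) = 0$ and the flatness $d\Phi + \Phi\wedge\Phi = 0$. Both are statements on the base $S$ and I would deduce them from the integrability of the Gauss--Manin connection, i.e. the quadratic relations $d\alpha_{ij} + \sum_k\alpha_{ik}\wedge\alpha_{kj} = 0$ among the connection coefficients (supplemented, in the base--base check, by differentiating the main identity and invoking the Bianchi identity $d\widetilde F = \widetilde F\wedge\widetilde\omega - \widetilde\omega\wedge\widetilde F$ for $\widetilde\nabla_{E^{\natural}/S}$, then cancelling $\widetilde\omega$ by faithfulness). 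Conceptually, $\Phi$ is the infinitesimal variation of the pro-unipotent holonomy as $E$ moves in moduli, which necessarily acts by Lie-algebra derivations; this is why a derivation-valued, rather than merely endomorphism-valued, correction suffices. The main obstacle is precisely the mixed-part computation: showing that a single $\Phi$ simultaneously accounts for the Gauss--Manin corrections of all the Kronecker forms $\widetilde\omega^{(n)}$, which is what elevates the correction to a derivation.
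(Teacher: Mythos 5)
Your proposal is structured as a direct computation: several of its ingredients are sound and do match the paper (your identity $\Phi\wedge\widetilde\omega+\widetilde\omega\wedge\Phi=\Phi(\widetilde\omega)$ for derivation-valued $\Phi$ is exactly Lemma \ref{lemma:integrability}, the vanishing of the relative part of the curvature is the integrability of $\nabla_{E^\natural/S}$, and your uniqueness argument is essentially the one in Lemma \ref{lemma:computation-universal-KZB}). But the paper does not prove \emph{existence} this way at all: it constructs $\Phi$ a priori as the connection form of the dual of a Gauss--Manin connection $\delta$ on the fundamental Hopf algebra, built from a relative bar complex with its Koszul filtration (Section \ref{sec:absolute-kzb}); the derivation property of $\Phi$ is the horizontality of the coproduct, the curvature identity is the horizontality of the antipode (Theorems \ref{thm:non-abelian-gm} and \ref{thm:integrability}), and both the flatness of $\Phi$ and the vanishing of the $\Omega^2_{R/k}$-component are obtained by base-change reduction to a one-dimensional base.

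The genuine gap in your argument is at its central step, the mixed part. You claim the mixed components of $d\widetilde\omega^{(n)}$ are ``governed by'' the relation $\widetilde\omega^{(n)}\wedge\widetilde\nu\wedge\widetilde\omega^{(0)}\equiv n\alpha_{21}\wedge\widetilde\nu\wedge\widetilde\omega^{(n+1)}$ of Theorem \ref{intro:thm-canonical-lift}, and that this relation forces the matching with the Leibniz expansion of one fixed derivation. It does not. That relation constrains the $1$-forms $\widetilde\omega^{(n)}$ themselves, not their exterior derivatives: if you apply $d$ to it, every resulting term separately lies in $f_*\mathcal{F}^2$ (for instance $d\widetilde\omega^{(n)}\wedge\widetilde\nu\wedge\widetilde\omega^{(0)}\in f_*\mathcal{F}^2$, because the relative part of $d\widetilde\omega^{(n)}$ is $\nu\wedge\omega^{(n-1)}$, whose wedge with $\widetilde\nu$ dies, while any $\Omega^1_{R/k}$-component wedged with a triple product of lifts lands in $\mathcal{F}^2$ since relative $3$-forms vanish), so differentiation yields $0\equiv 0$ and gives no control of the $\Omega^1_{R/k}\otimes\mathcal{A}^1$-components of $d\widetilde\omega^{(n)}$. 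Moreover, $\widetilde F_{\mathrm{mix}}$ also receives contributions you do not account for: the products $\widetilde\omega^{(n)}\wedge\widetilde\omega^{(m)}$ vanish relatively (Corollary \ref{coro:local-kronecker}) but \emph{not} absolutely --- analytically $\widetilde\omega^{(n)}\wedge\widetilde\omega^{(m)}=\bigl(m\varphi^{(n)}_\tau\varphi^{(m+1)}_\tau-n\varphi^{(m)}_\tau\varphi^{(n+1)}_\tau\bigr)\,dz\wedge\frac{d\tau}{2\pi i}$ --- and showing that these quadratic terms combine with the unknown mixed parts of $d\widetilde\omega^{(n)}$ into $-\Phi(\widetilde\omega)$ for a single $\Phi$ is precisely the hard content of the theorem. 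In the explicit computation of Theorem \ref{thm:computation-universal-KZB} this step requires the mixed heat equation and the Fay identity for the Kronecker function, and no algebraic substitute for these is among the ingredients you cite; so your ``simultaneous compatibility'' claim is an assertion of the theorem, not a proof of it. A similar problem affects your treatment of $d\Phi+\Phi\wedge\Phi=0$: the quadratic Gauss--Manin relations $d\alpha_{ij}+\sum_k\alpha_{ik}\wedge\alpha_{kj}=0$ give flatness of the connection on $H^1_{\dR}$ only, not of the derivation $\Phi$ acting on all of $R\langle\!\langle a,b\rangle\!\rangle$ (whose higher components involve, analytically, the Eisenstein coefficients of $D_\tau$), and the proposed Bianchi-plus-cancellation step is not a valid deduction. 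These are the points where the paper's bar-complex and Hopf-algebra machinery is doing real work that your outline leaves unproven.
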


In short, the elliptic KZB connection of the family is obtained by `correcting' the canonical lift of the relative elliptic KZB connection by $\Phi$. We actually retrieve $\Phi$ as the connection form of the dual of the Gauss--Manin connection on the relative fundamental Hopf algebra of $E\setminus O$; see \S\ref{par:intro-what-we-do}.

\subsection{Analytic formulae}\label{intro:par-analytic}

All of the above can be explicitly computed on a given family. In order to compare our results with the traditional analytic approach in the literature, we work out in detail the case of the universal framed elliptic curve over the upper half-plane $\mathcal{E} \to \mathfrak{H}$, whose fibre at $\tau \in \mathfrak{H}$ is
\[
  \mathcal{E}_{\tau} = \mathbb{C}/(\mathbb{Z} + \tau\mathbb{Z}).
\]
In this analytic situation, the universal vector extension can be uniformised as follows:
\[
  \mathcal{E}^{\natural}_{\tau} = \mathbb{C}^2/L_{\tau}\text{, }\qquad L_{\tau} = \{(m+n\tau,2\pi i n) \in \mathbb{C}^2:m,n \in \mathbb{Z}\}.
\]

Let $\theta_{\tau}(z)$ be Jacobi's odd theta function\footnote{Our normalisation is that of Proposition \ref{prop:analytic-kronecker-diff} below.}, and consider the so-called \emph{Kronecker theta function} (cf. \cite[\S2]{LR07}, \cite[\S3.4]{BL11})
\[
    F_{\tau}(z,x) = \frac{\theta'_{\tau}(0)\theta_{\tau}(z+x)}{\theta_{\tau}(z)\theta_{\tau}(x)}.
\]
If $(z,w)$ denote the coordinates on $\mathbb{C}^2$, then the Kronecker differentials associated to
\[
  \nu = dw \in \Gamma(\mathcal{E}^{\natural}_{\tau}, \Omega^1_{\mathcal{E}^{\natural}_{\tau}})
\]
by Theorem \ref{thm:intro-log-forms} are given by
\[
  \omega^{(n)} = \varphi^{(n)}_{\tau}(z,w)dz \in \Gamma(\mathcal{E}^{\natural}_{\tau}, \Omega^1_{\mathcal{E}^{\natural}_{\tau}}(\log \pi^{-1}O)),
\]
where $\varphi^{(n)}_{\tau}(z,w)$ are complex-analytic functions on $\mathcal{E}^{\natural}_{\tau}$ defined by the generating series
\[
  e^{wx}F_{\tau}(z,x) = \sum_{n\ge 0}\varphi_{\tau}^{(n)}(z,w)x^{n-1}.
\]
Note that $\omega^{(0)} = dz$. Thus, the relative KZB connection form is
\begin{align*}
  \omega_{\mathcal{E}^{\natural}/\mathfrak{H}} = -dw \otimes a - \sum_{n\ge 0}\varphi^{(n)}_{\tau}(z,w)dz\otimes \ad_a^nb = -dw \otimes a - dz \otimes \ad_ae^{w\ad_a}F_{\tau}(z,\ad_a)b.
\end{align*}

The canonical lifts of the above relative differentials, characterised by the properties of Theorem \ref{intro:thm-canonical-lift}, are explicitly given by
\[
  \widetilde{\nu} = dw, \qquad   \widetilde{\omega}^{(n)} =  \varphi^{(n)}_{\tau}(z,w)\left(dz - w \frac{d\tau}{2\pi i} \right) + n\varphi^{(n+1)}_{\tau}(z,w) \frac{d\tau}{2\pi i}.
\]
By direct computation of the curvature $d\widetilde{\omega}_{\mathcal{E}^{\natural}/\mathfrak{H}} + \widetilde{\omega}_{\mathcal{E}^{\natural}/\mathfrak{H}}\wedge \widetilde{\omega}_{\mathcal{E}^{\natural}/\mathfrak{H}}$, we obtain
\begin{equation}\label{eq:intro-Dtau}
  \Phi = -\frac{d\tau}{2\pi i}\otimes D_{\tau}\text{, }\qquad D_{\tau} = b\frac{\partial}{\partial a} + \frac{1}{2}\sum_{n\ge 2}(2n-1)G_{2n}(\tau) \sum_{\substack{j+k=2n-1\\ j,k>0}}[(-\ad_a)^jb,\ad_a^kb]\frac{\partial}{\partial b},
\end{equation}
where $G_{2n}(\tau) = \sum_{(r,s) \neq (0,0)}(r+s\tau)^{-2n}$ are the classical Eisenstein series. The final expression for the KZB connection form then becomes
\[
  \omega_{\mathcal{E}^{\natural}/\mathfrak{H}/\mathbb{C}} = -dw\otimes a - dz\otimes \ad_ae^{w\ad_a}F_{\tau}(z,\ad_a)b - \frac{d\tau}{2\pi i}\otimes \left( \ad_aF'_{\tau}(z,w,\ad_a)b + D_{\tau}\right),
\]
where $F'_{\tau}(z,w,x) = e^{wx}\frac{\partial}{\partial x}F_{\tau}(z,x) + \frac{1}{x^2}$.

\subsection{Relation to the literature}

The elliptic KZB connection \cite{bernard98} arose in Conformal Field Theory as a genus one version of the KZ connection \cite{KZ84}, which in its simplest guise is the pro-algebraic connection
\begin{equation}\label{eq:intro-kz}
    \nabla_{\KZ}: \mathcal{V}_{\KZ} \longrightarrow \Omega^1_{\mathbb{P}^1}(\log \{0,1,\infty\})\hat{\otimes} \mathcal{V}_{\KZ}\text{, }\qquad  \nabla_{\KZ} = d  - \frac{dz}{z}\otimes x_0 - \frac{dz}{1-z} \otimes x_1,
  \end{equation}
  where $\mathcal{V}_{\KZ}$ is the trivial pro-vector bundle over $\mathbb{P}^1$ with fibre the algebra of non-commutative power series $\mathbb{C}\langle \!\langle x_0,x_1 \rangle\!\rangle$. The KZ connection encodes quantities of deep arithmetic interest, obtained as iterated integrals of the differential 1-forms $\frac{dz}{z}$, $\frac{dz}{1-z}$. Namely, flat sections of $\nabla_{\KZ}$ are described by multiple polylogarithms, and their monodromy by multiple zeta values; see for instance \cite[\S4]{brown13}.

  Motivated by an elliptic analogue of the theory of multiple polylogarithms, Levin and Racinet \cite{LR07} were led to consider elliptic KZB connections as defined in \S\ref{par:intro-KZB}. In contrast to the genus zero case, however, the pro-vector bundle $\mathcal{V}_E$ is \emph{not} trivial, since the condition
  \begin{equation}\label{eq:intro-obstruction}
    H^1(E,\mathcal{O}_E) \neq 0
  \end{equation}
  amounts to the existence of non-trivial unipotent vector bundles on $E$. To obtain a formula as explicit as \eqref{eq:intro-kz}, they  compute the pullback of the elliptic KZB connection on $\mathcal{E}_{\tau} = \mathbb{C}/(\mathbb{Z} + \mathbb{Z}\tau)$ by the (analytic) uniformisation map $\mathbb{C} \to \mathcal{E}_{\tau}$:
  \[
    \nabla_{\tau}: \mathcal{O}_{\mathbb{C}}\hat{\otimes} \mathbb{C} \langle \! \langle a, b \rangle\! \rangle \longrightarrow \Omega^1_{\mathbb{C}}(\log (\mathbb{Z} + \tau \mathbb{Z})) \hat{\otimes} \mathbb{C} \langle \! \langle a, b \rangle\! \rangle\text{, }\qquad \nabla_{\tau} = d  -dz \otimes \ad_aF_{\tau}(z,\ad_a)b,
  \]
  with corresponding action of $\mathbb{Z} + \tau \mathbb{Z}$ on $\mathbb{C}\langle \!\langle a,b\rangle\!\rangle$ given by $(m+n\tau)\cdot f(a,b) = e^{-2\pi i na}f(a,b)$.

  By considering the commutative diagram
  \[
    \begin{tikzcd}
       \mathbb{C}^2 \arrow{r}\arrow{d} & \mathcal{E}_{\tau}^{\natural} \arrow{d}{\pi} \\
       \mathbb{C} \arrow{r} & \mathcal{E}_{\tau}
    \end{tikzcd}
  \]
  where horizontal arrows are the natural uniformisation maps and the left vertical arrow is the projection $(z,w)\mapsto z$, one can readily check that $f(a,b) \mapsto e^{-wa}f(a,b)$ induces an isomorphism between the pullbacks to $\mathbb{C}^2$ of our $(\mathcal{O}_{\mathcal{E}^{\natural}_{\tau}}\hat{\otimes}\mathbb{C}\langle \!\langle a, b\rangle \! \rangle,\nabla_{\mathcal{E}^{\natural}_\tau})$, as given in \S\ref{intro:par-analytic}, and Levin and Racinet's $(\mathcal{O}_{\mathbb{C}}\hat{\otimes} \mathbb{C} \langle \! \langle a, b \rangle\! \rangle, \nabla_{\tau})$.

  At this point, it is also instructive to compare our construction with Brown and Levin's theory of multiple elliptic polylogarithms \cite{BL11}, which rely on real-analytic logarithmic 1-forms $\nu_{\BL},\omega^{(0)}_{\BL},\omega^{(1)}_{\BL},\ldots$ defined by
\[
    \nu_{\BL} = 2\pi i dr\text{, }\qquad e^{2\pi i rx}F_{\tau}(z,x)dz = \sum_{n\ge 0}\omega^{(n)}_{\BL}x^{n-1},
\]
where $r(z) = \Im(z)/\Im(\tau)$. The presence of $r$ in their construction is justified by the transformation property
\[
    r(z + m + n\tau) = r(z) + n,
\]
which, together with the modularity properties of Kronecker's function, implies that the differentials $\nu_{\BL},\omega^{(n)}_{\BL}$ descend to $\mathcal{E}_{\tau}$. In this sense, the non-algebraicity in Brown and Levin's work is also related to the same cohomological obstruction \eqref{eq:intro-obstruction}, which turns out to be equivalent to the non-existence of a holomorphic function on $\mathcal{E}_{\tau}$ which transforms in the same way as $r$.

Our Kronecker differentials $\nu,\omega^{(n)}$ should be regarded as algebraic avatars of Brown and Levin's differentials $\nu_{\BL},\omega^{(n)}_{\BL}$. Indeed, the projection $\pi: \mathcal{E}_{\tau}^{\natural} \to \mathcal{E}_{\tau}$ admits a real-analytic section $\sigma: \mathcal{E}_{\tau} \to \mathcal{E}_{\tau}^{\natural}$ induced by $z\mapsto (z,2\pi i r(z))$, and we have
\[
  \nu_{\BL} = \sigma^*\nu\text{, }\qquad \omega_{\BL}^{(n)} = \sigma^*\omega^{(n)}.
\]
With this point of view, the universal vector extension $\mathcal{E}^{\natural}$ can be naively thought of as a space over the elliptic curve $\mathcal{E}_{\tau}$ obtained by adjoining a formal variable $w$ which transforms as $2\pi ir$ under the action of $\mathbb{Z} + \tau \mathbb{Z}$.

The \emph{universal} elliptic KZB connection was first considered in explicit form by Calaque, Enriquez, and Etingof \cite{CEE09}, in relation to the representation theory of braid monodromy groups. It is defined as an integrable connection on $\mathcal{V}_{\KZB}$, the trivial infinite-rank vector bundle over $\mathfrak{H}\times \mathbb{C}$ with fibre $\mathbb{C} \langle \! \langle a, b\rangle \! \rangle$, given by
\[
    \nabla_{\KZB} = d - dz\otimes \ad_aF_{\tau}(z,\ad_a)b - \frac{d\tau}{2\pi i}\otimes \left(\ad_aG_{\tau}(z,\ad_a)b + D_{\tau}\right),
\]
where $G_{\tau}(z,x) = \frac{\partial}{\partial x}F_{\tau}(z,x) + \frac{1}{x^2}$, and $D_{\tau}$ is as in \eqref{eq:intro-Dtau}. By considering a suitable action of $\SL_2(\mathbb{Z}) \ltimes \mathbb{Z}^2$, the connection $(\mathcal{V}_{\KZB},\nabla_{\KZB})$ is then proved to descend to the universal elliptic curve, seen as the orbifold quotient $\mathcal{E} = (\SL_2(\mathbb{Z}) \ltimes \mathbb{Z}^2) \backslash \! \backslash (\mathfrak{H}\times \mathbb{C})$.

The comparison with our connection on the universal vector extension is done via a universal analogue of the previous commutative diagram:
\[
    \begin{tikzcd}
       \mathfrak{H} \times \mathbb{C}^2 \arrow{r}\arrow{d} & \mathcal{E}^{\natural} \arrow{d}{\pi} \\
       \mathfrak{H}\times \mathbb{C} \arrow{r} & \mathcal{E}.
    \end{tikzcd}
\]
It follows from the explicit expressions in \S\ref{intro:par-analytic} that the pullback to $\mathfrak{H} \times\mathbb{C}^2$ of $(\mathcal{O}_{\mathcal{E}^{\natural}}\hat{\otimes}\mathbb{C}\langle \!\langle a, b\rangle \! \rangle,\nabla_{\mathcal{E}^{\natural}/\mathfrak{H}/\mathbb{C}})$ is isomorphic to the pullback of $(\mathcal{V}_{\KZB},\nabla_{\KZB})$.  There are also similar formulae for higher level universal elliptic KZB connections due to Calaque and Gonzalez \cite{CG20} (cf. \cite{hopper21}), and a comparison in full generality is worked out in Section \ref{sec:analytic-formuli} below. 

\subsection{Towards motivic multiple elliptic polylogarithms}

The present work has been originally motivated by the development of a motivic theory of multiple elliptic polylogarithms, in the framework of Brown's \emph{motivic periods} \cite{brown14,brown17} (which have been applied with great success to arithmetic questions concerning multiple zeta values). Recall that motivic periods involve Betti and algebraic de Rham realisations; this paper is purely devoted to the algebraic de Rham aspects of the theory (cf. \cite{FM20}).

More precisely, we are concerned here with \emph{unipotent algebraic de Rham fundamental groups}. In the Tannakian formalism, this amounts to the study of unipotent vector bundles with connection over punctured elliptic curves (see Appendix \ref{appendix:tannakian}). Our use of the universal vector extension is motivated by the cohomological properties (over a field $k$ of characteristic zero)
\[
    H^0(E^{\natural},\mathcal{O}_{E^{\natural}}) = k \text{, }\qquad H^1(E^{\natural},\mathcal{O}_{E^{\natural}}) = 0,
\]
which imply that every unipotent vector bundle over $E^{\natural}$ is canonically trivial. The importance of the universal vector extension in the algebraic de Rham fundamental group theory of a punctured elliptic curve has been previously advocated by Deligne (personal communications with P. Etingof and R. Hain, 2015), and some of its Tannakian implications have already been explored by Enriquez and Etingof \cite{EE18} (cf. \S\ref{par:de-Rham-group} below).

The above discussion is also connected to a number of natural algebraicity questions that have been raised in the literature concerning elliptic KZB equations, as the usual approach relies on analytic uniformisation maps. The algebraicity over $\mathbb{Q}$ of the universal elliptic KZB connection in level 1 was proved by Luo \cite{luo19} (cf. \cite[\S5]{LR07}) by making essential use of the moduli space $\mathcal{M}_{1,\vec{1}}$ classifying elliptic curves with a non-zero tangent vector at the identity. Here, the map $\mathcal{M}_{1,\vec{1}} \to \mathcal{M}_{1,1}$, or the corresponding map on universal elliptic curves, can be thought of as a particular $\mathbb{G}_m$-bundle over which the algebraicity question becomes computationally tractable. In this sense, our approach, which is based on the construction of the elliptic KZB connection on the universal vector extension of a family of elliptic curves, is not far in spirit from that of Luo, with the difference that we use a $\mathbb{G}_a$-bundle instead of a $\mathbb{G}_m$-bundle.

Algebraicity problems were also considered in the literature concerning elliptic polylogarithm sheaves (in the sense of Beilinson and Levin \cite{BL94}), usually motivated by arithmetic questions concerning $p$-adic realisations elliptic polylogarithm functions \cite{BKT10,sprang20}. Note that Sprang's approach \cite{sprang20} also relies on universal vector extensions, and it would be interesting to compare it with the methods of this paper.

\subsection{What we do}\label{par:intro-what-we-do}

Let $k$ be a field of characteristic zero, $S$ be a smooth $k$-scheme, and $(p:E \to S,O)$ be an elliptic curve over $S$. Consider its universal vector extension $f: E^{\natural} \to S$, which fits into a short exact sequence of commutative $S$-group schemes
\[
    \begin{tikzcd}
        0 \arrow{r} & \mathbb{V}(R^1p_*\mathcal{O}_E) \arrow{r} & E^{\natural} \arrow{r}{\pi}& E \arrow{r} & 0,
    \end{tikzcd}
\]
and is universal for extensions of $E$ by an $S$-vector group (see \S\ref{par:def-uve}). Let $Z\subset E$ be a subscheme given by a finite union of torsion sections of $p$. For simplicity, we also assume that $Z$ contains the identity section $O$.

We shall construct the elliptic KZB connection over $E^{\natural}$ with logarithmic singularities along $\pi^{-1}Z$. Our approach is based on the \emph{bar construction} formalism. We refer to the recent work of Chiarellotto, Di Proietto, and Shiho \cite{CdPS19} for general comparison statements of different approaches to unipotent fundamental groups. Regarding our particular framework, we have included in Appendix \ref{appendix:tannakian} precise comparison results with the Tannakian formalism over a field.

\subsubsection{Relative differentials}

Consider the the dg-algebra over $\mathcal{O}_S$ of relative logarithmic differential forms
\[
    \mathcal{A} \defeq f_*\Omega^{\bullet}_{E^{\natural}/S}(\log \pi^{-1}Z).
\]
It follows from a result due independently to Coleman \cite{coleman98} and Laumon \cite{laumon96} (Theorem \ref{thm:coleman-laumon}) that $\mathcal{A}$ is a model for the de Rham cohomology of $E\setminus Z$ over $S$ (Proposition \ref{prop:model-derham}):   
\[
  H^{\bullet}(\mathcal{A}) \cong H^{\bullet}_{\dR}((E\setminus Z)/S).
\]

Our main results in Section \ref{sec:relative-differentials} concern the structure of $\mathcal{A}$ as a dg-algebra over $\mathcal{O}_S$. In particular, we obtain in Theorem \ref{thm:kronecker-subbundle} a canonical decomposition
\[
  \mathcal{A}^1 = f_*\Omega^1_{E^{\natural}/S} \oplus \bigoplus_{n\ge 1}\mathcal{K}^{(n)},
\]
where the $\mathcal{O}_S$-submodules $\mathcal{K}^{(n)}$ are characterised by conditions involving the residue along $\pi^{-1}Z$ and the dg-algebra structure of $\mathcal{A}$. Concretely, locally over $S$, we have
\[
  f_*\Omega^1_{E^{\natural}/S} = \mathcal{O}_S\nu \oplus \mathcal{O}_S \omega^{(0)}\text{, }\qquad \mathcal{K}^{(n)} = \bigoplus_{P \in Z(S)}\mathcal{O}_S \omega^{(n)}_P,
\]
where $\omega^{(n)}_P$ are Kronecker differentials with logarithmic singularities along $\pi^{-1}P$.

This also allows us to prove the formality of the dg-algebra $\mathcal{A}$. More precisely, we obtain a canonical dg-quasi-isomorphism (Theorem \ref{thm:projector})
\[
  \mathcal{A} \longrightarrow H^{\bullet}(\mathcal{A})
\]
which plays a key role in the rest of the paper (cf. \cite[Theorem 19]{BL11}). 

\subsubsection{Relative KZB}

With $\mathcal{A}$ as above (note that $\mathcal{A}$ is a connected dg-algebra over $\mathcal{O}_S$), we consider the bar complex
\[
    \begin{tikzcd}
        0 \arrow{r}& B^0(\mathcal{A}) \arrow{r}{d_B} & B^1(\mathcal{A}) \arrow{r} & \cdots.
    \end{tikzcd}
\]
We shall only need the first two terms, which are explicitly given by $B^0(\mathcal{A}) = \bigoplus_{n\ge 0}(\mathcal{A}^1)^{\otimes n}$ and $B^1(\mathcal{A}) = \bigoplus_{n\ge 1}\bigoplus_{1\le i \le n}(\mathcal{A}^1)^{\otimes i-1}\otimes \mathcal{A}^2 \otimes (\mathcal{A}^1)^{\otimes n-i}$, where tensor products are over $\mathcal{O}_S$. Decomposable tensors are denoted by $a_1\otimes \cdots \otimes a_n = [a_1|\cdots |a_n]$, and the differential in degree zero is explicitly given by
\begin{align*}
    d_B: B^0(\mathcal{A}) \longrightarrow B^1(\mathcal{A})\text{, }\qquad 
    [a_1|\cdots|a_n]&\longmapsto  -\sum_{i=1}^n[a_1|\cdots|a_{i-1}|da_i|a_{i+1}|\cdots |a_n]\\
                          &\hspace{50px} -\sum_{i=1}^{n-1}[a_1|\cdots|a_{i-1}|a_i\wedge a_{i+1}|a_{i+2}|\cdots|a_n].
\end{align*}
Finally, we consider the $\mathcal{O}_S$-module
\[
    \mathcal{H}_{E/S,Z} \defeq H^0(B(\mathcal{A})) = \ker(d_B: B^0(\mathcal{A}) \longrightarrow B^1(\mathcal{A})).
\]
It comes with a natural commutative Hopf algebra structure, given by the deconcatenation coproduct and the shuffle product, and a natural filtration $L_n\mathcal{H}_{E/S,Z}$ by length (see \S \ref{par:bar-construction}). Note that $\mathcal{H}_{E/S,Z}$ can be thought of as the Hopf algebra corresponding to the relative de Rham unipotent fundamental group of $E\setminus Z$ over $S$ at certain `canonical base point' (see \S \ref{par:de-Rham-group} for a discussion in the case where $S$ is the spectrum of a field). 

In Section \ref{sec:relative-kzb}, the \emph{relative elliptic KZB connection} is naturally defined on the pullback of the continuous dual
\[
    \mathcal{H}_{E/S,Z}^{\vee} \defeq \lim_{n \ge 0}\mathcal{H}om_{\mathcal{O}_S}(L_n\mathcal{H}_{E/S,Z},\mathcal{O}_S)
\]
by
\[
    \nabla_{E^{\natural}/S,Z} : f^*\mathcal{H}_{E/S,Z}^{\vee} \longrightarrow \Omega^1_{E^{\natural}/S}(\log \pi^{-1}Z) \hat{\otimes}f^*\mathcal{H}_{E/S,Z}^{\vee}\text{, }\qquad \nabla_{E^{\natural}/S,Z} =  d + \omega_{E^{\natural}/S,Z},
\]
where the \emph{KZB form} $\omega_{E^{\natural}/S,Z} \in \Gamma(S, \mathcal{A}^1\hat{\otimes}\mathcal{H}^{\vee}_{E/S,Z})$ is the length 1 component of the element in $\Gamma(S,\mathcal{H}_{E/S,Z}\hat{\otimes}\mathcal{H}^{\vee}_{E/S,Z})$ induced by the Hopf algebra antipode, and acts on $\mathcal{H}^{\vee}_{E/S,Z}$ by left multiplication (see \S \ref{par:kzb-connection}). In Proposition \ref{prop:univ-property-kzb}, we prove the integrability of $(f^*\mathcal{H}_{E/S,Z}^{\vee}, \nabla_{E^{\natural}/S,Z})$ and we characterise it via a universal property.

Building on the results of Section \ref{sec:relative-differentials}, we show that $\mathcal{H}_{E/S,Z}$ is canonically isomorphic to the tensor coalgebra $T^c H^1_{\dR}((E\setminus Z)/S)$ (Theorem \ref{thm:tensor-coalgebra}). In particular, locally over $S$, we show in Theorem \ref{thm:explicit-kzb-form} that the continuous dual $\mathcal{H}_{E/S,Z}^{\vee}$ is isomorphic to the algebra of non-commutative power series
\[
  \mathcal{H}_{E/S,Z}^{\vee}\cong \frac{\mathcal{O}_S \langle \!\langle a,b, c_P : P \in Z(S) \rangle \! \rangle}{\langle \sum_{P \in Z(S)}c_P - [a,b]\rangle},
\]
and, under the above isomorphism, the KZB form is given by
\[
  \omega_{E^{\natural}/S,Z} = - \nu \otimes a - \omega^{(0)}\otimes b - \sum_{n\ge 1}\sum_{P \in Z(S)}\omega^{(n)}_P\otimes \ad_a^{n-1}c_P.
\]
When $Z=O$ (level 1), we recover the expressions in \S\ref{par:intro-construction-kzb}.

\subsubsection{Canonical lifts}

Our next results concern the sheaves of \emph{absolute} logarithmic differentials $f_*\Omega^{\bullet}_{E^{\natural}/k}(\log \pi^{-1}Z)$. It is known that $E^{\natural}/S$ admits a natural horizontal foliation --- formally, a $D$-group scheme structure  --- which amounts to a splitting of
\[
    \begin{tikzcd}
        0 \arrow{r} & f^*\Omega^1_{S/k} \arrow{r} & \Omega^1_{E^{\natural}/k} \arrow{r} & \Omega^1_{E^{\natural}/S} \arrow{r} & 0
    \end{tikzcd}
\]
satisfying a certain integrability condition and a compatibility with the group scheme structure \cite[\S 6.4]{bost13}. We show in \cite{FM22} that this splitting actually comes from the splitting of
\[
    \begin{tikzcd}
        0 \arrow{r} & \Omega^1_{S/k} \arrow{r} & f_*\Omega^1_{E^{\natural}/k} \arrow{r} & f_*\Omega^1_{E^{\natural}/S} \arrow{r} & 0
    \end{tikzcd}
\]
induced by the retraction $f_*\Omega^1_{E^{\natural}/k} \to \Omega^1_{S/k}$ given by restriction to the identity section $e \in E^{\natural}(S)$:
\[
    f_*\Omega^1_{E^{\natural}/k} = \Omega^1_{S/k} \oplus \mathcal{N}\text{, }\qquad \mathcal{N} = \ker(e^*). 
\]
In Theorem \ref{thm:canonical-lift-kronecker-subbundle}, we extend this result to logarithmic differentials by showing that there is a unique splitting of
\[
  \begin{tikzcd}
      0 \arrow{r} & \Omega^1_{S/k} \arrow{r} & f_*\Omega^1_{E^{\natural}/k}(\log \pi^{-1}Z) \arrow{r} & f_*\Omega^1_{E^{\natural}/S}(\log \pi^{-1}Z) \arrow{r} & 0
  \end{tikzcd}
\]
of the form
\[
  f_*\Omega^1_{E^{\natural}/k}(\log \pi^{-1}Z) = \Omega^1_{S/k} \oplus \mathcal{N} \oplus \bigoplus_{n\ge 1}\mathcal{L}^{(n)}
 \]
  where each $\mathcal{L}^{(n)}$ maps isomorphically onto $\mathcal{K}^{(n)}$, and for every $n\ge 1$
  \[
        \mathcal{L}^{(n)}\wedge \mathcal{N} \wedge \mathcal{N} \equiv d\mathcal{N} \wedge \mathcal{L}^{(n+1)} \mod f_*\mathcal{F}^2.
  \]
Here, $f_*\mathcal{F}^2$ is the second step of the (direct image) Koszul filtration on $f_*\Omega^{\bullet}_{E^{\natural}/k}(\log \pi^{-1}Z)$, given by the ideal generated by $\Omega^2_{S/k}$ (see \S \ref{par:canonical-lifts}).

Locally over $S$, the above result implies that the Kronecker differentials admit canonical lifts to sections of $f_*\Omega^1_{E^{\natural}/k}(\log \pi^{-1}Z)$, which we denote by $\widetilde{\nu}$, $\widetilde{\omega}^{(0)}$, and $\widetilde{\omega}^{(n)}_P$ for $n\ge 1$ and $P \in Z(S)$.

\subsubsection{Absolute KZB}

Finally, in Section \ref{sec:absolute-kzb}, the \emph{elliptic KZB connection} associated to $E/S/k$ punctured at $Z$ is constructed as a suitable lift of the $S$-connection $\nabla_{E^{\natural}/S,Z}$ to an integrable $k$-connection
\[
    \nabla_{E^{\natural}/S/k,Z} : f^*\mathcal{H}_{E/S,Z}^{\vee} \longrightarrow \Omega^1_{E^{\natural}/k}(\log \pi^{-1}Z) \hat{\otimes}f^*\mathcal{H}_{E/S,Z}^{\vee}.
\]

As explained in \S\ref{par:intro-construction-kzb}, the key idea is to obtain $\nabla_{E^{\natural}/S/k,Z}$ as a correction of the canonical lift of the relative connection $\widetilde{\nabla}_{E^{\natural}/S/k,Z}$ by a certain $k$-connection on $S$, which is dual to a \emph{Gauss--Manin connection}
\[
    \delta: \mathcal{H}_{E/S,Z} \longrightarrow \Omega^1_{S/k}\otimes \mathcal{H}_{E/S,Z}.
\]
Our construction of $\delta$ is a bar complex variant of the usual Katz--Oda procedure \cite{KO68}, and relies on the use of \emph{relative bar complexes}. It depends crucially on Coleman and Laumon's result on the cohomology of universal vector extensions. We have also drawn inspiration from an analogous approach developed by Brown and Levin \cite{BL12}. In the $C^{\infty}$ context, there is a similar construction due to Hain and Zucker \cite[Proposition 4.11]{HZ87}.
  
Consider the dg-algebra $\mathcal{C} \defeq f_*\Omega^{\bullet}_{E^{\natural}/k}(\log \pi^{-1}Z)$, which contains $\Omega \defeq \Omega^{\bullet}_{S/k}$ as a dg-subalgebra, and form the relative bar complex
\[
    \begin{tikzcd}
        0 \arrow{r}& B_{\Omega}^0(\mathcal{C}) \arrow{r}{d_B} & B_{\Omega}^1(\mathcal{C}) \arrow{r} & \cdots,
    \end{tikzcd}
\]
the definition of which is similar to the usual bar complex, but tensor products are now taken over the non-commutative ring $\Omega$ (see \S \ref{par:relative-bar}). Then, the Koszul filtration on $\mathcal{C}$ induces a decreasing filtration
\[
    B_{\Omega}(\mathcal{C}) = F^{0}B_{\Omega}(\mathcal{C})\supset F^1B_{\Omega}(\mathcal{C}) \supset F^2B_{\Omega}(\mathcal{C})\supset \cdots
\]
satisfying $B_{\Omega}(\mathcal{C})/ F^1B_{\Omega}(\mathcal{C}) \cong B(\mathcal{A})$. We construct a projection
\[
    \pi: F^1B_{\Omega}(\mathcal{C}) \longrightarrow \Omega^1_{S/k}\otimes B(\mathcal{A})[-1]
\]
which factors through $F^1B_{\Omega}(\mathcal{C})/F^2B_{\Omega}(\mathcal{C})$, and we define
\[
    \delta(\xi) = -\pi ( d_B\widetilde{\xi}),
\]
where $\widetilde{\xi}$ is the canonical lift of the section $\xi$ of $\mathcal{H}_{E/S,Z} = H^0(B(\mathcal{A}))$.

We prove in Theorems \ref{thm:non-abelian-gm} and \ref{thm:integrability} that the above definition yields an integrable $k$-connection on $\mathcal{H}_{E/S,Z}$ which preserves the length filtration and restricts to the tensor power of the Gauss--Manin connection on the graded quotients $L_n\mathcal{H}_{E/S,Z}/L_{n-1}\mathcal{H}_{E/S,Z} \cong H^1_{\dR}((E\setminus Z)/S)^{\otimes n}$. Moreover, its continuous dual
\[
        \delta^{\vee} : \mathcal{H}^{\vee}_{E/S,Z} \longrightarrow \Omega^1_{S/k}\hat{\otimes} \mathcal{H}^{\vee}_{E/S,Z}
 \]
 is a derivation of $\mathcal{H}^{\vee}_{E/S,Z}$ with coefficients in $\Omega^1_{S/k}$, and the $k$-connection
  \[
        \nabla_{E^{\natural}/S/k,Z}: f^*\mathcal{H}^{\vee}_{E/S,Z} \longrightarrow \Omega^1_{E^{\natural}/k}(\log \pi^{-1}Z)\hat{\otimes} f^*\mathcal{H}^{\vee}_{E/S,Z}\text{, }\qquad \nabla_{E^{\natural}/S/k,Z} = f^*\delta^{\vee} + \widetilde{\omega}_{E^{\natural}/S,Z}
  \]
is an integrable lift of the relative KZB connection $\nabla_{E^{\natural}/S,Z}$. 

\subsection{Acknowledgements}

This project has received funding from the European Research Council (ERC) under the European Union’s Horizon 2020 research and innovation programme (Grant Agreement No. 724638). The first author is currently supported by the grant $\#$2020/15804-1, São Paulo Research Foundation (FAPESP), and the second author was supported by a Walter-Benjamin Fellowship of the Deutsche Forschungsgemeinschaft (DFG). We would like to thank Francis Brown, Netan Dogra, Benjamin Enriquez, and Richard Hain for their enlightening (and often encouraging) comments during the writing of this manuscript.

\section{The universal vector extension of an elliptic curve} \label{sec:uve}

\subsection{Definition} \label{par:def-uve}

Let $p:E \to S$ be an elliptic curve. The \emph{universal vector extension} of $E/S$ is a commutative $S$-group scheme $f:E^{\natural} \to S$ with a morphism of $S$-group schemes $\pi: E^{\natural} \to E$ fitting into an exact sequence (of abelian fppf sheaves over $S$)
\begin{equation}\label{eq:exact-uve}
    \begin{tikzcd}
        0 \arrow{r} & \mathbb{V}(R^1p_*\mathcal{O}_E) \arrow{r} & E^{\natural} \arrow{r}{\pi}& E \arrow{r} & 0
    \end{tikzcd}
\end{equation}
which is universal for extensions of $E$ by an $S$-vector group. Namely,
\begin{align*}
    \Hom_{\mathcal{O}_S}(\mathcal{F},R^1p_*\mathcal{O}_E) &\To \Ext_{S_{\rm fppf}}(E,\mathbb{V}(\mathcal{F}))\\
    \varphi &\longmapsto \text{the class of the pushout of \eqref{eq:exact-uve} along }\mathbb{V}(\varphi)
\end{align*}
is an isomorphism for any vector bundle $\mathcal{F}$ over $S$ \cite[Prop. 1.10]{MM74} (cf. \cite[\S 2.2]{FM22}). Since $R^1p_*\mathcal{O}_E$ is a line bundle over $S$, it follows from \eqref{eq:exact-uve} that $\pi :E^{\natural} \to E$ is a $\mathbb{G}_a$-bundle. In particular, $f:E^{\natural} \to S$ is smooth, of finite presentation, and of relative dimension 2. Moreover, the formation of the universal vector extension is compatible with every base change $S' \to S$, meaning that there is a canonical $S'$-isomorphism $E^{\natural}\times_SS' \cong (E\times_S S')^{\natural}$.

\begin{example}\label{ex:algebraic-gluing}
    Assume that $S=\Spec R$ is affine, with $6$ invertible in $R$, and that $E/S$ admits a Weierstrass equation of the form
    \[
        E : y^2z = 4x^3 - g_2xz^2 - g_3z^3\text{, }\qquad g_2,g_3 \in R\text{, }\qquad g_ 2^3-27g_ 3^2 \in R^{\times}.
    \]
    Let $q(x) = 4x^3 - g_2x - g_3 \in R[x]$, and set
    \[
        U_1 = \Spec R[x,y,t]/(y^2-q(x))\text{, }\qquad U_2 = \Spec R[x,z,t]/(z-z^3q(x/z)).
    \]
    Let $U_{12}$ be the open subscheme of $U_1$ given by $y\neq 0$, and $U_{21}$ be the open subscheme of $U_2$ given by $z\neq 0$. Then, $E^{\natural}$ is isomorphic to the gluing of $U_1$ and $U_2$ along the isomorphism $U_{12} \stackrel{\sim}{\to}U_{21}$ given on the corresponding $R$-algebras by
    \[
        R[x,z^{\pm 1},t]/(z - z^3q(x/z)) \stackrel{\sim}{\To} R[x,y^{\pm 1},t]/(y^2 - q(x))\text{, }\qquad 
        (x,z,t)\longmapsto \left(\frac{x}{y},\frac{1}{y},t - \frac{q'(x)}{6y}\right),
    \]
    where $q'(x) = 12x^2-g_2$. This follows from the interpretation of $E^{\natural}$ as a moduli space of line bundles on $E$ equipped with an $S$-connection \cite[C.1-C.2]{katz77}. For instance, if $R'$ is an $R$-algebra, a point $(a,b,c) \in U_1(R')$ corresponds to the isomorphism class of
    \[
        (\mathcal{O}(P)\otimes \mathcal{O}(O)^{-1}, d + \omega_{P,c})\text{,} \qquad \omega_{P,c} = \left(\frac{1}{2}\frac{y+b}{x-a} +c\right)\frac{dx}{y},
    \]
    where $O = (0:1:0)\in E(R')$ is the identity, and $P=(a:b:1) \in E(R')$.
\end{example}

In the category of complex analytic spaces, the universal vector extension of an elliptic curve is more conveniently described in terms of its uniformisation.

\begin{example}\label{ex:uniformisation}
    Assume that $S$ is locally of finite type over $\mathbb{C}$. Then the analytification $E^{\natural,\an}$ is uniformised by the rank 2 holomorphic vector bundle $V^{\an} \defeq \mathbb{V}(H^1_{\dR}(E/S))^{\an}$ over $S^{\an}$: there is an exact sequence of relative complex Lie groups over $S^{\an}$
    \[
        \begin{tikzcd}
            0 \arrow{r} & L \arrow{r} & V^{\an} \arrow{r}{\exp} &  E^{\natural,\an} \arrow{r} & 0,
        \end{tikzcd}
    \]
    where $L$ is the space over $S^{\an}$ corresponding to the locally constant sheaf $(R^1p^{\an}_*\mathbb{Z})^{\vee}$. The map $L \to V^{\an}$ is induced by the morphism $(R^1p^{\an}_*\mathbb{Z})^{\vee} \to (H^1_{\dR}(E/S)^{\an})^{\vee}$ which sends a locally constant family of topological 1-cycles $\gamma$ to the functional $\alpha \mapsto \int_{\gamma}\alpha$. We refer to \cite[I.4.4]{MM74} for a proof. In the particular case where $S= \Spec \mathbb{C}$, let $(\omega,\eta)$ be a basis of $H^1_{\dR}(E/\mathbb{C})$. Then,
    \[
        E^{\natural,\an} \cong \mathbb{C}^2/L\text{, }\qquad L = \{({\smallint}_{\gamma}\omega,{\smallint}_{\gamma}\eta) \in \mathbb{C}^2: \gamma \in H_1(E^{\an},\mathbb{Z})\}.
    \]
\end{example}

\subsection{Coherent and de Rham cohomology} \label{par:cohomology}

We keep the above notation.

\begin{theorem}[Coleman, Laumon]\label{thm:coleman-laumon}
    If $S$ is of characteristic zero, then
    \[
        R^if_*\mathcal{O}_{E^{\natural}} =
        \begin{cases}
            \mathcal{O}_S & i=0\\
            0 & i \ge 1.
        \end{cases}
    \]
\end{theorem}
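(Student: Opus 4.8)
The plan is to exploit that $\pi\colon E^{\natural}\to E$ is a torsor under the vector group $\mathbb{V}(R^1p_*\mathcal{O}_E)$, hence affine, so that $R^if_*\mathcal{O}_{E^{\natural}}=R^ip_*(\pi_*\mathcal{O}_{E^{\natural}})$ and the problem reduces to computing the (derived) pushforward along $p$ of the quasi-coherent $\mathcal{O}_E$-algebra $\pi_*\mathcal{O}_{E^{\natural}}$. Writing $\omega=p_*\Omega^1_{E/S}$ and $\mathcal{L}=R^1p_*\mathcal{O}_E\cong\omega^{\vee}$, the algebra $\pi_*\mathcal{O}_{E^{\natural}}$ carries an exhaustive increasing filtration $F_0\subset F_1\subset\cdots$ by degree with $F_n/F_{n-1}\cong p^*\omega^{\otimes n}$, i.e. associated graded $\mathrm{Sym}_{\mathcal{O}_E}(p^*\omega)$. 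I would set $M_i\defeq R^if_*\mathcal{O}_{E^{\natural}}$ and aim to show $M_0=\mathcal{O}_S$ and $M_i=0$ for $i\ge1$; note that $R^{\ge2}p_*=0$ since the fibres of $p$ are curves, which already gives $M_{\ge2}=0$.

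The key input I would use is the relative de Rham complex of the $\mathbb{G}_a$-torsor $\pi$. Since $\Omega^1_{E^{\natural}/E}\cong\pi^*p^*\omega$, the relative differential gives a two-term complex $\pi_*\mathcal{O}_{E^{\natural}}\to\pi_*\mathcal{O}_{E^{\natural}}\otimes_{\mathcal{O}_E}p^*\omega$, and on the $n$-th graded piece it is multiplication by $n$ (from $d(t^n)=nt^{n-1}\,dt$). As this is an isomorphism for $n\ge1$ precisely because $S$ has characteristic zero, the relative Poincaré lemma holds on the nose and yields a short exact sequence of $\mathcal{O}_E$-modules
\[
0\longrightarrow\mathcal{O}_E\longrightarrow\pi_*\mathcal{O}_{E^{\natural}}\xrightarrow{\ d\ }\pi_*\mathcal{O}_{E^{\natural}}\otimes_{\mathcal{O}_E}p^*\omega\longrightarrow0.
\]
Applying $Rp_*$, the projection formula ($R^ip_*(\pi_*\mathcal{O}_{E^{\natural}}\otimes p^*\omega)=M_i\otimes\omega$) together with $p_*\mathcal{O}_E=\mathcal{O}_S$, $R^1p_*\mathcal{O}_E=\omega^{\vee}$ and $R^{\ge2}p_*=0$ produces the long exact sequence
\[
0\to\mathcal{O}_S\to M_0\xrightarrow{\ \bar d\ }M_0\otimes\omega\to\omega^{\vee}\to M_1\xrightarrow{\ \bar d\ }M_1\otimes\omega\to0,
\]
in which $\bar d$ is $\mathcal{O}_S$-linear and strictly lowers the degree filtration induced on each $M_i$.

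From here I would conclude by combining this long exact sequence with the filtration (using that $Rp_*$ commutes with the filtered colimit $\pi_*\mathcal{O}_{E^{\natural}}=\operatorname{colim}_nF_n$, so $M_i=\operatorname{colim}_nR^ip_*F_n$). The map $\bar d$ is locally nilpotent, and its behaviour is governed by the single extension class of $\pi$ in $H^1(E,p^*\mathcal{L})$. For the \emph{universal} vector extension this class corresponds to $\mathrm{id}\in\Hom(\mathcal{L},\mathcal{L})$ under the universal property of \eqref{eq:exact-uve}, and I expect this to force the connecting maps $\omega^{\otimes n}\to R^1p_*F_{n-1}$ attached to $0\to F_{n-1}\to F_n\to p^*\omega^{\otimes n}\to0$ to be injective. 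Using that $\chi(F_n)=0$ on the elliptic fibres (each graded piece has relative degree $0$), the resulting rank bookkeeping should pin down $R^0p_*F_n=\mathcal{O}_S$ for all $n$, whence $M_0=\mathcal{O}_S$, and simultaneously show that the transition maps on $R^1p_*F_n$ vanish, whence $M_1=\operatorname{colim}_nR^1p_*F_n=0$.

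The hard part is precisely this last step: the abstract structure (a surjective, locally nilpotent $\bar d$) is \emph{not} enough to conclude --- for instance $\partial_t$ on $k[t]$ is surjective and degree-lowering yet $k[t]\ne0$ --- so one must genuinely feed in the universal property of $E^{\natural}$, i.e. that the torsor class is a generator rather than an arbitrary class, to see that the connecting homomorphisms $\delta_n$ (equivalently, the relevant cup products) are isomorphisms. This is also exactly where characteristic zero is indispensable, both in the Poincaré lemma of the second paragraph and in the divisions by $n$ implicit in the level-by-level analysis; in positive characteristic the statement fails. An alternative, less self-contained route would be to invoke the analytic uniformisation of Example \ref{ex:uniformisation} together with GAGA after base change to $\mathbb{C}$-points, but since $f$ is not proper the base-change step is delicate, so I would prefer the direct algebraic argument above.
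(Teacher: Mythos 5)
Your framework is sound and genuinely different from the paper, which does not actually reprove this theorem: it refers to \cite[Th\'eor\`eme 2.4.1]{laumon96} and \cite[Corollary 2.7]{coleman98}, and remarks that for elliptic curves an elementary proof can be extracted from a \v{C}ech computation on the explicit affine cover of Example \ref{ex:algebraic-gluing}. Pushing forward along the affine map $\pi$, filtering $\pi_*\mathcal{O}_{E^{\natural}}$ by fibrewise degree, and invoking the characteristic-zero Poincar\'e lemma along the fibres of $\pi$ is a legitimate route. However, as written your proposal has a genuine gap, at exactly the point you flag yourself: the entire content of the theorem is the assertion that the connecting maps $\delta_n\colon p_*\mathrm{gr}_n \to R^1p_*F_{n-1}$ are isomorphisms, and you only say you ``expect'' this. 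Your own counterexample ($\partial_t$ on $k[t]$) shows that the formal structure you set up cannot yield the conclusion, and the Euler-characteristic bookkeeping is merely consistent with the answer, not a proof of it. The missing step is a short computation that must actually be carried out: localise on $S$ so that $\mathcal{L}\defeq R^1p_*\mathcal{O}_E$ is trivial; then $\pi$ becomes a $\mathbb{G}_a$-torsor glued by $t_i = t_j + c_{ij}$, and by the universal property of $E^{\natural}$ (via \eqref{eq:exact-uve} and \cite[Prop. 1.10]{MM74}) the cocycle $(c_{ij})$ represents an $\mathcal{O}_S$-basis of $H^1(E,\mathcal{O}_E)$. Lifting a local generator of $\mathrm{gr}_n$ to $t_i^n$ and using $t_i^n - t_j^n \equiv n\,c_{ij}\,t_j^{n-1} \bmod F_{n-2}$, one finds that $\delta_n$ followed by the natural map $R^1p_*F_{n-1}\to R^1p_*\mathrm{gr}_{n-1}$ (an isomorphism by induction, granted that $\delta_{n-1}$ is surjective) is $n$ times cup product with a basis of $H^1(E,\mathcal{O}_E)$, hence an isomorphism since $n$ is invertible. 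This yields $p_*F_n = \mathcal{O}_S$ for all $n$ and forces the transition maps $R^1p_*F_{n-1}\to R^1p_*F_n$ to vanish, so both claims follow in the colimit. Note that this is precisely where characteristic zero and universality enter, and that once you localise and write cocycles you are in effect performing the \v{C}ech computation the paper alludes to.

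A second error, fixable but real: your identifications are systematically off by a duality. The fibres of $\pi$ are copies of $\mathbb{V}(\mathcal{L}) = \Spec \mathrm{Sym}(\mathcal{L})$, so the associated graded of $\pi_*\mathcal{O}_{E^{\natural}}$ is $\mathrm{Sym}(p^*\mathcal{L})$, with graded pieces $p^*\mathcal{L}^{\otimes n}$, \emph{not} $p^*\omega^{\otimes n}$; likewise $\Omega^1_{E^{\natural}/E}\cong \pi^*p^*\mathcal{L}$ (the paper records in \S\ref{subsec:kronecker-differentials} that invariant $1$-forms on $\mathbb{V}(\mathcal{F})$ are $\mathcal{F}$, not $\mathcal{F}^{\vee}$), and the torsor class of $E^{\natural}$ lives in $H^1(E,p^*\mathcal{L}^{\vee}) = H^1(E,p^*\omega)$, which over affine $S$ is $\Gamma(S,\omega\otimes\mathcal{L})\cong \Gamma(S,\mathcal{E}nd(\mathcal{L}))$ --- this is what makes the statement ``the class is $\id_{\mathcal{L}}$'' meaningful. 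As you have written it, the class would live in $H^1(E,p^*\mathcal{L})\cong\Gamma(S,\mathcal{L}^{\otimes 2})$, where ``identity'' does not typecheck. Since $\omega$ and $\mathcal{L}$ are both line bundles, the shape of your long exact sequence is unaffected, but the corrected identifications are indispensable for the cocycle computation above, which is the part of the argument that actually proves the theorem.
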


For a proof of the more general statement concerning universal vector extensions of abelian schemes, we refer to \cite[Théorème 2.4.1]{laumon96} or to \cite[Corollary 2.7]{coleman98}. For elliptic curves, one may also give an elementary proof by computing the \v{C}ech cohomology of the affine cover given in Example \ref{ex:algebraic-gluing}.

\begin{remark}[Algebraic versus analytic functions]\label{rmk:gaga}
    Let $S=\Spec \mathbb{C}$. The above theorem implies that $\Gamma(E^{\natural},\mathcal{O}_{E^{\natural}}) = \mathbb{C}$, i.e., that every global regular function on $E^{\natural}$ is constant. The analogous statement in the analytic category is \emph{false}. In other words, $E^{\natural,\an}$ admits non-constant holomorphic functions. In fact, it follows from Example \ref{ex:uniformisation} that $E^{\natural,\an}$ is isomorphic to $\mathbb{C}^2/\mathbb{Z}^2 \cong \mathbb{C}^{\times}\times \mathbb{C}^{\times}$.
\end{remark}

From now on, we assume that $S$ is of characteristic zero. Let $e \in E^{\natural}(S)$ be the identity section. Since $E^{\natural}/S$ is a smooth group scheme, we have for every $n\ge 0$ an isomorphism
\[
    f^*e^*\Omega^n_{E^{\natural}/S} \stackrel{\sim}{\To} \Omega^n_{E^{\natural}/S},
\]
obtained by extending cotangent vectors at the identity to invariant differential forms via the group law. Then, it follows from the projection formula and Theorem \ref{thm:coleman-laumon} that
\begin{align}\label{eq:pushforward-regular-differentials}
    R^if_*\Omega^n_{E^{\natural}/S} \cong
    \begin{cases}
        e^*\Omega^n_{E^{\natural}/S} & i=0 \\
        0 & i\ge 1.
    \end{cases}
\end{align}
The above equation for $i=0$ means that every section of $f_*\Omega^n_{E^{\natural}/S}$ is an invariant differential $n$-form on $E^{\natural}/S$.

\begin{proposition}\label{prop:closed}
    Every section of $f_*\Omega^n_{E^{\natural}/S}$ is a closed differential form. The induced map
    \[
        f_*\Omega^n_{E^{\natural}/S} \To H^n_{\dR}(E^{\natural}/S)
    \]
    is an isomorphism of $\mathcal{O}_S$-modules.
\end{proposition}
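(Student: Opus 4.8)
The plan is to read off both assertions from \eqref{eq:pushforward-regular-differentials}, which identifies $f_*\Omega^n_{E^{\natural}/S}$ with the sheaf of invariant relative $n$-forms and shows that all higher direct images $R^qf_*\Omega^p_{E^{\natural}/S}$ vanish for $q\ge 1$. First I would establish closedness. Since the relative de Rham differential commutes with pullback, the form $d\omega$ attached to an invariant form $\omega$ is again invariant, hence a section of $f_*\Omega^{n+1}_{E^{\natural}/S}$. As $E^{\natural}/S$ has relative dimension $2$, the only nontrivial case is $n=1$: choosing a local invariant basis $\omega_1,\omega_2$ of $f_*\Omega^1_{E^{\natural}/S}$, invariance of $d\omega_i$ together with $f_*\Omega^2_{E^{\natural}/S}\cong e^*\Omega^2_{E^{\natural}/S}$ (which is locally generated by $\omega_1\wedge\omega_2$) gives $d\omega_i=c_i\,\omega_1\wedge\omega_2$ with $c_i\in\mathcal{O}_S$. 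To force $c_i=0$ I would use that, $E^{\natural}$ being commutative, the invariant forms are primitive: on $E^{\natural}\times_SE^{\natural}$ one has $m^*\omega_i=\pr_1^*\omega_i+\pr_2^*\omega_i$, where $m$ denotes the group law. Applying $d$ and comparing $m^*d\omega_i$ with $\pr_1^*d\omega_i+\pr_2^*d\omega_i$, the two sides differ precisely by $c_i$ times the cross term $\pr_1^*\omega_1\wedge\pr_2^*\omega_2+\pr_2^*\omega_1\wedge\pr_1^*\omega_2$, which is a nonzero section of $\Omega^2_{(E^{\natural}\times_SE^{\natural})/S}$; hence $c_i=0$. (Conceptually this is just the vanishing of the Chevalley--Eilenberg differential on $\bigwedge^{\bullet}\Lie(E^{\natural})^{\vee}$ for the abelian Lie algebra $\Lie(E^{\natural})$.) Thus every section of $f_*\Omega^n_{E^{\natural}/S}$ is closed, and there is a well-defined class map $f_*\Omega^n_{E^{\natural}/S}\to H^n_{\dR}(E^{\natural}/S)$.

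For the isomorphism I would run the Hodge--de Rham spectral sequence of the relative de Rham complex,
\[
  E_1^{p,q}=R^qf_*\Omega^p_{E^{\natural}/S}\Longrightarrow H^{p+q}_{\dR}(E^{\natural}/S).
\]
By \eqref{eq:pushforward-regular-differentials} the $E_1$-page is concentrated in the row $q=0$, where $E_1^{p,0}=f_*\Omega^p_{E^{\natural}/S}$, and the only possibly nonzero first differential $d_1\colon E_1^{p,0}\to E_1^{p+1,0}$ is the map induced by the relative de Rham differential on invariant forms. By the closedness just established $d_1=0$, so the spectral sequence degenerates at $E_1$ and the Hodge filtration on $H^n_{\dR}(E^{\natural}/S)$ has a single nonzero graded piece $E_\infty^{n,0}=f_*\Omega^n_{E^{\natural}/S}$. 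The resulting edge map $f_*\Omega^n_{E^{\natural}/S}=E_\infty^{n,0}\xrightarrow{\ \sim\ }H^n_{\dR}(E^{\natural}/S)$ coincides with the class map above, which is therefore an isomorphism of $\mathcal{O}_S$-modules.

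The main obstacle is the closedness statement; once it is in hand the second assertion is formal, driven entirely by the vanishing of the higher direct images in \eqref{eq:pushforward-regular-differentials}. I would take some care to justify the primitivity identity $m^*\omega=\pr_1^*\omega+\pr_2^*\omega$ from translation-invariance of $\omega$ (this is the first-order content of invariance for a commutative group scheme), and to check that the cross term really is a nonzero member of a local basis of $\Omega^2_{(E^{\natural}\times_SE^{\natural})/S}$, which holds since $\pr_1^*\omega_1,\pr_1^*\omega_2,\pr_2^*\omega_1,\pr_2^*\omega_2$ extend to a local basis of relative $1$-forms on the product. All maps involved respect the $\mathcal{O}_S$-module structure, so the final isomorphism is $\mathcal{O}_S$-linear as asserted.
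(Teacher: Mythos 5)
Your proposal is correct and follows essentially the same route as the paper: both rest on \eqref{eq:pushforward-regular-differentials} (invariance of sections plus $f_*$-acyclicity), deducing closedness from commutativity of the group law and the isomorphism from acyclicity. Your explicit primitivity computation $m^*\omega_i=\pr_1^*\omega_i+\pr_2^*\omega_i$ is just a spelled-out proof of the Maurer--Cartan fact the paper cites, and your Hodge--de Rham degeneration argument is the spectral-sequence formulation of the paper's acyclicity step.
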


This statement is contained in \cite[Propositions 2.4 and 2.7]{FM22} (cf. \cite[Theorem 2.2]{coleman98}). We reproduce a short proof for completeness.

\begin{proof}
    We have seen that sections of $f_*\Omega^n_{E^{\natural}/S}$ are invariant. That every invariant differential form is closed is a general property of smooth commutative group schemes; it is a consequence of the Maurer--Cartan equation. Now, that the natural maps $f_*\Omega^n_{E^{\natural}/S} \to H^n_{\dR}(E^{\natural}/S)$ are isomorphisms follows from the $f_*$-acyclicity of $\Omega^n_{E^{\natural}/S}$ (Equation \eqref{eq:pushforward-regular-differentials}).
\end{proof}

In fact, the above isomorphism is also compatible with the natural product structures, yielding an isomorphism of dg-algebras over $\mathcal{O}_S$:
\[
    f_*\Omega^{\bullet}_{E^{\natural}/S} \cong H^{\bullet}_{\dR}(E^{\natural}/S).
\]
Since $\pi:E^{\natural} \to E$ is a $\mathbb{G}_a$-bundle, it follows from the Künneth formula that $\pi^*: H^{\bullet}_{\dR}(E/S) \to H^{\bullet}_{\dR}(E^{\natural}/S)$ is also an isomorphism of dg-algebras over $\mathcal{O}_S$. Thus, we obtain a canonical isomorphism
\begin{equation}\label{eq:isom-cohomology-E}
    f_*\Omega^{\bullet}_{E^{\natural}/S} \cong H^{\bullet}_{\dR}(E/S).
\end{equation}

\begin{remark}
    By \cite[I.4]{MM74}, applying the functor $\Lie_S$ to the exact sequence \eqref{eq:exact-uve} gives rise to a short exact sequence of $\mathcal{O}_S$-modules
    \[
        \begin{tikzcd}
            0\arrow{r}{}&(R^1p_*\mathcal{O}_E)^{\vee}\arrow{r}{}&\Lie_S E^\natural\arrow{r}{}&\Lie_S E\arrow{r}{}&0,
        \end{tikzcd}
    \]
    canonically isomorphic to the dual of the Hodge--de Rham short exact sequence
    \begin{equation}\label{eq:hodge-derham}
        \begin{tikzcd}
            0\arrow{r}{}&p_*\Omega^1_{E/S} \arrow{r}{}&H^1_{\dR}(E/S) \arrow{r}{}&R^1p_*\mathcal{O}_E \arrow{r}{}&0.
        \end{tikzcd}
    \end{equation}
    The composition of isomorphisms $f_*\Omega^{1}_{E^{\natural}/S} \cong (\Lie_SE^{\natural})^{\vee} \cong H^1_{\dR}(E/S)$ coincides with the isomorphism \eqref{eq:isom-cohomology-E}.
\end{remark}

The fact that the dg-algebra $f_*\Omega^{\bullet}_{E^{\natural}/S}$ is a model for the de Rham cohomology of $E/S$ can be generalised to punctured elliptic curves as follows.

\begin{proposition}\label{prop:model-derham}
    Let $Z\subsetneq E$ be a smooth closed $S$-subscheme. For every $n\ge 0$, the natural maps
    \[
        H^n(f_*\Omega^{\bullet}_{E^{\natural}/S}(\log \pi^{-1}Z)) \To H^n_{\dR}(\pi^{-1}(E\setminus Z)/S) \stackrel{\pi^*}{\longleftarrow} H^n_{\dR}((E\setminus Z)/S)
    \]
    are isomorphisms.
\end{proposition}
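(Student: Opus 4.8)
The plan is to establish the two maps of the statement independently: the pullback $\pi^\ast$ on the right, which is a homotopy-invariance statement, and the comparison map on the left, which reduces to a vanishing of higher direct images. Since the target $H^n_{\dR}(\pi^{-1}(E\setminus Z)/S)$ is common to both, showing each map is an isomorphism suffices.

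For $\pi^\ast$, I would argue exactly as in the unpunctured case treated above. The restriction $\pi:\pi^{-1}(E\setminus Z)\To E\setminus Z$ is again a $\mathbb{G}_a$-bundle, being the pullback of the torsor under $\mathbb{V}(R^1p_*\mathcal{O}_E)$. By homotopy invariance of relative de Rham cohomology for affine bundles in characteristic zero — equivalently, by the relative Künneth formula applied Zariski-locally over $E\setminus Z$, where the bundle trivialises — the map
\[
  \pi^\ast: H^n_{\dR}((E\setminus Z)/S)\To H^n_{\dR}(\pi^{-1}(E\setminus Z)/S)
\]
is an isomorphism in every degree.

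For the left-hand map, I would first identify $H^n_{\dR}(\pi^{-1}(E\setminus Z)/S)$ with $R^nf_*\Omega^\bullet_{E^\natural/S}(\log\pi^{-1}Z)$ via the logarithmic de Rham comparison: since $\pi^{-1}Z$ is a smooth relative divisor in $E^\natural$, the inclusion of the relative log complex into the derived pushforward (along the open immersion) of the relative de Rham complex of the open is a quasi-isomorphism. It then remains to show that the hypercohomology spectral sequence
\[
  E_1^{p,q} = R^qf_*\Omega^p_{E^\natural/S}(\log\pi^{-1}Z)\Longrightarrow R^{p+q}f_*\Omega^\bullet_{E^\natural/S}(\log\pi^{-1}Z)
\]
collapses to the row $q=0$; the resulting edge isomorphism $H^n(f_*\Omega^\bullet_{E^\natural/S}(\log\pi^{-1}Z))\cong R^nf_*\Omega^\bullet_{E^\natural/S}(\log\pi^{-1}Z)$ is, after the comparison, precisely the natural map of the statement. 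Collapse is equivalent to the vanishing $R^qf_*\Omega^p_{E^\natural/S}(\log\pi^{-1}Z)=0$ for all $p$ and all $q\ge 1$, which is the heart of the matter.

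To prove this vanishing I would use the Poincaré residue exact sequence of the smooth relative divisor $i:\pi^{-1}Z\hookrightarrow E^\natural$,
\[
  0\To\Omega^p_{E^\natural/S}\To\Omega^p_{E^\natural/S}(\log\pi^{-1}Z)\stackrel{\Res}{\To}i_*\Omega^{p-1}_{\pi^{-1}Z/S}\To 0,
\]
and apply $Rf_*$. The leftmost term has no higher direct images by \eqref{eq:pushforward-regular-differentials}, which rests on Coleman--Laumon. For the rightmost term, $\pi^{-1}Z\to Z$ is a torsor under the line bundle $\mathbb{V}(R^1p_*\mathcal{O}_E)|_Z$, hence affine, and $Z\to S$ is finite (a relative divisor in the proper curve $E/S$), so the structure map $g:\pi^{-1}Z\to S$ is affine; as $i$ is a closed immersion this gives $R^qf_*i_*\Omega^{p-1}_{\pi^{-1}Z/S}=R^qg_*\Omega^{p-1}_{\pi^{-1}Z/S}=0$ for $q\ge 1$. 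The long exact sequence of higher direct images then wedges $R^qf_*\Omega^p_{E^\natural/S}(\log\pi^{-1}Z)$ between two vanishing terms for each $q\ge 1$, yielding the claim. The main obstacle is exactly this vanishing; once it is in place the rest is formal. The only subtlety to monitor is that $\pi^{-1}Z$ genuinely is a smooth relative divisor — which holds because $Z$ is a smooth relative divisor in $E$ and $\pi$ is smooth — so that both the residue sequence and the log comparison apply.
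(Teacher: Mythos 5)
Your proposal is correct and follows essentially the same route as the paper: the Künneth formula for the $\mathbb{G}_a$-bundle $\pi^{-1}(E\setminus Z)\to E\setminus Z$ on one side, Deligne's logarithmic de Rham comparison on the other, and then $f_*$-acyclicity of each $\Omega^p_{E^\natural/S}(\log\pi^{-1}Z)$ proved via the Poincaré residue sequence, using \eqref{eq:pushforward-regular-differentials} (Coleman--Laumon) for the regular forms and affineness of $\pi^{-1}Z\to S$ for the residue term. The paper phrases the collapse step simply as acyclicity of the complex rather than via the hypercohomology spectral sequence, but this is only a difference of presentation.
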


\begin{proof}
    That $\pi^*: H^n_{\dR}((E\setminus Z)/S) \to H^n_{\dR}(\pi^{-1}(E\setminus Z)/S)$ is an isomorphism follows from the Künneth formula and from the fact that $\pi: \pi^{-1}(E\setminus Z) \to E\setminus Z$ is a $\mathbb{G}_a$-bundle.

    By Deligne's theorem \cite[Ch. II, Corollaire 3.15, Remarque 3.16]{deligne70}, there is a canonical isomorphism
    \[
        \mathbb{R}^nf_*\Omega^{\bullet}_{E^{\natural}/S}(\log \pi^{-1}Z) \stackrel{\sim}{\To} H^n_{\dR}(\pi^{-1}(E\setminus Z)/S).
    \]
    We are left to prove that $\Omega^{\bullet}_{E^{\natural}/S}(\log \pi^{-1}Z)$ is a complex of $f_*$-acyclic sheaves. For this, let $i:\pi^{-1}Z \to E^{\natural}$ be the inclusion, and consider the Poincaré residue exact sequence
    \[
        \begin{tikzcd}
            0 \arrow{r} & \Omega^{\bullet}_{E^{\natural}/S} \arrow{r} & \Omega^{\bullet}_{E^{\natural}/S}(\log \pi^{-1}Z) \arrow{r}{\Res} & i_*\Omega^{\bullet}_{\pi^{-1}Z/S}[-1] \arrow{r} & 0.
        \end{tikzcd}
    \]
    As each $\Omega^n_{E^{\natural}/S}$ is $f_*$-acyclic by \eqref{eq:pushforward-regular-differentials}, and each $i_*\Omega^{n-1}_{\pi^{-1}Z/S}$ is $f_*$-acyclic by the fact that both $i$ and $f\circ i$ are affine, we conclude from long exact sequence in cohomology associated to the above short exact sequence that each $\Omega^n_{E^{\natural}/S}(\log \pi^{-1}Z)$ is $f_*$-acyclic.
\end{proof}

\section{Relative logarithmic differentials on the universal vector extension} \label{sec:relative-differentials}

\subsection{Kronecker differentials}\label{subsec:kronecker-differentials}

Recall that the sheaf of invariant differential 1-forms on the vector group $\mathbb{V}(R^1p_*\mathcal{O}_E)/S$ is canonically isomorphic to $R^1p_*\mathcal{O}_E$: a local section $t$ of $R^1p_*\mathcal{O}_E$ corresponds to the relative 1-form $dt$. Thus, under the identification \eqref{eq:isom-cohomology-E}, the Hodge--de Rham exact sequence \eqref{eq:hodge-derham} corresponds to the short exact sequence of sheaves of invariant differential forms
\begin{equation}\label{eq:hodge-derham-uve}
    \begin{tikzcd}
        0 \arrow{r} & p_*\Omega^1_{E/S} \arrow{r} & f_*\Omega^1_{E^{\natural}/S} \arrow{r} & R^1p_*\mathcal{O}_E\arrow{r} & 0,
    \end{tikzcd}
\end{equation}
where the left arrow is given by pullback by $\pi$, and the right arrow is given by restriction to $\mathbb{V}(R^1p_*\mathcal{O}_E) \cong \pi^{-1}O \hookrightarrow E^{\natural}$. 

Assume that $S$ is affine and that $R^1p_*\mathcal{O}_E$ is free with trivialisation $t$. Under the identification
\[
    f_*\mathcal{O}_{\pi^{-1}O} \cong \bigoplus_{n\ge 0}(R^1p_*\mathcal{O}_E)^{\otimes n} \cong \mathcal{O}_S[t],
\]
we obtain a commutative diagram
\begin{equation}\label{eq:residue-es}
    \begin{tikzcd}
        0 \arrow{r} & f_*\Omega^1_{E^{\natural}/S} \arrow{r}\arrow{d}{0} & f_*\Omega^1_{E^{\natural}/S}(\log \pi^{-1}O) \arrow{r}{\Res}\arrow{d}{d} & \mathcal{O}_S[t]\arrow{r}\arrow{d}{d} & 0\\
        0 \arrow{r} & f_*\Omega^2_{E^{\natural}/S} \arrow{r} & f_*\Omega^2_{E^{\natural}/S}(\log \pi^{-1}O) \arrow{r}{\Res} & \mathcal{O}_S[t]dt \arrow{r} & 0
    \end{tikzcd}
\end{equation}
where the rows are Poincaré residue exact sequences (cf. proof of Proposition \ref{prop:model-derham}).

\begin{lemma}\label{lemma:kronecker-differentials}
    Let $\alpha$ be a global section of $f_*\Omega^1_{E^{\natural}/S}(\log \pi^{-1}O)$ satisfying $\Res(\alpha)=1$. Then $d$ restricts to an isomorphism
    \[
        d: \mathcal{O}_S \alpha \stackrel{\sim}{\longrightarrow}f_*\Omega^2_{E^{\natural}/S}.
    \]
\end{lemma}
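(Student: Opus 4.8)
The plan is to recognise the map $d\colon \mathcal{O}_S\alpha \to f_*\Omega^2_{E^{\natural}/S}$ as a restriction of the connecting homomorphism in the long exact sequence attached to the Poincaré residue short exact sequence of complexes underlying \eqref{eq:residue-es}, and then to force it to be an isomorphism by a rank count together with the vanishing of the top de Rham cohomology of the punctured elliptic curve.

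First I would record the elementary facts making the statement well-posed. Since $\Res(d\alpha) = d\,\Res(\alpha) = d(1) = 0$ by the commutativity of \eqref{eq:residue-es}, the form $d\alpha$ lies in the subsheaf $f_*\Omega^2_{E^{\natural}/S}$; and since the relative differential kills $f^*\mathcal{O}_S$, the assignment $c\alpha \mapsto c\,d\alpha$ is $\mathcal{O}_S$-linear. Moreover $\Res(\alpha) = 1$ forces $\mathcal{O}_S\alpha$ to be free of rank one (if $c\alpha = 0$ then $c = \Res(c\alpha) = 0$), while the target $f_*\Omega^2_{E^{\natural}/S} \cong e^*\Omega^2_{E^{\natural}/S}$ is a line bundle on $S$, being the top exterior power of the rank-two bundle $e^*\Omega^1_{E^{\natural}/S}$. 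So both sides are locally free of rank one, and it suffices to prove the map is surjective.

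Next I would set up the cohomology. Pushing forward the Poincaré residue sequence (with $Z = O$) gives a short exact sequence of complexes of $\mathcal{O}_S$-modules
\[
0 \to f_*\Omega^{\bullet}_{E^{\natural}/S} \to f_*\Omega^{\bullet}_{E^{\natural}/S}(\log \pi^{-1}O) \xrightarrow{\Res} Q^{\bullet} \to 0, \qquad Q^{\bullet} \defeq f_*\Omega^{\bullet-1}_{\pi^{-1}O/S},
\]
which stays exact because all the sheaves involved are $f_*$-acyclic, exactly as in the proof of Proposition \ref{prop:model-derham}. By Proposition \ref{prop:closed} the differentials on $f_*\Omega^{\bullet}_{E^{\natural}/S}$ vanish, so its cohomology in degree $n$ is $f_*\Omega^n_{E^{\natural}/S}$; by Proposition \ref{prop:model-derham} the middle complex has cohomology $H^n_{\dR}((E\setminus O)/S)$; and since $\pi^{-1}O \cong \mathbb{G}_{a,S}$, one has $H^n(Q^{\bullet}) = H^{n-1}_{\dR}(\mathbb{G}_{a,S}/S)$, which is $\mathcal{O}_S$ for $n = 1$ and zero otherwise, with the generator of $H^1(Q^{\bullet})$ being the class of $1 \in \mathcal{O}_S[t] = f_*\Omega^0_{\pi^{-1}O/S}$. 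Chasing the standard description of the connecting map then shows precisely that $\partial([1]) = d\alpha$: one lifts $1$ along $\Res$ to any $\alpha$ with $\Res(\alpha) = 1$ and applies $d$. Thus the lemma's map is identified with the connecting map $\partial\colon H^1(Q^{\bullet}) \to H^2(f_*\Omega^{\bullet}_{E^{\natural}/S}) = f_*\Omega^2_{E^{\natural}/S}$.

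Finally, surjectivity of $\partial$ follows from exactness together with the vanishing of the adjacent term: the relevant tail of the long exact sequence reads
\[
H^1(Q^{\bullet}) \xrightarrow{\partial} f_*\Omega^2_{E^{\natural}/S} \to H^2\big(f_*\Omega^{\bullet}_{E^{\natural}/S}(\log\pi^{-1}O)\big) \to 0,
\]
and the last group is $H^2_{\dR}((E\setminus O)/S)$ by Proposition \ref{prop:model-derham}, which vanishes since $E\setminus O$ is a smooth affine curve over $S$ (its relative de Rham complex is concentrated in degrees $0$ and $1$). Hence $\partial$ is a surjection between line bundles, therefore an isomorphism, and with it the map $d\colon \mathcal{O}_S\alpha \to f_*\Omega^2_{E^{\natural}/S}$. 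I expect the only delicate point to be the clean identification of $d\alpha$ with the connecting homomorphism and the bookkeeping of the degree shift in the residue sequence; the vanishing $H^2_{\dR}((E\setminus O)/S) = 0$ and the rank counts are routine. As a cross-check, this vanishing is the cohomological shadow of the residue theorem on $E$: a closed $1$-form on $E\setminus O$ cannot have nonzero residue at the single puncture $O$, which is exactly why no closed lift of $\alpha$ exists and $d\alpha$ must be a generator.
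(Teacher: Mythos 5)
Your proof is correct and is essentially the paper's own argument: both reduce the lemma to surjectivity onto the line bundle $f_*\Omega^2_{E^{\natural}/S}$ and deduce it from the vanishing $H^2(f_*\Omega^{\bullet}_{E^{\natural}/S}(\log \pi^{-1}O)) \cong H^2_{\dR}((E\setminus O)/S) = 0$ of Proposition \ref{prop:model-derham}, combined with the residue exact sequence \eqref{eq:residue-es} and the closedness of sections of $f_*\Omega^1_{E^{\natural}/S}$ (Proposition \ref{prop:closed}). The paper merely performs your snake-lemma chase by hand --- choosing $\alpha'$ with $d\alpha'=\beta$ and correcting it to $\Res(\alpha')\alpha$ --- rather than phrasing it through the connecting homomorphism of the long exact sequence.
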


\begin{proof}
    Since $\Res(d\alpha) = d\Res(\alpha) = 0$, it follows from the residue exact sequence in degree 2 that $d\alpha$ is a section of the line bundle $f_*\Omega^2_{E^{\natural}/S}$, so that the map in the statement is well-defined. To verify that it is an isomorphism, it suffices to prove that it is surjective. Since $H^2(f_*\Omega^{\bullet}_{E^{\natural}/S}(\log \pi^{-1}O)) = 0$ by Proposition \ref{prop:model-derham}, for any section $\beta$ of $f_*\Omega^2_{E^{\natural}/S}$ there is a section $\alpha'$ of $f_*\Omega^1_{E^{\natural}/S}(\log \pi^{-1}O)$ satisfying $d\alpha' = \beta$. By the commutativity of \eqref{eq:residue-es}, $\Res(\alpha')$ is in $\mathcal{O}_S$, so that $\Res(\alpha')\alpha - \alpha'$ is in $f_*\Omega^1_{E^{\natural}/S}$. This shows that $d(\Res(\alpha')\alpha) = \beta$. 
\end{proof}

\begin{remark}\label{rmk:pairing}
    Note that $\alpha$ in the above statement is unique up to a section of $f_*\Omega^1_{E^{\natural}/S}$, so that $d\alpha$ is independent of any choice; it gives a canonical trivialisation of $f_*\Omega^2_{E^{\natural}/S}$. In particular, we obtain a symplectic pairing $\langle \ , \ \rangle : f_*\Omega^1_{E^{\natural}/S}\otimes f_*\Omega^1_{E^{\natural}/S} \to \mathcal{O}_S$ defined by $\langle \omega_1,\omega_2\rangle d\alpha = \omega_1\wedge \omega_2$, which induces by \eqref{eq:hodge-derham-uve} the classical isomorphism $p_*\Omega^1_{E/S} \cong (R^1p_*\mathcal{O}_E)^{\vee}$. Under the identification \eqref{eq:isom-cohomology-E}, the pairing $\langle \ , \ \rangle$ is the usual de Rham pairing on $H^1_{\dR}(E/S)$ (cf. \cite{coleman98}).
\end{remark}

\begin{theorem}\label{thm:kronecker-differentials}
    Assume that $S$ is affine, $R^1p_*\mathcal{O}_E = \mathcal{O}_St$ is free, and let $\nu$ be a global section of $f_*\Omega^1_{E^{\natural}/S}$ satisfying
    \[
        \nu |_{\pi^{-1}O} = dt.
    \]
    Then, there exists a unique family $\{\omega^{(n)}\}_{n \ge 0}$ of global sections of $f_*\Omega^1_{E^{\natural}/S}(\log \pi^{-1}O)$ such that $\omega^{(0)}$ trivialises $p_*\Omega^1_{E/S}$ and, for every $n\ge 1$:
    \begin{enumerate}[(i)]
        \item $\Res(\omega^{(n)}) = t^{n-1}/(n-1)!$,
        \item $\omega^{(n)}\wedge \omega^{(0)} = 0$, and
        \item $d\omega^{(n)} = \nu \wedge \omega^{(n-1)}$.
    \end{enumerate}
    Moreover, we have:
    \begin{equation}\label{eq:kronecker-1}
        f_*\Omega^1_{E^{\natural}/S}(\log \pi^{-1}O) = f_*\Omega^1_{E^{\natural}/S} \oplus  \bigoplus_{n\ge 1}\mathcal{O}_S\omega^{(n)}\text{, }\qquad f_*\Omega^1_{E^{\natural}/S} = \mathcal{O}_S\nu \oplus  \mathcal{O}_S\omega^{(0)}
    \end{equation}
    and
    \begin{equation}\label{eq:kronecker-2}
        f_*\Omega^2_{E^{\natural}/S}(\log \pi^{-1}O) = f_*\Omega^2_{E^{\natural}/S}\oplus \bigoplus_{n\ge 1}\mathcal{O}_S \nu \wedge \omega^{(n)}\text{, }\qquad f_*\Omega^2_{E^{\natural}/S} = \mathcal{O}_S \nu \wedge \omega^{(0)}. 
    \end{equation}
\end{theorem}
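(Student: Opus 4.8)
The plan is to prove uniqueness and existence separately, and then read the two decompositions off the residue sequences in \eqref{eq:residue-es}. I will use three facts. First, by Lemma \ref{lemma:kronecker-differentials} and Remark \ref{rmk:pairing}, every log $1$-form $\alpha$ with $\Res(\alpha)=1$ satisfies $d\alpha=\Theta$ for one fixed generator $\Theta$ of the line bundle $f_*\Omega^2_{E^{\natural}/S}$, independent of $\alpha$. Second, since $\nu$ lifts the generator $t$ of $R^1p_*\mathcal{O}_E$ in \eqref{eq:hodge-derham-uve}, the map $\nu\wedge(-)\colon p_*\Omega^1_{E/S}\to f_*\Omega^2_{E^{\natural}/S}$ is an isomorphism of line bundles, so I \emph{define} $\omega^{(0)}$ to be the unique generator of $p_*\Omega^1_{E/S}$ with $\nu\wedge\omega^{(0)}=\Theta$ (this is precisely condition (iii) at $n=1$). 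Third, sections of $f_*\Omega^1_{E^{\natural}/S}$ are $d$-closed (Proposition \ref{prop:closed}) and the relative differential annihilates functions pulled back from $S$; this lets me discard $d$ of any correction lying in $f_*\Omega^1_{E^{\natural}/S}$.

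For uniqueness, let $\delta^{(n)}=\omega^{(n)}-\tilde\omega^{(n)}$ for two families satisfying the conditions. Condition (i) gives $\Res(\delta^{(n)})=0$, so $\delta^{(n)}=c_n\nu+c_n'\omega^{(0)}\in f_*\Omega^1_{E^{\natural}/S}$, while $\nu\wedge\omega^{(0)}=\Theta=\nu\wedge\tilde\omega^{(0)}$ and the injectivity of $\nu\wedge(-)$ force $\delta^{(0)}=0$. Wedging with $\omega^{(0)}$ and invoking (ii) yields $c_n\Theta=0$, hence $c_n=0$; applying $d$ and invoking (iii) together with $d\omega^{(0)}=0$ gives $c_{n-1}'\Theta=0$, hence $c_{n-1}'=0$ for all $n\ge1$. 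Thus every $\delta^{(n)}=0$. The mechanism to record is that (iii) propagates the vanishing of the $\omega^{(0)}$-component from one level to all higher levels.

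Existence I will run as an induction that couples level $n$ to level $n-1$. At stage $n\ge1$, choose any lift $\eta^{(n)}$ with $\Res(\eta^{(n)})=t^{n-1}/(n-1)!$ (the residue map is onto by \eqref{eq:residue-es}) and subtract the multiple of $\nu$ read off from $\eta^{(n)}\wedge\omega^{(0)}=r_n\Theta$ to secure (ii); note $\eta^{(n)}\wedge\omega^{(0)}$ is automatically regular since $\omega^{(0)}|_{\pi^{-1}O}=0$. The one genuine computation is that the Poincaré residue of $\nu\wedge\omega^{(n-1)}$ equals $\Res(d\eta^{(n)})=d\Res(\eta^{(n)})$: using $\nu|_{\pi^{-1}O}=dt$ and $\Res(\omega^{(n-1)})=t^{n-2}/(n-2)!$, both sides equal $t^{n-2}/(n-2)!\,dt$ up to the same sign (and both vanish for $n=1$). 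Hence $d\eta^{(n)}-\nu\wedge\omega^{(n-1)}$ has zero residue, so it equals $s_n\Theta$; replacing $\omega^{(n-1)}$ by $\omega^{(n-1)}+s_n\omega^{(0)}$ changes $\nu\wedge\omega^{(n-1)}$ by exactly $s_n\Theta$ and leaves conditions (i)--(iii) at level $n-1$ untouched (because $\omega^{(0)}$ is regular, $d$-closed, and kills (ii)), so (iii) at level $n$ now holds and I set $\omega^{(n)}=\eta^{(n)}$. This adjustment is precisely the act of fixing the hitherto-free $\omega^{(0)}$-component of $\omega^{(n-1)}$, while the $\omega^{(0)}$-component of $\omega^{(n)}$ in turn stays free until stage $n+1$; for $n=1$ no adjustment is needed because $d\eta^{(1)}=\Theta=\nu\wedge\omega^{(0)}$ by the choice of $\omega^{(0)}$. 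Uniqueness then guarantees the resulting family is the unique one.

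The decompositions follow by lifting bases through \eqref{eq:residue-es}. In degree one, $\{\omega^{(n)}\}_{n\ge1}$ maps under $\Res$ to the basis $\{t^{n-1}/(n-1)!\}_{n\ge1}$ of $\mathcal{O}_S[t]$, splitting the top row as $f_*\Omega^1_{E^{\natural}/S}(\log\pi^{-1}O)=f_*\Omega^1_{E^{\natural}/S}\oplus\bigoplus_{n\ge1}\mathcal{O}_S\omega^{(n)}$ with $f_*\Omega^1_{E^{\natural}/S}=\mathcal{O}_S\nu\oplus\mathcal{O}_S\omega^{(0)}$; in degree two, $\{\nu\wedge\omega^{(n)}\}_{n\ge1}$ maps to the basis $\{t^{n-1}/(n-1)!\,dt\}_{n\ge1}$ of $\mathcal{O}_S[t]\,dt$, giving \eqref{eq:kronecker-2} with $f_*\Omega^2_{E^{\natural}/S}=\mathcal{O}_S\,\nu\wedge\omega^{(0)}=\mathcal{O}_S\Theta$. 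The main obstacle is the existence induction: one must verify the Poincaré-residue compatibility in (iii) and organize the bookkeeping so that the single free parameter at each level is consumed one step later without disturbing the lower-level conditions. By contrast, uniqueness and the two decompositions are essentially formal consequences of the residue sequences.
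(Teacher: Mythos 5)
Your proof is correct and follows essentially the same route as the paper's: lift the prescribed residues through the exact sequences \eqref{eq:residue-es}, correct each form by a multiple of $\nu$ to secure (ii) and by a multiple of $\omega^{(0)}$ (consumed one level later) to secure (iii), and prove uniqueness by the same component-by-component comparison using closedness of $\nu,\omega^{(0)}$. The only cosmetic differences are that you normalise $\omega^{(0)}$ up front via $\nu\wedge\omega^{(0)}=\Theta$ (justified by Remark \ref{rmk:pairing}), where the paper starts from an arbitrary trivialisation $\alpha_0$ and deduces a posteriori from Lemma \ref{lemma:kronecker-differentials} that $\omega^{(0)}$ trivialises $p_*\Omega^1_{E/S}$, and that you run existence as a staged induction with deferred corrections rather than the paper's simultaneous substitution $\omega^{(n)}=\alpha_n+r_n\alpha_0$.
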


\begin{proof}
    Let $\alpha_0$ be any trivialisation of $p_*\Omega^1_{E/S}$ (note that $p_*\Omega^1_{E/S}$ is free by Remark \ref{rmk:pairing}), so that
    \begin{equation}\label{eq:triv-regular}
        f_*\Omega^1_{E^{\natural}/S} = \mathcal{O}_S\nu \oplus \mathcal{O}_S\alpha_0\text{, }\qquad f_*\Omega^2_{E^{\natural}/S}= \mathcal{O}_S \nu \wedge \alpha_0.
    \end{equation}
    It follows from the residue exact sequence \eqref{eq:residue-es} in degree 1 that, for every $n\ge 1$, there is a global section $\alpha_n$ of $f_*\Omega^1_{E^{\natural}/S}(\log \pi^{-1}O)$ such that
    \[
        \Res(\alpha_n) = \frac{t^{n-1}}{(n-1)!}.
    \]
    Our goal is to modify each $\alpha_n$ by a section of $f_*\Omega^1_{E^{\natural}/S}$ so that properties (ii) and (iii) are also satisfied.

    Since the restriction of $\alpha_0$ to $\pi^{-1}O$ vanishes by \eqref{eq:hodge-derham-uve}, the 2-form $\alpha_n\wedge \alpha_0$ is in $f_*\Omega^2_{E^{\natural}/S}$. Thus, by \eqref{eq:triv-regular}, up to adding a multiple of $\nu$ to $\alpha_n$, we can assume that
    \[
        \alpha_n\wedge \alpha_0=0.
    \]
    Let $n\ge 1$. Since
    \[
        \Res(d\alpha_n) = d\Res(\alpha_n) = d(t^{n-1}/(n-1)!) = t^{n-2}dt/(n-2)! = \Res(\nu \wedge \alpha_{n-1}),
    \]
    it follows from the residue exact sequence \eqref{eq:residue-es} in degree 2 and from \eqref{eq:triv-regular} that there exists a global section $r_{n-1}$ of $\mathcal{O}_S$ such that
    \[
        d \alpha_n = \nu \wedge \alpha_{n-1} + r_{n-1}\nu \wedge \alpha_0.
    \]
    For every $n\ge 0$, we set
    \[
        \omega^{(n)}\defeq \alpha_n + r_n\alpha_0.
    \]
    Thus, $\omega^{(0)}$ is a global section of $p_*\Omega^1_{E/S}$ and $\omega^{(n)}$ are global sections of $f_*\Omega^1_{E^{\natural}/S}(\log \pi^{-1}O)$ satisfying (i), (ii), and (iii). Furthermore, since $d\omega^{(1)} = \nu\wedge \omega^{(0)}$, it follows from Lemma \ref{lemma:kronecker-differentials} that $\nu \wedge \omega^{(0)}$ is a trivialisation of $f_*\Omega^2_{E^{\natural}/S}$, so that $\{\nu,\omega^{(0)}\}$ is a trivialisation of $f_*\Omega^1_{E^{\natural}/S}$. Then, \eqref{eq:kronecker-1} and \eqref{eq:kronecker-2} follow from the residue exact sequence \eqref{eq:residue-es}.

    To prove unicity, let $\{\lambda_n\}_{n\ge 0}$ be a family of global sections of $f_*\Omega^1_{E^{\natural}/S}(\log \pi^{-1}O)$ such that $\lambda_0$ trivialises $p_*\Omega^1_{E/S}$ and $\lambda_n$ satisfies (i), (ii), and (iii) for $n\ge 1$. Then, we can write $\lambda_0 = u \omega^{(0)}$ for some $u \in \Gamma(S,\mathcal{O}_S^{\times})$, and, by (i), for every $n\ge 1$,
    \[
        \lambda_n = \omega^{(n)} + a_n\omega^{(0)} + b_n \nu,
    \]
    for some $a_n,b_n \in \Gamma(S,\mathcal{O}_S)$. From (ii), we conclude that $b_n=0$. Finally, property (iii) applied to the family $\{\lambda_n\}_{n\ge 0}$ yields
    \[
        \nu \wedge \omega^{(n-1)} = 
        \begin{cases}
            u\nu\wedge \omega^{(0)} & n=1\\
            \nu \wedge \omega^{(n-1)} + a_{n-1}\nu\wedge \omega^{(0)} & n\ge 2
        \end{cases}
    \]
    which implies that $u=1$ and $a_n=0$ for every $n\ge 1$.
\end{proof}

\begin{remark}\label{eq:change-basis}
    It follows from unicity and from equations (i), (ii), and (iii), that a change in $\nu$ to $\nu'= u\nu + v\omega^{(0)}$, with $u \in \Gamma(S,\mathcal{O}_S^{\times})$ and $v \in \Gamma(S,\mathcal{O}_S)$, changes $\omega^{(n)}$ to ${\omega^{(n)}}' = u^{n-1}\omega^{(n)}$.
\end{remark}

\begin{corollary}\label{coro:local-kronecker}
    With notation as in Theorem \ref{thm:kronecker-differentials}, we have $\omega^{(n)}\wedge \omega^{(m)} = 0$ for every $n,m\ge 0$.
\end{corollary}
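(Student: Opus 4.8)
The plan is to deduce the full statement from the single relation (ii), $\omega^{(n)}\wedge\omega^{(0)}=0$, by exploiting that $\nu$ and $\omega^{(0)}$ form a global \emph{frame} of the relative cotangent bundle. Indeed, as recalled in \S\ref{par:cohomology}, the canonical isomorphism $f^*e^*\Omega^1_{E^{\natural}/S}\cong\Omega^1_{E^{\natural}/S}$ identifies invariant forms with a frame, so by the decomposition \eqref{eq:kronecker-1} the sections $\nu,\omega^{(0)}$ trivialise $\Omega^1_{E^{\natural}/S}$ as an $\mathcal{O}_{E^{\natural}}$-module. Consequently $\nu\wedge\omega^{(0)}$ is a nowhere-vanishing section of the line bundle $\Omega^2_{E^{\natural}/S}$ (compare \eqref{eq:kronecker-2}). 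The upshot I want to extract is that, away from the divisor, each Kronecker differential is literally a function multiple of $\omega^{(0)}$.

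First I would restrict everything to the open complement $U\defeq\pi^{-1}(E\setminus O)=E^{\natural}\setminus\pi^{-1}O$. On $U$ each $\omega^{(n)}$ is regular, so I may write $\omega^{(n)}|_U=g_n\nu+h_n\omega^{(0)}$ with $g_n,h_n\in\Gamma(U,\mathcal{O}_U)$. Wedging with $\omega^{(0)}$ and invoking $\omega^{(n)}\wedge\omega^{(0)}=0$ (property (ii) for $n\ge 1$, and trivially for $n=0$) yields $g_n\,\nu\wedge\omega^{(0)}=0$; since $\nu\wedge\omega^{(0)}$ is nowhere vanishing on $U$, this forces $g_n=0$. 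Hence $\omega^{(n)}|_U=h_n\,\omega^{(0)}|_U$ for every $n\ge 0$ (with $h_0=1$). It then follows immediately that for all $n,m\ge 0$ the product computes, over $U$, as $\omega^{(n)}\wedge\omega^{(m)}|_U=h_nh_m\,\omega^{(0)}\wedge\omega^{(0)}=0$.

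It remains to upgrade this to a global vanishing, and this is the only genuine (though mild) obstacle. The form $\omega^{(n)}\wedge\omega^{(m)}$ is a global section of the locally free sheaf $\Omega^2_{E^{\natural}/S}(\log\pi^{-1}O)$, and a locally free $\mathcal{O}_{E^{\natural}}$-module admits no nonzero section supported on the Cartier divisor $\pi^{-1}O\cong\mathbb{G}_{a,S}$. Therefore the restriction map $\Gamma(E^{\natural},\Omega^2_{E^{\natural}/S}(\log\pi^{-1}O))\to\Gamma(U,\Omega^2_{U/S})$ is injective, as $U$ is schematically dense in the smooth $S$-scheme $E^{\natural}$. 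Combined with the vanishing of $\omega^{(n)}\wedge\omega^{(m)}$ on $U$ established above, this gives $\omega^{(n)}\wedge\omega^{(m)}=0$, as desired. I expect the substance of the argument to lie entirely in recognising the frame structure that collapses (ii) into the relation $\omega^{(n)}=h_n\omega^{(0)}$ on $U$; the globalisation is then a routine density/torsion-freeness statement, which one could alternatively phrase in terms of the residue computation in the basis \eqref{eq:kronecker-2}.
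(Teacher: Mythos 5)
Your proof is correct, and it rests on the same key mechanism as the paper's: property (ii) kills the $\nu$-component of $\omega^{(n)}$ in the rank-two frame $\{\nu,\omega^{(0)}\}$, so each $\omega^{(n)}$ becomes proportional to a single $1$-form and all wedge products vanish. The difference lies in where you localise. The paper works in a \emph{formal neighbourhood of a point of the divisor}: smoothness of $E^{\natural}/S$ yields formal $S$-coordinates $(x,t)$ with $dx=\omega^{(0)}$, $dt=\nu$ and $\pi^{-1}O=\{x=0\}$, one writes $\omega^{(n)}=F_n\tfrac{dx}{x}+G_n\,dt$, and (ii) forces $G_n=0$, so that the logarithmic structure is handled head-on along the divisor. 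You instead work on the \emph{open complement} $U=E^{\natural}\setminus\pi^{-1}O$, where the forms are regular, use the global invariant frame to write $\omega^{(n)}|_U=g_n\nu+h_n\omega^{(0)}$, kill $g_n$ by (ii), and then globalise via schematic density of the complement of an effective Cartier divisor (injectivity of restriction for torsion-free sheaves). Your route buys two things: it avoids formal completions and any choice of coordinates, and it makes the globalisation step explicit, whereas the paper's closing ``the statement follows immediately'' tacitly relies on the same kind of density/torsion-freeness reasoning, run from the divisor side. You were also right to be careful that \eqref{eq:kronecker-1} by itself is only a statement about global sections over $\mathcal{O}_S$; it is the identification $f^*e^*\Omega^1_{E^{\natural}/S}\cong\Omega^1_{E^{\natural}/S}$ of \S\ref{par:cohomology}, applied to the invariant forms $\nu,\omega^{(0)}$, that upgrades it to a trivialisation of $\Omega^1_{E^{\natural}/S}$ as an $\mathcal{O}_{E^{\natural}}$-module, which is what both proofs ultimately use.
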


\begin{proof}
    Since $E^{\natural}/S$ is smooth of relative dimension 2, in a formal neighbourhood of any point in the smooth relative effective Cartier divisor $\pi^{-1}O$, we can find $S$-coordinates $(x,t)$ such that $dx = \omega^{(0)}$, $dt=\nu$, so that $\pi^{-1}O$ is given by $x=0$. Since $\omega^{(n)}$ has logarithimic singularities along $\pi^{-1}O$, there are power series $F_n(x,t)$, $G_n(x,t)$ with coefficients in $\mathcal{O}_S$ such that $\omega^{(n)} = F_n(x,t)\frac{dx}{x} + G_n(x,t)dt$. By equation (ii) of Theorem \ref{thm:kronecker-differentials}, we have $G_n(x,t) = 0$, so that $\omega^{(n)} = F_n(x,t)\frac{dx}{x}$. The statement follows immediately.
\end{proof}

\subsection{Kronecker subbundles} \label{par:kronecker-subbundle}

Let $Z\subsetneq E$ be a closed subscheme of $E$ given by the union of \emph{torsion} sections $P\in E(S)$. Assume moreover that $Z$ contains the identity section: $O \in Z(S)$.

\begin{example}
    We can always take $Z = O$. If $E/S$ admits a full level $N$ structure, we can also consider $Z = E[N]$, or any other subscheme thereof which is flat over $S$ and contains the identity.
\end{example}

By pulling $Z$ back by the canonical projection $\pi: E^{\natural} \to E$, we obtain a divisor $\pi^{-1}Z$ on $E^{\natural}$, which can be written as a disjoint union:
\[
    \pi^{-1}Z = \bigsqcup_{P \in Z(S)}\pi^{-1}P.
\]
Since $P \in Z(S)$ is torsion and $\pi:E^{\natural} \to E$ is a $\mathbb{G}_a$-bundle, $P$ admits a unique lift to a torsion section $P^{\natural} \in E^{\natural}(S)$ \cite[Lemma C.1.1]{katz77}. Thus, we obtain a canonical trivialisation
\begin{equation}\label{eq:triv-D}
    \pi^{-1}Z \stackrel{\sim}{\To} Z \times_S \mathbb{V}(R^1p_*\mathcal{O}_E),
\end{equation}
given on the component $\pi^{-1}P$ by $x \mapsto (P,x - P^{\natural})$. This yields an isomorphism
\begin{equation}\label{eq:triv-OD}
    f_*\mathcal{O}_{\pi^{-1}Z} \cong p_*\mathcal{O}_Z \otimes \bigoplus_{n\ge 0}(R^1p_*\mathcal{O}_E)^{\otimes n} \cong \bigoplus_{n\ge 0}p_*\mathcal{O}_Z \otimes(R^1p_*\mathcal{O}_E)^{\otimes n},
\end{equation}
where tensor products are taken over $\mathcal{O}_S$.

\begin{theorem}\label{thm:kronecker-subbundle}
    There is a canonical decomposition
    \[
        f_*\Omega^1_{E^{\natural}/S}(\log \pi^{-1}Z) = f_*\Omega^1_{E^{\natural}/S} \oplus \bigoplus_{n\ge 1}\mathcal{K}^{(n)}
    \]
    where $\mathcal{K}^{(n)}$ are subbundles of $f_*\Omega^1_{E^{\natural}/S}(\log \pi^{-1}Z)$ uniquely determined by the following properties:
    \begin{enumerate}[(i)]
        \item Under the identification \eqref{eq:triv-OD}, the residue map
        \[
            \Res : f_*\Omega^1_{E^{\natural}/S}(\log \pi^{-1}Z) \longrightarrow f_*\mathcal{O}_{\pi^{-1}Z}
        \]
        restricts to an isomorphism between $\mathcal{K}^{(n)}$ and $p_*\mathcal{O}_Z\otimes (R^1p_*\mathcal{O}_E)^{\otimes (n-1)}$;
        \item $\mathcal{K}^{(n)}\wedge \mathcal{K}^{(0)}=0$ in $f_*\Omega^2_{E^{\natural}/S}(\log \pi^{-1}Z)$, where
        \[
            \mathcal{K}^{(0)} \defeq p_*\Omega^1_{E/S}
        \]
        is seen as a rank 1 subbundle of $f_*\Omega^1_{E^{\natural}/S}(\log \pi^{-1}Z)$ via pullback by $\pi$;
        \item $d\mathcal{K}^{(n)} = f_*\Omega^1_{E^{\natural}/S}\wedge \mathcal{K}^{(n-1)}$ in $f_*\Omega^2_{E^{\natural}/S}(\log \pi^{-1}Z)$.
    \end{enumerate}
    Moreover, we have
    \[
        f_*\Omega^2_{E^{\natural}/S}(\log \pi^{-1}Z) = \bigoplus_{n\ge 0} f_*\Omega^1_{E^{\natural}/S}\wedge \mathcal{K}^{(n)}.
    \]
\end{theorem}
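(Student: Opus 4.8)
The plan is to reduce the statement to the single-puncture, trivialised situation of Theorem~\ref{thm:kronecker-differentials} by exploiting the group-scheme structure of $E^{\natural}$, building each $\mathcal{K}^{(n)}$ by translating the Kronecker differentials $\omega^{(n)}$ to every torsion section. Throughout I would work locally, assuming $S$ affine, $R^1p_*\mathcal{O}_E=\mathcal{O}_St$ free, and a section $\nu$ fixed as in Theorem~\ref{thm:kronecker-differentials}; this is legitimate because both the asserted decomposition and the defining properties (i)--(iii) are local on $S$, and the uniqueness proved below will guarantee that the locally constructed pieces glue. Recall that each torsion $P\in Z(S)$ has a canonical torsion lift $P^{\natural}\in E^{\natural}(S)$ (already used in \eqref{eq:triv-D}), and that translation $\tau_{P^{\natural}}$ by $P^{\natural}$ is an $S$-automorphism of $E^{\natural}$ carrying $\pi^{-1}O$ isomorphically onto $\pi^{-1}P$, compatibly with \eqref{eq:triv-D}. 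I will also use the residue exact sequences in degrees one and two (the general-$Z$ analogues of \eqref{eq:residue-es}, exact after $f_*$ by the acyclicity established in the proof of Proposition~\ref{prop:model-derham}).

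First I would construct the candidate subbundles. For $n\ge 1$ and $P\in Z(S)$ set $\omega^{(n)}_P\defeq \tau_{-P^{\natural}}^*\omega^{(n)}$, with $\omega^{(n)}$ as in Theorem~\ref{thm:kronecker-differentials}, and put $\mathcal{K}^{(n)}\defeq \bigoplus_{P\in Z(S)}\mathcal{O}_S\omega^{(n)}_P$. Since $\omega^{(n)}$ has logarithmic poles only along $\pi^{-1}O$, the form $\omega^{(n)}_P$ has logarithmic poles only along $\pi^{-1}P$; hence its residue vanishes along $\pi^{-1}P'$ for $P'\neq P$, and along $\pi^{-1}P$ it equals $t^{n-1}/(n-1)!$ under \eqref{eq:triv-OD} by compatibility of $\tau_{P^{\natural}}$ with \eqref{eq:triv-D}, giving (i). Because $\nu$ and $\omega^{(0)}$ are invariant, hence $\tau$-invariant, property (ii) of Theorem~\ref{thm:kronecker-differentials} pulls back to $\omega^{(n)}_P\wedge\omega^{(0)}=0$, yielding (ii); and property (iii) pulls back to $d\omega^{(n)}_P=\nu\wedge\omega^{(n-1)}_P$ (using $\tau_{-P^{\natural}}^*\omega^{(0)}=\omega^{(0)}$), whence $d\mathcal{K}^{(n)}=\nu\wedge\mathcal{K}^{(n-1)}=f_*\Omega^1_{E^{\natural}/S}\wedge\mathcal{K}^{(n-1)}$, the last equality because $\omega^{(0)}\wedge\mathcal{K}^{(n-1)}=0$ by (ii). The decomposition $f_*\Omega^1_{E^{\natural}/S}(\log\pi^{-1}Z)=f_*\Omega^1_{E^{\natural}/S}\oplus\bigoplus_{n\ge 1}\mathcal{K}^{(n)}$ then drops out of the degree-one residue sequence, since by (i) the residues of $\bigoplus_{n\ge 1}\mathcal{K}^{(n)}$ exhaust $f_*\mathcal{O}_{\pi^{-1}Z}$ under \eqref{eq:triv-OD}, while $f_*\Omega^1_{E^{\natural}/S}=\ker(\Res)$.

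The main work — and the step I expect to be the crux — is uniqueness. Let $\{\mathcal{K}'^{(n)}\}_{n\ge 1}$ be any family satisfying (i)--(iii); I would show the distinguished residue-matched generators of $\mathcal{K}'^{(n)}$ coincide with the $\omega^{(n)}_P$. By (i) there is for each $P$ a unique $\kappa^{(n)}_P\in\mathcal{K}'^{(n)}$ with the same residue as $\omega^{(n)}_P$, so $\kappa^{(n)}_P-\omega^{(n)}_P\in\ker(\Res)=\mathcal{O}_S\nu\oplus\mathcal{O}_S\omega^{(0)}$, say $\kappa^{(n)}_P=\omega^{(n)}_P+a^{(n)}_P\nu+b^{(n)}_P\omega^{(0)}$. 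Wedging with $\omega^{(0)}$ and using (ii) together with the fact that $\nu\wedge\omega^{(0)}$ trivialises $f_*\Omega^2_{E^{\natural}/S}$ (Lemma~\ref{lemma:kronecker-differentials}) forces $a^{(n)}_P=0$. To kill $b^{(n)}_P$ I would invoke (iii) \emph{one level up}: since $\nu,\omega^{(0)}$ are closed (Proposition~\ref{prop:closed}) and the relative differential $d$ is $\mathcal{O}_S$-linear, one computes $d\kappa^{(n+1)}_P=\nu\wedge\omega^{(n)}_P$ independently of the unknown coefficients; property (iii) for $\mathcal{K}'^{(n+1)}$ then demands this lie in $f_*\Omega^1_{E^{\natural}/S}\wedge\mathcal{K}'^{(n)}=\bigoplus_{P'}\mathcal{O}_S(\nu\wedge\omega^{(n)}_{P'}+b^{(n)}_{P'}\nu\wedge\omega^{(0)})$, and matching residues in the degree-two sequence (where $\Res(\nu\wedge\omega^{(0)})=0$ and the $\Res(\nu\wedge\omega^{(n)}_{P'})$ are linearly independent, supported at distinct points) forces $b^{(n)}_P\,\nu\wedge\omega^{(0)}=0$, hence $b^{(n)}_P=0$. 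The subtle point I would state carefully is the cascading nature of this argument: the level-$n$ ambiguity is removed only by condition (iii) at level $n+1$, which is available precisely because the family is infinite. (Equivalently, Remark~\ref{eq:change-basis} shows directly that each line $\mathcal{O}_S\omega^{(n)}_P$ is independent of $\nu$.) As uniqueness is local, the locally defined $\mathcal{K}^{(n)}$ agree on overlaps and glue to canonical global subbundles, each a direct summand.

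Finally, for the degree-two decomposition I would feed this back into the residue sequence for $\Omega^2$. Using (ii) to rewrite $f_*\Omega^1_{E^{\natural}/S}\wedge\mathcal{K}^{(n)}=\mathcal{O}_S\nu\wedge\mathcal{K}^{(n)}$ for $n\ge 1$, together with $f_*\Omega^1_{E^{\natural}/S}\wedge\mathcal{K}^{(0)}=\mathcal{O}_S\nu\wedge\omega^{(0)}=f_*\Omega^2_{E^{\natural}/S}$, I would check that the residues of the $\nu\wedge\omega^{(n)}_P$ ($n\ge 1$, $P\in Z(S)$) form a basis of $f_*\Omega^1_{\pi^{-1}Z/S}$ under the analogue of \eqref{eq:triv-OD}, exactly as in \eqref{eq:kronecker-2}. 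The splitting $f_*\Omega^2_{E^{\natural}/S}(\log\pi^{-1}Z)=\bigoplus_{n\ge 0}f_*\Omega^1_{E^{\natural}/S}\wedge\mathcal{K}^{(n)}$ then follows from exactness of the degree-two residue sequence.
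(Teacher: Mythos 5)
Your proposal is correct and follows essentially the same route as the paper's own proof: construct $\omega^{(n)}_P = \tau^*_{-P^{\natural}}\omega^{(n)}$ using the canonical torsion lifts, verify (i)--(iii) via the invariance of $\nu$ and $\omega^{(0)}$, and prove uniqueness by residue matching, with (ii) killing the $\nu$-component and (iii) one level up killing the $\omega^{(0)}$-component. The paper organizes the uniqueness bookkeeping slightly differently (writing $d\lambda_{n,P}=c_{n,P}\,\nu\wedge\lambda_{n-1,P}$ and solving for $c_{n,P}$ and $a_{n-1,P}$ simultaneously), but the cascading mechanism you highlight --- the ambiguity at level $n$ being removed only by condition (iii) at level $n+1$ --- is exactly the one used there.
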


\begin{proof}
    It suffices to prove existence and unicity locally over $S$. Thus, we may assume that $S$ is affine and that $R^1p_*\mathcal{O}_E=\mathcal{O}_St$ is free. Let $\nu,\omega^{(0)},\omega^{(1)},\ldots$ be as in Theorem \ref{thm:kronecker-differentials}. For any $P \in Z(S)$, let $\tau_{-P^{\natural}}: E^{\natural}\to E^{\natural}$ be the translation by $-P^{\natural}$, and set, for every $n\ge 0$
    \begin{equation}\label{eq:defn-omegaP}
        \omega^{(n)}_{P} \defeq \tau^*_{-P^{\natural}}\omega^{(n)}  \in \Gamma(S, f_*\Omega^1_{E^{\natural}/S}(\log \pi^{-1}P)).
    \end{equation}
    Let us also denote by $t_P =  \tau^*_{-P^{\natural}}t$ the coordinate on $\pi^{-1}P$ induced by $t$ under the isomorphism \eqref{eq:triv-D}. Since $\nu, \omega^{(0)}$ are invariant --- in particular, $\omega_P^{(0)} = \omega^{(0)}$ --- it follows from Theorem \ref{thm:kronecker-differentials} that, for every $n\ge 1$,
    \begin{itemize}
        \item[(a)] $\Res(\omega^{(n)}_P) = t_P^{n-1}/(n-1)!$,
        \item[(b)] $\omega^{(n)}_P\wedge \omega^{(0)} = 0$, and 
        \item[(c)] $d\omega_P^{(n)} = \nu \wedge \omega^{(n-1)}_{P}$.
    \end{itemize}
    For $n\ge 1$, set
    \[
        \mathcal{K}^{(n)} \defeq \bigoplus_{P \in Z(S)}\mathcal{K}^{(n)}_P\text{, }\qquad \mathcal{K}^{(n)}_P \defeq \mathcal{O}_S \omega^{(n)}_P.
    \]
    Then, the residue exact sequence, together with (a), (b), and (c), show that $\mathcal{K}^{(n)}$ so defined satisfies all the properties in the statement.
  
    To prove unicity, let $\{\mathcal{L}_n\}_{n\ge 1}$ be another family of subbundles of $f_*\Omega^1_{E^{\natural}/S}(\log \pi^{-1}Z)$ satisfying (i), (ii), and (iii). By (i), $\mathcal{L}_n$ is trivialised by 1-forms $\lambda_{n,P}$, $P \in Z(S)$, satisfying (a). Then, by the residue exact sequence, there are global sections $a_{n,P}$, $b_{n,P}$ of $\mathcal{O}_S$ such that
    \[
        \lambda_{n,P} = \omega^{(n)}_P + a_{n,P} \omega^{(0)} + b_{n,P}\nu. 
    \]
    It follows from (ii) that $\lambda_{n,P}$ satisfies (b), so that $b_{n,P} = 0$. By (iii), we have
    \[
        d\lambda_{n,P} = c_{n,P}\nu \wedge \lambda_{n-1,P}
    \]
    for some global section $c_{n,P}$ of $\mathcal{O}_S$, where we set $\lambda_{0,P}\defeq \omega^{(0)}$. Thus, also using (c) for $\omega^{(n)}_P$, we obtain, for $n=1$,
    \[
        c_{1,P}\nu \wedge \omega^{(0)} = \nu \wedge \omega^{(0)},
    \]
    and, for $n\ge 2$,
    \[
        c_{n,P}\nu \wedge \omega^{(n-1)}_P + c_{n,P}a_{n-1,P}\nu \wedge \omega^{(0)} = \nu \wedge \omega^{(n-1)}_P.
    \]
    Thus, $c_{1,P} = 1$, and, by taking residues along $\pi^{-1}P$, we see that $c_{n,P} = 1$ for $n\ge 2$. Then, it follows from the same equation that $a_{n-1,P}= 0$. We conclude that $\lambda_{n,P}= \omega^{(n)}_P$, which yields $\mathcal{L}_n = \mathcal{K}^{(n)}$. 
\end{proof}

We shall always denote by
\[
    \mathcal{K}^{(n)} = \bigoplus_{P \in Z(S)} \mathcal{K}^{(n)}_P
\]
the decomposition induced by part (i) of Theorem \ref{thm:kronecker-subbundle} and $p_*\mathcal{O}_Z \cong \bigoplus_{P \in Z(S)}\mathcal{O}_S$. When $R^1p_*\mathcal{O}_E$ is free, a choice of $\nu$ as in Theorem \ref{thm:kronecker-differentials} induces trivialisations $\omega^{(n)}_P$ of $\mathcal{K}^{(n)}_P$ as in the proof of the above theorem, so that
\begin{equation}\label{eq:triv-A1}
    f_*\Omega^1_{E^{\natural}/S}(\log \pi^{-1}Z) = \mathcal{O}_S \nu \oplus \mathcal{O}_S \omega^{(0)} \oplus \bigoplus_{\substack{n\ge 1 \\ P \in Z(S)}}\mathcal{O}_S \omega^{(n)}_P
\end{equation}
\begin{equation}\label{eq:triv-A2}
    f_*\Omega^2_{E^{\natural}/S}(\log \pi^{-1}Z) = \mathcal{O}_S \nu \wedge \omega^{(0)}\oplus \bigoplus_{\substack{n\ge 1 \\ P \in Z(S)}}\mathcal{O}_S \nu \wedge \omega^{(n)}_P.
\end{equation}

\subsection{The dg-algebra of relative logarithmic differentials}\label{par:dg-algebra-relative}

With same hypotheses and notation as in the last sections, consider the dg-algebra over $\mathcal{O}_S$
\[
    \mathcal{A} \defeq f_*\Omega^{\bullet}_{E^{\natural}/S}(\log \pi^{-1}Z),
\]
and consider the submodule of $\mathcal{A}^1$
\[
    \mathcal{K} = \bigoplus_{n\ge 0}\mathcal{K}^{(n)}.
\]
It follows from Theorem \ref{thm:kronecker-subbundle} that the dg-algebra $\mathcal{A}$ is generated by its degree 1 sections. More precisely, we have the following structure result.

\begin{corollary}\label{coro:dg-algebra}
    $\mathcal{A}$ is locally free as an $\mathcal{O}_S$-module, and the wedge product induces an isomorphism of graded $\mathcal{O}_S$-algebras
    \[
        {\bigwedge}^{\bullet} \mathcal{A}^1 / \langle {\bigwedge}^2 \mathcal{K} \rangle \stackrel{\sim}{\To} \mathcal{A}.
    \]
\end{corollary}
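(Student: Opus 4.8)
The plan is to build the canonical algebra map out of the free graded-commutative algebra and then verify it is an isomorphism degree by degree, every quantity being controlled by Theorem \ref{thm:kronecker-subbundle}. Since the assertions are local on $S$, I would work under the hypotheses of Theorem \ref{thm:kronecker-differentials}, assuming $S$ affine and $R^1p_*\mathcal{O}_E=\mathcal{O}_St$ free, so that the explicit trivialisations \eqref{eq:triv-A1} and \eqref{eq:triv-A2} are available. Local freeness of $\mathcal{A}$ is then immediate: $\mathcal{A}^0=\mathcal{O}_S$ by Theorem \ref{thm:coleman-laumon}, the modules $\mathcal{A}^1$ and $\mathcal{A}^2$ are free with the bases exhibited in \eqref{eq:triv-A1} and \eqref{eq:triv-A2}, and $\mathcal{A}^n=0$ for $n\ge 3$ because $E^{\natural}/S$ has relative dimension $2$.

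Next, since $\mathcal{A}$ is a graded-commutative $\mathcal{O}_S$-algebra and $\bigwedge^{\bullet}\mathcal{A}^1$ is the free such algebra on the degree-$1$ module $\mathcal{A}^1$, the identity of $\mathcal{A}^1$ extends uniquely to a morphism of graded $\mathcal{O}_S$-algebras $\bigwedge^{\bullet}\mathcal{A}^1 \To \mathcal{A}$, namely the wedge product. It is surjective because, as observed just before the statement, Theorem \ref{thm:kronecker-subbundle} shows that $\mathcal{A}$ is generated in degree $1$.

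The crux is to identify the kernel with the ideal $\langle \bigwedge^2\mathcal{K}\rangle$. For the inclusion $\langle \bigwedge^2\mathcal{K}\rangle \subset \ker$ I must show that every product $\omega\wedge\omega'$ with $\omega,\omega'$ sections of $\mathcal{K}$ vanishes in $\mathcal{A}^2$; writing $\mathcal{K}=\bigoplus_{n\ge 0}\bigoplus_{P}\mathcal{O}_S\omega^{(n)}_P$ (with $\omega^{(0)}_P=\omega^{(0)}$), it suffices to treat $\omega=\omega^{(n)}_P$, $\omega'=\omega^{(m)}_Q$. Property (ii) of Theorem \ref{thm:kronecker-subbundle} gives $\omega^{(n)}_P\wedge\omega^{(0)}=0$; since $\{\nu,\omega^{(0)}\}$ is a frame of $f_*\Omega^1_{E^{\natural}/S}$ and $\nu\wedge\omega^{(0)}$ trivialises $f_*\Omega^2_{E^{\natural}/S}$, writing $\omega^{(n)}_P=a\nu+b\omega^{(0)}$ with meromorphic coefficients forces $a=0$, so $\omega^{(n)}_P$ is a multiple of $\omega^{(0)}$, and likewise for $\omega^{(m)}_Q$. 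Hence $\omega^{(n)}_P\wedge\omega^{(m)}_Q$ is a multiple of $\omega^{(0)}\wedge\omega^{(0)}=0$. This refines Corollary \ref{coro:local-kronecker}, covering in particular the off-diagonal case $P\neq Q$, and yields an induced surjection $\bigwedge^{\bullet}\mathcal{A}^1/\langle \bigwedge^2\mathcal{K}\rangle \To \mathcal{A}$.

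Finally I would check the induced map is an isomorphism by comparing graded pieces, using the splitting $\mathcal{A}^1=\mathcal{O}_S\nu\oplus\mathcal{K}$ together with $\nu\wedge\nu=0$. Degrees $0$ and $1$ agree tautologically. In degree $2$ one has $\bigwedge^2\mathcal{A}^1=(\nu\wedge\mathcal{K})\oplus\bigwedge^2\mathcal{K}$, so the quotient is $\nu\wedge\mathcal{K}$, which maps isomorphically onto $\mathcal{A}^2=\mathcal{O}_S\nu\wedge\mathcal{K}$ by \eqref{eq:triv-A2}. In degrees $\ge 3$, any wedge of at least three elements of $\mathcal{O}_S\nu\oplus\mathcal{K}$ uses $\nu$ at most once, hence has at least two factors in $\mathcal{K}$ and so lies in $\langle \bigwedge^2\mathcal{K}\rangle$; thus the quotient vanishes there, matching $\mathcal{A}^n=0$. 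The step to get right is the degree-$2$ bookkeeping: the ideal must be neither too small (all of $\bigwedge^2\mathcal{K}$, including cross terms $P\neq Q$, must die) nor too big (the complement $\nu\wedge\mathcal{K}$ must inject), and both facts follow from the precise description of $\mathcal{A}^2$ in \eqref{eq:triv-A2}.
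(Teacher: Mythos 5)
Your proof is correct and follows essentially the same route as the paper's: a surjection from $\bigwedge^{\bullet}\mathcal{A}^1$ using degree-$1$ generation, then the local splitting $\mathcal{A}^1 = \mathcal{O}_S\nu \oplus \mathcal{K}$ to match graded pieces, with the quotient in degree $2$ mapping isomorphically onto $\mathcal{A}^2$ via \eqref{eq:triv-A2} and everything in degree $\ge 3$ absorbed into the ideal $\langle \bigwedge^2\mathcal{K}\rangle$. The one point of divergence is your justification of $\mathcal{K}\wedge\mathcal{K}=0$: where the paper invokes the formal-coordinate computation of Corollary \ref{coro:local-kronecker} (stated only for a single point) and leaves the cross terms $P\neq Q$ implicit, you argue globally that property (ii) of Theorem \ref{thm:kronecker-subbundle} forces each $\omega^{(n)}_P$ to be a meromorphic multiple of $\omega^{(0)}$, which kills all pairs at once --- a slightly cleaner and more complete treatment of that step.
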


\begin{proof}
    That $\mathcal{A}$ is locally free as an $\mathcal{O}_S$-module follows Theorem \ref{thm:kronecker-subbundle} and from the fact that $R^1p_*\mathcal{O}_E$ is locally free (cf. \eqref{eq:triv-A1} and \eqref{eq:triv-A2}).
  
    It follows from Theorem \ref{thm:kronecker-subbundle} (see also Corollary \ref{coro:local-kronecker}) that the wedge product gives a well-defined morphism of graded $\mathcal{O}_S$-algebras ${\bigwedge}^{\bullet} \mathcal{A}^1 / \langle {\bigwedge}^2 \mathcal{K}\rangle \to \mathcal{A}$. This is an isomorphism in degree 0 by Theorem \ref{thm:coleman-laumon}, and is trivially an isomorphism in degree 1. To see that it is also an isomorphism in higher degrees, we may argue locally over $S$. Thus, we may assume that $\mathcal{A}^1/\mathcal{K} \cong R^1p_*\mathcal{O}_E$ is free (of rank 1), and that the short exact sequence
    \[
        \begin{tikzcd}
            0 \arrow{r} & \mathcal{K} \arrow{r} & \mathcal{A}^1 \arrow{r} & \mathcal{A}^1/\mathcal{K} \arrow{r} & 0
        \end{tikzcd}
    \]
    splits:
    \[
        \mathcal{A}^1 \cong (\mathcal{A}^1/\mathcal{K}) \oplus \mathcal{K}.
    \]
    This implies that
    \[
        {\bigwedge}^2\mathcal{A}^1 \cong ((\mathcal{A}^1/\mathcal{K})\otimes \mathcal{K}) \oplus {\bigwedge}^2 \mathcal{K} \cong \mathcal{A}^2 \oplus {\bigwedge}^2 \mathcal{K},
    \]
    and, for $n\ge 3$,
    \[
        {\bigwedge}^n\mathcal{A}^1 \cong ((\mathcal{A}^1/\mathcal{K}) \otimes {\bigwedge}^{n-1}\mathcal{K}) \oplus {\bigwedge}^n \mathcal{K} \subset \im ({\bigwedge}^{n-2}\mathcal{A}^1 \otimes {\bigwedge}^2\mathcal{K} \longrightarrow {\bigwedge}^n \mathcal{A}^1).\qedhere
    \]
\end{proof}

Our next result concerns the cohomology of $\mathcal{A}$. It follows from $\mathcal{A}^0= \mathcal{O}_S$ (Theorem \ref{thm:coleman-laumon}) that $H^0(\mathcal{A}) = \mathcal{O}_S$ and that
\[
    H^1(\mathcal{A}) = \ker(d: \mathcal{A}^1 \To \mathcal{A}^2)\subset \mathcal{A}^1.
\]
Moreover, it follows from Proposition \ref{prop:model-derham} or from Theorem \ref{thm:kronecker-subbundle} that $H^n(\mathcal{A}) = 0$ for $n\ge 2$.

\begin{theorem}\label{thm:projector}
    We have a canonical decomposition of $\mathcal{O}_S$-modules
    \[
        \mathcal{A}^1 =  H^1(\mathcal{A}) \oplus \mathcal{K}^{(1)}_O \oplus \bigoplus_{n\ge 2}\mathcal{K}^{(n)}.
    \]
    Let $\rho^1: \mathcal{A}^1 \to H^1(\mathcal{A})$ be the projector induced by the above decomposition, and
    \[
        \rho: \mathcal{A} \To H^{\bullet}(\mathcal{A})
    \]
    be the induced morphism of graded $\mathcal{O}_S$-algebras ($\rho^0 = \id$, and $\rho^n=0$ for $n\ge 2$). Then, $\rho$ is a dg-quasi-isomorphism.
\end{theorem}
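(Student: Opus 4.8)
The plan is to make the degree-one differential of $\mathcal{A}$ fully explicit, read off the decomposition from it, and then check by hand that the induced projector is simultaneously a chain map, an algebra morphism, and a quasi-isomorphism. Since all three summands in the asserted decomposition are defined globally and canonically, and the assertions that the sum is direct, that it fills $\mathcal{A}^1$, and that $\rho$ has the three properties can all be checked locally over $S$, I would argue locally, where Theorem \ref{thm:kronecker-subbundle} provides the trivialisations \eqref{eq:triv-A1}, \eqref{eq:triv-A2} and where $d\nu = d\omega^{(0)} = 0$ (Proposition \ref{prop:closed}) and $d\omega^{(n)}_P = \nu\wedge\omega^{(n-1)}_P$ for $n\ge 1$, with the convention $\omega^{(0)}_P = \omega^{(0)}$.

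First I would compute $H^1(\mathcal{A}) = \ker(d\colon \mathcal{A}^1\To\mathcal{A}^2)$. Writing a local section as $\beta + \sum_{n\ge 1}\kappa_n$ with $\beta \in f_*\Omega^1_{E^{\natural}/S}$ and $\kappa_n = \sum_P c_{n,P}\,\omega^{(n)}_P \in \mathcal{K}^{(n)}$, the relations above give $d\beta = 0$, $d\kappa_1 = \bigl(\sum_P c_{1,P}\bigr)\nu\wedge\omega^{(0)}$, and $d\kappa_n = \sum_P c_{n,P}\,\nu\wedge\omega^{(n-1)}_P$ for $n\ge 2$. As these land in distinct graded summands of \eqref{eq:triv-A2} (indexed by $n-1$), closedness is equivalent to $\sum_P c_{1,P} = 0$ and $\kappa_n = 0$ for $n\ge 2$. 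Hence $H^1(\mathcal{A}) = f_*\Omega^1_{E^{\natural}/S} \oplus \ker(d|_{\mathcal{K}^{(1)}})$, while $d$ restricts to an isomorphism $\mathcal{K}^{(1)}_O \stackrel{\sim}{\To} \mathcal{O}_S\,\nu\wedge\omega^{(0)}$ sending $\omega^{(1)}_O \mapsto \nu\wedge\omega^{(0)}$; therefore $\mathcal{K}^{(1)} = \ker(d|_{\mathcal{K}^{(1)}}) \oplus \mathcal{K}^{(1)}_O$. Feeding this into the decomposition of $\mathcal{A}^1$ from Theorem \ref{thm:kronecker-subbundle} yields $\mathcal{A}^1 = H^1(\mathcal{A}) \oplus \mathcal{K}^{(1)}_O \oplus \bigoplus_{n\ge 2}\mathcal{K}^{(n)}$, and canonicity follows since the summands are globally defined and directness is local. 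This bookkeeping is, in my view, the genuinely content-bearing step; everything afterwards is formal.

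Next I would verify that $\rho$ is a morphism of complexes and of graded algebras. It is a chain map because the target has zero differential, so one needs $\rho(d\alpha) = 0$: in degree $0$ this holds since $d\colon \mathcal{A}^0 \to \mathcal{A}^1$ vanishes (as $\mathcal{A}^0 = \mathcal{O}_S$ by Theorem \ref{thm:coleman-laumon}, equivalently $H^1(\mathcal{A}) = \ker d^1$ has no coboundaries), and in degrees $\ge 1$ it holds since $\rho^n = 0$ for $n\ge 2$. For the algebra structure, $\mathcal{O}_S$-linearity in each degree handles all mixed products with a degree-$0$ factor, and the only remaining case is $\alpha,\beta \in \mathcal{A}^1$, where $\rho(\alpha\wedge\beta) = \rho^2(\alpha\wedge\beta) = 0 = \rho^1(\alpha)\cup\rho^1(\beta)$, the right-hand side lying in $H^2(\mathcal{A})$. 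The crucial input here is that cup products of degree-one classes vanish, i.e. the formality, which in our situation is \emph{forced} by $H^{\ge 2}(\mathcal{A}) = 0$ (Proposition \ref{prop:model-derham}); no further argument is required.

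Finally, for the quasi-isomorphism I would compute $H^n(\rho)$ degree by degree. In degree $0$, $\rho^0 = \id$ induces the identity on $H^0(\mathcal{A}) = \mathcal{O}_S$; in degrees $\ge 2$ both sides vanish; and in degree $1$, since there are no coboundaries, $H^1(\mathcal{A}) = \ker(d^1)$, and a closed class $\alpha$ is sent to $\rho^1(\alpha) = \alpha$ because $H^1(\mathcal{A})$ is exactly the summand on which $\rho^1$ acts as the identity. Thus $\rho$ induces an isomorphism in every degree, completing the proof that $\rho$ is a dg-quasi-isomorphism.
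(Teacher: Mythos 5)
Your proof is correct and takes essentially the same route as the paper: work locally over $S$ with the trivialisation \eqref{eq:triv-A1}, use the differential equations for the Kronecker differentials to compute $H^1(\mathcal{A})$ and split $\mathcal{K}^{(1)} = \ker(d|_{\mathcal{K}^{(1)}})\oplus \mathcal{K}^{(1)}_O$, and feed this into the decomposition of Theorem \ref{thm:kronecker-subbundle}. The formal verifications you spell out at the end (chain map, algebra morphism via $H^2(\mathcal{A})=0$, quasi-isomorphism degree by degree) are precisely what the paper compresses into ``the second assertion follows immediately from the first (cf. Proposition \ref{prop:model-derham})''.
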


\begin{proof}
    The second assertion follows immediately from the first (cf. Proposition \ref{prop:model-derham}). To prove the decomposition, we may work locally over $S$. Let $\nu,\omega^{(0)},\omega_P^{(1)},\omega^{(2)}_P,\ldots$ be as in \eqref{eq:triv-A1}. It follows from the equations
    \begin{align*}
        d\omega^{(0)} = d\nu = 0\text{, } \qquad d\omega^{(1)}_P = \nu \wedge \omega^{(0)}\text{, }\qquad d\omega^{(n)}_P = \nu \wedge \omega^{(n-1)}_P \qquad (n\ge 2), 
    \end{align*}
    that
    \[
        H^1(\mathcal{A}) = \mathcal{O}_S\nu \oplus \mathcal{O}_S\omega^{(0)}\oplus \bigoplus_{P \in Z(S)\setminus\{O\}}\mathcal{O}_S(\omega^{(1)}_P - \omega^{(1)}_O).
    \]
    To conclude, we simply remark that
    \[
        \mathcal{K}^{(1)} = \bigoplus_{P \in Z(S)}\mathcal{O}_S \omega^{(1)}_P = \mathcal{O}_S\omega^{(1)}_O \oplus\bigoplus_{P \in Z(S)\setminus\{O\}}\mathcal{O}_S(\omega^{(1)}_P - \omega^{(1)}_O). \qedhere
    \]
\end{proof}

\section{Canonical lifts of relative logarithmic differentials}\label{sec:canonical-lifts}

\subsection{Canonical lifts of regular differentials}

Keep the same notation and hypotheses of \S\ref{par:kronecker-subbundle}, and assume moreover that $S$ is smooth over a field $k$ of characteristic zero.

We review some of the results of \cite{FM22}. It follows from the smoothness of $f:E^{\natural}\to S$, that we have an exact sequence of $\mathcal{O}_{E^{\natural}}$-modules
\[
    \begin{tikzcd}
        0 \arrow{r} & f^*\Omega^1_{S/k} \arrow{r} & \Omega^1_{E^{\natural}/k} \arrow{r} & \Omega^1_{E^{\natural}/S} \arrow{r} & 0.
    \end{tikzcd}
\]
By Theorem \ref{thm:coleman-laumon}, we obtain an exact sequence of $\mathcal{O}_S$-modules
\[
    \begin{tikzcd}
        0 \arrow{r} & \Omega^1_{S/k} \arrow{r} & f_*\Omega^1_{E^{\natural}/k} \arrow{r} & f_*\Omega^1_{E^{\natural}/S} \arrow{r} & 0,
    \end{tikzcd}
\]
which admits a canonical splitting, as follows.

\begin{proposition}
    The pullback by the identity section $e\in E^{\natural}(S)$ induces a retraction $e^* : f_*\Omega^1_{E^{\natural}/k} \to \Omega^1_{S/k}$. In particular, if $\mathcal{N} \defeq \ker (e^*)$, we obtain a canonical decomposition
    \[
        f_*\Omega^1_{E^{\natural}/k} = \Omega^1_{S/k} \oplus \mathcal{N}\text{, }\qquad \mathcal{N} \cong f_*\Omega^1_{E^{\natural}/S}.
    \]
\end{proposition}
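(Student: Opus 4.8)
The plan is to produce the short exact sequence displayed just before the statement by pushing forward the cotangent sequence of the tower $E^{\natural}\xrightarrow{f}S\to\Spec k$, and then to exhibit the asserted retraction explicitly and verify that it splits the inclusion by the functoriality of Kähler differentials. The only cohomological input will be Theorem \ref{thm:coleman-laumon}; everything else is formal.

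First I would recall that, since $f$ is smooth, the cotangent sequence
\[
    0 \to f^*\Omega^1_{S/k} \to \Omega^1_{E^{\natural}/k} \to \Omega^1_{E^{\natural}/S} \to 0
\]
is exact on the left. Applying $f_*$ and invoking the projection formula together with Theorem \ref{thm:coleman-laumon} (exactly as in the derivation of \eqref{eq:pushforward-regular-differentials}): since $\Omega^1_{S/k}$ is locally free because $S/k$ is smooth, one has $R^if_*f^*\Omega^1_{S/k}\cong \Omega^1_{S/k}\otimes R^if_*\mathcal{O}_{E^{\natural}}$, which equals $\Omega^1_{S/k}$ for $i=0$ and $0$ for $i=1$. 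Hence the long exact sequence degenerates to the stated short exact sequence of $\mathcal{O}_S$-modules, in which the injection is $f_*$ applied to $f^*\Omega^1_{S/k}\hookrightarrow \Omega^1_{E^{\natural}/k}$ under the identification $f_*f^*\Omega^1_{S/k}=\Omega^1_{S/k}$.

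Next I would define the retraction. A local section of $f_*\Omega^1_{E^{\natural}/k}$ is a $k$-differential on (an open part of) $E^{\natural}$, and pulling it back along the $k$-morphism $e:S\to E^{\natural}$ yields a section of $\Omega^1_{S/k}$; this is the map $e^*$, which is $\mathcal{O}_S$-linear by functoriality of pullback of differentials. To check that $e^*$ splits the inclusion I would compose and apply the functoriality of the cotangent map to $S\xrightarrow{e}E^{\natural}\xrightarrow{f}S$. Concretely, for a section $\alpha$ of $\Omega^1_{S/k}$, its image in $f_*\Omega^1_{E^{\natural}/k}$ is the pulled-back form $f^*\alpha$, and
\[
    e^*(f^*\alpha)=(f\circ e)^*\alpha=\alpha,
\]
since $f\circ e=\id_S$; the chain rule for Kähler differentials says precisely that the two cotangent maps compose to the identity on $\Omega^1_{S/k}$. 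Thus $e^*$ is a retraction of the inclusion $\Omega^1_{S/k}\hookrightarrow f_*\Omega^1_{E^{\natural}/k}$.

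Finally, a retraction of a short exact sequence of $\mathcal{O}_S$-modules produces a canonical splitting $f_*\Omega^1_{E^{\natural}/k}=\Omega^1_{S/k}\oplus\mathcal{N}$ with $\mathcal{N}=\ker(e^*)$, and since $\mathcal{N}$ is a complement to the image of $\Omega^1_{S/k}$, the quotient map restricts to an isomorphism $\mathcal{N}\xrightarrow{\sim}f_*\Omega^1_{E^{\natural}/S}$. I do not anticipate a genuine obstacle: the subtle point is merely to pin down the map ``$e^*$'' correctly as pullback along the identity section followed by the natural $e^*\Omega^1_{E^{\natural}/k}\to\Omega^1_{S/k}$, and to read off the retraction identity from $f\circ e=\id_S$; all the nontrivial content is already packaged in the acyclicity statement of Theorem \ref{thm:coleman-laumon}.
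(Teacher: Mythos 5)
Your proof is correct, but it is worth pointing out that the paper does not actually prove this proposition internally: its entire proof is the citation \cite[Theorem 3.2]{FM22}, where the statement is established in the more general setting of universal vector extensions of abelian schemes. What you have written is the natural self-contained argument, and it uses only ingredients the paper has already assembled. Your derivation of the short exact sequence coincides with the paper's own preamble to the proposition (smoothness of $f$ gives left exactness of the cotangent sequence; the projection formula plus Theorem \ref{thm:coleman-laumon} gives $f_*f^*\Omega^1_{S/k}\cong\Omega^1_{S/k}$ and $R^1f_*f^*\Omega^1_{S/k}=0$, whence surjectivity of $f_*\Omega^1_{E^{\natural}/k}\to f_*\Omega^1_{E^{\natural}/S}$), and the remaining content — that $e^*$ is a well-defined $\mathcal{O}_S$-linear map satisfying $e^*\circ f^*=\id$ because $f\circ e=\id_S$ — is, as you say, pure functoriality of pullback of Kähler differentials. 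Your remark that Coleman--Laumon is needed only for the identification $\mathcal{N}\cong f_*\Omega^1_{E^{\natural}/S}$, while the decomposition $f_*\Omega^1_{E^{\natural}/k}=\Omega^1_{S/k}\oplus\mathcal{N}$ follows formally from the retraction alone, is also accurate. The trade-off between the two routes is the expected one: the citation buys the generality of \cite{FM22} (abelian schemes of arbitrary relative dimension, together with further compatibilities — e.g.\ with the Gauss--Manin connection — that the paper invokes later), whereas your argument keeps the proof self-contained for elliptic curves at essentially no extra cost.
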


\begin{proof}
    See  \cite[Theorem 3.2]{FM22}.
\end{proof}

It follows that any section $\omega$ of $f_*\Omega^1_{E^{\natural}/S}$ lifts canonically to a section of $\mathcal{N}\subset f_*\Omega^1_{E^{\natural}/k}$, which we denote by $\widetilde{\omega}$. For the next result, recall that $H^1_{\dR}(E/S)$ is endowed with an integrable $k$-connection, the \emph{Gauss--Manin connection}. Under the identification \eqref{eq:isom-cohomology-E}, we get an integrable $k$-connection
\[
    \nabla : f_*\Omega^1_{E^{\natural}/S} \To \Omega^1_{S/k}\otimes f_*\Omega^1_{E^{\natural}/S}.
\]

\begin{proposition}
    There is a canonical isomorphism $\Omega^1_{S/k}\otimes f_*\Omega^1_{E^{\natural}/S} \cong f_*\Omega^2_{E^{\natural}/k} / \Omega^2_{S/k}$, under which we have
    \[
        \nabla \omega = d \widetilde{\omega} \mod \Omega^2_{S/k}
    \]
    for any section $\omega$ of $f_*\Omega^1_{E^{\natural}/S}$.
\end{proposition}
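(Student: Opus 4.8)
The plan is to realise the stated formula as an instance of the Katz--Oda description of the Gauss--Manin connection, using the Koszul filtration $\mathcal{F}^{\bullet}\Omega^{\bullet}_{E^{\natural}/k}$ by the ideal generated by $f^*\Omega^1_{S/k}$. First I would record the graded pieces $\mathcal{F}^p/\mathcal{F}^{p+1}\cong f^*\Omega^p_{S/k}\otimes_{\mathcal{O}_{E^{\natural}}}\Omega^{\bullet-p}_{E^{\natural}/S}$, specialise to degree two, and apply $f_*$. By Theorem \ref{thm:coleman-laumon} and the $f_*$-acyclicity \eqref{eq:pushforward-regular-differentials}, together with the projection formula, pushing forward is exact on these pieces; it gives $f_*\mathcal{F}^2\Omega^2_{E^{\natural}/k}=\Omega^2_{S/k}$ and a short exact sequence
\[
0\to\Omega^2_{S/k}\to f_*\mathcal{F}^1\Omega^2_{E^{\natural}/k}\to\Omega^1_{S/k}\otimes f_*\Omega^1_{E^{\natural}/S}\to 0,
\]
which exhibits the canonical isomorphism $f_*\mathcal{F}^1\Omega^2_{E^{\natural}/k}/\Omega^2_{S/k}\xrightarrow{\sim}\Omega^1_{S/k}\otimes f_*\Omega^1_{E^{\natural}/S}$. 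This is the canonical isomorphism of the statement, the target being the $\mathcal{F}^1$-part of $f_*\Omega^2_{E^{\natural}/k}/\Omega^2_{S/k}$ (into which, as I check next, $d\widetilde{\omega}$ falls, so no extra component along $f_*\Omega^2_{E^{\natural}/S}=\mathcal{F}^0/\mathcal{F}^1$ appears).

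Next I would check that $d\widetilde{\omega}$ indeed lands in $f_*\mathcal{F}^1\Omega^2_{E^{\natural}/k}$. The image of $d\widetilde{\omega}$ in the quotient $f_*\Omega^2_{E^{\natural}/S}=\mathcal{F}^0/\mathcal{F}^1$ is the relative differential of the image of $\widetilde{\omega}$ in $f_*\Omega^1_{E^{\natural}/S}$, namely $d_{E^{\natural}/S}\omega$, which vanishes because every invariant form is closed (Proposition \ref{prop:closed}). Hence $d\widetilde{\omega}\bmod\Omega^2_{S/k}$ is a well-defined element of $\Omega^1_{S/k}\otimes f_*\Omega^1_{E^{\natural}/S}$. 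I would also note that this class does not depend on the chosen lift of $\omega$: two lifts differ by a local section $f^*\eta$ with $\eta\in\Omega^1_{S/k}$, and $d(f^*\eta)=f^*(d_{S/k}\eta)\in f^*\Omega^2_{S/k}$ dies modulo $\Omega^2_{S/k}$. In particular the canonical lift $\widetilde{\omega}\in\mathcal{N}$ computes the same class as any lift.

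Then I would identify this class with the Gauss--Manin connection. Since $f_*\mathcal{O}_{E^{\natural}}=\mathcal{O}_S$ by Theorem \ref{thm:coleman-laumon}, there are no nonzero exact invariant $1$-forms, so Proposition \ref{prop:closed} gives $H^1_{\dR}(E^{\natural}/S)=f_*\Omega^1_{E^{\natural}/S}$, with every degree-one class represented by a unique invariant form. The Katz--Oda recipe for $\nabla$ on $\mathbb{R}^1f_*\Omega^{\bullet}_{E^{\natural}/S}$ is precisely: represent a class by a relatively closed form $\omega$, lift it to $\Omega^{\bullet}_{E^{\natural}/k}/\mathcal{F}^2$, apply the absolute $d$, and read off the result via the connecting map in $\mathbb{R}^1f_*(\mathcal{F}^1/\mathcal{F}^2)=\Omega^1_{S/k}\otimes H^1_{\dR}(E^{\natural}/S)$. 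Unwinding this with the identifications above yields exactly $\nabla\omega=d\widetilde{\omega}\bmod\Omega^2_{S/k}$. Finally I would verify compatibility with the normalisation in the statement: the connection $\nabla$ there is transported from $H^1_{\dR}(E/S)$ along \eqref{eq:isom-cohomology-E}, which factors through $\pi^*\colon H^1_{\dR}(E/S)\xrightarrow{\sim}H^1_{\dR}(E^{\natural}/S)$; as $\pi^*$ is horizontal by functoriality of Gauss--Manin, the transported connection coincides with the Katz--Oda connection on $f_*\Omega^1_{E^{\natural}/S}$.

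The main obstacle I anticipate is the bookkeeping in matching the connecting homomorphism of the Katz--Oda long exact sequence with the naive formula $d\widetilde{\omega}$: one must verify that the reduction of $d\widetilde{\omega}$ modulo $\mathcal{F}^2$ is already a cocycle for the relative differential on $\mathcal{F}^1/\mathcal{F}^2$, so that it represents a genuine class in $\Omega^1_{S/k}\otimes H^1_{\dR}(E^{\natural}/S)$ and not merely an element of the degree-two term, and that the sign and the identification of $\mathbb{R}^1f_*(\mathcal{F}^1/\mathcal{F}^2)$ are the intended ones. Here the fact that $H^1_{\dR}(E^{\natural}/S)$ is computed by invariant, hence closed, forms collapses the relevant cohomology onto $f_*\Omega^1_{E^{\natural}/S}$ itself and removes the need to pass to hypercohomology, which is what makes the equality hold on the nose.
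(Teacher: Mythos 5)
Your argument is correct, but note that the paper itself contains no proof of this proposition: its ``proof'' is the citation \cite[Propositions 2.6 and 3.5]{FM22}. What you have written is in effect a self-contained reconstruction of the method of that reference, and it coincides with the machinery this paper deploys later in the logarithmic setting: Proposition \ref{prop:koszul-log} is your graded-piece computation for the Koszul filtration, and the description \eqref{eq:gauss-manin-katz-oda} of the Gauss--Manin connection in the proof of Theorem \ref{thm:non-abelian-gm} (lift, apply $d$, reduce modulo $f_*\mathcal{F}^2$) is exactly your Katz--Oda step. Two features of your write-up are worth stressing. First, you correctly noticed that the statement cannot be read literally: the quotient $f_*\Omega^2_{E^{\natural}/k}/\Omega^2_{S/k}$ still maps onto the line bundle $f_*\Omega^2_{E^{\natural}/S}\neq 0$ (already for $S=\Spec k$ the displayed isomorphism would read $0\cong f_*\Omega^2_{E^{\natural}/k}$), so the canonical isomorphism is onto the degree-two part of $f_*\mathcal{F}^1$ modulo $\Omega^2_{S/k}$; your repair --- checking that $d\widetilde{\omega}$ lands in this submodule because $\omega$ is invariant, hence relatively closed by Proposition \ref{prop:closed} --- is the intended content, and it is the reading implicitly used in Example \ref{ex:gauss--manin}. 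Second, your observation that Proposition \ref{prop:closed} together with the acyclicity \eqref{eq:pushforward-regular-differentials} collapses all hypercohomology onto $f_*$ of invariant forms is precisely what turns the Katz--Oda connecting homomorphism into the naive formula, with no cocycle or coboundary ambiguity; this is the real point of the proposition and you isolate it cleanly. The only blemishes are cosmetic: the connecting map lands in $\mathbb{R}^2f_*(\mathcal{F}^1/\mathcal{F}^2)$, i.e.\ in degree two of the shifted complex $f^*\Omega^1_{S/k}\otimes\Omega^{\bullet}_{E^{\natural}/S}[-1]$, not in $\mathbb{R}^1f_*$, and the sign convention for the connecting map should be fixed once and for all; neither affects the structure of your argument.
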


\begin{proof}
    See \cite[Proposition 2.6]{FM22} and \cite[Proposition 3.5]{FM22}.
\end{proof}

\begin{example}\label{ex:gauss--manin}
    If $\omega^{(0)},\nu$ is a trivialisation of $f_*\Omega^1_{E^{\natural}/S}$ as in \S\ref{subsec:kronecker-differentials}, and $\alpha_{ij} \in \Gamma(S,\Omega^1_{S/k})$ are defined by
    \begin{align*}
        \nabla \omega^{(0)} &= \alpha_{11}\otimes \omega^{(0)} + \alpha_{21}\otimes \nu \\
        \nabla \nu &= \alpha_{12} \otimes \omega^{(0)} + \alpha_{22}\otimes \nu,
    \end{align*}
    then the canonical lifts satisfy (cf. \cite[Remark 3.7]{FM22}):
    \begin{equation}\label{eq:GM-canonical-lifts}
        \begin{aligned}
            d \widetilde{\omega}^{(0)} &= \alpha_{11}\wedge \widetilde{\omega}^{(0)} + \alpha_{21}\wedge \widetilde{\nu} \\
            d \widetilde{\nu} &= \alpha_{12} \wedge \widetilde{\omega}^{(0)} + \alpha_{22}\wedge \widetilde{\nu}.
        \end{aligned}
    \end{equation}
    Note that the above equations hold `on the nose', and not only modulo $\Omega^2_{S/k}$. 
\end{example}

For later reference, we consider the following auxiliary results.

\begin{lemma}\label{lem:restric-D}
    Assume that $S$ is affine, that $R^1p_*\mathcal{O}_E$ is trivial, and let $\omega^{(0)},\nu$, and $t$ be as in Theorem \ref{thm:kronecker-differentials}. Then:
    \[
        \widetilde{\omega}^{(0)}|_{\pi^{-1}O}=-\alpha_{21}t, \qquad \widetilde{\nu}|_{\pi^{-1}O}=dt-\alpha_{22}t.
    \]
\end{lemma}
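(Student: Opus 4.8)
The plan is to work directly on the affine scheme $\pi^{-1}O \cong \Spec R[t] = \mathbb{G}_{a,S}$ and compute the pullbacks $\widetilde{\omega}^{(0)}|_{\pi^{-1}O}$ and $\widetilde{\nu}|_{\pi^{-1}O}$ along the inclusion $j : \pi^{-1}O \hookrightarrow E^{\natural}$. Since $R[t] = R \otimes_k k[t]$, the choice of coordinate $t$ gives a canonical decomposition $\Omega^1_{R[t]/k} = \left(\Omega^1_{R/k} \otimes_R R[t]\right) \oplus R[t]\,dt$, whose projection onto the second factor is the relative differential $\Omega^1_{R[t]/k} \to \Omega^1_{R[t]/R}$; I will call the elements of the first summand \emph{horizontal}. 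First I would fix the $dt$-components. Restriction is compatible with the surjection from absolute to relative differentials, so the $dt$-component of $\widetilde{\omega}^{(0)}|_{\pi^{-1}O}$ equals the relative restriction $\omega^{(0)}|_{\pi^{-1}O}$, which vanishes because $\omega^{(0)}$ lies in $p_*\Omega^1_{E/S}$ and this subbundle restricts to $0$ by \eqref{eq:hodge-derham-uve}; similarly the $dt$-component of $\widetilde{\nu}|_{\pi^{-1}O}$ equals $\nu|_{\pi^{-1}O} = dt$. Hence there are horizontal $a_0, a_1 \in \Omega^1_{R/k} \otimes_R R[t]$ with $\widetilde{\omega}^{(0)}|_{\pi^{-1}O} = a_0$ and $\widetilde{\nu}|_{\pi^{-1}O} = dt + a_1$.

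Next I would impose the boundary condition coming from the defining property $\widetilde{\omega}^{(0)}, \widetilde{\nu} \in \mathcal{N} = \ker(e^*)$. The identity section factors as $S \xrightarrow{e_0} \pi^{-1}O \xrightarrow{j} E^{\natural}$, where $e_0$ is the zero section $t = 0$. Thus $0 = e^*\widetilde{\omega}^{(0)} = e_0^* a_0 = a_0|_{t=0}$, and, since $e_0^* dt = d(e_0^* t) = 0$, also $0 = e^*\widetilde{\nu} = a_1|_{t=0}$. So both horizontal parts vanish at $t = 0$.

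The heart of the computation is then to pull back the Gauss--Manin equations \eqref{eq:GM-canonical-lifts} along $j$ and extract their $dt$-components. Write $\partial_t$ for the $t$-derivative acting on horizontal forms; one checks that for horizontal $\theta$ the $dt$-component of $d\theta$ is $dt \wedge \partial_t \theta$, and that wedging with $dt$ is injective on horizontal forms (via the analogous decomposition $\Omega^2_{R[t]/k} = \left(\Omega^2_{R/k} \otimes_R R[t]\right) \oplus \left((\Omega^1_{R/k}\otimes_R R[t]) \wedge dt\right)$). Applying $j^*$ to the first equation gives $d a_0 = \alpha_{11} \wedge a_0 + \alpha_{21} \wedge (dt + a_1)$; the only $dt$-term on the right is $\alpha_{21} \wedge dt = -dt \wedge \alpha_{21}$, so comparing $dt$-components yields the linear ODE $\partial_t a_0 = -\alpha_{21}$. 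Together with $a_0|_{t=0} = 0$ this integrates to $a_0 = -\alpha_{21} t$, i.e. $\widetilde{\omega}^{(0)}|_{\pi^{-1}O} = -\alpha_{21} t$. The second equation is handled identically: since $d(dt) = 0$ we have $d(\widetilde{\nu}|_{\pi^{-1}O}) = d a_1$, whose $dt$-component must match that of $\alpha_{12} \wedge a_0 + \alpha_{22} \wedge (dt + a_1)$, namely $\alpha_{22} \wedge dt = -dt \wedge \alpha_{22}$, giving $\partial_t a_1 = -\alpha_{22}$ and hence $a_1 = -\alpha_{22} t$, so that $\widetilde{\nu}|_{\pi^{-1}O} = dt - \alpha_{22} t$.

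I expect the only real subtlety to be the bookkeeping in the last step: one must project onto the $dt$-component precisely, discarding the $\Omega^2_{R/k}$-valued terms such as $\alpha_{11} \wedge a_0$ and $d\alpha_{21}$, which do not enter the ODE and whose mutual consistency is guaranteed by the flatness of the Gauss--Manin connection (equivalently, the integrability equation $d\alpha_{21} = \alpha_{11} \wedge \alpha_{21} + \alpha_{21} \wedge \alpha_{22}$) rather than by imposing new constraints. No deeper obstacle arises, since the characterisation of the canonical lift reduces the statement to a first-order linear ODE in $t$ with vanishing initial condition.
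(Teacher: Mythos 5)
Your proof is correct and follows essentially the same route as the paper's: restrict to $\pi^{-1}O \cong \mathbb{G}_{a,S}$, split forms into horizontal and $dt$-parts, use $e^*\widetilde{\omega}^{(0)} = e^*\widetilde{\nu} = 0$ as the initial condition at $t=0$, and plug into the Gauss--Manin equations \eqref{eq:GM-canonical-lifts}. The only cosmetic difference is that you package the coefficient comparison as a first-order ODE $\partial_t a_i = -\alpha_{2i}$ and integrate, whereas the paper writes the horizontal parts as polynomials $\sum_n \gamma_n t^n$, $\sum_n \delta_n t^n$ and matches coefficients degree by degree --- the same computation.
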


\begin{proof}
    As $\pi^{-1}O$ is isomorphic to $\mathbb{G}_{a,S}$ via the coordinate $t$, we have
    \[
        \Gamma(S,f_*\Omega^1_{\pi^{-1}O/k}) \cong \Gamma(S,\Omega^1_{S/k})[t] + \Gamma(S,\mathcal{O}_S)[t]dt
    \]
    Thus, since $\omega^{(0)}|_{\pi^{-1}O}=0$ and $\nu|_{\pi^{-1}O}=dt$, we can write
    \begin{equation} \label{eq:restrict-D}
        \widetilde{\omega}^{(0)}\vert_{\pi^{-1}O}=\sum_{n\geq 0}\gamma_nt^n, \qquad \widetilde{\nu}\vert_{\pi^{-1}O}=dt+\sum_{n\geq 0}\delta_nt^n.
    \end{equation}
    for unique $\gamma_n,\delta_n \in \Gamma(S,\Omega^1_{S/k})$ with $\gamma_n = \delta_n = 0$ for $n\gg 0$. The condition $e^*\widetilde{\omega}^{(0)} = e^*\widetilde{\nu} = 0$ implies that $\gamma_0=\delta_0=0$. Plugging \eqref{eq:restrict-D} into \eqref{eq:GM-canonical-lifts}, a short calculation shows that $\gamma_n=\delta_n =0$ for $n\ge 2$, and that $\gamma_1=-\alpha_{21}$ and $\delta_1=-\alpha_{22}$.
\end{proof}

\begin{lemma}\label{lem:invariance}
    Let $\widetilde{\omega} \in \Gamma(S,\mathcal{N})$ and $Q \in E^{\natural}(S)$ be a torsion section. If $\tau_Q: E^{\natural} \to E^{\natural}$ denotes the translation by $Q$, then $\tau_Q^*\widetilde{\omega} = \widetilde{\omega}$. In particular, global sections of $\mathcal{N}$ are invariant under translation by $P^{\natural}\in E^{\natural}(S)$ for every $P \in Z(S)$.
\end{lemma}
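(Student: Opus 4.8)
The plan is to reduce the statement to the vanishing of the pullback $Q^*\widetilde{\omega}$ along the torsion section $Q$, and then to deduce this vanishing from the multiplication-by-$N$ map together with the characteristic zero hypothesis.

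First I would compare $\tau_Q^*\widetilde{\omega}$ with $\widetilde{\omega}$ modulo $f^*\Omega^1_{S/k}$. Since $\tau_Q$ is an $S$-automorphism, it fixes $f^*\Omega^1_{S/k}$ and commutes with the projection $f_*\Omega^1_{E^{\natural}/k} \to f_*\Omega^1_{E^{\natural}/S}$; moreover $\widetilde{\omega}$ maps to a relative \emph{invariant} form $\omega$, which satisfies $\tau_Q^*\omega = \omega$ because every section of $f_*\Omega^1_{E^{\natural}/S}$ is invariant. Hence $\eta \defeq \tau_Q^*\widetilde{\omega} - \widetilde{\omega}$ lies in the kernel of the projection, i.e. $\eta = f^*\beta$ for a unique $\beta \in \Gamma(S,\Omega^1_{S/k})$. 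Applying $e^*$ and using $e^*f^* = \id$, $\tau_Q \circ e = Q$, and $e^*\widetilde{\omega} = 0$, I obtain $\beta = e^*\eta = e^*\tau_Q^*\widetilde{\omega} = Q^*\widetilde{\omega}$. Thus the desired identity $\tau_Q^*\widetilde{\omega} = \widetilde{\omega}$ is \emph{equivalent} to $Q^*\widetilde{\omega} = 0$.

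Next I would prove $Q^*\widetilde{\omega} = 0$ using torsion. Choose $N \ge 1$ with $[N]Q = e$, so that $[N] \circ Q = e$ as sections and hence $Q^*[N]^* = e^*$. The crucial structural input is that $[N]^*$ acts as multiplication by $N$ on $\mathcal{N}$: since $[N]$ is a group homomorphism with $[N]\circ e = e$, we have $e^*\circ[N]^* = e^*$, so $[N]^*$ preserves $\mathcal{N} = \ker(e^*)$; as $[N]^*$ also commutes with the projection onto $f_*\Omega^1_{E^{\natural}/S}$ — on which it induces multiplication by $N$, being the transpose of multiplication by $N$ on $\Lie_S E^{\natural}$ — and $\mathcal{N} \to f_*\Omega^1_{E^{\natural}/S}$ is an isomorphism, injectivity of the projection on $\mathcal{N}$ forces $[N]^*\widetilde{\omega} = N\widetilde{\omega}$. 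Therefore $0 = e^*\widetilde{\omega} = Q^*[N]^*\widetilde{\omega} = N\,Q^*\widetilde{\omega}$, and since $N$ is invertible in $\mathcal{O}_S$ (a sheaf of $\mathbb{Q}$-algebras as $\operatorname{char}k = 0$) we conclude $Q^*\widetilde{\omega} = 0$. The final assertion then follows by taking $Q = P^{\natural}$, which is a torsion section for every $P \in Z(S)$ by the construction recalled in \S\ref{par:kronecker-subbundle}.

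The main obstacle is the identity $[N]^*\widetilde{\omega} = N\widetilde{\omega}$, that is, the \emph{functoriality} of the canonical decomposition $f_*\Omega^1_{E^{\natural}/k} = \Omega^1_{S/k} \oplus \mathcal{N}$ under group endomorphisms of $E^{\natural}$. Everything else is formal bookkeeping with pullbacks along sections; the real content is that $\mathcal{N}$ — singled out by the \emph{absolute} condition $e^* = 0$ — is nonetheless stable under $[N]^*$ and inherits the relative multiplication-by-$N$ behaviour, which hinges precisely on $[N]$ being a homomorphism fixing the identity section.
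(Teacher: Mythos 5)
Your proof is correct. The first step is identical to the paper's: both arguments use the invariance of sections of $f_*\Omega^1_{E^{\natural}/S}$ to write $\tau_Q^*\widetilde{\omega} - \widetilde{\omega} = f^*\beta$ for a unique $\beta \in \Gamma(S,\Omega^1_{S/k})$, and both identify $\beta = Q^*\widetilde{\omega}$ by pulling back along $e$ (in the paper this is the case $n=1$ of its displayed equation). Where you genuinely diverge is in the mechanism that kills $\beta$. The paper iterates the translation: by induction $\tau_{nQ}^*\widetilde{\omega} = \widetilde{\omega} + n\beta$, so pulling back by $e$ gives $(nQ)^*\widetilde{\omega} = n\beta$, and taking $n$ equal to the order of $Q$ yields $0 = e^*\widetilde{\omega} = N\beta$. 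You instead invoke the endomorphism $[N]$: the stability of $\mathcal{N} = \ker(e^*)$ under $[N]^*$ (from $[N]\circ e = e$), the fact that $[N]^*$ acts by $N$ on $f_*\Omega^1_{E^{\natural}/S}$ (invariant forms, transpose of $\Lie([N]) = N$), and the injectivity of $\mathcal{N} \to f_*\Omega^1_{E^{\natural}/S}$ together give $[N]^*|_{\mathcal{N}} = N\cdot\id$, whence $0 = e^*\widetilde{\omega} = Q^*[N]^*\widetilde{\omega} = N\,Q^*\widetilde{\omega}$. Both routes land on the same equation $N\beta = 0$ and then use characteristic zero. The paper's induction is more elementary and self-contained — it needs nothing about $[N]$ beyond $NQ = e$ and no facts about how homomorphisms act on $\mathcal{N}$. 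Your version requires proving the eigenvalue statement $[N]^*|_{\mathcal{N}} = N$, but in exchange it isolates a reusable structural fact, namely that the canonical splitting $f_*\Omega^1_{E^{\natural}/k} = \Omega^1_{S/k}\oplus\mathcal{N}$ is functorial under group-scheme endomorphisms, and it makes explicit the clean intermediate statement $Q^*\widetilde{\omega} = 0$ for every torsion section $Q$ (canonical lifts restrict to zero along torsion sections), which is equivalent to the lemma and of independent interest.
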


\begin{proof}
    Since every section of $f_*\Omega^1_{E^{\natural}/S}$ is invariant (see \S\ref{par:cohomology}), there exists $\beta \in \Gamma(S,\Omega^1_{S/k})$ such that $\tau^*_{Q}\widetilde{\omega} = \widetilde{\omega} + \beta$. By induction, for any $n \ge 1$, we obtain
    \[
        \tau^*_{nQ}\widetilde{\omega} = \widetilde{\omega} + n\beta.
    \]
    By pulling back the above equation by $e$, we get
    \[
        (nQ)^*\widetilde{\omega} = e^*\widetilde{\omega} + n\beta = n\beta,
    \]
    where we have also used that $e^*\widetilde{\omega} = 0$. Since $Q$ is torsion and $S$ is of characteristic zero, we conclude that $\beta=0$.
\end{proof}

\subsection{Canonical lifts of Kronecker differentials} \label{par:canonical-lifts}

Define the \emph{Koszul filtration} $\mathcal{F}^0\supset \mathcal{F}^1 \supset  \cdots $ on the complex $\Omega^{\bullet}_{E^{\natural}/k}(\log \pi^{-1}Z)$ by
\[
    \mathcal{F}^p \defeq \im (f^*\Omega^p_{S/k} \otimes \Omega^{\bullet}_{E^{\natural}/k}(\log \pi^{-1}Z)[-p] \To \Omega^{\bullet}_{E^{\natural}/k}(\log \pi^{-1}Z)), 
\]
where the above map is given by the wedge product. This gives rise to a filtration $f_*\mathcal{F}^{p}$ on $f_*\Omega^{\bullet}_{E^{\natural}/k}(\log \pi^{-1}Z)$ by direct image on each component.

\begin{proposition}\label{prop:koszul-log}
    For every $p\ge 0$, there is a canonical isomorphism of complexes of $\mathcal{O}_S$-modules
    \[
        f_*\mathcal{F}^p/f_*\mathcal{F}^{p+1} \cong \Omega^p_{S/k}\otimes f_*\Omega^{\bullet}_{E^{\natural}/S}(\log \pi^{-1}Z)[-p].
    \]
\end{proposition}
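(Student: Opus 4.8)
The plan is to prove the statement first at the level of sheaves on $E^{\natural}$ and then to push forward along $f$, the point being that everything in sight is $f_*$-acyclic, so that $f_*$ commutes with passing to the associated graded of the Koszul filtration. The geometric input I would isolate at the start is that $\pi^{-1}Z = \bigsqcup_{P \in Z(S)} \pi^{-1}P$ is a disjoint union of $\mathbb{G}_a$-torsors over $S$, hence smooth over $S$ of relative dimension $1$; in particular it is a relative normal crossings divisor in $E^{\natural}/S$. This is exactly what guarantees the fundamental short exact sequence of locally free $\mathcal{O}_{E^{\natural}}$-modules
\[
    0 \To f^*\Omega^1_{S/k} \To \Omega^1_{E^{\natural}/k}(\log \pi^{-1}Z) \To \Omega^1_{E^{\natural}/S}(\log \pi^{-1}Z) \To 0,
\]
identifying the quotient of the absolute logarithmic differentials by the base-pulled-back differentials with the relative logarithmic differentials. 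This exact sequence is the crux of the whole argument.

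From this sequence, the standard multilinear description of the Koszul filtration on the exterior algebra of an extension yields, as complexes on $E^{\natural}$, a canonical isomorphism
\[
    \mathcal{F}^p/\mathcal{F}^{p+1} \stackrel{\sim}{\To} f^*\Omega^p_{S/k}\otimes_{\mathcal{O}_{E^{\natural}}}\Omega^{\bullet}_{E^{\natural}/S}(\log \pi^{-1}Z)[-p],
\]
where the differential on the right is induced by the relative de Rham differential. The only thing to check here is that the absolute differential $d$ descends on the graded piece to the relative one: the base-direction part of $d$ raises the Koszul degree by one, so it vanishes in $\mathcal{F}^p/\mathcal{F}^{p+1}$. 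This is the routine sign-bookkeeping step, and it is exactly the computation underlying the Katz--Oda construction referenced in \S\ref{par:intro-what-we-do}.

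Finally I would apply $f_*$, and here the acyclicity from Proposition \ref{prop:model-derham} is essential. I claim every term $(\mathcal{F}^p)^n$ is $f_*$-acyclic. Indeed, each $(\mathcal{F}^p)^n$ carries the finite filtration induced by $\mathcal{F}^\bullet$, whose graded pieces are $f^*\Omega^q_{S/k}\otimes \Omega^{n-q}_{E^{\natural}/S}(\log \pi^{-1}Z)$ for $q \ge p$; by the projection formula,
\[
    R^if_*\!\left(f^*\Omega^q_{S/k}\otimes \Omega^{n-q}_{E^{\natural}/S}(\log \pi^{-1}Z)\right) \cong \Omega^q_{S/k}\otimes R^if_*\Omega^{n-q}_{E^{\natural}/S}(\log \pi^{-1}Z) = 0 \quad (i\ge 1),
\]
using the $f_*$-acyclicity of the relative logarithmic differentials established in the proof of Proposition \ref{prop:model-derham}. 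A descending induction on $p$ over this finite filtration gives the acyclicity of every $(\mathcal{F}^p)^n$. Consequently, applying $f_*$ to the termwise exact sequence $0 \to \mathcal{F}^{p+1} \to \mathcal{F}^p \to \mathcal{F}^p/\mathcal{F}^{p+1} \to 0$ keeps it exact, so that $f_*\mathcal{F}^p/f_*\mathcal{F}^{p+1} \cong f_*(\mathcal{F}^p/\mathcal{F}^{p+1})$, and one last application of the projection formula identifies this with $\Omega^p_{S/k}\otimes_{\mathcal{O}_S} f_*\Omega^{\bullet}_{E^{\natural}/S}(\log \pi^{-1}Z)[-p]$, as required. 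The main obstacle is really the first paragraph: once the exact sequence for logarithmic differentials is secured from the relative normal crossings property, everything else is formal homological algebra enabled by the vanishing in Proposition \ref{prop:model-derham}.
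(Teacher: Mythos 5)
Your proof is correct and follows essentially the same route as the paper: identify the graded pieces $\mathcal{F}^p/\mathcal{F}^{p+1}$ at the sheaf level on $E^{\natural}$ via the locally split sequence of logarithmic differentials and the exterior algebra structure, then prove $f_*$-acyclicity of the terms of $\mathcal{F}^p$ by descending induction, using the acyclicity of the relative logarithmic differentials from the proof of Proposition \ref{prop:model-derham} together with the projection formula, and finally push forward. The only cosmetic difference is that you secure the finiteness needed for the induction degree by degree (since $(\mathcal{F}^q)^n = 0$ for $q > n$), whereas the paper reduces to $S$ finitely presented over $k$ so that $\mathcal{F}^N = 0$ outright.
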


\begin{proof}
    Using that
    \[
        \begin{tikzcd}
            0 \arrow{r} & f^*\Omega^1_{S/k} \arrow{r} & \Omega^1_{E^{\natural}/k}(\log \pi^{-1}Z) \arrow{r} & \Omega^1_{E^{\natural}/S}(\log \pi^{-1}Z) \arrow{r} & 0
        \end{tikzcd}
    \]
    is locally split, and that $\Omega^{\bullet}_{E^{\natural}/k}(\log \pi^{-1}Z) = \bigwedge^{\bullet}\Omega^1_{E^{\natural}/k}(\log \pi^{-1}Z)$, we see that the Koszul filtration satisfies
    \begin{equation} \label{eqn:KoszulGraded}
        \mathcal{F}^p/\mathcal{F}^{p+1}\cong f^*\Omega^p_{S/k}\otimes \Omega^\bullet_{E^\natural/S}(\log \pi^{-1}Z)[-p], \qquad \mbox{for every } p\geq 0.
    \end{equation}
    Thus, by the projection formula, it suffices to check that each $\mathcal{F}^p$ is a complex of $f_*$-acyclic $\mathcal{O}_{E^\natural}$-modules. This statement is local on $S$, so we may assume that $S\rightarrow \Spec k$ is finitely presented, which implies in particular that $\mathcal{F}^{N}=0$ for some $N \ge 0$. We prove the desired assertion, which is trivially true for $p\ge N$, by descending induction on $p$. By \eqref{eqn:KoszulGraded}, we have a short exact sequence
    \[
        \begin{tikzcd}
            0 \arrow{r} & \mathcal{F}^{p+1} \arrow{r} & \mathcal{F}^p \arrow{r} & f^*\Omega^p_{S/k}\otimes \Omega^{\bullet}_{E^{\natural}/S}(\log \pi^{-1}Z)[-p] \arrow{r} & 0.
        \end{tikzcd}
    \]
    Note that $f^*\Omega^p_{S/k}\otimes \Omega^{\bullet}_{E^{\natural}/S}(\log \pi^{-1}Z)[-p]$ is a complex of $f_*$-acyclic $\mathcal{O}_{E^{\natural}}$-modules by the same argument in the proof of Proposition \ref{prop:model-derham}. Thus, if $\mathcal{F}^{p+1}$ is also a complex of $f_*$-acyclic $\mathcal{O}_{E^{\natural}}$-modules, so is $\mathcal{F}^p$ by the long exact sequence in cohomology. 
\end{proof}

\begin{theorem}\label{thm:lift-kronecker}
    Assume that $S$ is affine, that $R^1p_*\mathcal{O}_E$ is free, and let $\nu,\omega^{(0)},\omega^{(1)},\ldots$ be as in Theorem \ref{thm:kronecker-differentials}. There is a unique family $\{\widetilde{\omega}^{(n)}\}_{n\ge 1}$ of global sections of $f_*\Omega^1_{E^{\natural}/k}(\log \pi^{-1}O)$ such that, for every $n\ge 1$, $\widetilde{\omega}^{(n)}$ is a lift of $\omega^{(n)}$ satisfying
    \[
        \widetilde{\omega}^{(n)}\wedge \widetilde{\nu}\wedge \widetilde{\omega}^{(0)} \equiv n \alpha_{21}\wedge \widetilde{\nu}\wedge \widetilde{\omega}^{(n+1)} \mod f_*\mathcal{F}^2,
    \]
    where $\alpha_{21} \in \Gamma(S,\Omega^1_{S/k})$ is a coefficient of the Gauss--Manin connection as in Example \ref{ex:gauss--manin}.
\end{theorem}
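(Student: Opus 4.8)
The plan is to reduce the defining congruence to a \emph{decoupled} system of linear conditions, one for each $\widetilde{\omega}^{(n)}$ separately, and then to solve each by a residue computation along $\pi^{-1}O$. First I would record the linear algebra of lifts. By Proposition \ref{prop:koszul-log} in degree $1$ the sequence
\[
  \begin{tikzcd}
    0 \arrow{r} & \Omega^1_{S/k} \arrow{r} & f_*\Omega^1_{E^{\natural}/k}(\log \pi^{-1}O) \arrow{r} & f_*\Omega^1_{E^{\natural}/S}(\log \pi^{-1}O) \arrow{r} & 0
  \end{tikzcd}
\]
is exact, so the lifts of a fixed $\omega^{(n)}$ form a torsor under $\Gamma(S,\Omega^1_{S/k})$. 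The target of the congruence sits, again by Proposition \ref{prop:koszul-log}, inside $f_*\mathcal{F}^1/f_*\mathcal{F}^2 \cong \Omega^1_{S/k}\otimes f_*\Omega^{\bullet}_{E^{\natural}/S}(\log \pi^{-1}O)[-1]$, whose degree $3$ part is $\Omega^1_{S/k}\otimes f_*\Omega^2_{E^{\natural}/S}(\log \pi^{-1}O)$ because $f_*\Omega^3_{E^{\natural}/S}(\log \pi^{-1}O)=0$ by relative dimension $2$; by Theorem \ref{thm:kronecker-differentials} this module is free on the classes $\alpha\otimes \nu\wedge\omega^{(m)}$, $m\ge 0$. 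The key observation is that, since $\alpha_{21}\in\Omega^1_{S/k}$, the right-hand side already lies in $f_*\mathcal{F}^1$, so its class modulo $f_*\mathcal{F}^2$ equals $n\,\alpha_{21}\otimes \nu\wedge\omega^{(n+1)}$ and is \emph{independent of the lift} $\widetilde{\omega}^{(n+1)}$. Thus the system decouples: the congruence for each $n$ is a single condition on $\widetilde{\omega}^{(n)}$ alone.

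Next I would isolate the effect of the ambiguity in $\widetilde{\omega}^{(n)}$. Replacing $\widetilde{\omega}^{(n)}$ by $\widetilde{\omega}^{(n)}+\eta$ with $\eta\in\Gamma(S,\Omega^1_{S/k})$ changes the left-hand side modulo $f_*\mathcal{F}^2$ by exactly $\eta\otimes\nu\wedge\omega^{(0)}$, hence only alters the coefficient of $\nu\wedge\omega^{(0)}$. Therefore the coefficients of $\nu\wedge\omega^{(m)}$ for $m\ge 1$ are \emph{lift-independent}, and the theorem splits into two parts: (i) these lift-independent coefficients must coincide with those of $n\,\alpha_{21}\otimes\nu\wedge\omega^{(n+1)}$, i.e.\ equal $n\alpha_{21}$ for $m=n+1$ and vanish otherwise; and (ii) granting (i), the remaining $\nu\wedge\omega^{(0)}$-coefficient is matched by a unique choice of $\eta$, yielding existence and uniqueness of $\widetilde{\omega}^{(n)}$.

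The substance is (i), which I would establish by taking Poincaré residues along $\pi^{-1}O$. As $\Res$ is $\mathcal{O}_S$-linear and compatible with the Koszul filtration (it kills $\Omega^2_{S/k}$ and, since $\pi^{-1}O\cong \mathbb{G}_{a,S}$ has relative dimension $1$, lands in $\Omega^1_{S/k}\otimes f_*\Omega^1_{\pi^{-1}O/S}$ modulo $\Omega^2_{S/k}$), it identifies the reduction of the residue of the $3$-form with $\id\otimes\Res$ applied to the degree-$3$ class. Because $\Res(\nu\wedge\omega^{(m)})=\tfrac{t^{m-1}}{(m-1)!}\,dt$ for $m\ge1$ and $0$ for $m=0$, the map $\id\otimes\Res$ is injective on the $m\ge1$ part and annihilates the $m=0$ part, so it detects precisely the lift-independent coefficients. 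Using $\Res(\widetilde{\omega}^{(n)})=t^{n-1}/(n-1)!$ (a function of the relative class only, Theorem \ref{thm:kronecker-differentials}(i)) together with $\widetilde{\omega}^{(0)}|_{\pi^{-1}O}=-\alpha_{21}t$ and $\widetilde{\nu}|_{\pi^{-1}O}=dt-\alpha_{22}t$ from Lemma \ref{lem:restric-D}, expanding the wedge gives
\[
  \Res\bigl(\widetilde{\omega}^{(n)}\wedge\widetilde{\nu}\wedge\widetilde{\omega}^{(0)}\bigr)\equiv \frac{t^{n}}{(n-1)!}\,\alpha_{21}\wedge dt \pmod{\Omega^2_{S/k}},
\]
and since $\tfrac{t^{n}}{(n-1)!}=n\,\Res(\omega^{(n+1)})$ this is exactly $(\id\otimes\Res)\bigl(n\alpha_{21}\otimes\nu\wedge\omega^{(n+1)}\bigr)$. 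By injectivity this proves (i).

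Finally, for (ii) I would define $\widetilde{\omega}^{(n)}$ as the unique lift whose $\nu\wedge\omega^{(0)}$-coefficient on the left-hand side vanishes, which matches the right-hand side as it has no $\nu\wedge\omega^{(0)}$-term for $n+1\ge 2$; this pins down $\eta$ uniquely and concludes existence and uniqueness. I expect the main obstacle to be the residue bookkeeping in (i): one must justify carefully that the Poincaré residue of the absolute logarithmic $3$-form is computed by restricting the regular factors $\widetilde{\nu},\widetilde{\omega}^{(0)}$ to $\pi^{-1}O$ and multiplying by the relative residue of $\widetilde{\omega}^{(n)}$, that reducing modulo $\Omega^2_{S/k}$ faithfully matches reducing modulo $f_*\mathcal{F}^2$, and that the signs in the wedge expansion are handled correctly.
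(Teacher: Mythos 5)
Your proposal is correct and takes essentially the same route as the paper's proof: both expand the triple wedge product modulo $f_*\mathcal{F}^2$ in the basis $\nu\wedge\omega^{(m)}$ of $f_*\Omega^2_{E^{\natural}/S}(\log \pi^{-1}O)$ via Proposition \ref{prop:koszul-log}, determine the coefficients by taking residues along $\pi^{-1}O$ using Theorem \ref{thm:kronecker-differentials}(i) and Lemma \ref{lem:restric-D}, and then adjust the lift by a section of $\Omega^1_{S/k}$ to remove the $\nu\wedge\omega^{(0)}$-component. The differences are purely organizational --- you make the decoupling (lift-independence of the higher coefficients on both sides) explicit before solving, whereas the paper proves uniqueness by differencing two solutions and existence by correcting an arbitrary lift $\varphi_n$ by $-\gamma_{n,0}$ --- and the residue-versus-Koszul-reduction compatibility you flag as the main obstacle is the same routine check the paper uses implicitly.
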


\begin{proof}
    We first prove unicity. Any other lift of $\omega^{(n)}$ would be of the form $\widetilde{\omega}^{(n)}+\beta_n$, for some $\beta_n \in \Gamma(S,\Omega^1_{S/k})$. By the condition in the statement, we deduce that
    \[
        \beta_n\wedge \widetilde{\nu}\wedge \widetilde{\omega}^{(0)} \equiv 0 \mod f_*\mathcal{F}^2.
    \]
    In other words, the image of $\beta_n\wedge \widetilde{\nu}\wedge \widetilde{\omega}^{(0)}$ in $f_*\mathcal{F}^{1}/f_*\mathcal{F}^2$ vanishes. By Proposition \ref{prop:koszul-log}, this means that $\beta_n \otimes \nu \wedge \omega^{(0)}=0$ in $\Omega^1_{S/k}\otimes f_*\Omega^{2}_{E^{\natural}/S}(\log \pi^{-1}O)$, which implies that $\beta_n=0$, since $\nu\wedge \omega^{(0)}, \nu \wedge \omega^{(1)}, \ldots $ is a trivialisation of $f_*\Omega^2_{E^{\natural}/S}(\log \pi^{-1}O)$ by Theorem \ref{thm:kronecker-differentials}.

    For the existence, let $\varphi_n \in \Gamma(S,f_*\Omega^1_{E^\natural/k}(\log \pi^{-1}O))$ be any lift of $\omega^{(n)}$. By Proposition \ref{prop:koszul-log} and the fact that $f_*\Omega^2_{E^\natural/S}(\log \pi^{-1}O)$ is trivialised by $\nu\wedge\omega^{(0)},\nu\wedge\omega^{(1)},\ldots$, there exist unique $\gamma_{n,i} \in \Gamma(S,\Omega^1_{S/k})$ such that
    \[
        \varphi_n\wedge\widetilde{\nu}\wedge\widetilde{\omega}^{(0)} \equiv \sum_{i\geq 0}\gamma_{n,i}\wedge \widetilde{\nu}\wedge\varphi_i \mod f_*\mathcal{F}^2.
    \]
    We take residues along $\pi^{-1}O$ on both sides of the above equation. By Theorem \ref{thm:kronecker-differentials} and Lemma \ref{lem:restric-D}, we get on the one hand
    \[
        \operatorname{Res}(\varphi_n\wedge\widetilde{\nu}\wedge\widetilde{\omega}^{(0)})=\frac{t^n}{(n-1)!}\alpha_{21}\wedge dt+\frac{t^{n+1}}{(n-1)!}\alpha_{22}\wedge\alpha_{21},
    \]
    and, on the other hand,
    \[
        \operatorname{Res}(\gamma_{n,i}\wedge\widetilde{\nu}\wedge\varphi_i)=
        \begin{cases}
            0 & i=0\\
            \frac{t^{i-1}}{(i-1)!}\gamma_{n,i}\wedge dt+\frac{t^i}{(i-1)!}\alpha_{22}\wedge\gamma_{n,i} & i\geq 1.
        \end{cases}
    \]
    In particular, we get
    \[
        \frac{t^n}{(n-1)!}\alpha_{21}\wedge dt=\sum_{i\geq 1}\frac{t^{i-1}}{(i-1)!}\gamma_{n,i}\wedge dt,
    \]
    so that $\gamma_{n,n+1} = n\alpha_{21}$ and $\gamma_{n,i} = 0$ for $i \not\in \{0,n+1\}$. We conclude that
    \[
        \varphi_n\wedge\widetilde{\nu}\wedge\widetilde{\omega}^{(0)}\equiv \gamma_{n,0}\wedge \widetilde{\nu}\wedge\widetilde{\omega}^{(0)}+n\alpha_{21}\wedge\widetilde{\nu}\wedge\varphi_{n+1} \mod f_*\mathcal{F}^2.
    \]
    Thus, $\widetilde{\omega}^{(n)}\defeq \varphi_n-\gamma_{n,0}$ are lifts of $\omega^{(n)}$ satisfying the equation in the statement.  
\end{proof}

\begin{remark}
    If we consider $\nu' = u\nu + v\omega^{(0)}$ as in Remark \ref{eq:change-basis}, then, by unicity, the canonical lifts of the corresponding Kronecker differentials are $\widetilde{\omega}^{(n)}{}' = u^{n-1}\widetilde{\omega}^{(n)}$.
\end{remark}

\begin{theorem}\label{thm:canonical-lift-kronecker-subbundle}
    Let $\mathcal{K}^{(n)}$ be the Kronecker subbundles of $f_*\Omega^1_{E^{\natural}/S}(\log \pi^{-1}Z)$ defined in Theorem \ref{thm:kronecker-subbundle}. For every $n\ge 1$, there is a unique lift of $\mathcal{K}^{(n)}$ to a subbundle $\mathcal{L}^{(n)}$ of $f_*\Omega^1_{E^{\natural}/k}(\log \pi^{-1}Z)$ such that
    \begin{equation}\label{eq:defn-Ln}
        \mathcal{L}^{(n)}\wedge \mathcal{N} \wedge \mathcal{N} \equiv d \mathcal{N} \wedge \mathcal{L}^{(n+1)} \mod f_*\mathcal{F}^2.
    \end{equation}
    Moreover, we have
    \[
        f_*\Omega^1_{E^{\natural}/k}(\log \pi^{-1}Z) = \Omega^1_{S/k} \oplus \mathcal{N} \oplus \bigoplus_{n\ge 1}\mathcal{L}^{(n)}.
    \]
\end{theorem}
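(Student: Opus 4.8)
The plan is to prove the existence/uniqueness of the $\mathcal{L}^{(n)}$ together with the decomposition by reducing everything to the local, level-one statement of Theorem \ref{thm:lift-kronecker} and then propagating it to the remaining torsion sections by translation. Since a subbundle characterised by a local property admitting a unique solution can be constructed locally and glued, I first work over an affine open of $S$ on which $R^1p_*\mathcal{O}_E = \mathcal{O}_S t$ is free, and choose $\nu$ as in Theorem \ref{thm:kronecker-differentials}. This produces the Kronecker differentials $\omega^{(0)},\omega^{(1)},\dots$ and, for each torsion section $P\in Z(S)$, their translates $\omega^{(n)}_P = \tau^*_{-P^\natural}\omega^{(n)}$ trivialising $\mathcal{K}^{(n)}_P$ as in the proof of Theorem \ref{thm:kronecker-subbundle}. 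The goal is to produce canonical lifts $\widetilde\omega^{(n)}_P \in \Gamma(S, f_*\Omega^1_{E^\natural/k}(\log\pi^{-1}Z))$ of the $\omega^{(n)}_P$ and set $\mathcal{L}^{(n)} = \bigoplus_{P\in Z(S)}\mathcal{O}_S\widetilde\omega^{(n)}_P$.

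For existence I would take the lifts $\widetilde\omega^{(n)} = \widetilde\omega^{(n)}_O$ supplied by Theorem \ref{thm:lift-kronecker} and define $\widetilde\omega^{(n)}_P := \tau^*_{-P^\natural}\widetilde\omega^{(n)}$ for the other $P$. Since $\tau_{-P^\natural}$ is an automorphism of $E^\natural$ over $k$ lying over $S$, its pullback commutes with $d$, preserves the Koszul filtration $f_*\mathcal{F}^\bullet$ (as $f\circ\tau_{-P^\natural}=f$), acts as the identity on forms pulled back from $S$ — in particular fixing the Gauss--Manin coefficient $\alpha_{21}$ — and, crucially, fixes $\widetilde\nu$ and $\widetilde\omega^{(0)}$ by Lemma \ref{lem:invariance}. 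Applying $\tau^*_{-P^\natural}$ to the defining relation of Theorem \ref{thm:lift-kronecker} then shows that $\widetilde\omega^{(n)}_P$ is a lift of $\omega^{(n)}_P$ satisfying the same congruence with $\pi^{-1}O$ replaced by $\pi^{-1}P$. Finally, I would check that \eqref{eq:defn-Ln} is, on trivialising sections, the subbundle reformulation of these congruences: reducing $d\widetilde\omega^{(0)}$ modulo $f_*\mathcal{F}^2$ via the Gauss--Manin equations of Example \ref{ex:gauss--manin} gives $d\widetilde\omega^{(0)}\wedge\widetilde\omega^{(n+1)}_P\equiv\alpha_{21}\wedge\widetilde\nu\wedge\widetilde\omega^{(n+1)}_P$ — the term $\alpha_{11}\wedge\widetilde\omega^{(0)}\wedge\widetilde\omega^{(n+1)}_P$ vanishing because $\omega^{(0)}\wedge\omega^{(n+1)}_P=0$ places $\widetilde\omega^{(0)}\wedge\widetilde\omega^{(n+1)}_P$ in $f_*\mathcal{F}^1$ — which is exactly the right-hand side of Theorem \ref{thm:lift-kronecker} up to the invertible factor $n$.

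For uniqueness, any other lift of $\mathcal{K}^{(n)}$ has trivialising sections differing from $\widetilde\omega^{(n)}_P$ by forms $\beta_{n,P}\in\Gamma(S,\Omega^1_{S/k})$, and \eqref{eq:defn-Ln} forces $\beta_{n,P}\wedge\widetilde\nu\wedge\widetilde\omega^{(0)}\equiv 0\bmod f_*\mathcal{F}^2$; by Proposition \ref{prop:koszul-log} and the fact that $\nu\wedge\omega^{(0)},\nu\wedge\omega^{(1)},\dots$ trivialise $f_*\Omega^2_{E^\natural/S}(\log\pi^{-1}Z)$ this yields $\beta_{n,P}=0$, exactly as in the uniqueness part of Theorem \ref{thm:lift-kronecker}. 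Local uniqueness then shows the $\mathcal{L}^{(n)}$ are independent of the auxiliary choice of $\nu$ and hence glue to global subbundles. The decomposition follows formally: the canonical splitting $f_*\Omega^1_{E^\natural/k} = \Omega^1_{S/k}\oplus\mathcal{N}$ together with the degree-one part of Proposition \ref{prop:koszul-log}, namely the exact sequence $0\to\Omega^1_{S/k}\to f_*\Omega^1_{E^\natural/k}(\log\pi^{-1}Z)\to f_*\Omega^1_{E^\natural/S}(\log\pi^{-1}Z)\to 0$, shows that lifting each summand of the relative decomposition of Theorem \ref{thm:kronecker-subbundle} — with $\mathcal{N}$ lifting $f_*\Omega^1_{E^\natural/S}$ and $\mathcal{L}^{(n)}$ lifting $\mathcal{K}^{(n)}$ — produces the claimed direct sum.

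The main obstacle I anticipate is the passage from the single-puncture relation of Theorem \ref{thm:lift-kronecker} to the invariant, all-punctures statement: one must be sure that translation by the torsion lift $P^\natural$ is compatible with every structure entering the defining relation — the differential, the Koszul filtration, and the base forms $\alpha_{ij}$ — for which Lemma \ref{lem:invariance}, guaranteeing $\tau^*_{-P^\natural}\widetilde\nu=\widetilde\nu$ and $\tau^*_{-P^\natural}\widetilde\omega^{(0)}=\widetilde\omega^{(0)}$, is the decisive input. The remaining work is bookkeeping: verifying that \eqref{eq:defn-Ln}, stated globally, is equivalent on trivialising sections to the explicit congruences of Theorem \ref{thm:lift-kronecker}, so that both existence and the choice-independence needed for gluing reduce to results already established.
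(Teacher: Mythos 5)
Your proposal is correct and follows essentially the same route as the paper: working locally, transporting the canonical lifts of Theorem \ref{thm:lift-kronecker} by $\tau^*_{-P^{\natural}}$ (with Lemma \ref{lem:invariance} supplying the invariance of $\widetilde{\nu}$, $\widetilde{\omega}^{(0)}$ and the fact that translation fixes base forms and the Koszul filtration), taking $\mathcal{L}^{(n)} = \bigoplus_P \mathcal{O}_S\widetilde{\omega}^{(n)}_P$, proving uniqueness by isolating the $\nu\wedge\omega^{(0)}$-coefficient under the identification of Proposition \ref{prop:koszul-log}, and deducing the direct sum decomposition from the exact sequence of pushforwards. Your extra verifications (the Gauss--Manin reduction showing $d\widetilde{\omega}^{(0)}\wedge\widetilde{\omega}^{(n+1)}_P \equiv \alpha_{21}\wedge\widetilde{\nu}\wedge\widetilde{\omega}^{(n+1)}_P \bmod f_*\mathcal{F}^2$, and the gluing via local uniqueness) are details the paper leaves implicit, not a different argument.
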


\begin{proof}
    We may work locally over $S$, so that the hypotheses of Theorem \ref{thm:lift-kronecker} are satisfied. The proof is similar to that of Theorem \ref{thm:kronecker-subbundle}: for every $P \in Z(S)$, we set
    \begin{equation}\label{eq:def-omegaP-lift}
        \widetilde{\omega}^{(n)}_P\defeq \tau_{-P^{\natural}}^*\widetilde{\omega}^{(n)}.
    \end{equation}
    Using that $\widetilde{\nu}$ and $\widetilde{\omega}^{(0)}$ are invariant under translation by $P^{\natural}$ (Lemma \ref{lem:invariance}), we obtain
    \[
        d\widetilde{\omega}^{(n)}_P\wedge \widetilde{\nu}\wedge \widetilde{\omega}^{(0)} \equiv n \alpha_{21}\wedge \widetilde{\nu}\wedge \widetilde{\omega}_P^{(n+1)} \mod f_*\mathcal{F}^2.
    \]
    Thus, the subbundles
    \[
        \mathcal{L}^{(n)} \defeq \bigoplus_{P \in Z(S)}\mathcal{L}_P^{(n)}\text{, }\qquad \mathcal{L}_P^{(n)} = \mathcal{O}_S \widetilde{\omega}_P^{(n)},
    \]
    satisfy \eqref{eq:defn-Ln}. The second assertion follows immediately from the exactness of
    \[
        \begin{tikzcd}
            0 \arrow{r} & \Omega^1_{S/k} \arrow{r} & f_*\Omega^1_{E^{\natural}/k}(\log \pi^{-1}Z) \arrow{r} & f_*\Omega^1_{E^{\natural}/S}(\log \pi^{-1}Z) \arrow{r} & 0
        \end{tikzcd}
    \]
    and from Theorem \ref{thm:kronecker-subbundle}.

    For unicity, let $\mathcal{L}^{(n)}{}'$ be another family of subbundles of $f_*\Omega^1_{E^{\natural}/S}(\log \pi^{-1}Z)$ satisfying \eqref{eq:defn-Ln}. Since $\mathcal{L}^{(n)}{}'$ is isomorphic to $\mathcal{K}^{(n)} = \bigoplus_{P \in Z(S)}\mathcal{K}^{(n)}_P$, we have a decomposition $\mathcal{L}^{(n)}{}' = \bigoplus_{P \in Z(S)}\mathcal{L}_P^{(n)}{}'$. Let $\widetilde{\omega}_P^{(n)}{}'$ be the trivialisation of $\mathcal{L}_P^{(n)}{}'$ corresponding to the trivialisation $\omega^{(n)}_P$ of $\mathcal{K}_P^{(n)}$. Let $\beta^n_P \in \Gamma(S,\Omega^1_{S/k})$ be such that
    \[
        \widetilde{\omega}^{(n)}_P{}' = \widetilde{\omega}^{(n)}_P + \beta^n_P.
    \]
    Since $\pi^{-1}Z= \bigsqcup_{P \in Z(S)}\pi^{-1}P$, and $\widetilde{\omega}_P^{(n)}{}'$ has singularities only along the component $\pi^{-1}P$, equation \eqref{eq:defn-Ln} implies that, for every $P \in Z(S)$, we have
    \[
        \mathcal{O}_S \widetilde{\omega}_P^{(n)}{}' \wedge \widetilde{\nu} \wedge \widetilde{\omega}^{(0)} \equiv \mathcal{O}_S \alpha_{21} \wedge \widetilde{\nu} \wedge \widetilde{\omega}^{(n+1)}_P{}' \mod f_*\mathcal{F}^2.
    \]
    Under the identification $f_*\mathcal{F}^1/f_*\mathcal{F}^2 \cong \Omega^1_{S/k} \otimes f_*\Omega^{\bullet}_{E^{\natural}/S}(\log \pi^{-1}Z)$ of Proposition \ref{prop:koszul-log}, we deduce that
    \[
        \beta^n_P \otimes \nu \wedge \omega^{(0)} \in \mathcal{O}_S \alpha_{21}\otimes \nu \wedge \omega^{(n+1)}_P. 
    \]
    This is only possible if $\beta^n_P=0$, since $\nu \wedge \omega^{(0)}, \nu \wedge\omega^{(1)}_P, \ldots$ trivialise $f_*\Omega^2_{E^{\natural}/S}(\log \pi^{-1}Z)$.
\end{proof}

\section{Relative elliptic KZB connections} \label{sec:relative-kzb}

\subsection{Reminders on the bar construction}\label{par:bar-construction}

We work in the category of $\mathcal{O}_S$-modules for some scheme $S$. All tensor products are taken over $\mathcal{O}_S$.

Let $\mathcal{A}$ be a graded-commutative dg-algebra over $\mathcal{O}_S$, and assume that $\mathcal{A}$ is \emph{connected}: $\mathcal{A}^0 = \mathcal{O}_S$. We denote by
\[
    \mathcal{I} \defeq \bigoplus_{n\ge 1} \mathcal{A}^n
\]
the kernel of the augmentation $\mathcal{A} \to \mathcal{O}_S$ given by projection onto the component of degree 0. Local sections of a tensor power $\mathcal{I}^{\otimes n}$ will be written in `bar notation':
\[
    a_1 \otimes \cdots \otimes a_n \eqdef [a_1 | \cdots | a_n].
\]
The \emph{bar construction} associated to $\mathcal{A}$ is the total complex of $\mathcal{O}_S$-modules $(B^{\bullet}(\mathcal{A}),d_B)$ associated to the double complex $(B^{\bullet,\bullet}(\mathcal{A}),d_1,d_2)$ given by
\[
    B^{-s,t}(\mathcal{A}) \defeq (\mathcal{I}^{\otimes s})^t\text{, }\qquad s,t\ge 0,
\]
\[
    d_1: B^{-s,t}(\mathcal{A})\To B^{-s,t+1}(\mathcal{A})\text{, }\qquad  [a_1|\cdots|a_n] \longmapsto \sum_{i=1}^n(-1)^i[Ja_1|\cdots |Ja_{i-1}|da_i|a_{i+1}| \cdots |a_n] 
\]
\[
    d_2: B^{-s,t}(\mathcal{A}) \To B^{-s+1,t}(\mathcal{A})\text{, }\qquad [a_1|\cdots |a_n] \longmapsto \sum_{i=1}^{n-1}(-1)^{i-1}[Ja_1|\cdots |Ja_{i-1}|Ja_i \wedge a_{i+1}|a_{i+2}|\cdots |a_n]
\]
where $J : \mathcal{I} \to \mathcal{I}$ is the involution acting by $(-1)^j$ in degree $j$.

The \emph{length filtration} on $B(\mathcal{A})$ is the increasing exhaustive filtration by the $\mathcal{O}_S$-submodules
\[
    L_nB(\mathcal{A}) \defeq \bigoplus_{m=0}^n \mathcal{I}^{\otimes m}\text{, }\qquad n \ge 0.
\]
Note that $d_1$ sends $L_nB(\mathcal{A})$ to itself, while $d_2$ sends it to $L_{n-1}B(\mathcal{A})$. In particular, the bar differential $d_B = d_1+d_2$ preserves the length filtration.

The bar construction is functorial: if $\varphi: \mathcal{A}_1\rightarrow \mathcal{A}_2$ is a morphism of connected graded-commutative dg-algebras, then
\[
    B(\varphi): B(\mathcal{A}_1)\longrightarrow B(\mathcal{A}_2), \qquad B(\varphi)\defeq \bigoplus_{n\geq 0}\overline{\varphi}^{\otimes n}
\]
is a morphism of complexes of $\mathcal{O}_S$-modules, where $\overline{\varphi}: \mathcal{I}_1\rightarrow \mathcal{I}_2$ is obtained from $\varphi$ by restriction to the kernel of the augmentation. The next result shows that, under a suitable K\"unneth-type condition, the functor $B$ also preserves quasi-isomorphisms.

\begin{lemma}\label{lemma:bar-quasi-isom}
    Let $\varphi: \mathcal{A}_1\to\mathcal{A}_2$ be a dg-quasi-isomorphism between connected graded-commutative dg-algebras. Assume that for all $m,n\geq 0$ the canonical maps
    \begin{equation} \label{eqn:canonicalmap}
        \bigoplus_{i_1+\cdots+i_n=m}H^{i_1}(\mathcal{I}_j)\otimes \cdots\otimes H^{i_n}(\mathcal{I}_j) \longrightarrow H^m(\mathcal{I}_j^{\otimes n})
    \end{equation}
    are isomorphisms, for $j=1,2$. Then, the induced map $B(\varphi): B(\mathcal{A}_1)\rightarrow B(\mathcal{A}_2)$ is a quasi-isomorphism.
\end{lemma}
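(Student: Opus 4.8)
The plan is to compare $B(\varphi)$ through the length filtration, using that the associated graded of the bar complex is built from tensor powers of the augmentation ideal, on which hypothesis \eqref{eqn:canonicalmap} pins down the effect of $\varphi$ on cohomology. First I would record that the restriction $\overline{\varphi}\colon \mathcal{I}_1\to\mathcal{I}_2$ is already a quasi-isomorphism. Since $\mathcal{A}_j^0=\mathcal{O}_S$ and the differential vanishes on $\mathcal{A}_j^0$, the inclusion $\mathcal{I}_j\hookrightarrow\mathcal{A}_j$ induces isomorphisms $H^n(\mathcal{I}_j)\cong H^n(\mathcal{A}_j)$ for all $n\ge 1$, while $H^0(\mathcal{I}_j)=0$; as $\overline{\varphi}$ is the restriction of $\varphi$, it inherits the property of being a quasi-isomorphism.

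The decisive step is then to show that $\overline{\varphi}^{\otimes n}\colon\mathcal{I}_1^{\otimes n}\to\mathcal{I}_2^{\otimes n}$ is a quasi-isomorphism for every $n\ge 0$, and this is precisely where \eqref{eqn:canonicalmap} is needed, since in general a tensor power of a quasi-isomorphism need not be one. The canonical maps of \eqref{eqn:canonicalmap} are natural in the complex, so for each $m$ they fit $H^m(\overline{\varphi}^{\otimes n})$ into a commutative square whose other vertical arrow is $\bigoplus_{i_1+\cdots+i_n=m}H^{i_1}(\overline{\varphi})\otimes\cdots\otimes H^{i_n}(\overline{\varphi})$ and whose horizontal arrows are, by hypothesis, isomorphisms for $j=1,2$. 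The latter vertical arrow is an isomorphism because each $H^{i}(\overline{\varphi})$ is, whence $H^m(\overline{\varphi}^{\otimes n})$ is an isomorphism for all $m$.

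Now I would run the length filtration. The differential $d_B=d_1+d_2$ preserves $L_\bullet B(\mathcal{A})$, with $d_1$ preserving length and $d_2$ lowering it by one, so $\operatorname{gr}^L_n B(\mathcal{A})=(\mathcal{I}^{\otimes n},d_1)$ and $B(\varphi)$ induces $\overline{\varphi}^{\otimes n}$ on the $n$-th graded piece, a quasi-isomorphism by the previous step. Feeding the short exact sequences of complexes $0\to L_{n-1}B(\mathcal{A})\to L_nB(\mathcal{A})\to \operatorname{gr}^L_nB(\mathcal{A})\to 0$ into the comparison of long exact cohomology sequences, with base case $L_0B(\mathcal{A})=\mathcal{O}_S$ (on which $B(\varphi)=\operatorname{id}$), the five lemma and induction on $n$ show that $B(\varphi)\colon L_nB(\mathcal{A}_1)\to L_nB(\mathcal{A}_2)$ is a quasi-isomorphism for every $n$. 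Finally, the length filtration is exhaustive, so $B(\mathcal{A}_j)=\colim_n L_nB(\mathcal{A}_j)$, and since cohomology of complexes of $\mathcal{O}_S$-modules commutes with filtered colimits, $B(\varphi)$ is a quasi-isomorphism.

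I expect the main subtlety to be one of \emph{convergence} rather than algebra: the length filtration is exhaustive and bounded below, but in a fixed total degree it is not bounded above (already in degree $0$ the term $\bigoplus_{n\ge 0}(\mathcal{A}^1)^{\otimes n}$ is infinite), so a naive spectral-sequence comparison would require care with $\varprojlim$/$\varprojlim^1$ terms. I would therefore deliberately avoid spectral sequences and argue by the explicit induction over $L_\bullet$ followed by the filtered-colimit passage, which sidesteps convergence altogether. The only other delicate point is the second step, where the whole force of hypothesis \eqref{eqn:canonicalmap} is to guarantee that tensor powers of $\overline{\varphi}$ remain quasi-isomorphisms.
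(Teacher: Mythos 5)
Your proof is correct and follows essentially the same route as the paper's: induction on the length filtration, using the Künneth-type hypothesis \eqref{eqn:canonicalmap} to see that the induced map $\overline{\varphi}^{\otimes n}$ on the graded pieces $L_nB(\mathcal{A})/L_{n-1}B(\mathcal{A})\cong(\mathcal{I}^{\otimes n},d_1)$ is a quasi-isomorphism, and concluding by the five lemma. You merely make explicit two steps the paper leaves implicit --- that $\overline{\varphi}$ is itself a quasi-isomorphism (via the naturality square) and the final passage from the $L_n$ to all of $B(\mathcal{A})$ by exhaustiveness of the filtration and exactness of filtered colimits --- which is a sound and welcome elaboration, not a different method.
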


\begin{proof}
    It suffices to prove that the induced map $L_nB(\mathcal{A}_1)\rightarrow L_nB(\mathcal{A}_2)$ is a quasi-isomorphism for every $n\geq 0$. We proceed by induction on $n$, the base case $n=0$ being trivial. Now fix $n\geq 1$ and consider the following commutative diagram with exact rows:
    \[
        \begin{tikzcd}
            0\arrow{r}&L_{n-1}B(\mathcal{A}_1)\arrow{r}\arrow{d}&L_nB(\mathcal{A}_1)\arrow{r}\arrow{d}&L_nB(\mathcal{A}_1)/L_{n-1}B(\mathcal{A}_1)\arrow{r}\arrow{d}&0\\
            0\arrow{r}&L_{n-1}B(\mathcal{A}_2)\arrow{r}&L_nB(\mathcal{A}_2)\arrow{r}&L_nB(\mathcal{A}_2)/L_{n-1}B(\mathcal{A}_2)\arrow{r}&0.
        \end{tikzcd}
    \]
    The vertical arrow on the left is a quasi-isomorphism by induction hypothesis. That the vertical arrow on the right is a quasi-isomorphism follows from the isomorphisms \eqref{eqn:canonicalmap}. We conclude by an application of the five lemma.
\end{proof}

\begin{remark}\label{rmk:kunneth}
    For the hypotheses of Lemma \ref{lemma:bar-quasi-isom} to be satisfied, it is sufficient that all of $\mathcal{I}^n_j$, $d\mathcal{I}^n_j$, and $H^n(\mathcal{I}_j)$ are flat $\mathcal{O}_S$-modules ($n\ge 0$, $j=1,2$); see \cite[Theorem 3.6.3]{weibel94}.
\end{remark}

We state without proof the following standard result.

\begin{proposition}
    The cohomology in degree 0 of the bar construction
    \[
        H^0(B(\mathcal{A})) = \ker(d_B:B^0(\mathcal{A}) \To B^1(\mathcal{A}))
    \]
    is naturally equipped with the structure of a filtered commutative Hopf algebra over $\mathcal{O}_S$, given by:
    \begin{itemize}
        \item A commutative multiplication $\shuffle: H^0(B(\mathcal{A}))\otimes H^0(B(\mathcal{A}))\rightarrow H^0(B(\mathcal{A}))$, the \emph{shuffle product}, which is defined on local sections by
        \[
            [a_1|\cdots|a_m] \shuffle [a_{m+1}|\cdots|a_{m+n}]=\sum_{\sigma}[a_{\sigma(1)}|\cdots|a_{\sigma(m+n)}],
        \]
        where the sum ranges over all permutations $\sigma$ of $\{1,\ldots,m+n\}$ such that $\sigma^{-1}$ is strictly increasing on $\{1,\ldots,m\}$ and $\{m+1,\ldots,m+n\}$. The unit for $\shuffle$ is $1 \in B^{0,0}(\mathcal{A})=\mathcal{O}_S$.    
        \item A comultiplication $\Delta: H^0(B(\mathcal{A}))\rightarrow H^0(B(\mathcal{A}))\otimes H^0(B(\mathcal{A}))$, the \emph{deconcatenation coproduct}, which is defined on local sections by
        \[
            \Delta([a_1|\cdots|a_n])=[a_1|\cdots|a_n] \otimes 1+1\otimes [a_1|\cdots|a_n]+\sum_{r=1}^{n-1} [a_1|\cdots|a_r]\otimes [a_{r+1}|\cdots|a_n].
        \]
        The counit for $\Delta$ is the augmentation map $\varepsilon: H^0(B(\mathcal{A}))\rightarrow \mathcal{O}_S$.  
        \item An antipode $\sigma: H^0(B(\mathcal{A}))\rightarrow H^0(B(\mathcal{A}))$, given on local sections by
        \[
            \sigma([a_1|\cdots|a_n])=(-1)^n[a_n|\cdots|a_1].
        \]
        \item A length filtration $L_nH^0(B(\mathcal{A})) \defeq L_nB(\mathcal{A}) \cap H^0(B(\mathcal{A}))$.
    \end{itemize}
\end{proposition}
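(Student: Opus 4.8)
The plan is to realise all four structures at the level of the full bar complex $B(\mathcal{A})$ (equivalently, of the tensor module $\bigoplus_n \mathcal{I}^{\otimes n}$), to verify that each is compatible with the total differential $d_B$, and then to pass to cohomology in degree $0$. The key organising observation is that $\shuffle$, $\Delta$, $\sigma$, the (co)units, and the length filtration are all given by explicit combinatorial formulae, so that \emph{every} Hopf-algebra axiom (associativity and commutativity of $\shuffle$, coassociativity of $\Delta$, the (co)unit and antipode identities, and the bialgebra compatibility) is a classical identity for the shuffle--deconcatenation bialgebra, valid verbatim on the underlying module $B^0(\mathcal{A}) = \bigoplus_n (\mathcal{A}^1)^{\otimes n}$ and independent of $d_B$. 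Hence the only genuinely homological input is the compatibility of $\shuffle$, $\Delta$, and $\sigma$ with $d_B$; everything else is formal once these maps are known to descend to $H^0(B(\mathcal{A}))$. Throughout one uses that an element of $\mathcal{A}^1 \subset B^0(\mathcal{A})$ has \emph{total} (shifted) bar-degree $0$, which is why the displayed shuffle and deconcatenation formulae carry no Koszul signs, while the involution $J$ encodes precisely these shifted-degree signs inside $d_1$ and $d_2$.

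First I would treat the shuffle product. I extend $\shuffle$ to a degree-$0$ map $B(\mathcal{A})\otimes B(\mathcal{A}) \to B(\mathcal{A})$ by the usual signed shuffle formula (signs from the total grading, i.e.\ from $J$) and prove the graded Leibniz rule $d_B(x\shuffle y) = (d_Bx)\shuffle y + (-1)^{|x|}x\shuffle(d_By)$. Splitting $d_B = d_1 + d_2$, the $d_1$-part is routine: $d_1$ applies the internal differential of $\mathcal{A}$ letter-by-letter and commutes with the shuffling of entries up to the sign bookkeeping supplied by $J$. The substantive point is the $d_2$-part, which replaces an adjacent pair of letters by their product $Ja_i\wedge a_{i+1}$. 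Applying $d_2$ after shuffling produces, besides the terms matching $d_2x$ and $d_2y$, \emph{cross terms} in which the two multiplied letters originate from the two different words; these must cancel in pairs, and the cancellation is exactly the graded-commutativity of the product of $\mathcal{A}$ (a pair $a\wedge b$ and $\pm\,b\wedge a$ coming from two shuffles that differ by a transposition). This sign-matching is the main obstacle, and it is the precise place where the hypothesis that $\mathcal{A}$ is graded-commutative enters. Granting the Leibniz rule, $\shuffle$ sends cocycles to cocycles and so induces $H^0(B(\mathcal{A}))\otimes H^0(B(\mathcal{A})) \to H^0(B(\mathcal{A}))$ (composing with the tautological map $H^0\otimes H^0 \to H^0(B\otimes B)$, which needs no flatness); associativity, commutativity, and the unit $1\in B^{0,0}(\mathcal{A}) = \mathcal{O}_S$ then hold because they already hold on $B^0(\mathcal{A})$.

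Next I would treat the deconcatenation coproduct $\Delta$ and the antipode $\sigma$. The structural fact is that $d_B$ is a \emph{coderivation} of the deconcatenation coalgebra, so that $\Delta$ is a morphism of complexes $B(\mathcal{A}) \to B(\mathcal{A})\otimes B(\mathcal{A})$ for the total differential on the target; this is immediate from the local form of $d_1$ and $d_2$, each term of which alters only one letter or one adjacent pair, so that cutting a word commutes with the differential. Thus $\Delta$ induces $H^0(B(\mathcal{A})) \to H^0(B(\mathcal{A})\otimes B(\mathcal{A}))$, and to land in $H^0\otimes H^0$ one invokes a degree-$0$ Künneth isomorphism $H^0(B\otimes B)\cong H^0(B)\otimes H^0(B)$; this holds whenever $B^0(\mathcal{A})$ is flat over $\mathcal{O}_S$, which is the case in all our applications since $\mathcal{A}$ is locally free (Corollary \ref{coro:dg-algebra}). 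Coassociativity and the counit $\varepsilon$ are combinatorial on $B^0(\mathcal{A})$. For the antipode, the cleanest route is to observe that the resulting object is a filtered, connected, conilpotent bialgebra with $L_0 = \mathcal{O}_S$, so that an antipode exists and is unique over any base; one then checks by the standard shuffle--deconcatenation computation that it is given by the displayed signed-reversal formula, equivalently that $\shuffle\circ(\sigma\otimes\id)\circ\Delta = \eta\circ\varepsilon$.

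Finally, the bialgebra compatibility --- that $\Delta$ is a morphism of $\shuffle$-algebras --- is the classical statement that $(\bigoplus_n\mathcal{I}^{\otimes n},\shuffle,\Delta)$ is a bialgebra, proved by the familiar bijection between shuffles of deconcatenations and deconcatenations of shuffles; it holds on the underlying modules and hence on $H^0$. The length filtration $L_nH^0(B(\mathcal{A})) = L_nB(\mathcal{A})\cap H^0(B(\mathcal{A}))$ is exhaustive with $L_0 = \mathcal{O}_S$, and directly from the formulae one has $\shuffle(L_m\otimes L_n)\subseteq L_{m+n}$, $\Delta(L_n)\subseteq\sum_{r=0}^n L_r\otimes L_{n-r}$, and $\sigma(L_n)\subseteq L_n$, so that $H^0(B(\mathcal{A}))$ is a filtered commutative Hopf algebra over $\mathcal{O}_S$. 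To summarise, the single step requiring real care is the sign-matching in the $d_2$-Leibniz computation for $\shuffle$; the remaining verifications are either combinatorial identities independent of the differential or formal consequences of the coderivation property together with the degree-$0$ Künneth isomorphism.
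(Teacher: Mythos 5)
The paper offers no proof of this proposition (it is explicitly ``stated without proof'' as a standard result), so there is nothing of the authors' to compare against; your architecture is the standard one --- Leibniz rule for the Eilenberg--MacLane shuffle product using graded-commutativity, the coderivation property of $d_B$ for deconcatenation, antipode from connectedness of the filtered bialgebra --- and the shuffle, bialgebra-compatibility and filtration steps are correct as you describe them.

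There is, however, a genuine gap in the coproduct step, at precisely the point you dismiss as routine. You assert that the degree-zero K\"unneth identification needed for $\Delta$ to land in $H^0(B(\mathcal{A}))\otimes H^0(B(\mathcal{A}))$ ``holds whenever $B^0(\mathcal{A})$ is flat over $\mathcal{O}_S$''. Flatness of $B^0(\mathcal{A})$ does give $\ker(d_B\otimes\id)=\ker(d_B)\otimes B^0(\mathcal{A})$ and $\ker(\id\otimes d_B)=B^0(\mathcal{A})\otimes\ker(d_B)$, but what you actually need is
\[
\bigl(\ker d_B\otimes B^0(\mathcal{A})\bigr)\cap\bigl(B^0(\mathcal{A})\otimes\ker d_B\bigr)=\ker d_B\otimes\ker d_B,
\]
and this can fail even when every $\mathcal{A}^n$ is free. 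Take $\mathcal{O}_S=k[\epsilon]/(\epsilon^2)$ and $\mathcal{A}=\mathcal{O}_S\oplus\mathcal{O}_S\,a\oplus\mathcal{O}_S\,g$ with $a$ of degree $1$, $g$ of degree $2$, $da=\epsilon g$, $dg=0$, and all products of positive-degree elements equal to zero. Then $d_2=0$, $B^0(\mathcal{A})$ is free, and $\xi=[\epsilon a|a]$ lies in $H^0(B(\mathcal{A}))$, since $d_1\xi=-[\epsilon^2g|a]-[\epsilon a|\epsilon g]=0$. Because $d_B=d_1$ preserves length here, every cocycle decomposes into length-homogeneous cocycles, and the length-one cocycles are exactly $\epsilon\mathcal{O}_S a$; hence the length-$(1,1)$ component of any element in the image of $H^0(B(\mathcal{A}))\otimes H^0(B(\mathcal{A}))\to B^0(\mathcal{A})\otimes B^0(\mathcal{A})$ lies in the image of $\epsilon\mathcal{O}_Sa\otimes\epsilon\mathcal{O}_Sa$, which is zero since $\epsilon^2=0$. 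But the length-$(1,1)$ component of $\Delta\xi$ is $[\epsilon a]\otimes[a]$, which is nonzero in $\mathcal{A}^1\otimes\mathcal{A}^1\cong\mathcal{O}_S$. So $\Delta\xi$ is not in that image: the displayed coproduct does not restrict, and the proposition needs a hypothesis beyond connectedness and graded-commutativity.

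The gap is repairable, but the correct hypothesis is local splitness of $H^0(B(\mathcal{A}))\subseteq B^0(\mathcal{A})$ (equivalently, suitable flatness of $B^0(\mathcal{A})/\ker d_B$), not flatness of $B^0(\mathcal{A})$; and for the dg-algebra the paper actually uses, this input is already available at this point of the text. Indeed, for $\mathcal{A}=f_*\Omega^{\bullet}_{E^{\natural}/S}(\log \pi^{-1}Z)$, Theorem \ref{thm:projector} gives a dg-quasi-isomorphism $\rho:\mathcal{A}\to H^{\bullet}(\mathcal{A})$ onto a complex with zero differential and $H^n(\mathcal{A})=0$ for $n\ge 2$, so that $B(H^{\bullet}(\mathcal{A}))$ is concentrated in degree $0$; by Lemma \ref{lemma:bar-quasi-isom} (whose K\"unneth hypotheses hold by local freeness, cf.\ Remark \ref{rmk:kunneth}), $B(\rho)^0$ restricts to an isomorphism $H^0(B(\mathcal{A}))\stackrel{\sim}{\to}T^cH^1(\mathcal{A})$, so the inclusion $H^0(B(\mathcal{A}))\hookrightarrow B^0(\mathcal{A})$ admits a retraction and $H^0(B(\mathcal{A}))$ is a direct summand. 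Combined with flatness of $B^0(\mathcal{A})$, this yields the displayed intersection identity as well as the injectivity of $H^0\otimes H^0\to B^0\otimes B^0$ (and of the threefold tensor product), which you also need, tacitly, to transfer the coalgebra axioms from $B^0(\mathcal{A})$ to $H^0(B(\mathcal{A}))$. Over a field, as in Appendix \ref{appendix:tannakian}, none of this is an issue. So keep your architecture, but replace the flatness claim by this splitting argument, or add the corresponding hypothesis to the statement.
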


\begin{example}\label{ex:non-commutative}
    If $\mathcal{A}^n = 0$ for $n\ge 2$, then $d_B = 0$, so that
    \[
        H^0(B(\mathcal{A})) = B^0(\mathcal{A}) = T^c\mathcal{A}^1.
    \]
    Here, $T^c\mathcal{A}^1 = \bigoplus_{n\ge 0}(\mathcal{A}^1)^{\otimes n}$ denotes the \emph{tensor coalgebra} on $\mathcal{A}^1$, with the above structure of filtered Hopf algebra over $\mathcal{O}_S$. If moreover $\mathcal{A}^1 = \mathcal{O}_S\alpha_1 \oplus \cdots \oplus \mathcal{O}_S\alpha_r$, then we can identify
    \[
        H^0(B(\mathcal{A})) \cong \mathcal{O}_S \langle \alpha_1,\ldots,\alpha_r\rangle,
    \]
    where the length filtration $L_n$ is spanned by non-commutative polynomials of total degree $\le n$.
\end{example}

In general, denote by $\pr_n : H^0(B(\mathcal{A})) \to (\mathcal{A}^1)^{\otimes n}$ the natural projection onto the component of pure length $n$. Comodules for the Hopf algebra $H^0(B(\mathcal{A}))$ can be characterised as follows. 

\begin{proposition}\label{prop:comodule-bar}
    Let $\mathcal{E}$ be a vector bundle over $S$, and $\rho: \mathcal{E} \to H^0(B(\mathcal{A})) \otimes \mathcal{E}$ be an $\mathcal{O}_S$-morphism. Write $\rho = \sum_{n\ge 0}\rho_n$, where $\rho_n = (\pr_n \otimes \id)\circ \rho : \mathcal{E} \to (\mathcal{A}^1)^{\otimes n}\otimes \mathcal{E} $. Then, $\rho$ is a comodule structure if and only if
    \begin{enumerate}[(i)]
        \item $\rho_0 = \id_{\mathcal{E}}$, and
        \item $\rho_n$ is the $n$-fold composition
        \[
            [\rho_1]^n \defeq (\id_{(\mathcal{A}^1)^{\otimes n-1}}\otimes \rho_1)\circ \cdots \circ (\id_{\mathcal{A}^1}\otimes \rho_1)\circ \rho_1:\mathcal{E} \longrightarrow (\mathcal{A}^1)^{\otimes n}\otimes \mathcal{E},
        \]
        for every $n\ge 1$.
    \end{enumerate}
    Moreover, given an $\mathcal{O}_S$-linear map $\omega: \mathcal{E} \to \mathcal{A}^1 \otimes \mathcal{E}$, there exists a comodule structure $\rho : \mathcal{E} \to H^0(B(\mathcal{A})) \otimes \mathcal{E}$ satisfying $\rho_1 = \omega$ if and only if $\omega$ is \emph{locally nilpotent} (i.e.,  locally over $S$, we have $[\omega]^n = 0$ for $n\gg 0$), and
    \[
        d\omega + \omega\wedge \omega =0
    \]
    in $\mathcal{A}^2 \otimes \mathcal{E}nd(\mathcal{E})$.
\end{proposition}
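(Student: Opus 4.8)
The plan is to prove the two assertions by unwinding the comodule axioms and the condition $\rho\in\ker(d_B)\otimes\mathcal{E}$ into their components with respect to the length filtration.

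For the first assertion, I would spell out the counit and coassociativity axioms. Since $\varepsilon$ is the projection onto $L_0 = \mathcal{O}_S$, the counit axiom $(\varepsilon\otimes\id)\circ\rho = \id_{\mathcal{E}}$ is literally the statement $\rho_0 = \id_{\mathcal{E}}$, i.e. condition (i). For coassociativity, I would project $(\Delta\otimes\id)\circ\rho = (\id\otimes\rho)\circ\rho$ onto the summand $(\mathcal{A}^1)^{\otimes p}\otimes(\mathcal{A}^1)^{\otimes q}\otimes\mathcal{E}$. On pure length $n$ the deconcatenation coproduct is the sum over $p+q = n$ of the tautological identifications $(\mathcal{A}^1)^{\otimes n}\cong(\mathcal{A}^1)^{\otimes p}\otimes(\mathcal{A}^1)^{\otimes q}$, so the left-hand side contributes $\rho_{p+q}$ while the right-hand side contributes $(\id_{(\mathcal{A}^1)^{\otimes p}}\otimes\rho_q)\circ\rho_p$. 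Thus coassociativity is equivalent to $\rho_{p+q} = (\id_{(\mathcal{A}^1)^{\otimes p}}\otimes\rho_q)\circ\rho_p$ for all $p,q\ge 0$. Granting (i), the instances with $q=1$ are exactly the recursion that defines $[\rho_1]^n$, giving (ii); conversely (i) and (ii) recover the full family by grouping the factors of $[\rho_1]^{p+q}$ and using associativity of composition.

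For the second assertion, the first assertion forces any comodule structure with $\rho_1 = \omega$ to be $\rho = \sum_{n\ge0}[\omega]^n$ (with $[\omega]^0 = \id_{\mathcal{E}}$), so the whole question is when this formula defines an $\mathcal{O}_S$-morphism into $H^0(B(\mathcal{A}))\otimes\mathcal{E}$. As $H^0(B(\mathcal{A}))$ lies in the \emph{direct sum} $B^0(\mathcal{A}) = \bigoplus_n(\mathcal{A}^1)^{\otimes n}$, every local section has finite length; working locally with $\mathcal{E}$ free of finite rank, this holds for $\rho$ precisely when $[\omega]^n = 0$ locally for $n\gg 0$, which is local nilpotence. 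Granting local nilpotence, the only remaining requirement is that the image lie in $H^0(B(\mathcal{A}))\otimes\mathcal{E} = \ker(d_B)\otimes\mathcal{E} = \ker(d_B\otimes\id_{\mathcal{E}})$, the last equality by flatness of $\mathcal{E}$.

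The crux is therefore the computation of $(d_B\otimes\id)\rho$. Here I would use that $d_1$ preserves length and $d_2$ lowers it by one, so the length-$m$ component of $(d_B\otimes\id)\rho$ is $(d_1\otimes\id)[\omega]^m + (d_2\otimes\id)[\omega]^{m+1}$. Tracking the signs of the involution $J$ (which acts by $-1$ on each degree-one entry), a short calculation gives $(d_1\otimes\id)[\omega]^m = -\sum_{i=1}^m\iota_i(d\omega)$ and $(d_2\otimes\id)[\omega]^{m+1} = -\sum_{i=1}^m\iota_i(\omega\wedge\omega)$, where $\iota_i$ inserts a degree-two factor into the $i$-th slot of the iterated word and leaves the remaining slots equal to $\omega$. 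Summing, the length-$m$ component equals $-\sum_{i=1}^m\iota_i(d\omega + \omega\wedge\omega)$, which vanishes for all $m$ if and only if its $m=1$ instance $d\omega + \omega\wedge\omega \in \mathcal{A}^2\otimes\mathcal{E}nd(\mathcal{E})$ vanishes. This proves simultaneously that integrability is necessary (read off the length-$1$ component) and sufficient (it forces every component to vanish); combined with the first assertion, it follows that $\rho$ is a comodule structure with $\rho_1 = \omega$ exactly when $\omega$ is locally nilpotent and $d\omega + \omega\wedge\omega = 0$. I expect the sign bookkeeping in this last step --- verifying that the $J$-signs in $d_1$ and $d_2$ conspire so that each slot receives the single combination $d\omega + \omega\wedge\omega$ --- to be the main technical obstacle, the rest being formal manipulation of the explicit bar differential and coproduct.
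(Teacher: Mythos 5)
Your proposal is correct and follows essentially the same route as the paper's proof: the counit axiom gives (i), coassociativity unwound through the deconcatenation coproduct gives the recursion (ii), and the last assertion is reduced (using finite type of $\mathcal{E}$ for local nilpotence and flatness to identify $\ker(d_B)\otimes\mathcal{E}$ with $\ker(d_B\otimes\id)$) to the explicit bar-differential computation showing each slot receives exactly $d\omega+\omega\wedge\omega$. Your organisation of that computation by length components, with $d_1$ length-preserving and $d_2$ length-lowering, is just a repackaging of the paper's single displayed calculation, so the two arguments coincide in substance.
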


\begin{proof}
    Since the counit $\varepsilon$ of $H^0(B(\mathcal{A}))$ is equal to $\pr_0$, statement (i) is equivalent to the comodule axiom $(\varepsilon \otimes \id)\circ \rho = \id$. Let
    \[
        \delta_{i,j} : (\mathcal{A}^1)^{\otimes i+j} \stackrel{\sim}{\To} (\mathcal{A}^1)^{\otimes i} \otimes (\mathcal{A}^1)^{\otimes j}\text{, } \qquad [a_1| \cdots | a_{i+j}] \longmapsto [a_1 | \cdots |a_i] \otimes [a_{i+1}| \cdots | a_{i+j}]
    \]
    be the `deconcatenation isomorphisms'. Using the expression $\rho = \sum_{n\ge 0}\rho_n$ and the definition of the deconcatenation coproduct $\Delta$ of $H^0(B(\mathcal{A}))$, we obtain
    \[
        (\Delta \otimes \id)\circ \rho = \sum_{n\ge 0} (\Delta \otimes \id)\circ \rho_n = \sum_{n\ge 0}\sum_{i+j = n}(\delta_{i,j} \otimes \id)\circ \rho_n = \sum_{i,j\ge 0}(\delta_{i,j} \otimes \id)\circ \rho_{i+j}
    \]
    and
    \[
         (\id\otimes \rho)\circ \rho  = \left( \id \otimes \sum_{i\ge 0}\rho_i\right)\circ \sum_{j\ge 0}\rho_j = \sum_{i,j\ge 0} (\id\otimes\rho_i)\circ \rho_j.
    \]
    By induction, this shows that the comodule axiom $(\Delta \otimes \id)\circ \rho = (\id\otimes \rho)\circ \rho$ is equivalent to (ii).

    For the last assertion, note that any $\mathcal{O}_S$-linear map $\omega : \mathcal{E} \to \mathcal{A}^1\otimes \mathcal{E}$ defines a $\mathcal{O}_S$-linear map
    \[
      \rho \defeq ( [\omega]^n)_{\ge 0} : \mathcal{E} \longrightarrow \prod_{n\ge 0}(\mathcal{A}^1)^{\otimes n}\otimes \mathcal{E}.
    \]
    We regard $T^c \mathcal{A}^1 = \bigoplus_{n\ge 0} (\mathcal{A}^1)^{\otimes n}$ as a submodule of $\prod_{n\ge 0}(\mathcal{A}^1)^{\otimes n}$. Since $\mathcal{E}$ is an $\mathcal{O}_S$-module of finite type, we see that $\rho$ factors through $T^c\mathcal{A}^1 \otimes  \mathcal{E}$ if and only if, locally over $S$, we have $[\omega]^n = 0$ for every sufficiently large $n$. Finally, since $H^0(B(\mathcal{A})) = \ker(d_B)$ and $\mathcal{E}$ is flat, the image of $\rho$ is contained in $H^0(B(\mathcal{A})) \otimes \mathcal{E}$ if and only if $(d_B\otimes \id)\circ \rho =0$. In bar notation, we have:
    \begin{align*}
        (d_B \otimes \id) \circ \rho &= \sum_{n\ge 1}d_B \underbrace{[\omega | \cdots | \omega]}_{\text{length }n}= -\sum_{n\ge 1}\left(\sum_{i=1}^n\underbrace{[\omega |\cdots | \overbrace{d\omega}^{i\text{th position}} |\cdots |\omega]}_{\text{length }n} + \sum_{i=1}^{n-1}\underbrace{[\omega | \cdots  | \overbrace{\omega \wedge \omega}^{i\text{th position}} | \cdots |\omega}_{\text{length }n-1}] \right)\\
        &= -\sum_{n\ge 1}\underbrace{[\omega | \cdots |\overbrace{ d\omega + \omega \wedge \omega}^{i\text{th position}}  | \cdots |\omega}_{\text{length }n-1}],
    \end{align*}
    so that the identity $(d_B\otimes \id)\circ \rho =0$ is equivalent to $d\omega + \omega \wedge \omega  = 0$.
\end{proof}

\subsection{Fundamental Hopf algebra} \label{subsec:fundamental-Hopf}

Let $S$ be a scheme of characteristic zero, $p:E \to S$ be an elliptic curve, and $Z$ be a divisor on $E/S$ as in \S\ref{par:kronecker-subbundle}. Let $f:E^{\natural} \to S$ be the universal vector extension of $E/S$ and $\pi:E^{\natural} \to E$ be the natural projection.

\begin{definition}\label{def:fundamental-hopf}
    The \emph{de Rham fundamental Hopf algebra} of $E/S$ punctured at $Z$ is the filtered Hopf algebra over $\mathcal{O}_S$ defined by
    \[
        \mathcal{H}_{E/S,Z} \defeq H^0(B(f_*\Omega^{\bullet}_{E^{\natural}/S}(\log \pi^{-1}Z))).
    \]
\end{definition}

The affine group scheme over $S$ corresponding to $\mathcal{H}_{E/S,Z}$ can be regarded as a base point free version of the `relative unipotent de Rham fundamental group' of $E\setminus Z$ over $S$  (cf. \cite{CdPS19}). We refer to Appendix \ref{appendix:tannakian} for precise comparison theorems in the case where $S$ is the spectrum of a field.

\begin{example}
    Assume that $S$ is affine, that $R^1p_*\mathcal{O}_E$ is trivial, and let $\nu,\omega^{(0)},\omega^{(1)}_P,\omega^{(2)}_P,\ldots$ be as in \eqref{eq:triv-A1}. Then, a section $\xi$ of $\mathcal{H}_{E/S,Z}$ is an $\mathcal{O}_S$-linear combination of words in these 1-forms, satisfying $d_B\xi =0$. For instance,
    \[
        [\omega^{(0)}|\omega^{(0)}]\text{,}\qquad [\omega^{(0)}| \nu] + [\omega_P^{(1)}]\text{, }\qquad [\omega^{(1)}_{P}-\omega^{(1)}_O|\nu]+ [\omega^{(2)}-\omega_O^{(2)}]
    \]
    are sections of length 2, and
    \begin{align*}
         [\omega^{(0)} | \nu | \omega^{(0)} | \nu | \nu] + [\omega_P^{(1)} | \omega^{(0)} | \nu | \nu] &- [\omega^{(0)} | \omega_P^{(1)} | \nu | \nu] + [\omega^{(0)} | \nu | \omega_P^{(1)} | \nu] + 2[\omega_P^{(1)} | \omega^{(1)} | \nu]\\
        &\ \ \ - 2[\omega^{(0)} | \omega_P^{(2)} | \nu] + [\omega^{(0)} | \nu | \omega_P^{(2)}] + 3 [\omega_P^{(1)} | \omega_P^{(2)}] - 3 [\omega^{(0)} | \omega_P^{(3)}]
    \end{align*}
    is a section of length 5.
\end{example}

\begin{theorem}\label{thm:tensor-coalgebra}
    The projector $\rho$ from Theorem \ref{thm:projector} induces an isomorphism of filtered Hopf algebras over $\mathcal{O}_S$
    \[
        \mathcal{H}_{E/S,Z} \stackrel{\sim}{\To} T^cH^1_{\dR}((E\setminus Z)/S).
    \]
\end{theorem}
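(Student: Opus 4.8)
The plan is to realize the asserted isomorphism as the map induced on $H^0$ by the functoriality of the bar construction, applied to the dg-quasi-isomorphism $\rho: \mathcal{A} \to H^{\bullet}(\mathcal{A})$ of Theorem \ref{thm:projector}, where I abbreviate $\mathcal{A} = f_*\Omega^{\bullet}_{E^{\natural}/S}(\log \pi^{-1}Z)$. First I would identify the target. The graded algebra $H^{\bullet}(\mathcal{A})$ is concentrated in degrees $0$ and $1$, with $H^0(\mathcal{A}) = \mathcal{O}_S$ and $H^{n}(\mathcal{A}) = 0$ for $n \ge 2$, and it carries the zero differential; hence it is a connected graded-commutative dg-algebra with $(H^{\bullet}(\mathcal{A}))^n = 0$ for $n \ge 2$. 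By Example \ref{ex:non-commutative}, the bar differential vanishes on $B(H^{\bullet}(\mathcal{A}))$, so that
\[
    H^0(B(H^{\bullet}(\mathcal{A}))) = B^0(H^{\bullet}(\mathcal{A})) = T^c H^1(\mathcal{A})
\]
as filtered Hopf algebras over $\mathcal{O}_S$. Invoking Proposition \ref{prop:model-derham} to identify $H^1(\mathcal{A}) \cong H^1_{\dR}((E\setminus Z)/S)$, the right-hand side is exactly $T^c H^1_{\dR}((E\setminus Z)/S)$.

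The candidate morphism is then $H^0(B(\rho))$, where $B(\rho): B(\mathcal{A}) \to B(H^{\bullet}(\mathcal{A}))$ is the map of complexes supplied by functoriality; here I use that $\rho$ is a morphism of dg-algebras, which is part of the content of Theorem \ref{thm:projector}. Since $B(\rho) = \bigoplus_n \overline{\rho}^{\otimes n}$ is by construction a morphism of filtered coalgebras that intertwines the shuffle products and the antipodes, the induced map on $H^0$ is automatically a morphism of filtered Hopf algebras. It therefore remains only to prove that it is bijective.

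The crux is to show that $B(\rho)$ is a quasi-isomorphism, for then it induces an isomorphism on every cohomology object, in particular on $H^0$. For this I would apply Lemma \ref{lemma:bar-quasi-isom}, checking its K\"unneth hypotheses through the sufficient flatness criterion of Remark \ref{rmk:kunneth}: it is enough that the degree-$n$ pieces $\mathcal{I}^n_j$, their images $d\mathcal{I}^n_j$, and the cohomology $H^n(\mathcal{I}_j)$ are flat over $\mathcal{O}_S$, for $j = 1,2$, with $\mathcal{A}_1 = \mathcal{A}$ and $\mathcal{A}_2 = H^{\bullet}(\mathcal{A})$. For $\mathcal{A}_1 = \mathcal{A}$, Corollary \ref{coro:dg-algebra} gives that each $\mathcal{A}^n$ is locally free; since $\mathcal{A}^{n} = 0$ for $n \ge 3$ and $H^2(\mathcal{A}) = 0$, the differential $d: \mathcal{A}^1 \to \mathcal{A}^2$ is surjective, so $d\mathcal{A}^1 = \mathcal{A}^2$ and $d\mathcal{A}^2 = 0$ are locally free, while $H^1(\mathcal{A})$ is a direct summand of the locally free module $\mathcal{A}^1$ by Theorem \ref{thm:projector}, hence flat. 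For $\mathcal{A}_2 = H^{\bullet}(\mathcal{A})$ the differential is zero and the only nonzero piece of the augmentation ideal is $H^1(\mathcal{A})$, which is flat by the same argument, so all three conditions hold trivially.

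I expect the flatness bookkeeping to be the only point genuinely requiring care, and it is entirely controlled by the structural results of Section \ref{sec:relative-differentials}, namely Corollary \ref{coro:dg-algebra} and the canonical splitting of Theorem \ref{thm:projector}. Granting these, Lemma \ref{lemma:bar-quasi-isom} yields that $B(\rho)$ is a quasi-isomorphism; passing to $H^0$ and combining with the Hopf-algebra compatibility obtained from functoriality then completes the proof that $H^0(B(\rho)): \mathcal{H}_{E/S,Z} \to T^c H^1_{\dR}((E\setminus Z)/S)$ is an isomorphism of filtered Hopf algebras.
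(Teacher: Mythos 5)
Your proposal is correct and follows essentially the same route as the paper: both deduce the theorem from the dg-quasi-isomorphism $\rho$ of Theorem \ref{thm:projector} together with Lemma \ref{lemma:bar-quasi-isom}, verifying the K\"unneth hypotheses via the flatness criterion of Remark \ref{rmk:kunneth} using the structural results of Section \ref{sec:relative-differentials}. Your write-up simply makes explicit what the paper's short proof leaves implicit, namely the identification of the target $H^0(B(H^{\bullet}(\mathcal{A}))) = T^cH^1_{\dR}((E\setminus Z)/S)$ via Example \ref{ex:non-commutative} and Proposition \ref{prop:model-derham}, and the (automatic) compatibility of $B(\rho)$ with the filtered Hopf algebra structures.
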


\begin{proof}
    This follows immediately from the fact that $\rho$ is a dg-quasi-isomorphism (Theorem \ref{thm:projector}) and from Lemma \ref{lemma:bar-quasi-isom}. To verify the Künneth-type hypotheses of Lemma \ref{lemma:bar-quasi-isom} (cf. Remark \ref{rmk:kunneth}), we apply Theorem \ref{thm:kronecker-subbundle} and Theorem \ref{thm:projector}.
\end{proof}

\begin{example}
    Locally over $S$, the isomorphism of the above theorem is given explicitly by writing a section of $\mathcal{H}_{E/S,Z}$ as an $\mathcal{O}_S$-linear combination of words in $\omega^{(0)},\nu,\omega^{(1)}_{P}-\omega^{(1)}_O, \omega^{(1)}_O,\omega^{(n)}_P$ (where $n\ge 2$ and $P \in Z(S)$), and by sending $\omega^{(1)}_O,\omega^{(n)}_P$ to zero. For instance, the length 2 section
    \[
        [\omega^{(0)}| \nu] + [\omega_P^{(1)}] = [\omega^{(0)}| \nu] + [\omega_P^{(1)} - \omega^{(1)}_O] + [\omega^{(1)}_O]
    \]
    of $\mathcal{H}_{E/S,Z}$ is sent to the length 2 section  $[\omega^{(0)}| \nu] + [\omega_P^{(1)} - \omega^{(1)}_O]$ of $T^cH^1_{\dR}((E\setminus Z)/S)$.
\end{example}

We also deduce from the above theorem that the formation of $\mathcal{H}_{E/S,Z}$ commutes with every base change in $S$.

\begin{corollary}\label{coro:base-change}
    For any morphism of schemes $\varphi:S'\to S$, the natural map
    \[
        \varphi^*\mathcal{H}_{E/S,Z} \To \mathcal{H}_{(E\times_SS')/S, Z\times_SS'}
    \]
    is an isomorphism of filtered Hopf algebras over $\mathcal{O}_{S'}$.
\end{corollary}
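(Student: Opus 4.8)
The plan is to deduce the statement from Theorem~\ref{thm:tensor-coalgebra} by reducing to the corresponding base-change statement for the tensor coalgebra of relative de Rham cohomology. Write $E' = E\times_S S'$ and $Z' = Z\times_S S'$, and recall that the formation of the universal vector extension commutes with base change, so that $(E')^{\natural}\cong E^{\natural}\times_S S'$. Since $\mathcal{A} = f_*\Omega^{\bullet}_{E^{\natural}/S}(\log \pi^{-1}Z)$ is locally free as an $\mathcal{O}_S$-module (Corollary~\ref{coro:dg-algebra}) and its formation is compatible with base change---this can be checked locally from the explicit trivialisations \eqref{eq:triv-A1} and \eqref{eq:triv-A2}, or via flat base change together with the $f_*$-acyclicity established in Proposition~\ref{prop:model-derham}---there is a canonical isomorphism of dg-algebras $\varphi^*\mathcal{A}\cong \mathcal{A}'$, where $\mathcal{A}' = f'_*\Omega^{\bullet}_{(E')^{\natural}/S'}(\log (\pi')^{-1}Z')$. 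Applying $\varphi^*$ to the kernel $\mathcal{H}_{E/S,Z}=\ker(d_B)$ and using the right-exactness of pullback yields the natural filtered morphism of Hopf algebras $\varphi^*\mathcal{H}_{E/S,Z}\to \mathcal{H}_{E'/S',Z'}$ appearing in the statement.

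First I would reduce to the associated graded for the length filtration. By Theorem~\ref{thm:tensor-coalgebra}, the projector $\rho$ induces an isomorphism $\operatorname{gr}^L_n\mathcal{H}_{E/S,Z}\cong H^n_{\dR}(-)$, more precisely $\operatorname{gr}^L_n\mathcal{H}_{E/S,Z}\cong H^1_{\dR}((E\setminus Z)/S)^{\otimes n}$, a locally free $\mathcal{O}_S$-module of finite rank, and similarly over $S'$. In particular every length-graded piece is locally free, so each short exact sequence $0\to L_{n-1}\mathcal{H}_{E/S,Z}\to L_n\mathcal{H}_{E/S,Z}\to \operatorname{gr}^L_n\mathcal{H}_{E/S,Z}\to 0$ is locally split and stays exact after applying $\varphi^*$. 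By induction on $n$, the five lemma, and passage to the colimit, it therefore suffices to prove that the natural map induces an isomorphism on $\operatorname{gr}^L_n$ for every $n$.

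Next I would identify the map induced on graded pieces. Because the projector $\rho$ of Theorem~\ref{thm:projector} is built from the canonical decomposition $\mathcal{A}^1 = H^1(\mathcal{A})\oplus \mathcal{K}^{(1)}_O\oplus\bigoplus_{n\ge 2}\mathcal{K}^{(n)}$, and the Kronecker subbundles of Theorem~\ref{thm:kronecker-subbundle} are characterised by conditions on the residue, the wedge product, and the differential---all of which are stable under pullback---the uniqueness clauses in those theorems force the formation of $\rho$, and hence the isomorphism of Theorem~\ref{thm:tensor-coalgebra}, to commute with base change. Under these identifications the map induced by the natural base-change morphism on $\operatorname{gr}^L_n$ is exactly the $n$-th tensor power of the de Rham base-change map $\varphi^*H^1_{\dR}((E\setminus Z)/S)\to H^1_{\dR}((E'\setminus Z')/S')$. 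As pullback commutes with tensor products, we are reduced to showing that this last map is an isomorphism. But $H^1_{\dR}((E\setminus Z)/S)=H^1(\mathcal{A})$ (Proposition~\ref{prop:model-derham}) is the complementary direct summand in the locally free module $\mathcal{A}^1$ of the base-change-stable subbundle $\mathcal{K}^{(1)}_O\oplus\bigoplus_{n\ge 2}\mathcal{K}^{(n)}$, so its formation commutes with base change, as required.

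The step requiring the most care is the compatibility asserted in the third paragraph: that the canonical decompositions of Theorems~\ref{thm:kronecker-subbundle} and~\ref{thm:projector} are preserved under $\varphi^*$. The verification is not computational but conceptual---one pulls back the decomposition over $S$, checks that the image subbundles still satisfy the defining residue, wedge, and differential conditions over $S'$ (using that residue, $\wedge$, and $d$ are compatible with pullback, and that the distinguished section $O$ and the torsion lifts $P^{\natural}$ behave functorially), and then invokes uniqueness to conclude that they coincide with the canonical subbundles over $S'$. Once this functoriality is granted, every remaining step is formal, and the corollary follows.
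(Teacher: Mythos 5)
Your proof is correct, and it reaches the conclusion by a route that is related to, but noticeably more self-contained than, the paper's. The paper's own proof is three lines: it invokes the standard fact that de Rham cohomology of smooth proper schemes commutes with arbitrary base change, feeds this into the residue exact sequence $0 \to H^1_{\dR}(E/S) \to H^1_{\dR}((E\setminus Z)/S) \to H^0_{\dR}(Z/S) \to H^2_{\dR}(E/S) \to 0$ to deduce base change for $H^1_{\dR}((E\setminus Z)/S)$, and then applies Theorem \ref{thm:tensor-coalgebra} together with base-change compatibility of the universal vector extension. You instead stay entirely inside the paper's machinery: you prove that the whole dg-algebra $\mathcal{A}$, its Kronecker subbundles, and the projector $\rho$ commute with base change via the uniqueness clauses of Theorems \ref{thm:kronecker-differentials}, \ref{thm:kronecker-subbundle} and \ref{thm:projector}, and you obtain base change for $H^1_{\dR}((E\setminus Z)/S) \cong H^1(\mathcal{A})$ from the decomposition $\mathcal{A}^1 = H^1(\mathcal{A}) \oplus \mathcal{K}^{(1)}_O \oplus \bigoplus_{n\ge 2}\mathcal{K}^{(n)}$ rather than from the residue sequence. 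Your version buys something real: it makes explicit a point the paper's terse proof leaves implicit, namely that the isomorphism of Theorem \ref{thm:tensor-coalgebra} is itself natural in $S$ (equivalently, that $\rho$ pulls back to $\rho'$); without this, one only gets an abstract isomorphism $\varphi^*\mathcal{H}_{E/S,Z} \cong \mathcal{H}_{E'/S',Z'}$, not the assertion about the \emph{natural} map. Conversely, the paper's version buys brevity, and in fact your filtration/five-lemma reduction is dispensable: once you know $\varphi^*\mathcal{A} \cong \mathcal{A}'$ compatibly with $\rho$, you can conclude directly from $\varphi^*\mathcal{H}_{E/S,Z} \cong \varphi^* T^c H^1_{\dR}((E\setminus Z)/S) \cong T^c H^1_{\dR}((E'\setminus Z')/S') \cong \mathcal{H}_{E'/S',Z'}$, since pullback commutes with direct sums and tensor products. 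Two minor caveats: your parenthetical alternative ``via flat base change'' is only valid for flat $\varphi$, whereas the corollary concerns arbitrary morphisms, so the argument from the local trivialisations and uniqueness (which you do give) is the one that counts; and ``right-exactness of pullback'' is an odd label for the construction of the natural map, which really just uses functoriality of $\varphi^*$ applied to the inclusion $\mathcal{H}_{E/S,Z} \hookrightarrow B^0(\mathcal{A})$ followed by the factorisation through $\ker(d_B)$ on $B^0(\mathcal{A}')$.
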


\begin{proof}
    It is well known that, for proper and smooth schemes, the formation of the de Rham cohomology commutes with arbitrary base change. Using the residue exact sequence
    \[
        \begin{tikzcd}
            0 \arrow{r} & H^1_{\dR}(E/S) \arrow{r} & H^1_{\dR}((E\setminus Z)/S) \arrow{r}{\Res}& H^0_{\dR}(Z/S)\arrow{r} & H^2_{\dR}(E/S) \arrow{r} & 0
        \end{tikzcd}
    \]
    we deduce that the formation of $H^1_{\dR}((E\setminus Z)/S)$ commutes with arbitrary base change. To conclude we apply Theorem \ref{thm:tensor-coalgebra} and the fact that the formation of the universal vector extension also commutes with arbitrary base change (cf. \S \ref{par:def-uve}). 
\end{proof}

\subsection{Elliptic KZB connection}\label{par:kzb-connection}

We keep the notation of \S\ref{subsec:fundamental-Hopf}. It follows from Theorem \ref{thm:tensor-coalgebra} that there are canonical isomorphisms
\begin{equation}\label{eq:isomorphism-graded-length}
    L_n\mathcal{H}_{E/S,Z}/L_{n-1}\mathcal{H}_{E/S,Z} \cong H^1_{\dR}((E\setminus Z)/S)^{\otimes n}.
\end{equation}
In particular, each $L_n\mathcal{H}_{E/S,Z}$ is a vector bundle over $S$. The \emph{continuous dual} of $\mathcal{H}_{E/S,Z}$ is the $\mathcal{O}_S$-module
\[
    \mathcal{H}_{E/S,Z}^{\vee} \defeq \lim_n \, (L_n\mathcal{H}_{E/S,Z})^{\vee}\text{, }\qquad (L_n\mathcal{H}_{E/S,Z})^{\vee} = \mathcal{H}om_{\mathcal{O}_S}(L_n\mathcal{H}_{E/S,Z},\mathcal{O}_S),
\]
with the dual structure of a completed Hopf algebra over $\mathcal{O}_S$ (cf. \cite[\S3.2.6]{BGF22}). For instance, its multiplication is given by
\[
    \Delta^{\vee} : \mathcal{H}_{E/S,Z}^{\vee}\hat{\otimes}\mathcal{H}_{E/S,Z}^{\vee} \longrightarrow \mathcal{H}_{E/S,Z}^{\vee}\text{,}\qquad \Delta^{\vee} \defeq \lim_n \, (\Delta|_{L_n\mathcal{H}_{E/S,Z}})^{\vee}.
\]
Regarding the restriction of the antipode $\sigma$ to $L_n\mathcal{H}_{E/S,Z}$ as a global section of $L_n\mathcal{H}_{E/S,Z} \otimes (L_n\mathcal{H}_{E/S,Z})^{\vee}$, we get a global section
\[
    \hat{\sigma}  =  \lim_n \sigma|_{L_n\mathcal{H}_{E/S,Z}} \in \Gamma(S, \mathcal{H}_{E/S,Z}\hat{\otimes}\mathcal{H}_{E/S,Z}^{\vee}).
\]

\begin{definition}
    The \emph{KZB form} of $E/S$ punctured at $Z$ is the global section
    \[
        \omega_{E^{\natural}/S,Z} \defeq (\pr_1\otimes \id)(\hat{\sigma}) \in \Gamma(S, f_*\Omega^1_{E^{\natural}/S}(\log \pi^{-1}Z) \hat{\otimes} \mathcal{H}_{E/S,Z}^{\vee}),
    \]
    where $\pr_1: \mathcal{H}_{E/S,Z} \to f_*\Omega^1_{E^{\natural}/S}(\log \pi^{-1}Z)$ is the projection onto the component of length 1.
\end{definition}

By letting $\omega_{E^{\natural}/S,Z}$ act on $\mathcal{H}_{E/S,Z}^{\vee}$ by left multiplication via $\Delta^{\vee}$, we can also regard it as an $\mathcal{O}_S$-linear map
\[
    \omega_{E^{\natural}/S,Z}: \mathcal{H}_{E/S,Z}^{\vee} \longrightarrow f_*\Omega^1_{E^{\natural}/S}(\log \pi^{-1}Z) \hat{\otimes} \mathcal{H}_{E/S,Z}^{\vee}.
\]

\begin{proposition}\label{prop:univ-property-kzb}
    We have
    \[
        d\omega_{E^{\natural}/S,Z} + \omega_{E^{\natural}/S,Z}\wedge \omega_{E^{\natural}/S,Z} = 0.
    \]
    Moreover, if we denote
    \[
        1 \defeq \lim_n\,  (\varepsilon|_{L_n\mathcal{H}_{E/S,Z}})^{\vee} \in \Gamma(S,\mathcal{H}_{E/S,Z}^{\vee}),
    \]
    then the triple $(\mathcal{H}^{\vee}_{E/S,Z},\omega_{E^{\natural}/S,Z},1)$ satisfies the following universal property: for every triple $(\mathcal{E},\omega,e)$, where $\mathcal{E}$ is a vector bundle over $S$, $\omega: \mathcal{E} \to f_*\Omega^1_{E^{\natural}/S}(\log \pi^{-1}Z)\otimes \mathcal{E}$ is a locally nilpotent $\mathcal{O}_S$-linear map satisfying $d\omega + \omega \wedge \omega =0$, and $e \in \Gamma(S,\mathcal{E})$, there is a unique $\mathcal{O}_S$-linear map $\varphi : \mathcal{H}^{\vee}_{E/S,Z} \to \mathcal{E}$ such that $(\id \otimes \varphi) \circ \omega_{E^{\natural}/S,Z} = \omega \circ \varphi$ and $\varphi(1) = e$.
\end{proposition}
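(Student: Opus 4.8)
The proof rests on the observation that $\hat{\sigma}$ is the ``universal horizontal section'' of the connection $d + \omega_{E^{\natural}/S,Z}$, so that both assertions reduce to the bar-complex computation already carried out in Proposition \ref{prop:comodule-bar}. Write $\omega \defeq \omega_{E^{\natural}/S,Z}$ and $\hat{\sigma}_{(n)} \defeq (\pr_n \otimes \id)(\hat{\sigma}) \in \Gamma(S,(\mathcal{A}^1)^{\otimes n}\hat{\otimes}\mathcal{H}^{\vee}_{E/S,Z})$, so that $\omega = \hat{\sigma}_{(1)}$. The first step is to establish the key identity $\hat{\sigma}_{(n)} = [\omega]^n$, where $[\omega]^n$ is the $n$-fold iterate of $\omega$ under left multiplication by $\Delta^{\vee}$ on $\mathcal{H}^{\vee}_{E/S,Z}$, exactly as in Proposition \ref{prop:comodule-bar}(ii). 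This is a formal consequence of the antipode being the convolution inverse of the identity: unwinding the definition of $\hat{\sigma}$ and of the product $\Delta^{\vee}$ dual to the deconcatenation coproduct yields the recursion $\hat{\sigma}_{(n)} = (\id_{\mathcal{A}^1}\otimes \omega)\circ \hat{\sigma}_{(n-1)}$ together with $\hat{\sigma}_{(0)} = 1$ (note $\varepsilon \circ \sigma = \varepsilon$), which I would verify directly from the formula $\sigma([a_1|\cdots|a_m]) = (-1)^m[a_m|\cdots|a_1]$.

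With the resulting identity $\hat{\sigma} = \sum_{n\ge 0}[\omega]^n$ in hand, integrability is immediate. By construction $\hat{\sigma} \in \Gamma(S,\mathcal{H}_{E/S,Z}\hat{\otimes}\mathcal{H}^{\vee}_{E/S,Z})$, and since $\mathcal{H}_{E/S,Z} = \ker(d_B)$ we have $(d_B\otimes \id)(\hat{\sigma}) = 0$. Substituting $\hat{\sigma} = \sum_n [\omega]^n$ and running verbatim the computation at the end of the proof of Proposition \ref{prop:comodule-bar}, the left-hand side collapses to $-\sum_{n\ge 1}[\omega|\cdots|(d\omega + \omega\wedge\omega)|\cdots|\omega]$; as the summands of distinct length are independent, its vanishing forces $d\omega + \omega\wedge\omega = 0$ in $\mathcal{A}^2\hat{\otimes}\mathcal{H}^{\vee}_{E/S,Z}$. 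Convergence is not an issue, since working with $\mathcal{H}^{\vee}_{E/S,Z} = \lim_n (L_n\mathcal{H}_{E/S,Z})^{\vee}$ each fixed length involves only finitely many terms.

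For the universal property I would combine the comodule--connection dictionary of Proposition \ref{prop:comodule-bar} with the universal property of the regular module. Given a triple $(\mathcal{E},\omega_{\mathcal{E}},e)$ as in the statement, Proposition \ref{prop:comodule-bar} produces a unique $\mathcal{H}_{E/S,Z}$-comodule structure $\rho^{\mathcal{E}}:\mathcal{E}\to \mathcal{H}_{E/S,Z}\otimes \mathcal{E}$ with $(\pr_1\otimes \id)\circ \rho^{\mathcal{E}} = \omega_{\mathcal{E}}$. Since $\mathcal{E}$ is a vector bundle and $\rho^{\mathcal{E}}$ is locally nilpotent, dualizing turns $\mathcal{E}$ into a left module over the algebra $(\mathcal{H}^{\vee}_{E/S,Z},\Delta^{\vee})$ via $x\cdot v \defeq (\langle x,-\rangle\otimes \id)(\rho^{\mathcal{E}}(v))$, and I set $\varphi(x) \defeq x\cdot e$. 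Then $\varphi(1) = e$ because $1$ is the unit of $\mathcal{H}^{\vee}_{E/S,Z}$, and the relation $(\id\otimes \varphi)\circ \omega_{E^{\natural}/S,Z} = \omega_{\mathcal{E}}\circ \varphi$ is precisely the statement that $\varphi$ is a morphism of $\mathcal{H}^{\vee}_{E/S,Z}$-modules, which one reads off in length $1$ using the key identity of the first step.

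Uniqueness follows from the same identity: the intertwining relation applied iteratively expresses the restriction of any admissible $\varphi$ to the image of $[\omega_{E^{\natural}/S,Z}]^n$ entirely in terms of $[\omega_{\mathcal{E}}]^n(e)$, and since these elements topologically generate $\mathcal{H}^{\vee}_{E/S,Z} = \lim_n(L_n\mathcal{H}_{E/S,Z})^{\vee}$, the map $\varphi$ is determined by $\varphi(1) = e$. The main obstacle is the first step: correctly matching the signs and the left-versus-right orderings relating the antipode $\sigma$, the involution $J$ entering $d_B$, and the product $\Delta^{\vee}$, together with verifying that the duality between finite-rank $\mathcal{H}_{E/S,Z}$-comodules and $\mathcal{H}^{\vee}_{E/S,Z}$-modules stays exact in the pro-finite limit. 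Once that identity is pinned down, the remaining steps are formal.
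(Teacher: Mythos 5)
Your argument for the integrability assertion is correct and is, modulo presentation, the paper's own: the identity $\hat{\sigma}=\sum_{n\ge 0}[\omega_{E^{\natural}/S,Z}]^n$ (equivalently, that left multiplication by $\hat{\sigma}$ is a completed comodule structure) is dual to the anti\-coalgebra property $\Delta\circ\sigma=(\sigma\otimes\sigma)\circ\mathrm{flip}\circ\Delta$ of the antipode, which can indeed be checked on the explicit formula for $\sigma$; combined with $(d_B\otimes\id)(\hat{\sigma})=0$ and the computation in Proposition \ref{prop:comodule-bar}, it yields the Maurer--Cartan equation. Your uniqueness sketch is also essentially the paper's, with one caveat: uniqueness is claimed among \emph{all} $\mathcal{O}_S$-linear maps, not only continuous ones, so an appeal to topological generation is insufficient by itself; one should let the recursion forced by the intertwining relation terminate using local nilpotence (or, as in the paper, exploit that $\sigma$ is an involution).

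The genuine gap is in your existence construction, and it is exactly a left-versus-right error of the kind you flagged, only located in your Step 3 rather than Step 1. The recipe $x\cdot v\defeq(\langle x,-\rangle\otimes\id)(\rho^{\mathcal{E}}(v))$ does \emph{not} define a left module over $(\mathcal{H}^{\vee}_{E/S,Z},\Delta^{\vee})$: writing $\Delta h=\sum h_{(1)}\otimes h_{(2)}$, one has $\langle xy,h\rangle=\sum\langle x,h_{(1)}\rangle\langle y,h_{(2)}\rangle$, and coassociativity of $\rho^{\mathcal{E}}$ then gives $(xy)\cdot v=y\cdot(x\cdot v)$, i.e.\ a \emph{right} module. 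Consequently $\varphi(x)\defeq x\cdot e$ fails the intertwining relation. Concretely, in the trivialisation of Theorem \ref{thm:explicit-kzb-form} write $\omega_{\mathcal{E}}=-\nu\otimes A-\omega^{(0)}\otimes B-\sum_{n\ge 1,\,P}\omega^{(n)}_P\otimes C^{(n)}_P$: evaluating $(\id\otimes\varphi)\circ\omega_{E^{\natural}/S,Z}=\omega_{\mathcal{E}}\circ\varphi$ at $1$ and then at $b$ forces $\varphi(a)=Ae$ and $\varphi(ab)=ABe$, whereas your formula returns $\varphi(a)=-Ae$ and $\varphi(ab)=BAe$, because the length-$n$ component $[\omega_{\mathcal{E}}]^n(e)$ of $\rho^{\mathcal{E}}(e)$ records the endomorphisms in \emph{reversed} order and with a sign $(-1)^n$. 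What is missing is precisely the antipode: the correct map is $\varphi(\lambda)=\bigl((\lambda\circ\sigma)\otimes\id\bigr)(\rho^{\mathcal{E}}(e))$, as in the paper's proof; precomposing with the anti-automorphism $\sigma$ turns your right module into a left one and restores both the order and the signs. This is not a cosmetic point --- it is the same twist that makes your Step 1 identity true (the analogous identity for the element corresponding to $\id_{\mathcal{H}}$ instead of $\sigma$ would involve right multiplication), and it is the reason the KZB form is defined from $\hat{\sigma}$ in the first place. Once this correction is made, your argument coincides with the paper's proof.
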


\begin{proof}
    Let $\rho: \mathcal{E} \to \mathcal{H}_{E/S,Z}\otimes \mathcal{E}$ be the comodule structure corresponding to $\omega$ as in Proposition \ref{prop:comodule-bar}. Working locally over $S$, we may assume that there is some $n_0\ge 0$ such that $\rho(\mathcal{E}) \subset L_{n_0}\mathcal{H}_{E/S,Z}\otimes \mathcal{E}$. For every $n\ge n_0$, define
    \begin{equation}\label{eq:phin}
        \varphi_n : (L_n\mathcal{H}_{E/S,Z})^{\vee} \To \mathcal{E}\text{, }\qquad  \lambda \longmapsto (\lambda \otimes \id)\circ (\sigma\otimes \id)\circ \rho(e).
    \end{equation}
    Then, a straightforward computation shows that $\varphi \defeq \lim_n \varphi_n : \mathcal{H}_{E/S,Z}^{\vee} \to \mathcal{E}$ is an $\mathcal{O}_S$-linear map satisfying the properties in the statement. 
    To prove unicity, let $\rho_{E/S,Z}: \mathcal{H}_{E/S,Z}^{\vee} \to \mathcal{H}_{E/S,Z}\hat{\otimes} \mathcal{H}_{E/S,Z}^{\vee}$ be the completed comodule structure corresponding to $\omega_{E^{\natural}/S,Z}$ via Proposition \ref{prop:comodule-bar}. It is given by left multiplication by $\hat{\sigma}$. If $\varphi': \mathcal{H}_{E/S,Z}^{\vee} \to \mathcal{E}$ is an $\mathcal{O}_S$-morphism satisfying $\varphi'(1)=e$ and  $(\id \otimes \varphi') \circ \omega_{E^{\natural}/S,Z} = \omega \circ \varphi'$, then it follows from Proposition \ref{prop:comodule-bar} that this last equation can be lifted to
    \[
        (\id \otimes \varphi') \circ \rho_{E/S,Z} = \rho \circ \varphi'.
    \]
    Thus,
    \[
        \rho(e) = (\id \otimes \varphi')\circ \rho_{E/S,Z}(1) = (\id \otimes \varphi')(\hat{\sigma}).
    \]
    Let $n\ge n_0$ be as above. For any section $\lambda$ of $(L_n\mathcal{H}_{E/S,Z})^{\vee}$, we have
    \[
        (\lambda \otimes \id)\circ \rho (e) = (\lambda \otimes \id) \circ (\id \otimes \varphi')(\hat{\sigma}) = \varphi' \circ (\lambda \otimes \id)(\hat{\sigma}) = \varphi'_n(\lambda \circ \sigma|_{L_n\mathcal{H}_{E/S,Z}}),
    \]
    where in the last equality we regard $\lambda \circ \sigma|_{L_n\mathcal{H}_{E/S,Z}}$ as a section of $(L_n\mathcal{H}_{E/S,Z})^{\vee}$. Finally, using that $\sigma$ is an involution, we conclude that $\varphi'_n$ is given by the same formula as $\varphi_n$ in \eqref{eq:phin}.
\end{proof}

\begin{definition}\label{def:vertical-kzb}
    Consider the completed pullback $f^*\mathcal{H}_{E/S,Z}^{\vee} \defeq \lim_n \, f^*(L_n\mathcal{H}_{E/S,Z})^{\vee}$. The \emph{relative elliptic KZB connection} of $E/S$ punctured at $Z$ is the $S$-connection
    \[
        \nabla_{E^{\natural}/S,Z} : f^*\mathcal{H}^{\vee}_{E/S,Z} \longrightarrow \Omega^1_{E^{\natural}/S}(\log \pi^{-1}Z) \hat{\otimes} f^*\mathcal{H}_{E/S,Z}^{\vee}\text{, }\qquad \nabla_{E^{\natural}/S,Z} = d + \omega_{E^{\natural}/S,Z}.
    \]
\end{definition}

\begin{proposition}\label{prop:base-change-vkzb}
    The formation of $\nabla_{E^{\natural}/S,Z}$ is compatible with arbitrary base change in $S$.
\end{proposition}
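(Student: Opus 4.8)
The plan is to exploit the fact that the relative elliptic KZB connection is assembled by purely functorial operations out of the filtered Hopf algebra $\mathcal{H}_{E/S,Z}$, so that its compatibility with base change follows formally from Corollary \ref{coro:base-change} once we verify that each intermediate construction---the continuous dual, the length-$1$ projection, and the relevant sheaves of differentials---commutes with base change. Fix a morphism $\varphi: S'\to S$, write $E' = E\times_S S'$ and $Z' = Z\times_S S'$, and let $f':E'^{\natural}\to S'$ be the universal vector extension of $E'/S'$. Since the formation of the universal vector extension commutes with base change, $E'^{\natural}\cong E^{\natural}\times_S S'$; writing $g: E'^{\natural}\to E^{\natural}$ for the projection, the square with sides $f$, $f'$, $\varphi$, $g$ is cartesian, so that $g^*f^* = f'^*\varphi^*$. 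As the relative de Rham differential $d$ manifestly commutes with pullback along $g$, it suffices to establish, compatibly, the two identities $g^*f^*\mathcal{H}^{\vee}_{E/S,Z}\cong f'^*\mathcal{H}^{\vee}_{E'/S',Z'}$ and $g^*\omega_{E^{\natural}/S,Z} = \omega_{E'^{\natural}/S',Z'}$.

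For the first, by \eqref{eq:isomorphism-graded-length} each graded quotient $L_n\mathcal{H}_{E/S,Z}/L_{n-1}\mathcal{H}_{E/S,Z}$ is a vector bundle, so every $L_n\mathcal{H}_{E/S,Z}$ is a vector bundle and the length filtration is locally split. Consequently $\varphi^*(L_n\mathcal{H}_{E/S,Z})^{\vee}\cong (\varphi^*L_n\mathcal{H}_{E/S,Z})^{\vee}$, and Corollary \ref{coro:base-change} identifies the latter, filtered-compatibly, with $(L_n\mathcal{H}_{E'/S',Z'})^{\vee}$. Passing to the inverse limit over $n$, and using that the completed pullback is by definition the limit of the pullbacks of the finite-level duals (the transition maps $(L_n\mathcal{H}_{E/S,Z})^{\vee}\To (L_{n-1}\mathcal{H}_{E/S,Z})^{\vee}$ being split surjections of vector bundles), we obtain $\varphi^*\mathcal{H}^{\vee}_{E/S,Z}\cong \mathcal{H}^{\vee}_{E'/S',Z'}$, whence $g^*f^*\mathcal{H}^{\vee}_{E/S,Z}\cong f'^*\mathcal{H}^{\vee}_{E'/S',Z'}$.

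For the second, recall that $\omega_{E^{\natural}/S,Z} = (\pr_1\otimes\id)(\hat{\sigma})$ with $\hat{\sigma} = \lim_n \sigma|_{L_n\mathcal{H}_{E/S,Z}}$. The isomorphism of Corollary \ref{coro:base-change} is one of filtered Hopf algebras; it therefore commutes with the antipode $\sigma$ and preserves the length grading, so that $\varphi^*\hat{\sigma}$ corresponds to $\hat{\sigma}'$ and the projections $\pr_1$ are intertwined. It remains to note that the target $\mathcal{A}^1 = f_*\Omega^1_{E^{\natural}/S}(\log \pi^{-1}Z)$ of $\pr_1$ commutes with base change: by Theorem \ref{thm:kronecker-subbundle} its canonical decomposition has summands $f_*\Omega^1_{E^{\natural}/S}\cong H^1_{\dR}(E/S)$ and $\mathcal{K}^{(n)}\cong p_*\mathcal{O}_Z\otimes (R^1p_*\mathcal{O}_E)^{\otimes(n-1)}$, each built from the cohomology of the proper smooth $E/S$ and from the finite $Z/S$, and these are all compatible with base change; the characterising residue, wedge, and $d$ conditions of Theorem \ref{thm:kronecker-subbundle} guarantee the decomposition itself is canonical, hence preserved. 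Pulling the resulting identity back along $g$---under which the relative logarithmic differentials $\Omega^1_{E^{\natural}/S}(\log \pi^{-1}Z)$ likewise commute with base change, the divisor $\pi^{-1}Z$ being smooth over $S$---yields $g^*\omega_{E^{\natural}/S,Z} = \omega_{E'^{\natural}/S',Z'}$. Combining the two identities gives $g^*\nabla_{E^{\natural}/S,Z} = \nabla_{E'^{\natural}/S',Z'}$.

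The only genuine subtlety, and the step I would treat most carefully, is the behaviour of the continuous dual and completed pullback under $\varphi^*$: a priori pullback need not commute with the inverse limit defining $\mathcal{H}^{\vee}_{E/S,Z}$. This is resolved entirely by the observation that the length filtration consists of vector subbundles with vector-bundle graded pieces, so that at each finite level $n$ one works with honest vector bundles---for which duals and pullbacks commute on the nose and the filtration is locally split---and the passage to the limit is then harmless. Everything else is formal functoriality resting on Corollary \ref{coro:base-change}.
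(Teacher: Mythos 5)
Your proof is correct and follows essentially the same route as the paper, whose entire argument is the one-line observation that the claim follows from Corollary \ref{coro:base-change} together with the fact that $\omega_{E^{\natural}/S,Z}$ is induced by the antipode of $\mathcal{H}_{E/S,Z}$. Your write-up simply makes explicit the points the paper leaves implicit (base change of the universal vector extension, the vector-bundle nature of the length filtration so that continuous duals and completed pullbacks commute with $\varphi^*$, and the compatibility of $\pr_1$ with the Hopf-algebra isomorphism), all of which are sound.
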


\begin{proof}
    This follows from Corollary \ref{coro:base-change} and from the fact that $\omega_{E^{\natural}/S,Z}$ is induced by the antipode of the Hopf algebra $\mathcal{H}_{E/S,Z}$.
\end{proof}

We shall now give explicit formulas for the relative elliptic KZB connection. Assuming that $S$ is affine and that $R^1p_*\mathcal{O}_E$ is trivial, let $\nu,\omega^{(0)},\omega^{(1)}_P,\omega^{(2)}_P,\ldots$ be as in \eqref{eq:triv-A1}. Recall that
\[
    \{\nu,\omega^{(0)}\} \cup \{\omega^{(1)}_P - \omega^{(1)}_O : P \in Z(S)\setminus \{O\}\}
\]
trivialises the vector bundle $H^1(f_*\Omega^{\bullet}_{E^{\natural}/S}(\log \pi^{-1}Z))$ over $S$ (cf. proof of Theorem \ref{thm:projector}). We denote by
\[
    \{a,b\} \cup \{b_P : P \in Z(S) \setminus \{O\}\}
\]
the dual trivialisation of $H^1(f_*\Omega^{\bullet}_{E^{\natural}/S}(\log \pi^{-1}Z))^{\vee}$. By Theorem \ref{thm:tensor-coalgebra} (cf. Example \ref{ex:non-commutative}), we have
\[
    \mathcal{H}_{U/S}^{\vee} \cong \mathcal{O}_S \langle \!\langle a,b,b_P : P \in Z(S) \setminus\{O\}\rangle \! \rangle. 
\]
Note that Hopf algebra unit $1$ as defined in Proposition \ref{prop:univ-property-kzb} gets identified with the constant $1$. It will be more convenient to write
\[
    \mathcal{O}_S \langle \!\langle a,b,b_P : P \in Z(S) \setminus\{O\}\rangle \! \rangle \cong \frac{ \mathcal{O}_S \langle \!\langle a,b,c_P : P \in Z(S)\rangle \! \rangle}{\langle \sum_{P \in Z(S)}c_P - [a,b] \rangle},
\]
where the isomorphism is given by sending $b_P$ to $c_P$ for every $P \in Z(S) \setminus\{O\}$.

\begin{theorem}\label{thm:explicit-kzb-form}
    With the above notation, we have
    \begin{equation}\label{eq:explicit-kzb-form}
        \omega_{E^{\natural}/S,Z} =  - \nu \otimes a - \omega^{(0)}\otimes b - \sum_{n\ge 1}\sum_{P \in Z(S)}\omega^{(n)}_P\otimes \ad_a^{n-1}c_P,
    \end{equation}
    where $\ad_ax = [a,x] = ax-xa$.
\end{theorem}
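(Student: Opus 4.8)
The plan is to avoid evaluating $(\pr_1\otimes\id)(\hat{\sigma})$ head-on, and instead to pin down the unknown coefficients of $\omega_{E^{\natural}/S,Z}$ by playing its integrability against its length-one part. I would work locally over $S$, so that the hypotheses of Theorem~\ref{thm:kronecker-differentials} hold and the trivialisations \eqref{eq:triv-A1} and \eqref{eq:triv-A2} are available. Since $\{\nu,\omega^{(0)}\}\cup\{\omega^{(n)}_P\}_{n\ge1,\,P\in Z(S)}$ is an $\mathcal{O}_S$-basis of $\mathcal{A}^1=f_*\Omega^1_{E^{\natural}/S}(\log\pi^{-1}Z)$, I may write uniquely
\[
  \omega_{E^{\natural}/S,Z} = -\nu\otimes a' - \omega^{(0)}\otimes b' - \sum_{n\ge1}\sum_{P\in Z(S)}\omega^{(n)}_P\otimes U^{(n)}_P,
\]
with $a',b',U^{(n)}_P\in\mathcal{H}^{\vee}_{E/S,Z}$, and the whole problem becomes to show that $a'=a$, $b'=b$ and $U^{(n)}_P=\ad_a^{\,n-1}c_P$.

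First I would extract a recursion from integrability. By Proposition~\ref{prop:univ-property-kzb} we have $d\omega_{E^{\natural}/S,Z}+\omega_{E^{\natural}/S,Z}\wedge\omega_{E^{\natural}/S,Z}=0$, where $d$ acts only on the form-factor. Using the structure equations $d\nu=d\omega^{(0)}=0$ and $d\omega^{(n)}_P=\nu\wedge\omega^{(n-1)}_P$ (with $\omega^{(0)}_P\defeq\omega^{(0)}$) of Theorem~\ref{thm:kronecker-differentials}, together with the fact from Corollary~\ref{coro:local-kronecker} that every product of two Kronecker forms vanishes, only the terms pairing $\nu$ with a Kronecker form survive in $\omega\wedge\omega$, while the $\mathcal{H}^{\vee}$-coefficients multiply by concatenation. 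Reading off the coefficients of the basis \eqref{eq:triv-A2} of $\mathcal{A}^2$ then gives
\[
  \sum_{P\in Z(S)}U^{(1)}_P = [a',b'], \qquad U^{(n+1)}_P = \ad_{a'}U^{(n)}_P \quad (n\ge1),
\]
so that $U^{(n)}_P=\ad_{a'}^{\,n-1}U^{(1)}_P$ and everything reduces to the initial data $a',b',U^{(1)}_P$.

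To compute that initial data I would use the length-one part of $\hat{\sigma}$. Since the antipode acts by $-1$ on the degree-one generators, the contribution of the length-one basis words to $(\pr_1\otimes\id)(\hat{\sigma})$ is exactly $-\sum_v v\otimes v^{\vee}$, namely $-\nu\otimes a-\omega^{(0)}\otimes b-\sum_{P\ne O}(\omega^{(1)}_P-\omega^{(1)}_O)\otimes c_P$. The key point is that no word of length $\ge2$ can affect the coefficients of $\nu$, of $\omega^{(0)}$, or of $\omega^{(1)}_P$ for $P\ne O$: under the formality isomorphism of Theorem~\ref{thm:tensor-coalgebra}, a length-$n$ basis element of $\mathcal{H}_{E/S,Z}$ lifts to a closed element of the bar complex whose degree-one component lies in the image of a contracting homotopy $h$ (a right inverse to $d\colon\mathcal{A}^1\to\mathcal{A}^2$), and this image is precisely the complement $\mathcal{K}^{(1)}_O\oplus\bigoplus_{m\ge2}\mathcal{K}^{(m)}$ of $H^1$ furnished by Theorem~\ref{thm:projector}. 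As this complement contains none of $\nu$, $\omega^{(0)}$, $\omega^{(1)}_{P\ne O}$, one reads off $a'=a$, $b'=b$ and $U^{(1)}_P=c_P$ for $P\ne O$ on the nose. Feeding this into the commutator relation above forces $U^{(1)}_O=[a,b]-\sum_{P\ne O}c_P=c_O$, so $U^{(1)}_P=c_P$ for every $P$, whence $U^{(n)}_P=\ad_a^{\,n-1}c_P$, as claimed.

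The hard part will be the last ingredient — the determination of the initial data, and in particular the \emph{cleanness} assertion that the coefficients of $\nu$, $\omega^{(0)}$ and $\omega^{(1)}_{P\ne O}$ receive no higher-length corrections. The integrability recursion is a finite, if sign-sensitive, manipulation of the bar differential; but isolating those three coefficients genuinely requires controlling the degree-one component of the formality lifts on all of $\mathcal{H}_{E/S,Z}$, which is exactly what the homotopy $h$ underlying Theorem~\ref{thm:projector} provides. Once cleanness is established, the commutator relation $\sum_P c_P=[a,b]$ is what allows the asymmetric role of the identity section $O$ to be absorbed into the uniform answer.
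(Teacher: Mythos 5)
Your proposal is correct, but it closes the argument by a genuinely different route than the paper. The paper never expands $\omega_{E^{\natural}/S,Z}$ itself: it takes the right-hand side $\omega'$ of \eqref{eq:explicit-kzb-form}, runs your integrability computation on an \emph{arbitrary} test triple $(\mathcal{E},\omega,e)$ (same structure equations, same vanishing of products of Kronecker forms, yielding $\sum_P C^{(1)}_P=[A,B]$ and $C^{(n)}_P=\ad_A^{n-1}C^{(1)}_P$), and concludes that $(\mathcal{H}^{\vee}_{E/S,Z},\omega',1)$ satisfies the universal property of Proposition~\ref{prop:univ-property-kzb}; equality with the KZB form is then inferred from that property. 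You instead run the same recursion on the KZB form itself (legitimate, since $d\omega_{E^{\natural}/S,Z}+\omega_{E^{\natural}/S,Z}\wedge\omega_{E^{\natural}/S,Z}=0$ is part of Proposition~\ref{prop:univ-property-kzb}) and then pin down the initial coefficients $a',b',U^{(1)}_{P\neq O}$ by inspecting $(\pr_1\otimes\id)(\hat{\sigma})$ directly through the isomorphism of Theorem~\ref{thm:tensor-coalgebra}. This buys something real: the universal property by itself characterises the triple $(\mathcal{H}^{\vee}_{E/S,Z},\omega,1)$ only up to automorphism --- for any continuous $\mathcal{O}_S$-algebra automorphism $\theta$ of $\mathcal{H}^{\vee}_{E/S,Z}$, the form $(\id\otimes\theta)(\omega_{E^{\natural}/S,Z})$ satisfies the \emph{same} universal property, because conjugation by $\theta$ carries left multiplications to left multiplications --- so identifying the KZB form \emph{as an element} of $\mathcal{A}^1\hat{\otimes}\mathcal{H}^{\vee}_{E/S,Z}$ requires exactly the kind of rigidity your ``cleanness'' step supplies; the paper's route, by contrast, is shorter and never needs to manipulate the formality lifts, but its final inference implicitly relies on such a rigidity statement as well.

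Two small repairs to your write-up. First, the cleanness assertion does not need a contracting homotopy: if $\xi\in\mathcal{H}_{E/S,Z}$ is the preimage of a word of length $\geq 2$ under the length-preserving isomorphism $B(\rho)$ of Theorem~\ref{thm:tensor-coalgebra}, then $\rho^1(\pr_1\xi)=\pr_1(B(\rho)\xi)=0$, hence $\pr_1\xi\in\ker\rho^1=\mathcal{K}^{(1)}_O\oplus\bigoplus_{m\geq 2}\mathcal{K}^{(m)}$ by the decomposition of Theorem~\ref{thm:projector}; this one-line argument replaces the discussion of $h$, which would anyway require invoking the injectivity of $B(\rho)|_{\mathcal{H}_{E/S,Z}}$ to know that the lift built from $h$ is the one appearing in the dual-basis expansion of $\hat{\sigma}$. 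Second, in the wedge computation you need $\omega^{(n)}_P\wedge\omega^{(m)}_Q=0$ for \emph{all} $P,Q\in Z(S)$, not only the case $P=Q=O$ covered by Corollary~\ref{coro:local-kronecker}; this follows from Theorem~\ref{thm:kronecker-subbundle}(ii), since each $\omega^{(n)}_P$ is killed by wedging with $\omega^{(0)}$ and is therefore a (meromorphic) multiple of $\omega^{(0)}$ --- the same fact the paper uses tacitly in its own computation.
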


\begin{proof}
    Call $\omega'$  the right-hand side of \eqref{eq:explicit-kzb-form}. We show that $\omega'$ satisfies the universal property for the KZB form. For this, let $(\mathcal{E},\omega,e)$ be a triple as in Proposition \ref{prop:univ-property-kzb}. As $\nu,\omega^{(0)},\omega^{(n)}_P$ ($n\ge 1$, $P \in Z(S)$) trivialise $f_*\Omega^1_{E^{\natural}/S}(\log \pi^{-1}Z)$, we can uniquely write
    \[
        \omega = - \nu \otimes A - \omega^{(0)}\otimes B - \sum_{n\ge 1}\sum_{P \in Z(S)}\omega^{(n)}_P \otimes C^{(n)}_P,
    \]
    where $A,B,C^{(n)}_P$ are nilpotent endomorphisms of $\mathcal{E}$ (with $C^{(n)}_P = 0$ for $n\gg 0$). Since
    \[
        0 = d\omega + \omega \wedge \omega = \nu \wedge \omega^{(0)} \otimes \left([A,B]-\sum_{P \in Z(S)}C^{(1)}_P\right) + \sum_{n\ge 1}\nu \wedge \omega^{(n)}_P \otimes \left([A,C^{(n)}_P]-C^{(n+1)}_P\right), 
    \]
    we conclude that
    \[
        \sum_{P \in Z(S)}C^{(1)}_P = [A,B]\text{, }\qquad C_P^{(n)} = \ad_A^{n-1}C^{(1)}_P.
    \]
    Thus, there is a unique $\mathcal{O}_S$-morphism $\varphi:\mathcal{H}^{\vee}_{E/S,Z} \to \mathcal{E}$ satisfying $\varphi(1)= e$, $\varphi(a) = Ae$, $\varphi(b)=Be$, and $\varphi(c_P)=C^{(1)}_Pe$ for every $P \in Z(S)$. To finish the proof, it suffices to remark that these conditions are equivalent to $\varphi(1)=e$ and $(\id \otimes \varphi) \circ \omega' = \omega \circ \varphi$.
\end{proof}

\begin{example}
    When $Z=O$, we have $\mathcal{H}_{E/S,O}^{\vee} \cong \mathcal{O}_S \langle \! \langle a,b\rangle\!\rangle$. Since $c_O = [a,b]=\ad_ab$, the formula for $\omega_{E^{\natural}/S,Z}$ becomes
    \[
        \omega_{E^{\natural}/S,O} = - \nu \otimes a - \sum_{n\ge 0}\omega^{(n)}\otimes \ad^n_ab,
    \]
    where $\omega^{(n)} = \omega^{(n)}_O$.
\end{example}

\section{Absolute elliptic KZB connections}\label{sec:absolute-kzb}

\subsection{Bar construction relative to a dg-algebra}\label{par:relative-bar}

Let $S$ be a smooth scheme over a field $k$ of characteristic zero. In this paragraph, we consider a variant of the bar construction relative to the dg-algebra $\Omega \defeq \Omega^{\bullet}_{S/k}$.

To lighten the notation, tensor products without subscripts are over $\mathcal{O}_S$. If $\mathcal{F}$ and $\mathcal{G}$ are bimodules over the (non-commutative) $\mathcal{O}_S$-algebra $\Omega$, we denote
\[
    \mathcal{F} \otimes_{\Omega}\mathcal{G} \defeq (\mathcal{F} \otimes\mathcal{G})/ \mathcal{R},
\]
where $\mathcal{R}$ is the submodule generated by $(f\wedge \omega)\otimes g - f \otimes (J\omega \wedge g)$, with $f,g,\omega$ sections of $\mathcal{F},\mathcal{G},\Omega$ respectively.

Let $\mathcal{C} = \bigoplus_{n\ge 0}\mathcal{C}^n$ be a graded $\mathcal{O}_S$-algebra equipped with a $k$-linear differential $d: \mathcal{C} \to \mathcal{C}[1]$ making it a commutative dg-algebra over $k$. Assume that $\mathcal{C}^0 = \mathcal{O}_S$ and that $\mathcal{C}$ contains $\Omega$ both as a graded subalgebra over $\mathcal{O}_S$ and as a dg-subalgebra over $k$. The \emph{relative bar construction} on $\mathcal{C}$ is defined as follows. Let $\mathcal{J} = \bigoplus_{n\ge 1}\mathcal{C}^n$ be the kernel of the projection on the component of degree zero $\mathcal{C} \to \mathcal{O}_S$, and set
\[
    B_{\Omega}^{-s,t}(\mathcal{C}) \defeq (\mathcal{J}^{\otimes_{\Omega} s})^t\text{, }\qquad s,t\ge 0.
\]
Here, we also denote local sections of a tensor power $\mathcal{J}^{\otimes_{\Omega}n}$ in `bar notation':
\[
    c_1 \otimes \cdots \otimes c_n \eqdef [c_1 | \cdots | c_n].
\]
With the above notion of tensor product, one can readily check that
\[
    d_1: B_{\Omega}^{-s,t}(\mathcal{C})\To B_{\Omega}^{-s,t+1}(\mathcal{C})\text{, }\qquad  [c_1|\cdots|c_n] \longmapsto \sum_{i=1}^n(-1)^i[Jc_1|\cdots |Jc_{i-1}|dc_i|c_{i+1}| \cdots |c_n] 
\]
\[
    d_2: B_{\Omega}^{-s,t}(\mathcal{C}) \To B^{-s+1,t}_{\Omega}(\mathcal{C})\text{, }\qquad [c_1|\cdots |c_n] \longmapsto \sum_{i=1}^{n-1}(-1)^{i-1}[Jc_1|\cdots |Jc_{i-1}|Jc_i \wedge c_{i+1}|c_{i+2}|\cdots |c_n]
\]
are well-defined (even though $d$ is only $k$-linear), so that we obtain a double complex $(B^{\bullet,\bullet}_{\Omega}(\mathcal{C}),d_1,d_2)$. The relative bar construction is the associated total complex $(B^{\bullet}_{\Omega}(\mathcal{C}),d_B)$.

\begin{remark}
    Similarly to the usual bar construction, as an $\mathcal{O}_S$-module, $B_{\Omega}(\mathcal{C})$ is simply the tensor module
    \[
        B_{\Omega}(\mathcal{C}) =  \bigoplus_{n\ge 0} \mathcal{J}^{\otimes_{\Omega} n},
    \]
    with a shift on the grading: $\mathrm{deg}([c_1|\cdots |c_n]) = \sum_{i=1}^n(\mathrm{deg}(c_i)-1)$.
\end{remark}

\subsection{Koszul filtration on the relative bar construction}

We take
\[
    \mathcal{C} \defeq f_*\Omega^{\bullet}_{E^{\natural}/k}(\log \pi^{-1}Z),
\]
with hypotheses and notation as in \S\ref{subsec:fundamental-Hopf}. The Koszul filtration by $\mathcal{O}_{E^{\natural}}$-submodules $\mathcal{F}^0\supset \mathcal{F}^1 \supset \cdots$ on $\Omega^{\bullet}_{E^{\natural}/k}(\log \pi^{-1}Z)$ (cf. \S\ref{par:canonical-lifts}) induces by direct image a filtration by $\mathcal{O}_S$-submodules $f_*\mathcal{F}^0\supset f_*\mathcal{F}^1 \supset \cdots $ on $\mathcal{C}$. Note that each $f_*\mathcal{F}^p$ is actually a $\Omega$-submodule of $\mathcal{C}$. Let $F^{\bullet}\mathcal{J}$ be the induced filtration on $\mathcal{J} \subset \mathcal{C}$, so that:
\[
    F^p\mathcal{J} =
    \begin{cases}
        \mathcal{J} & p=0 \\
        f_*\mathcal{F}^p & p\ge 1.
    \end{cases}
\]
This induces a filtration on $B_{\Omega}(\mathcal{C})$:
\[
    F^pB_{\Omega}(C) = \bigoplus_{n\ge 1} \sum_{p_1 + \cdots + p_n=p}\im(F^{p_1}\mathcal{J}\otimes_{\Omega}\cdots \otimes_{\Omega}F^{p_n}\mathcal{J} \To \mathcal{J}^{\otimes_{\Omega} n}).
\]

In what follows, let $\mathcal{A} = f_*\Omega^{\bullet}_{E^{\natural}/S}(\log \pi^{-1}Z)$ and $\mathcal{I} = \bigoplus_{n\ge 1}\mathcal{A}^n$ (cf. \S\ref{par:bar-construction}). By Proposition \ref{prop:koszul-log}, we have $\mathcal{J}/F^1\mathcal{J} \cong \mathcal{I}$. Let
\[
    B_{\Omega}(\mathcal{C}) \To B(\mathcal{A})
\]
be the natural map induced by the quotient map $\mathcal{J} \to \mathcal{I}$.

\begin{lemma}\label{lemma:sequence-bar}
    The sequence
    \[
        \begin{tikzcd}
            0 \arrow{r} & F^1B_{\Omega}(\mathcal{C}) \arrow{r} & B_{\Omega}(\mathcal{C}) \arrow{r} & B(\mathcal{A}) \arrow{r} & 0
        \end{tikzcd}
    \]
    is exact.
\end{lemma}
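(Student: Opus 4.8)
The plan is to verify exactness on the underlying graded $\mathcal{O}_S$-modules, using that both sides split along the \emph{length} decomposition. As $\mathcal{O}_S$-modules we have $B_{\Omega}(\mathcal{C}) = \bigoplus_{n\ge 0}\mathcal{J}^{\otimes_{\Omega} n}$ and $B(\mathcal{A}) = \bigoplus_{n\ge 0}\mathcal{I}^{\otimes n}$, the natural map is the direct sum of the maps $\phi_n \defeq \bar{q}^{\otimes n}$ induced by the quotient $\bar{q}: \mathcal{J}\to\mathcal{I}=\mathcal{J}/F^1\mathcal{J}$, and (recalling that each $f_*\mathcal{F}^p$ is an $\Omega$-submodule of $\mathcal{C}$) the submodule $F^1B_{\Omega}(\mathcal{C})$ also respects this decomposition. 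Since exactness of a sequence of complexes is checked degreewise and direct sums preserve exactness, it suffices to prove, for every $n\ge 1$, that
\[
    0 \To \textstyle\sum_{i=1}^n \im\big(\mathcal{J}\otimes_{\Omega}\cdots\otimes_{\Omega} F^1\mathcal{J}\otimes_{\Omega}\cdots\otimes_{\Omega}\mathcal{J}\big) \To \mathcal{J}^{\otimes_{\Omega} n}\xrightarrow{\phi_n}\mathcal{I}^{\otimes n}\To 0
\]
is exact, where $F^1\mathcal{J}$ sits in the $i$-th slot; the $n=0$ term is the identity on $\mathcal{O}_S$. Note this matches $F^1B_{\Omega}(\mathcal{C})$ exactly, since the only way to write $1=p_1+\cdots+p_n$ with $p_j\ge 0$ is to place a single $1$ in one slot (with $F^0\mathcal{J}=\mathcal{J}$ elsewhere).

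The first point I would establish is that the source and target tensor products are compatible. The key observation is that $\Omega^{\ge 1}_{S/k}$ maps into $f_*\mathcal{F}^1 = F^1\mathcal{J}$ under $f^*$, so the $\Omega$-bimodule structure on $\mathcal{I} = \mathcal{J}/F^1\mathcal{J}$ factors through the augmentation $\Omega\to\mathcal{O}_S$ onto degree $0$. Hence the $J$-twisted relations defining $\otimes_{\Omega}$ become trivial on $\mathcal{I}$ (for $\omega\in\Omega^{\ge 1}$ both sides vanish, and for $\omega\in\mathcal{O}_S$ they reduce to $\mathcal{O}_S$-bilinearity, as $J=\id$ in degree $0$), giving a canonical identification $\mathcal{I}^{\otimes_{\Omega} n} = \mathcal{I}^{\otimes n}$. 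Under this identification $\phi_n$ is literally the $n$-fold $\Omega$-tensor power of the surjection $\bar{q}$.

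Surjectivity of $\phi_n$ is immediate, since one may lift each factor of a decomposable tensor $[a_1|\cdots|a_n]$ along $\bar{q}$. For the kernel I would invoke right-exactness of $\otimes_{\Omega}$: writing $\mathcal{F}\otimes_{\Omega}\mathcal{G}$ as the ordinary tensor product over the ring $\Omega$ after twisting the left action on $\mathcal{G}$ by the ring automorphism $J$ (so that $\omega\cdot g \defeq J\omega\wedge g$), the functor $-\otimes_{\Omega}-$ is right-exact in each variable. Factoring $\phi_n$ as the composite $\mathcal{J}^{\otimes_{\Omega} n}\to\mathcal{I}\otimes_{\Omega}\mathcal{J}^{\otimes_{\Omega}(n-1)}\to\cdots\to\mathcal{I}^{\otimes_{\Omega} n}$, each factor of which applies $\bar{q}$ in a single slot, right-exactness identifies the kernel of the $i$-th factor with the image of the corresponding $F^1\mathcal{J}$-slot. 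An induction on $n$, taking preimages through the already-established surjections, then yields $\ker\phi_n = \sum_{i=1}^n\im(\mathcal{J}\otimes_{\Omega}\cdots\otimes_{\Omega} F^1\mathcal{J}\otimes_{\Omega}\cdots\otimes_{\Omega}\mathcal{J})$, as required.

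The step requiring the most care is this last bookkeeping over the non-commutative ring $\Omega$ with the $J$-twisted convention: one must check that the iterated preimages really do add up to the sum of the single-slot images, and confirm that no flatness hypothesis is needed (indeed only right-exactness is used, which holds unconditionally). Once the identification $\mathcal{I}^{\otimes_{\Omega} n} = \mathcal{I}^{\otimes n}$ is in place, everything else is formal.
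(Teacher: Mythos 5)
Your proof is correct, and it takes a genuinely different route from the paper's. The paper argues through ordinary tensor products over $\mathcal{O}_S$: invoking flatness of $\mathcal{J}$, $F^1\mathcal{J}$ and $\mathcal{I}$ (available here because these sheaves are locally free, cf. Corollary \ref{coro:dg-algebra} and Theorem \ref{thm:canonical-lift-kronecker-subbundle}), it first identifies the kernel of $\mathcal{J}^{\otimes n}\to\mathcal{I}^{\otimes n}$ with the sum of the single-slot images of $F^1\mathcal{J}$, and then pushes this exactness forward along the surjection $\mathcal{J}^{\otimes n}\to\mathcal{J}^{\otimes_{\Omega}n}$, using only that the map to $\mathcal{I}^{\otimes n}$ factors through it. You instead work intrinsically over $\Omega$: you make explicit that the $\Omega$-bimodule structure on $\mathcal{I}=\mathcal{J}/F^1\mathcal{J}$ factors through $\mathcal{O}_S$ (since $\Omega^{\ge 1}\subset F^1\mathcal{J}$ and $F^1\mathcal{J}$ is an $\Omega$-submodule), giving the identification $\mathcal{I}^{\otimes_{\Omega}n}=\mathcal{I}^{\otimes n}$ --- a point the paper leaves implicit in the words ``factors through'' --- and you then compute the kernel by right-exactness of the $J$-twisted balanced tensor product over $\Omega$ (legitimate, since $J$ is a ring automorphism, so the paper's $\otimes_{\Omega}$ really is a balanced product over a ring), combined with the $\alpha^{-1}(\alpha(X))=X+\ker\alpha$ bookkeeping and induction on the number of slots; I checked this bookkeeping and it goes through. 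What each approach buys: the paper's argument is shorter because flatness is free in this geometric situation and lets one avoid all non-commutative subtleties; yours is more robust, using no flatness hypothesis whatsoever, and it isolates exactly why the comparison map $B_{\Omega}(\mathcal{C})\to B(\mathcal{A})$ is well defined. A minor stylistic advantage on your side: by phrasing the kernel as the \emph{sum} of the slot images inside $\mathcal{J}^{\otimes_{\Omega}n}$, you avoid the paper's mild abuse of writing short exact sequences whose left-hand term is a direct sum of slot modules, whose map into $\mathcal{J}^{\otimes n}$ is not actually injective when the slots overlap.
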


\begin{proof}
    By flatness of $\mathcal{J}$, $F^1\mathcal{J}$, and of $\mathcal{I}$, we have, for every $n\ge 1$, an exact sequence of $\mathcal{O}_S$-modules
    \[
        \begin{tikzcd}
            0 \arrow{r} & \bigoplus_{i=1}^n \mathcal{J}^{\otimes i-1}\otimes F^1\mathcal{J} \otimes \mathcal{J}^{\otimes n-i} \arrow{r} & \mathcal{J}^{\otimes n} \arrow{r} & \mathcal{I}^{\otimes n} \arrow{r} & 0.
        \end{tikzcd}
    \]
    Since the map $\mathcal{J}^{\otimes n} \to \mathcal{I}^{\otimes n}$ factors through $\mathcal{J}^{\otimes_{\Omega} n} \to \mathcal{I}^{\otimes n}$, we obtain an exact sequence of $\mathcal{O}_S$-modules
    \[
        \begin{tikzcd}
            0 \arrow{r} & \bigoplus_{i=1}^n \mathcal{J}^{\otimes_{\Omega} i-1}\otimes_{\Omega} F^1\mathcal{J} \otimes_{\Omega} \mathcal{J}^{\otimes_{\Omega} n-i} \arrow{r} & \mathcal{J}^{\otimes_{\Omega} n} \arrow{r} & \mathcal{I}^{\otimes n} \arrow{r} & 0.
        \end{tikzcd}
    \]
    This proves that the sequence in the statement is an exact sequence of $\mathcal{O}_S$-modules.
\end{proof}

Let
\[
    \sigma_{n,i}: \mathcal{I}^{\otimes i-1}\otimes (\Omega^1[1] \otimes \mathcal{I}) \otimes \mathcal{I}^{\otimes n-i} \stackrel{\sim}{\To} \Omega^1[1] \otimes \mathcal{I}^{\otimes n}
\]
be the `Koszul sign rule' isomorphism given by
\[
    [a_1 | \cdots |a_{i-1}|\omega \otimes a_i | a_{i+1}|\cdots |a_n]\longmapsto (-1)^{i-1}\omega \otimes [Ja_1 | \cdots |Ja_{i-1}|a_i |\cdots |a_n].
\]

\begin{lemma}\label{lemma:projection}
    For $n\ge 1$ and $1\le i \le n$, let
    \[
        \pi_{n,i}: \mathcal{J}^{\otimes i-1}\otimes F^1\mathcal{J} \otimes \mathcal{J}^{\otimes n-i} \To \Omega^1[1] \otimes \mathcal{I}^{\otimes n}
    \]
    be the projection given by $F^1\mathcal{J} \to F^1\mathcal{J}/F^2\mathcal{J} \cong \Omega^1[1] \otimes \mathcal{A} \to \Omega^1[1] \otimes \mathcal{I}$ (cf. Proposition \ref{prop:koszul-log})  on the $i$th factor and by $\mathcal{J} \to \mathcal{J}/F^1\mathcal{J} \cong \mathcal{I}$ on the other factors, composed with $\sigma_{n,i}$. Then:
    \begin{enumerate}[(i)]
        \item The sum
        \[
            \pi_n = \sum_{1\le i \le n}\pi_{n,i} : \bigoplus_{i=1}^n\mathcal{J}^{\otimes i-1}\otimes F^1\mathcal{J} \otimes \mathcal{J}^{\otimes n-i} \To \Omega^1[1] \otimes \mathcal{I}^{\otimes n}
        \]
        factors through an $\mathcal{O}_S$-linear map
        \[
            \pi_n : \sum_{i=1}^n \im(\mathcal{J}^{\otimes_{\Omega} i-1}\otimes_{\Omega} F^1\mathcal{J} \otimes_{\Omega} \mathcal{J}^{\otimes_{\Omega} n-i} \To \mathcal{J}^{\otimes_{\Omega} n}) \longrightarrow \Omega^1[1] \otimes \mathcal{I}^{\otimes n}
        \]
        \item The induced $\mathcal{O}_S$-linear map
        \[
            \pi: F^1B_{\Omega}(\mathcal{C}) \To \Omega^1[1] \otimes B(\mathcal{A}) \cong \Omega^1\otimes B(\mathcal{A})[-1]
        \]
        is a morphism of complexes over $k$.
    \end{enumerate}
\end{lemma}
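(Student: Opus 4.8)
The plan is to treat both assertions by systematically passing to the associated graded of the Koszul filtration, where every tensor product over $\Omega$ becomes an ordinary tensor product over $\mathcal{O}_S$, and then matching signs.

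For (i), since the target $\Omega^1[1]\otimes\mathcal{I}^{\otimes n}$ is an ordinary tensor product over $\mathcal{O}_S$, it suffices to show that $\pi_n$ descends from $\bigoplus_i\mathcal{J}^{\otimes i-1}\otimes F^1\mathcal{J}\otimes\mathcal{J}^{\otimes n-i}$ to the image in $\mathcal{J}^{\otimes_\Omega n}$; that is, that $\pi_n$ annihilates every defining relation $[\cdots|c_j\wedge\omega|c_{j+1}|\cdots]-[\cdots|c_j|J\omega\wedge c_{j+1}|\cdots]$, with $\omega$ a section of $\Omega$. I would split into cases according to where the distinguished $F^1\mathcal{J}$ factor sits relative to the positions $j,j+1$. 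If $\omega$ has positive degree and is moved among the non-distinguished (weight $0$) factors, then both $c_j\wedge\omega$ and $J\omega\wedge c_{j+1}$ acquire weight $\geq 1$; together with the distinguished factor this is total weight $\geq 2$, so both terms vanish under the projection to $\operatorname{gr}^1$, and the relation is preserved trivially. The only substantive case is when $\omega$ crosses the boundary of the distinguished factor: the two terms then live in different summands of $\bigoplus_i$, and the required equality is exactly the identity one checks on generators, where the sign $(-1)^{i-1}$ and the $J$-twists built into $\sigma_{n,i}$ compensate the Koszul sign incurred by commuting $\omega$ past the intervening factors, using $J\omega=(-1)^{\deg\omega}\omega$. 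This same weight count shows that $\pi_n$ kills $F^2B_\Omega(\mathcal{C})$, so that $\pi$ factors through $F^1B_\Omega(\mathcal{C})/F^2B_\Omega(\mathcal{C})$.

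For (ii), I would first record that the bar differential preserves the Koszul filtration: $d_1$ preserves it because the absolute differential $d$ sends $\mathcal{F}^p$ into $\mathcal{F}^p$ (from $d(f^*\eta\wedge\beta)=f^*(d\eta)\wedge\beta\pm f^*\eta\wedge d\beta$, whose first summand lies in $\mathcal{F}^{p+1}$), while $d_2$ preserves it by multiplicativity $\mathcal{F}^a\wedge\mathcal{F}^b\subset\mathcal{F}^{a+b}$. Hence $d_B$ induces a differential $\overline{d}_B$ on $\operatorname{gr}^1$, and by Proposition~\ref{prop:koszul-log} (an isomorphism of complexes) the differential induced on the distinguished factor $F^1\mathcal{J}/F^2\mathcal{J}\cong\Omega^1[1]\otimes\mathcal{A}$ is $\id_{\Omega^1}\otimes d$. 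It then remains to push an element of $F^1B_\Omega(\mathcal{C})$ carrying a single weight-$1$ factor through $d_B$ and project: the weight-raising (Gauss--Manin) component of $d$ applied to any weight-$0$ factor produces a second weight-$1$ factor and thus lands in $F^2$, where $\pi$ vanishes, and the weight-preserving components reproduce exactly $\id_{\Omega^1}\otimes d_B$ on $\Omega^1\otimes B(\mathcal{A})[-1]$ once the shift sign from $[-1]$ is absorbed.

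The step I expect to be genuinely delicate is the sign and cancellation bookkeeping in (ii). Concretely, a distinguished factor whose $\mathcal{A}$-component lies in $\mathcal{A}^0$ (a pure pull-back form) is killed by $\pi$, yet $d_2$ can merge it with an adjacent $\mathcal{I}$-factor to produce a distinguished factor whose $\mathcal{A}$-component lies in $\mathcal{I}$, which is \emph{not} killed. For $\pi$ to be a morphism of complexes these contributions must cancel, and one verifies that the two possible merges (to the left and to the right) occur with the opposite signs $(-1)^{j-1}$ dictated by $d_2$ while yielding the same word after applying $\sigma_{n,i}$, so that they annihilate one another. Checking that all the $J$-twists and Koszul signs line up in this cancellation, and that they simultaneously match the target differential on the genuine $\mathcal{I}$-valued part, is the main computational obstacle; everything else is formal once the graded identifications of Proposition~\ref{prop:koszul-log} and Lemma~\ref{lemma:sequence-bar} are in place.
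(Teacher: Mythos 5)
Your overall strategy for both parts is the same as the paper's (reduce to explicit generators and match signs via $\sigma_{n,i}$ and $J$), but there are two genuine gaps.

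In (i), your reduction ``descends to the image in $\mathcal{J}^{\otimes_\Omega n}$, \emph{that is}, kills the defining relations of $\otimes_\Omega$'' is not an equivalence. The kernel of $\bigoplus_{i}\mathcal{J}^{\otimes i-1}\otimes F^1\mathcal{J}\otimes\mathcal{J}^{\otimes n-i}\to\mathcal{J}^{\otimes_\Omega n}$ has a second source besides the $\otimes_\Omega$-relations: the overlap kernel of the sum map into $\mathcal{J}^{\otimes n}$, i.e.\ the ambiguity of which summand an element with \emph{two} factors in $F^1\mathcal{J}$ is lifted to. The paper deals with this first, using flatness: for $n=2$ the sequence $0 \to (F^1\mathcal{J})^{\otimes 2} \to (F^1\mathcal{J}\otimes\mathcal{J})\oplus(\mathcal{J}\otimes F^1\mathcal{J}) \to \mathcal{J}^{\otimes 2}$, with injection $x\mapsto (x,-x)$, is exact, and each $\pi_{2,i}$ kills $(F^1\mathcal{J})^{\otimes 2}$ because one factor is always reduced mod $F^1\mathcal{J}$. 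Your crossing-case sign computation agrees with the paper's; this preliminary identification of the overlap kernel is what is missing. It is fillable, but it is a real step, not part of the relation-killing.

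In (ii), the subtlety you flag is real (and it is exactly what the paper's own spanning claim glosses over: a word containing a pure pullback factor $\beta\in\Omega^1$ cannot be rewritten by the $\otimes_\Omega$-relations, so such words are \emph{not} combinations of $[\omega\wedge c_1|c_2|\cdots|c_n]$). But your proposed resolution fails: the left/right merge cancellation requires the pure factor to sit in an \emph{interior} position. At the boundary there is only one merge and nothing cancels it. Concretely, take $[\beta|c]$ with $\beta\in\Omega^1$ pure and $c$ with nonzero relative class $\overline{c}\in\mathcal{I}$: then $\pi([\beta|c])=0$, while
\[
d_B[\beta|c] = -[d\beta|c]-[\beta|dc]-[\beta\wedge c], \qquad \pi\bigl(d_B[\beta|c]\bigr) = -\beta\otimes[\overline{c}]\neq 0,
\]
so the chain-map identity cannot be obtained on such elements by any bookkeeping --- it does not hold there. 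The paper avoids (rather than resolves) this by verifying the identity only on words of the form $[\omega\wedge c_1|c_2|\cdots|c_n]$, which is also all that the later construction of the Gauss--Manin connection uses, since $d_B(s(\xi))$ is built from canonical lifts whose degree-one factors have no pure $\Omega$-components; modulo $F^2$ (where $\pi$ vanishes and which $d_B$ preserves) its weight-one part is spanned by such words. So the correct repair of your argument is not a finer cancellation but a restriction of the class of elements on which the identity is asserted; as written, your proof of (ii) gets stuck at the boundary case.
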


\begin{proof}
    For clarity, we only give details for the case $n=2$; the general case is similar.

    To prove (i), we first remark that, from the flatness of $F^1\mathcal{J}$, $\mathcal{J}$, and $\mathcal{J}/F^1\mathcal{J} \cong \mathcal{I}$, we obtain an exact sequence
    \[
        \begin{tikzcd}
            0 \arrow{r} & (F^1\mathcal{J})^{\otimes 2} \arrow{r} & (F^1\mathcal{J}\otimes \mathcal{J})\oplus (\mathcal{J} \otimes F^1\mathcal{J}) \arrow{r} & \mathcal{J}^{\otimes 2},
        \end{tikzcd}
    \]
    where the injection is given by $x\mapsto (x,-x)$. For $i=1,2$, we have $\pi_{2,i}((F^1\mathcal{J})^{\otimes 2}) = 0$, since there is always a projection $\mathcal{J} \to \mathcal{J}/F^1\mathcal{J}$ in one of the factors. This shows that  $\pi_2$ factors through
    \[
        \pi_2: \im((F^1\mathcal{J}\otimes \mathcal{J}) \oplus (\mathcal{J}\otimes F^1\mathcal{J})  \To \mathcal{J}^{\otimes 2})= F^1\mathcal{J}\otimes \mathcal{J} + \mathcal{J}\otimes F^1\mathcal{J} \To \Omega^1[1]\otimes \mathcal{I}^{\otimes 2}.
    \]
    We are left to show that $\pi_2$ factors through the image of $\mathcal{F}^1\mathcal{J}\otimes \mathcal{J} + \mathcal{J} \otimes F^1\mathcal{J}$ in $\mathcal{J}^{\otimes_{\Omega}2}$. By definition, the kernel of $\mathcal{J}^{\otimes 2} \to \mathcal{J}^{\otimes_{\Omega}2}$ is generated by sections of the form
    \[
        x = (c_1\wedge \omega) \otimes c_2 - c_1 \otimes(J\omega \wedge c_2),
    \]
    with $c_1,c_2$ sections of $\mathcal{J}$ and $\omega$ a section of $\Omega^{n}$ with $n\ge 1$. Every such element is in $\mathcal{F}^1\mathcal{J}\otimes \mathcal{J} + \mathcal{J} \otimes F^1\mathcal{J}$, so we only need to prove that $\pi_2(x) = 0$. Using that $\Omega^i = \bigwedge^i \Omega^1$, we can reduce to the case where $\omega$ is a section of $\Omega^1$: if $\omega = \omega'\wedge \omega''$, then we can write
    \begin{align*}
        x =& ((c_1\wedge \omega')\wedge \omega'')\otimes c_2 - (c_1\wedge \omega')\otimes (J\omega''\wedge c_2)\\
        &+ (c_1\wedge \omega') \otimes (J\omega''\wedge c_2) - c_1 \otimes (J\omega'\wedge (J\omega'' \wedge c_2)).
    \end{align*}
    Finally, assuming that $\omega$ is a section of $\Omega^1$, we have $x = (c_1\wedge \omega) \otimes c_2 + c_1 \otimes(\omega \wedge c_2)$, so that
    \begin{align*}
        \pi(x) &= \pi_{2,1}((c_1\wedge \omega) \otimes c_2) + \pi_{2,2}(c_1\otimes (\omega \wedge c_2))\\
           &= \pi_{2,1}((\omega \wedge Jc_1) \otimes c_2) + \pi_{2,2}(c_1\otimes (\omega \wedge c_2))\\
           &= \omega \otimes (Jc_1\otimes c_2) - \omega \otimes (Jc_1\otimes c_2) = 0.
    \end{align*}
    This ends the proof of (i).

    To prove (ii), we simply compute:
    \begin{align*}
        d_{B}([\omega \wedge c_1 | c_2 |\cdots |c_n]) =& -[d\omega \wedge c_1 - \omega \wedge dc_1 | c_2 |\cdots |c_n]\\ & - \sum_{i=2}^n(-1)^i[\omega \wedge Jc_1|\cdots |Jc_{i-1}|dc_i|c_{i+1}| \cdots |c_n] \\
        &- \sum_{i=1}^n(-1)^{i-1}[\omega \wedge Jc_1|\cdots |Jc_{i-1}|Jc_i \wedge c_{i+1}|c_{i+2}|\cdots |c_n]
    \end{align*}
    Thus,
    \begin{align*}
        \pi_2(d_{B}([\omega \wedge c_1 |c_2| \cdots |c_n])) = -(\id\otimes d_B)(\pi_2([\omega \wedge c_1 |c_2| \cdots |c_n])).
    \end{align*}
    This finishes the proof, as every section of $F^1B_{\Omega}(\mathcal{C})$ is a combination of sections of the form $[\omega \wedge c_1 |c_2| \cdots |c_n]$.
\end{proof}

\subsection{Gauss--Manin connection on the fundamental Hopf algebra}\label{par:GM-connection-fundamental}

Consider the splitting
\[
    \begin{tikzcd}
        0 \arrow{r} & \Omega^1 \arrow{r} & \mathcal{C}^1 \arrow{r} & \mathcal{A}^1 \arrow{r}\arrow[bend right]{l} & 0,
    \end{tikzcd}
\]
induced by the canonical lift of Kronecker differentials, as in Section \ref{sec:canonical-lifts}. Locally,
\[
    \nu \longmapsto \widetilde{\nu}\text{, }\qquad \omega^{(n)}_P\longmapsto \widetilde{\omega}^{(n)}_P\text{, }\qquad n\ge 0\text{, } P \in Z(S).
\]
It induces a splitting
\[
    \begin{tikzcd}
        0 \arrow{r} & F^1B^0_{\Omega}(\mathcal{C}) \arrow{r} & B^0_{\Omega}(\mathcal{C}) \arrow{r} & B^0(\mathcal{A}) \arrow{r}\arrow[bend right]{l}[swap]{s} & 0.
    \end{tikzcd}
\]
given by
\[
    s[a_1|\cdots |a_n]= [\widetilde{a}_1|\cdots |\widetilde{a}_n].
\]

\begin{lemma}
    If $\xi$ is a section of $H^0(B(\mathcal{A}))$, then $d_{B}\circ s (\xi) \in F^1B^1_{\Omega}(\mathcal{C})$.
\end{lemma}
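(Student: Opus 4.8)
The plan is to reduce the statement to the fact, recorded in Lemma \ref{lemma:sequence-bar}, that the quotient map $q \colon B_\Omega(\mathcal{C}) \to B(\mathcal{A})$ has kernel $F^1 B_\Omega(\mathcal{C})$, combined with the observation that $q$ is a morphism of complexes while $s$ is merely a section of $q$ in degree $0$. The essential point is that $s$ itself fails to be a chain map---this failure is precisely what will produce the Gauss--Manin connection---but the obstruction to it being a chain map is forced to land in $\ker q = F^1$.

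Concretely, I would first note that $q$ is a morphism of complexes. Indeed, $q$ is induced by the quotient $\mathcal{J} \to \mathcal{J}/F^1\mathcal{J} \cong \mathcal{I}$, which in turn comes from the surjection of graded $\mathcal{O}_S$-algebras $\mathcal{C} \twoheadrightarrow \mathcal{C}/F^1\mathcal{C} \cong \mathcal{A}$ of Proposition \ref{prop:koszul-log} (case $p=0$). Since the de Rham differential $d$ preserves the Koszul filtration $\mathcal{F}^\bullet$, this surjection is compatible with $d$, and inspection of the defining formulas for $d_1$ and $d_2$ then shows that $q \circ d_B = d_B \circ q$. Second, by construction $s$ is the splitting of the degree-$0$ part of the sequence in Lemma \ref{lemma:sequence-bar}, so that $q \circ s = \id_{B^0(\mathcal{A})}$.

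Granting this, the lemma follows by a one-line diagram chase. For $\xi$ a section of $H^0(B(\mathcal{A})) = \ker(d_B \colon B^0(\mathcal{A}) \to B^1(\mathcal{A}))$ we compute
\[
  q\bigl(d_B(s\xi)\bigr) = d_B\bigl(q(s\xi)\bigr) = d_B(\xi) = 0,
\]
using successively that $q$ is a chain map, that $q\circ s = \id$, and that $\xi$ is a bar cocycle. Hence $d_B(s\xi)$ lies in $\ker\bigl(q \colon B^1_\Omega(\mathcal{C}) \to B^1(\mathcal{A})\bigr) = F^1 B^1_\Omega(\mathcal{C})$, as claimed.

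The only step needing genuine (if routine) care---and the mild obstacle in an otherwise formal argument---is verifying that $q$ is really a chain map, since the proof of Lemma \ref{lemma:sequence-bar} only establishes exactness at the level of $\mathcal{O}_S$-modules. This amounts to checking that the isomorphism $\mathcal{J}/F^1\mathcal{J} \cong \mathcal{I}$ intertwines the relative bar differential on $B_\Omega(\mathcal{C})$ with the ordinary bar differential on $B(\mathcal{A})$: the $d_1$-part is immediate from $d$ preserving $\mathcal{F}^\bullet$, while for the $d_2$-part one uses that the extra $\otimes_\Omega$-relations all lie in $F^1$ and are therefore annihilated in the quotient. I expect no real difficulty beyond this bookkeeping, the substance of the lemma being carried entirely by Lemma \ref{lemma:sequence-bar}.
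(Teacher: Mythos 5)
Your proof is correct and is essentially identical to the paper's: the paper also deduces the lemma from the exact sequence of Lemma \ref{lemma:sequence-bar}, viewed as a commutative diagram with vertical maps $d_B$, and performs exactly your chase $q(d_B(s\xi)) = d_B(q(s\xi)) = d_B(\xi) = 0$, so that $d_B(s\xi) \in \ker q = F^1B^1_\Omega(\mathcal{C})$. Your explicit verification that $q$ is a chain map is just the commutativity of that diagram, which the paper asserts without spelling out; the substance is the same.
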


\begin{proof}
    This follows immediately from the commutative diagram with exact rows
    \[
        \begin{tikzcd}
            0 \arrow{r} & F^1B^0_{\Omega}(\mathcal{C})\arrow{d} \arrow{r} & B^0_{\Omega}(\mathcal{C})\arrow{d}{d_{B}} \arrow{r} & B^0(\mathcal{A})\arrow{d}{d_B} \arrow{r}\arrow[bend right]{l}[swap]{s} & 0\\
            0 \arrow{r} & F^1B^1_{\Omega}(\mathcal{C}) \arrow{r} & B^1_{\Omega}(\mathcal{C}) \arrow{r} & B^1(\mathcal{A}) \arrow{r} & 0
        \end{tikzcd}
    \]
    given by Lemma \ref{lemma:sequence-bar}.
\end{proof}

Thus, we can define a $k$-linear map
\[
    \delta : H^0(B(\mathcal{A})) \To \Omega^1\otimes B^0(\mathcal{A}) \text{, }\qquad \delta = -\pi \circ d_{B}\circ s.
\]

\begin{lemma}
    The image of $\delta$ is contained in $\Omega^1\otimes H^0(B(\mathcal{A}))$.
\end{lemma}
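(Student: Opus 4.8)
The plan is to reduce the statement to a single cocycle identity. Since $\delta\xi$ already lies in $\Omega^1\otimes B^0(\mathcal{A})$ by construction, and since $\Omega^1=\Omega^1_{S/k}$ is locally free over $\mathcal{O}_S$ (as $S$ is smooth over $k$), tensoring by $\Omega^1$ is exact, so $\ker(\id_{\Omega^1}\otimes d_B)=\Omega^1\otimes\ker(d_B)=\Omega^1\otimes H^0(B(\mathcal{A}))$. Hence it suffices to prove the single identity $(\id_{\Omega^1}\otimes d_B)(\delta\xi)=0$ for every section $\xi$ of $H^0(B(\mathcal{A}))$.

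The key input is that $\pi$ is a morphism of complexes over $k$ (Lemma~\ref{lemma:projection}(ii)). Implicit in this statement is that $F^1B_{\Omega}(\mathcal{C})$ is a subcomplex of $B_{\Omega}(\mathcal{C})$: the Koszul filtration $\mathcal{F}^{\bullet}$ is $d$-stable (differentiating $f^*\eta\wedge\alpha$ with $\eta$ a $p$-form on $S$ keeps us in $\mathcal{F}^p$, since $f^*(d\eta)$ is the pullback of a $(p{+}1)$-form), and the bar differentials $d_1,d_2$ either apply $d$ factorwise or wedge adjacent factors, neither of which lowers the total Koszul degree; thus $d_B(F^1B_{\Omega}(\mathcal{C}))\subset F^1B_{\Omega}(\mathcal{C})$. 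Matching the sign obtained at the end of the proof of Lemma~\ref{lemma:projection}, the morphism-of-complexes property reads $\pi\circ d_B=-(\id_{\Omega^1}\otimes d_B)\circ\pi$ on $F^1B_{\Omega}(\mathcal{C})$, where the target differential is (up to this sign, coming from the degree shift in $\Omega^1\otimes B(\mathcal{A})[-1]$) simply $\id_{\Omega^1}\otimes d_B$.

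Now carry out the computation. By the preceding lemma, $y\defeq d_B(s\xi)$ lies in $F^1B^1_{\Omega}(\mathcal{C})$, so it is in the domain of $\pi$ and $\delta\xi=-\pi(y)$ lives in the degree-one part of $\Omega^1\otimes B(\mathcal{A})[-1]$, which the shift identifies with $\Omega^1\otimes B^0(\mathcal{A})$ (consistent with the target of $\delta$). Then
\[
  (\id_{\Omega^1}\otimes d_B)(\delta\xi)=-(\id_{\Omega^1}\otimes d_B)(\pi(y))=\pi(d_B y)=\pi\bigl(d_B^2(s\xi)\bigr)=0,
\]
using $d_B^2=0$ in the last step. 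This is precisely the required cocycle condition, and the exactness argument above then places $\delta\xi$ in $\Omega^1\otimes H^0(B(\mathcal{A}))$.

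The argument is short, so there is no real structural obstacle; the only points requiring care are purely bookkeeping. First, one must be sure that $\pi\circ d_B$ is meaningful at $y$, which is handled by the observation that $F^1B_{\Omega}(\mathcal{C})$ is a subcomplex. Second, one must track the single minus sign in Lemma~\ref{lemma:projection}(ii) and confirm that the degree shift places $\pi(y)$ in $\Omega^1\otimes B^0(\mathcal{A})$ and its differential in $\Omega^1\otimes B^1(\mathcal{A})$; the sign is in any case immaterial for the kernel computation. I note that the hypothesis $d_B\xi=0$ is used only to guarantee $y\in F^1$ via the preceding lemma; beyond that, the proof rests entirely on $d_B^2=0$, the morphism-of-complexes property of $\pi$, and the flatness of $\Omega^1_{S/k}$.
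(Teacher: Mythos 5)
Your proof is correct and takes essentially the same route as the paper's: the paper also computes $(\id\otimes d_B)\circ\delta=-(\id\otimes d_B)\circ\pi\circ d_B\circ s=\pi\circ d_B\circ d_B\circ s=0$, using that $\pi$ is a morphism of $k$-complexes (part (ii) of the projection lemma) together with $d_B^2=0$. The additional bookkeeping you supply --- flatness of $\Omega^1_{S/k}$ to identify $\ker(\id\otimes d_B)$ with $\Omega^1\otimes\ker(d_B)$, the verification that $F^1B_{\Omega}(\mathcal{C})$ is a subcomplex, and the sign from the shift --- is left implicit in the paper but is consistent with it.
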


\begin{proof}
    By definition, $(\id \otimes d_B)\circ \delta = -(\id \otimes d_B) \circ \pi \circ d_B \circ s$. Then, using that $\pi$ is a morphism of $k$-complexes (part (ii) of Lemma \ref{lemma:projection}), we get  $-(\id \otimes d_B) \circ \pi \circ d_B \circ s = \pi \circ d_B\circ d_B \circ s  = 0$.
\end{proof}

In the notation of Definition \ref{def:fundamental-hopf}, we obtain a $k$-linear map
\[
    \delta : \mathcal{H}_{E/S,Z} \To \Omega_{S/k}^1 \otimes \mathcal{H}_{E/S,Z}.
\]

\begin{theorem}\label{thm:non-abelian-gm}
    The above defined map $\delta$ is an integrable $k$-connection on the $\mathcal{O}_S$-module $\mathcal{H}_{E/S,Z}$. Moreover:
    \begin{enumerate}[(i)]
        \item The connection $\delta$ preserves the length filtration.
        \item For every $n\ge 1$, the induced connection on $L_n \mathcal{H}_{E/S,Z}/L_{n-1}\mathcal{H}_{E/S,Z}$ gets identified with the $n$th tensor power of the Gauss--Manin connection on $H^1_{\dR}((E\setminus Z)/S)$ under the isomorphism \eqref{eq:isomorphism-graded-length}. In particular, $\delta$ is regular singular at infinity.
        \item The deconcatenation coproduct $\Delta$ and the antipode  $\sigma$ are horizontal for $\delta$, namely:
        \[
            (\id \otimes \Delta)\circ \delta = (\delta\otimes \id + \id \otimes \delta)\circ \Delta\text{, }\qquad\delta \circ  \sigma = (\id \otimes \sigma) \circ \delta.
        \]
        \item The formation of $\delta$ is compatible with base change of the form $S'\to S$, where $S'$ is a smooth $k$-scheme, and with extension of scalars $k\subset k'$.
   \end{enumerate}
\end{theorem}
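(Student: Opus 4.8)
The plan is to read every assertion off the definition $\delta = -\pi\circ d_B\circ s$, exploiting three facts: the splitting $s$ (built from the canonical lifts of Theorem~\ref{thm:canonical-lift-kronecker-subbundle}) is $\mathcal{O}_S$-linear and, being canonical, globally defined; the projection $\pi$ is a morphism of complexes over $k$ (Lemma~\ref{lemma:projection}(ii)); and $d_B^2 = 0$. First I would verify the Leibniz rule, which is what makes $\delta$ a $k$-connection. For a local section $f$ of $\mathcal{O}_S$ and $\xi = [a_1|\cdots|a_n]$ in $\mathcal{H}_{E/S,Z}$ one has $s(f\xi)=f\,s(\xi)$, and placing $f$ on the first slot, the only occurrence of $d$ hitting $f$ is in the first $d_1$-term, where $d(f\widetilde a_1)=df\wedge\widetilde a_1 + f\,d\widetilde a_1$, while $d_2$ and the involution $J$ commute with the scalar $f$. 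This gives $d_B(f\,s\xi)=f\,d_B(s\xi)-[df\wedge\widetilde a_1|\widetilde a_2|\cdots|\widetilde a_n]$. Applying $-\pi$ and noting that only $\pi_{n,1}$ survives on $[df\wedge\widetilde a_1|\cdots]$ (with sign $(-1)^0$), so that $\pi([df\wedge\widetilde a_1|\cdots])=df\otimes[a_1|\cdots|a_n]$, I obtain $\delta(f\xi)=df\otimes\xi+f\,\delta(\xi)$.

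Parts (i) and (ii) are then quick. The length filtration is preserved because $s$ preserves word-length, $d_B=d_1+d_2$ does not increase it ($d_1$ preserves, $d_2$ lowers), and $\pi$ preserves the number of bar factors; hence $\delta(L_n)\subseteq\Omega^1\otimes L_n$. On $\mathrm{gr}^L_n$ the length-lowering part $d_2$ dies, so the induced map is $-\bar\pi\circ d_1\circ s$, and on a pure tensor of closed relative forms $[\xi_1|\cdots|\xi_n]$ it equals $\sum_i [\xi_1|\cdots|\nabla\xi_i|\cdots|\xi_n]$, where the identification $\pi(d\widetilde\xi_i)=\nabla\xi_i$ is exactly the relation $\nabla\omega = d\widetilde\omega \bmod \Omega^2_{S/k}$ recalled just before Example~\ref{ex:gauss--manin}. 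Under \eqref{eq:isomorphism-graded-length} this is the $n$-th tensor power of the Gauss--Manin connection; regular singularity at infinity follows since iterated extensions of the regular-singular Gauss--Manin connection are regular singular.

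The main obstacle is integrability, which I would obtain from the spectral sequence of the Koszul-filtered relative bar complex $(B_\Omega(\mathcal{C}),F^\bullet)$. Using Proposition~\ref{prop:koszul-log} and the local splitting of the Koszul filtration one identifies $\mathrm{gr}^p_F B_\Omega(\mathcal{C})\cong \Omega^p\otimes B(\mathcal{A})[-p]$ with $d_0=\mathrm{id}\otimes d_B$, so by local freeness of $\Omega^p$ one gets $E_1^{p,q}=\Omega^p\otimes H^q(B(\mathcal{A}))$; in particular the bottom row is $E_1^{\bullet,0}=\Omega^\bullet\otimes\mathcal{H}_{E/S,Z}$. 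The zig-zag defining $d_1$ on this row is precisely ``lift by $s$, apply $d_B$, project by $\pi$'', i.e.\ $d_1=\pm\delta$, and this is the bar-complex analogue of the Katz--Oda description. Since $d_1\circ d_1=0$ in any spectral sequence, the curvature $\delta^2$ vanishes. The delicate point, which is the real content of the argument, is to check that $d_1$ on $E_1^{1,0}\to E_1^{2,0}$ is genuinely the Leibniz extension of $\delta$ (so that $d_1^2$ computes the connection curvature, not merely its graded approximation); this is forced by $d_B^2=0$ together with the chain-map property of $\pi$ in Lemma~\ref{lemma:projection}(ii) and its analogue $\pi^{(2)}\colon F^2B_\Omega(\mathcal{C})\to\Omega^2\otimes B(\mathcal{A})[-2]$, by tracking the second step of the filtration when $d_B$ is applied to a lift of $d_B s\xi$.

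Finally, (iii) and (iv) are formal. The deconcatenation coproduct $\Delta$ and the antipode $\sigma$ are purely combinatorial and already make sense on $B_\Omega(\mathcal{C})$; the lift $s$ (factorwise), $d_B$ (a coderivation, anti-compatible with reversal), and $\pi$ (factorwise) all intertwine them, so $\delta=-\pi\,d_B\,s$ is a coderivation for $\Delta$ and satisfies $\delta\circ\sigma=(\mathrm{id}\otimes\sigma)\circ\delta$. For base change, every ingredient commutes with the flat base changes $S'\to S$ and the extension of scalars $k\subset k'$: the Koszul filtration on $\mathcal{C}$, the relative bar complex $B_\Omega(\mathcal{C})$, the projection $\pi$, and the canonical lift $s$, whose defining conditions in Theorem~\ref{thm:canonical-lift-kronecker-subbundle} are stable under such base change, together with the Hopf algebra $\mathcal{H}_{E/S,Z}$ itself (Corollary~\ref{coro:base-change}). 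Hence so does $\delta$.
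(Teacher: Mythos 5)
Your verification of the Leibniz rule and your arguments for (i)--(iv) are correct and essentially coincide with the paper's: the Leibniz rule comes from the single extra term $-[dr\wedge\widetilde{a}_1|\cdots|\widetilde{a}_n]$ produced by $d_B$, part (ii) is the factorwise application of the Katz--Oda relation $\nabla\omega\equiv d\widetilde{\omega}\bmod\Omega^2_{S/k}$ together with stability of regularity under extensions, and (iv) follows from the naturality of $s$, $d_B$, $\pi$ and Corollary~\ref{coro:base-change}. The divergence is in the proof of integrability, where you take a genuinely different route from the paper --- and there your argument has a real gap.

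Your spectral sequence rests on the identification $\mathrm{gr}^p_F B_{\Omega}(\mathcal{C})\cong\Omega^p_{S/k}\otimes B(\mathcal{A})[-p]$, and this identification is false. The graded pieces of the Koszul filtration on the relative bar complex contain extra classes coming from bar factors which are \emph{pure forms from the base}: if $\omega$ is a nonzero local section of $\Omega^1_{S/k}\subset\mathcal{J}$, then $[\omega]$ lies in $F^1B_{\Omega}(\mathcal{C})$, has total degree $0$, and is nonzero in $\mathrm{gr}^1_F$ (because $F^2\mathcal{J}$ vanishes in degree $1$), whereas
\[
    \bigl(\Omega^1_{S/k}\otimes B(\mathcal{A})[-1]\bigr)^0=\Omega^1_{S/k}\otimes B(\mathcal{A})^{-1}=0;
\]
the same happens for $[\omega|\widetilde{a}_1|\cdots]$, $[\widetilde{a}_1|\omega|\cdots]$, and so on. This is visible in the paper's own constructions: Proposition~\ref{prop:koszul-log} gives $F^1\mathcal{J}/F^2\mathcal{J}\cong\Omega^1_{S/k}\otimes\mathcal{A}[-1]$, which contains the summand $\Omega^1_{S/k}\otimes\mathcal{A}^0$, and the projection $\pi$ of Lemma~\ref{lemma:projection} is designed to kill precisely that summand (it composes with $\mathcal{A}\to\mathcal{I}$), so on $\mathrm{gr}^1_F$ it is a proper projection, not an isomorphism. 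Consequently $E_1^{p,q}$ is not $\Omega^p_{S/k}\otimes H^q(B(\mathcal{A}))$, the bottom row of $E_1$ is larger than $\Omega^{\bullet}_{S/k}\otimes\mathcal{H}_{E/S,Z}$, and the relation $d_1\circ d_1=0$ does not by itself yield vanishing of the curvature of $\delta$: one would first have to prove that the extra classes do not contribute to the relevant cohomology, and then carry out the identification of $d_1$ with the Leibniz extension of $\delta$ --- the two steps you defer to an unconstructed $\pi^{(2)}$ and to ``tracking the second step of the filtration''. That deferred step is the entire difficulty, and it is missing.

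The paper sidesteps all of this with a trick for which you already have every ingredient: integrability is a formal consequence of (iv). Étale-locally on $S$, the pair $(E/S,Z)$ is pulled back from a smooth $k$-scheme of dimension $1$ (an étale chart of the moduli stack of elliptic curves with the relevant level structure), the formation of $\delta$ --- hence of its curvature --- commutes with such base changes by (iv), and over a $1$-dimensional base the curvature, being a map $\mathcal{H}_{E/S,Z}\to\Omega^2_{S/k}\otimes\mathcal{H}_{E/S,Z}$, vanishes identically since $\Omega^2_{S/k}=0$. The same reduction to $\dim S=1$ is how the paper proves Theorem~\ref{thm:integrability}. I recommend replacing your spectral-sequence argument by this one; alternatively, a genuine Katz--Oda-style proof can likely be completed, but only after computing the cohomology of the true graded pieces $\mathrm{gr}^p_FB_{\Omega}(\mathcal{C})$, which is substantial work not contained in your proposal.
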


\begin{proof}
    To show that $\delta$ is a connection, let $\gamma = \sum_{j}[c_{1}^j|\cdots |c_{n_j}^j]$ be a section of $B^0_{\Omega}(\mathcal{C})$ such that $d_{B}\gamma$ lies in $F^1B^1_{\Omega}(\mathcal{C})$. For a section $r$ of $\mathcal{O}_S$, we have
    \[
        d_{B}(r\gamma) = \sum_{j}d_{B}[rc_{1}^j|\cdots |c_{n_j}^j] =-\sum_{j}[dr\wedge c_{1}^j|\cdots |c_{n_j}^j] +rd_{B}\gamma. 
    \]
    In particular, $d_{B}(r\gamma)$ is also a section of $F^1B^1_{\Omega}(\mathcal{C})$, and
    \[
        \pi \circ d_{B}(r\gamma) = -dr\otimes \overline{\gamma} + r \pi \circ d_B(\gamma),
    \]
    where $\overline{\gamma}$ denotes the image of $\gamma$ in $B^0(\mathcal{A})$. If $\gamma = s(\xi)$ for some section $\xi$ of $\mathcal{H}_{E/S,Z} = H^0(B(\mathcal{A}))$, this shows that
    \[
        \delta(r\xi) = dr\otimes \xi + r\delta(\xi).
    \]
    Thus, $\delta$ is a $k$-connection.

    Note that the definition of $\delta$ only involves the canonical splitting $s$, which is compatible with base change, and natural operations on the bar construction. Therefore, (iv) follows from Corollary \ref{coro:base-change}. The compatibility with base change immediately implies the integrability of $\delta$, since the moduli stack of elliptic curves is a 1-dimensional smooth Deligne--Mumford stack, and every connection on a smooth curve is integrable.

    Properties (i) and (iii) are straightforward to verify. We are left to prove (ii). Recall that, by Katz and Oda's construction \cite{KO68}, the Gauss--Manin connection $\nabla$ on $H^1_{\dR}((E\setminus Z)/S)$ can be explicitly described as follows. Under the isomorphism
    \[
        H^1_{\dR}((E\setminus Z)/S)\cong  H^1(\mathcal{A}) = \ker (d:\mathcal{A}^1 \longrightarrow \mathcal{A}^2)
    \]
    of Proposition \ref{prop:model-derham} (cf. \S\ref{par:dg-algebra-relative}), for a cohomology class given by a closed differential form $\omega$ in $\mathcal{A}^1$, let $\omega'$ be any lift of $\omega$ to an absolute differential form in  $\mathcal{C}^1$; then
    \begin{equation}\label{eq:gauss-manin-katz-oda}
        \nabla \omega = d\omega' \mod f_*\mathcal{F}^2 = \sum_{i=1}^n\alpha_i \otimes \omega_i
    \end{equation}
    for unique $\alpha_i$ in $\Omega^1$ and $\omega_i$ in $\mathcal{A}^1$ closed (see Proposition \ref{prop:koszul-log}).

    Now, given $[\omega_1| \cdots |  \omega_n]$ in $H^1(\mathcal{A})^{\otimes n}$, let $\xi$ be a section of $L_n\mathcal{H}_{E/S,Z}$ mapping to $\omega_1\otimes \cdots\otimes \omega_n$ under the isomorphism \eqref{eq:isomorphism-graded-length}. Since $\mathcal{H}_{E/S,Z}\subset B^0(\mathcal{A}) = T^c \mathcal{A}^1$, we can write $\xi = \xi_n + \xi_{n-1} + \cdots +\xi_0$, where each $\xi_i$ is of pure length $i$, and $\xi_n = [\omega_1 | \cdots |\omega_n]$. We have
    \[
        (-d_B \circ s)(\xi) = \sum_{i=1}^n[\widetilde{\omega}_1|\cdots |d\widetilde{\omega}_i |\cdots |\widetilde{\omega}_n] + \text{lower length}.
    \]
    Then, since each $\widetilde{\omega}_i$ is a lift of $\omega_i$ to an absolute differential in $\mathcal{C}^1$, it follows from \eqref{eq:gauss-manin-katz-oda} and from the definition of $\pi$ that
    \[
        \delta(\xi) = \nabla^{\otimes n}([\omega_1|\cdots| \omega_n]) + \text{lower length}.
    \]
    The last statement of (ii) follows from the regularity at infinity of the Gauss--Manin connection \cite[II, \S7]{deligne70} and from the fact that regularity is preserved by extensions \cite[II, Proposition 4.6.(i)]{deligne70}.
\end{proof}

\subsection{Lifting the relative elliptic KZB connection}\label{par:universal-kzb}

The absolute elliptic KZB connection will be given by combining the relative connection  $\nabla_{E^{\natural}/S,Z}$ (Definition \ref{def:vertical-kzb}) with the dual of $\delta$:
\[
    \delta^{\vee} : \mathcal{H}^{\vee}_{E/S,Z} \longrightarrow \Omega^1_{S/k}\hat{\otimes}\mathcal{H}^{\vee}_{E/S,Z}.
\]
For this, let
\[
    \widetilde{\omega}_{E^{\natural}/S,Z} = (s \otimes \id)(\omega_{E^{\natural}/S,Z}) \in \Gamma(S, f_*\Omega^1_{E^{\natural}/k}(\log \pi^{-1}Z) \hat{\otimes} \mathcal{H}_{E/S,Z}^{\vee})
\]
be the canonical lift of the KZB form, and let it act on $\mathcal{H}_{E/S,Z}^{\vee}$ by left multiplication. 

\begin{definition}
    The \emph{absolute elliptic KZB connection} of $E/S/k$ punctured at $Z$ is the $k$-connection
    \[
        \nabla_{E^{\natural}/S/k,Z} : f^*\mathcal{H}^{\vee}_{E/S,Z} \longrightarrow \Omega^1_{E^{\natural}/k}(\log \pi^{-1}Z) \hat{\otimes} f^*\mathcal{H}^{\vee}_{E/S,Z}\text{, }\qquad \nabla_{E^{\natural}/S/k,Z}= f^*\delta^{\vee} + \widetilde{\omega}_{E^{\natural}/S,Z}.
    \]
\end{definition}

\begin{proposition}\label{prop:base-change-kzb}
    The formation of $\nabla_{E^{\natural}/S/k,Z}$ is compatible with every base change of the form $S' \to S$, where $S'$ is a smooth $k$-scheme, and with extension of scalars $k\subset k'$.
\end{proposition}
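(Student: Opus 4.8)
The plan is to read off the claim from the decomposition $\nabla_{E^{\natural}/S/k,Z} = f^*\delta^{\vee} + \widetilde{\omega}_{E^{\natural}/S,Z}$ and to reduce the compatibility of each of the two summands to results already in hand. Fix once and for all a morphism $\varphi: S' \to S$ of $k$-schemes with $S'$ smooth over $k$, and write $E' = E\times_S S'$, $Z' = Z\times_S S'$, $f':E'^{\natural} \to S'$; since the universal vector extension commutes with base change, there is a canonical identification $E'^{\natural} \cong E^{\natural}\times_S S'$. By Corollary \ref{coro:base-change} the natural map $\varphi^*\mathcal{H}_{E/S,Z} \to \mathcal{H}_{E'/S',Z'}$ is an isomorphism of filtered Hopf algebras, which will be used throughout to match up the fundamental Hopf algebras, their continuous duals, and the completed pullbacks on the two sides. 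The extension-of-scalars case $k\subset k'$ is handled identically, every construction being manifestly functorial in $k$, so I focus on $\varphi$.

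First I would treat the Gauss--Manin summand $f^*\delta^{\vee}$. Theorem \ref{thm:non-abelian-gm}(iv) asserts that the connection $\delta$ on $\mathcal{H}_{E/S,Z}$ is compatible with base change of exactly this form. Since $\delta$ preserves the length filtration (Theorem \ref{thm:non-abelian-gm}(i)) and each $L_n\mathcal{H}_{E/S,Z}$ is a finite-rank vector bundle by \eqref{eq:isomorphism-graded-length}, dualising each $(L_n\mathcal{H}_{E/S,Z})^{\vee}$ commutes with $\varphi^*$, and so does the inverse limit defining the continuous dual; hence $\delta^{\vee}$, and therefore its pullback $f^*\delta^{\vee}$, is compatible with $\varphi$. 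For the connection-form summand I would use Proposition \ref{prop:base-change-vkzb}, which gives the compatibility of the relative KZB form $\omega_{E^{\natural}/S,Z}$ with base change, together with the fact that $\widetilde{\omega}_{E^{\natural}/S,Z} = (s\otimes \id)(\omega_{E^{\natural}/S,Z})$ for the canonical splitting $s$. As recorded in the proof of Theorem \ref{thm:non-abelian-gm}(iv), the splitting $s$ is itself compatible with base change: the canonical lifts $\widetilde{\nu},\widetilde{\omega}^{(n)}_P$ are pinned down by the normalisation $e^*(-)=0$ and by the relations modulo $f_*\mathcal{F}^2$ involving the Gauss--Manin coefficient $\alpha_{21}$ (Theorems \ref{thm:lift-kronecker} and \ref{thm:canonical-lift-kronecker-subbundle}), and all of these defining conditions are preserved by the base-change maps on absolute differentials, so uniqueness forces compatibility. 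Combining the two, $\widetilde{\omega}_{E^{\natural}/S,Z}$ transforms correctly under $\varphi$.

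Finally I would assemble the pieces: since $\nabla_{E^{\natural}/S/k,Z}$ is the sum of $f^*\delta^{\vee}$ and left multiplication by $\widetilde{\omega}_{E^{\natural}/S,Z}$, both of which are compatible with $\varphi$, the canonical comparison map $\varphi^*f^*\mathcal{H}^{\vee}_{E/S,Z} \to f'^*\mathcal{H}^{\vee}_{E'/S',Z'}$ intertwines $\varphi^*\nabla_{E^{\natural}/S/k,Z}$ with $\nabla_{E'^{\natural}/S'/k,Z'}$. I expect the only genuinely delicate point to be the interaction of pullback along $\varphi$ with the inverse limit defining the continuous dual and with the completed tensor products $\hat{\otimes}$; the hard part is to check that these limits and completions commute with $\varphi^*$. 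This is in fact harmless precisely because each graded quotient is a finite-rank vector bundle, so the relevant inverse systems are eventually duals of vector bundles and $\varphi^*$ commutes with the limit termwise — a point that also underlies the well-definedness of $\delta^{\vee}$ in the first place.
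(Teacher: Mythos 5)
Your proposal is correct and follows the same route as the paper: the paper's proof is the one-line citation of Proposition \ref{prop:base-change-vkzb} (for the relative KZB form) together with Theorem \ref{thm:non-abelian-gm} (for $\delta$, whose proof already records the base-change compatibility of the canonical splitting $s$), applied to the defining decomposition $\nabla_{E^{\natural}/S/k,Z}= f^*\delta^{\vee} + \widetilde{\omega}_{E^{\natural}/S,Z}$. Your write-up simply makes explicit the routine checks (uniqueness of canonical lifts, duals and limits of finite-rank bundles commuting with $\varphi^*$) that the paper leaves implicit.
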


\begin{proof}
    This follows immediately from Proposition \ref{prop:base-change-vkzb} and Theorem \ref{thm:non-abelian-gm}.
\end{proof}

To prove integrability, consider the following lemma.

\begin{lemma}\label{lemma:integrability}
    Let $A$ be a $k$-algebra (not necessarily commutative) and $(\Omega,d,\wedge)$ be a dg-algebra over $k$. Let $\varphi \in \Omega^1 \otimes \mathrm{Der}_k(A)$ and $\alpha \in \Omega^1 \otimes A$. We identify $A$ as a subspace of $\End_k(A)$ by left multiplication. Then, the following equation holds in $\Omega^2 \otimes \End_k(A)$:
    \[
        d(\varphi + \alpha) + (\varphi + \alpha)\wedge (\varphi+\alpha) = (d\varphi + \varphi \wedge \varphi) + (d\alpha + \alpha \wedge \alpha) + \varphi(\alpha),
    \]
    where $\varphi(\alpha)$ is the element of $\Omega^2 \otimes A$ given by `evaluating' $\varphi$ at $\alpha$.
\end{lemma}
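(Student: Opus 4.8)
The plan is to expand the left-hand side using that $d$ is additive and that $\wedge$ is bilinear, and then cancel the two ``pure'' curvature terms $d\varphi + \varphi\wedge\varphi$ and $d\alpha + \alpha\wedge\alpha$ that already appear on the right. Since $\varphi$ and $\alpha$ both lie in degree $1$ of $\Omega^{\bullet}\otimes\End_k(A)$, we have $d(\varphi+\alpha) = d\varphi + d\alpha$ and
\[
(\varphi+\alpha)\wedge(\varphi+\alpha) = \varphi\wedge\varphi + \varphi\wedge\alpha + \alpha\wedge\varphi + \alpha\wedge\alpha,
\]
so the asserted identity is equivalent to the single relation
\[
\varphi\wedge\alpha + \alpha\wedge\varphi = \varphi(\alpha) \qquad \text{in } \Omega^2\otimes\End_k(A).
\]

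To prove this relation I would argue on decomposable tensors, writing $\varphi = \sum_i \omega_i\otimes D_i$ with $\omega_i\in\Omega^1$ and $D_i\in\mathrm{Der}_k(A)$, and $\alpha = \sum_j \eta_j\otimes a_j$ with $\eta_j\in\Omega^1$ and $a_j\in A$, the latter viewed as the left-multiplication operator $L_{a_j}\in\End_k(A)$. Because the $\End_k(A)$-factors sit in degree $0$, the product on $\Omega^{\bullet}\otimes\End_k(A)$ introduces no Koszul sign from the operator part, so
\[
\varphi\wedge\alpha = \sum_{i,j}(\omega_i\wedge\eta_j)\otimes(D_i\circ L_{a_j}), \qquad \alpha\wedge\varphi = \sum_{i,j}(\eta_j\wedge\omega_i)\otimes(L_{a_j}\circ D_i).
\]
Using $\eta_j\wedge\omega_i = -\,\omega_i\wedge\eta_j$, the sum becomes $\sum_{i,j}(\omega_i\wedge\eta_j)\otimes[D_i,L_{a_j}]$, where the anticommutation of the $1$-forms is precisely what turns $\varphi\wedge\alpha+\alpha\wedge\varphi$ into a \emph{commutator} of operators rather than an anticommutator.

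The crux, and the only step with genuine content, is the elementary identity $[D,L_a] = L_{D(a)}$ for a derivation $D$ and $a\in A$. Evaluating on $x\in A$ and using the Leibniz rule, $(D\circ L_a)(x) = D(ax) = D(a)x + aD(x) = \bigl(L_{D(a)} + L_a\circ D\bigr)(x)$, whence $[D,L_a] = L_{D(a)}$. Substituting this gives
\[
\varphi\wedge\alpha + \alpha\wedge\varphi = \sum_{i,j}(\omega_i\wedge\eta_j)\otimes L_{D_i(a_j)},
\]
which is exactly the element $\varphi(\alpha)$ obtained by evaluating the derivation part of $\varphi$ on the algebra part of $\alpha$ and regarding $D_i(a_j)\in A$ as left multiplication. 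I do not expect a serious obstacle: the computation is entirely formal, the only real input being the derivation property encoded in $[D,L_a]=L_{D(a)}$. The one point requiring care is the sign bookkeeping together with checking that the identification $A\hookrightarrow\End_k(A)$ by left multiplication is a ring homomorphism compatible with the product used on $\Omega^{\bullet}\otimes\End_k(A)$, so that passing between $a\in A$ and $L_a$ is harmless throughout.
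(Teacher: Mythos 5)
Your proposal is correct and follows essentially the same route as the paper's proof: reduce the identity to $\varphi \wedge \alpha + \alpha \wedge \varphi = \varphi(\alpha)$, pass to decomposable tensors by linearity, use anticommutativity of $1$-forms to produce the commutator $[D, L_a]$, and conclude via the Leibniz-rule identity $[D, L_a] = L_{D(a)}$. The paper's argument is just a more compressed version of the same computation, so there is nothing to add.
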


\begin{proof}
    It suffices to prove that $\varphi \wedge \alpha + \alpha \wedge \varphi = \varphi(\alpha)$. Since this equation is linear in $\varphi$ and in $\alpha$, we can assume that $\varphi = \omega \otimes \partial$ and $\alpha = \eta \otimes a$. Then,
    \begin{align*}
        \varphi \wedge \alpha + \alpha \wedge \varphi &= \omega \wedge \eta \otimes (\partial \circ a) + \eta \wedge \omega \otimes (a \circ \partial)\\ &= \omega \wedge \eta \otimes (\partial \circ a - a \circ \partial) = \omega \wedge \eta \otimes \partial(a) = \varphi(\alpha),
    \end{align*}
    where we used that $\partial$ is a derivation in the penultimate equality above.
\end{proof}

\begin{theorem}\label{thm:integrability}
    The $k$-connection $\nabla_{E^{\natural}/S/k,Z}$ is integrable. 
\end{theorem}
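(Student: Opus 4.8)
The plan is to compute the curvature of $\nabla_{E^{\natural}/S/k,Z}$ by means of Lemma \ref{lemma:integrability} and to show that it vanishes after decomposing along the Koszul filtration. Since integrability is étale-local on $S$ and the formation of $\nabla_{E^{\natural}/S/k,Z}$ is compatible with base change along smooth $k$-schemes (Proposition \ref{prop:base-change-kzb}), I would first reduce to the universal case: étale-locally on $S$ the pair $(E,Z)$ is pulled back from a modular curve (a finite cover of the moduli of elliptic curves carrying the relevant level/torsion data), which is smooth of dimension $1$ over $k$. We may thus assume $\dim S = 1$, so that $\Omega^2_{S/k} = 0$ and $f_*\mathcal{F}^2$ vanishes in degree $2$. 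Writing $\omega_{E^{\natural}/S/k,Z} = \varphi + \widetilde{\omega}_{E^{\natural}/S,Z}$, where $\widetilde{\omega}_{E^{\natural}/S,Z} = (s\otimes\id)(\omega_{E^{\natural}/S,Z})$ acts by left multiplication and $\varphi \in \Omega^1_{S/k}\hat{\otimes}\mathrm{Der}_k(\mathcal{H}^{\vee}_{E/S,Z})$ is the connection form of $f^*\delta^{\vee}$, Lemma \ref{lemma:integrability} (with $A = \mathcal{H}^{\vee}_{E/S,Z}$ and $\Omega = f_*\Omega^{\bullet}_{E^{\natural}/k}(\log \pi^{-1}Z)$) gives
\[
  d\omega_{E^{\natural}/S/k,Z} + \omega_{E^{\natural}/S/k,Z}\wedge\omega_{E^{\natural}/S/k,Z} = (d\varphi + \varphi\wedge\varphi) + (d\widetilde{\omega}_{E^{\natural}/S,Z} + \widetilde{\omega}_{E^{\natural}/S,Z}\wedge\widetilde{\omega}_{E^{\natural}/S,Z}) + \varphi(\widetilde{\omega}_{E^{\natural}/S,Z}).
\]
The first summand equals $f^*$ of the curvature of $\delta^{\vee}$, which vanishes by the integrability of $\delta$ (Theorem \ref{thm:non-abelian-gm}); it also lies in $\Omega^2_{S/k}\hat{\otimes}\mathrm{Der}_k(\mathcal{H}^{\vee}_{E/S,Z}) = 0$ on the $1$-dimensional base.

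It remains to prove $(d\widetilde{\omega}_{E^{\natural}/S,Z} + \widetilde{\omega}_{E^{\natural}/S,Z}\wedge\widetilde{\omega}_{E^{\natural}/S,Z}) + \varphi(\widetilde{\omega}_{E^{\natural}/S,Z}) = 0$, which I would check on the two surviving graded pieces of the Koszul filtration (Proposition \ref{prop:koszul-log}). In Koszul degree $0$ the derivation term $\varphi(\widetilde{\omega}_{E^{\natural}/S,Z})$ does not contribute, since $\varphi$ raises the Koszul degree by one; moreover $\widetilde{\omega}_{E^{\natural}/S,Z}$ reduces to $\omega_{E^{\natural}/S,Z}$ and $d$ to the relative differential, so the degree $0$ component is precisely the relative curvature $d\omega_{E^{\natural}/S,Z} + \omega_{E^{\natural}/S,Z}\wedge\omega_{E^{\natural}/S,Z}$, which vanishes by Proposition \ref{prop:univ-property-kzb}.

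The crux, and the main obstacle, is the Koszul degree $1$ identity
\[
  d\widetilde{\omega}_{E^{\natural}/S,Z} + \widetilde{\omega}_{E^{\natural}/S,Z}\wedge\widetilde{\omega}_{E^{\natural}/S,Z} \equiv -\varphi(\widetilde{\omega}_{E^{\natural}/S,Z}) \pmod{f_*\mathcal{F}^2},
\]
which is exactly where the definition $\delta = -\pi\circ d_B\circ s$ enters. Rather than compute directly with the explicit lifts $\widetilde{\nu},\widetilde{\omega}^{(n)}_P$, I would argue inside the relative bar complex $B_\Omega(\mathcal{C})$. Let $\rho_{E/S,Z}$ be the comodule structure dual to $\omega_{E^{\natural}/S,Z}$ (left multiplication by $\hat{\sigma}$, as in the proof of Proposition \ref{prop:univ-property-kzb}), and let $\widetilde{\rho} = (s\hat{\otimes}\id)\circ\rho_{E/S,Z}$ be its canonical lift, whose length $1$ part is $\widetilde{\omega}_{E^{\natural}/S,Z}$. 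Evaluating $(d_B\hat{\otimes}\id)\widetilde{\rho}$ by the computation in the proof of Proposition \ref{prop:comodule-bar}, now with the absolute differential of $\mathcal{C}$, inserts the absolute curvature $d\widetilde{\omega}_{E^{\natural}/S,Z} + \widetilde{\omega}_{E^{\natural}/S,Z}\wedge\widetilde{\omega}_{E^{\natural}/S,Z}$ into each bar slot. On the other hand, since $\rho_{E/S,Z}$ takes values in $H^0(B(\mathcal{A}))\hat{\otimes}\mathcal{H}^{\vee}_{E/S,Z}$, Lemma \ref{lemma:sequence-bar} shows that $(d_B\hat{\otimes}\id)\widetilde{\rho}$ lies in $F^1B_\Omega(\mathcal{C})\hat{\otimes}\mathcal{H}^{\vee}_{E/S,Z}$, and applying the projection $\pi$ of Lemma \ref{lemma:projection} reproduces $\delta$, hence $\varphi(\widetilde{\omega}_{E^{\natural}/S,Z})$ after dualizing. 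Comparing these two evaluations of $(\pi\hat{\otimes}\id)(d_B\hat{\otimes}\id)\widetilde{\rho}$ yields the degree $1$ identity. The delicate points are the sign bookkeeping (the involution $J$ and the Koszul rule encoded in $\sigma_{n,i}$) and the passage between left multiplication by $\widetilde{\omega}_{E^{\natural}/S,Z}$ and the derivation action of $\varphi$; isolating the length $1$ (equivalently Koszul degree $1$) component is what keeps this manageable, as longer bar words merely repeat the same per-slot curvature.
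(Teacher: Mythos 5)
Your strategy coincides with the paper's own proof: reduce to $\dim S=1$ via Proposition \ref{prop:base-change-kzb}, use Lemma \ref{lemma:integrability} to reduce integrability to the single identity $d\widetilde{\omega}_{E^{\natural}/S,Z}+\widetilde{\omega}_{E^{\natural}/S,Z}\wedge\widetilde{\omega}_{E^{\natural}/S,Z}+\varphi(\widetilde{\omega}_{E^{\natural}/S,Z})=0$, and prove that identity by evaluating $(\pi\hat{\otimes}\id)(d_B\hat{\otimes}\id)$ on the canonical lift of the group-like element $\hat{\sigma}$ in two ways. Your Koszul-degree-$0$ check via Proposition \ref{prop:univ-property-kzb} is a clean, equivalent substitute for the paper's appeal to the lemma that $d_B\circ s$ lands in $F^1B_{\Omega}(\mathcal{C})$.

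However, the crux (your Koszul degree $1$ step) has a genuine gap. Your Evaluation B only uses the definition $\delta=-\pi\circ d_B\circ s$, so comparing the two evaluations yields
\[
(\delta\otimes\id)(\hat{\sigma}) \;=\; \sum_{n\ge 1}\sum_{i=1}^{n}(\pi\otimes\id)\bigl[\widetilde{\omega}_{E^{\natural}/S,Z}|\cdots|\,d\widetilde{\omega}_{E^{\natural}/S,Z}+\widetilde{\omega}_{E^{\natural}/S,Z}\wedge\widetilde{\omega}_{E^{\natural}/S,Z}\,|\cdots|\widetilde{\omega}_{E^{\natural}/S,Z}\bigr],
\]
which is true but not yet the desired identity: what you need on the left is $-(\id\otimes\delta^{\vee})(\hat{\sigma})$, whose slot-by-slot expansion is what produces the $\varphi(\omega_{E^{\natural}/S,Z})$ terms. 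The passage from $(\delta\otimes\id)(\hat{\sigma})$ to $-(\id\otimes\delta^{\vee})(\hat{\sigma})$ is \emph{not} ``dualizing'': for any filtration-preserving $\mathcal{O}_S$-linear $T:\mathcal{H}_{E/S,Z}\to\mathcal{H}_{E/S,Z}$ with associated element $\hat{T}\in\mathcal{H}_{E/S,Z}\hat{\otimes}\mathcal{H}^{\vee}_{E/S,Z}$, one has that $(\delta\otimes\id+\id\otimes\delta^{\vee})(\hat{T})$ is the element corresponding to the map $\delta\circ T-(\id\otimes T)\circ\delta$, so the identity you need for $T=\sigma$ is precisely the $\delta$-horizontality of the antipode, i.e.\ part (iii) of Theorem \ref{thm:non-abelian-gm}, which you never invoke. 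The same omission occurs earlier: the derivation property $\varphi\in\Omega^1_{S/k}\hat{\otimes}\mathrm{Der}_k(\mathcal{H}^{\vee}_{E/S,Z})$, which you assume both in order to apply Lemma \ref{lemma:integrability} and (implicitly) to distribute $\delta^{\vee}$ across the bar slots of $\hat{\sigma}$, is not automatic; it follows from the $\delta$-horizontality of the deconcatenation coproduct, again part (iii) of Theorem \ref{thm:non-abelian-gm}. Both gaps are fillable, since the paper proves exactly these horizontality statements, but as written your comparison of the two evaluations does not close.
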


\begin{proof}
    We may work locally over $S$ and assume that $R^1p_*\mathcal{O}_E$ is trivial. Moreover, by Proposition \ref{prop:base-change-kzb}, we can also assume that $\dim S = 1$ (cf. proof of Theorem \ref{thm:non-abelian-gm}). Then, with notation as in \eqref{eq:triv-A1} and \S\ref{par:kzb-connection}, we identify
    \[
        \mathcal{H}^{\vee}_{E/S,Z} \cong \mathcal{O}_S \hat{\otimes}A\text{, }\qquad A = \frac{ k \langle \!\langle a,b,c_P : P \in Z(S)\rangle \! \rangle}{\langle \sum_{P \in Z(S)}c_P - [a,b] \rangle}
    \]
    and we write
    \[
        \delta^{\vee} = d + \Phi\text{, }\qquad \Phi \in \Gamma(S,\Omega^1_{S/k})\otimes \End_k(A),
    \]
    so that $\nabla_{E^{\natural}/S/k,Z}$ is a $k$-connection on $\mathcal{O}_{E^{\natural}}\hat{\otimes}A$, given by
    \[
        \nabla_{E^{\natural}/S/k,Z} = d + \Phi + \widetilde{\omega}_{E^{\natural}/S,Z}.
    \]
    Since the multiplication in $\mathcal{H}^{\vee}_{E/S,Z}$ is given by the dual of the deconcatenation coproduct $\Delta$, it follows from Theorem \ref{thm:non-abelian-gm} that $\Phi \in \Gamma(S,\Omega^1_{S/k})\otimes \mathrm{Der}_k(A)$. Moreover, since  $\delta^{\vee}$ is integrable, we conclude from Lemma \ref{lemma:integrability} that the integrability of $\nabla_{E^{\natural}/S/k,Z}$ is equivalent to the equation
    \begin{equation}\label{eq:integrability-kzb}
        d\widetilde{\omega}_{E^{\natural}/S,Z} + \widetilde{\omega}_{E^{\natural}/S,Z}\wedge \widetilde{\omega}_{E^{\natural}/S,Z} + \Phi(\widetilde{\omega}_{E^{\natural}/S,Z}) = 0.
    \end{equation}

    Recall from \S\ref{par:kzb-connection} that $\hat{\sigma}$ is the element of $\Gamma(S,\mathcal{H}_{E/S,Z})\hat{\otimes} A \cong \Gamma(S, \mathcal{H}_{E/S,Z}\hat{\otimes}\mathcal{H}_{E/S,Z}^{\vee})$ corresponding to the antipode $\sigma$ of $\mathcal{H}_{E/S,Z}$, and that $\omega_{E^{\natural}/S,Z} = (\pr_1\otimes \id)(\hat{\sigma})$. Since left multiplication by $\hat{\sigma}$ defines a (completed) $\mathcal{H}_{E/S,Z}$-comodule structure on $\mathcal{H}_{E/S,Z}^{\vee}$, it follows from Proposition \ref{prop:comodule-bar} that
    \[
        \hat{\sigma} = \sum_{n\ge 0}\underbrace{[\omega_{E^{\natural}/S,Z} | \cdots | \omega_{E^{\natural}/S,Z}]}_{\text{length }n}.
    \]
    Since $\sigma : \mathcal{H}_{E/S,Z} \to \mathcal{H}_{E/S,Z}$ is horizontal for the connection $\delta$ by Theorem \ref{thm:non-abelian-gm}, and since $\delta^{\vee}$ is defined as the dual of $\delta$, we have
    \begin{equation}\label{eq:sigma-horizontal}
        (\delta \otimes \id + \id \otimes \delta^{\vee})(\hat{\sigma}) = 0.
    \end{equation}
    On the one hand, using the definition of $\delta = -\pi \circ d_B \circ s$, we get:
    \[
        (\delta \otimes \id)(\hat{\sigma}) = \sum_{n\ge 1}\sum_{i=1}^n (\pi \otimes \id)(\underbrace{[\widetilde{\omega}_{E^{\natural}/S,Z}| \cdots | \overbrace{d\widetilde{\omega}_{E^{\natural}/S,Z} + \widetilde{\omega}_{E^{\natural}/S,Z}\wedge \widetilde{\omega}_{E^{\natural}/S,Z}}^{i\text{th position}} | \cdots | \widetilde{\omega}_{E^{\natural}/S,Z}]}_{\text{length }n})
    \]
    On the other hand, using that $\Phi$ has coefficients in $k$-derivations of $A$, we obtain:
    \[
        (\id \otimes\delta^{\vee})(\hat{\sigma}) = \sum_{n\ge 1}\sum_{i=1}^n\underbrace{[\omega_{E^{\natural}/S,Z}|\cdots | \overbrace{\Phi(\omega_{E^{\natural}/S,Z})}^{i\text{th position}}| \cdots| \omega_{E^{\natural}/S,Z}]}_{\text{length }n}.
    \]
    Thus, equation \eqref{eq:sigma-horizontal} is equivalent to
    \[
        (\pi\otimes \id) (d\widetilde{\omega}_{E^{\natural}/S,Z} + \widetilde{\omega}_{E^{\natural}/S,Z}\wedge \widetilde{\omega}_{E^{\natural}/S,Z} + \Phi(\widetilde{\omega}_{E^{\natural}/S,Z})) = 0.
    \]
    To conclude, we simply remark that the hypothesis $\dim S =1$ implies that $F^2\mathcal{J} = f_*\mathcal{F}^2 = 0$, so that $\pi$ is injective on $F^1\mathcal{J} = f_*\mathcal{F}^1$ (cf. Proposition \ref{prop:koszul-log}), which yields \eqref{eq:integrability-kzb}.
\end{proof}

\section{Analytic formulae}\label{sec:analytic-formuli}

\subsection{Kronecker differentials}\label{par:analytic-kronecker}

Let $E$ be an elliptic curve over $\mathbb{C}$ and let $\tau \in \mathfrak{H}$ be such that $E^{\an} \cong \mathbb{C}/(\mathbb{Z} + \tau \mathbb{Z})$, so that $H_1(E^{\an},\mathbb{Z}) \cong \mathbb{Z} + \tau \mathbb{Z}$. fder the basis $(\omega,\eta)$ of $H^1_{\dR}(E/\mathbb{C})$ given by
\begin{equation}\label{eq:analytic-omega-eta}
    \omega = dz\text{, }\qquad \eta = \left(\wp_{\tau}(z) + G_2(\tau)\right)dz,
\end{equation}
where $\wp_{\tau}(z)$ is the Weierstrass $\wp$-function associated to the lattice $\mathbb{Z}+\tau \mathbb{Z}$, and $G_2(\tau) = \sum_{n}\sum_{m}'(m+nz)^{-2}$ is the Eisenstein series of weight 2 and level 1. It follows from Example \ref{ex:uniformisation} and \cite[Lemma A1.3.9]{katz72} that
\begin{equation}\label{eq:explicit-uve-analytic}
    E^{\natural,\an} \cong \mathbb{C}^2/L_{\tau}\text{, }\qquad L_{\tau} = \{(m + n \tau, 2\pi i n) \in \mathbb{C}^2 : m,n \in \mathbb{Z}\}.
\end{equation}
If $z,w$ denote the coordinates on $\mathbb{C}^2$, then the basis $(\omega,\eta)$ of $H^1_{\dR}(E/\mathbb{C})$ corresponds under the isomorphism \eqref{eq:isom-cohomology-E} to the basis
\[
    \omega^{(0)}\defeq dz\text{, }\qquad \nu \defeq dw
\]
of $\Gamma(E^{\natural}, \Omega^1_{E^{\natural}/\mathbb{C}})$.

\begin{proposition}\label{prop:analytic-kronecker-diff}
    Let
    \[
        F_{\tau}(z,x) \defeq \frac{\theta'_{\tau}(0)\theta_{\tau}(z+x)}{\theta_{\tau}(z)\theta_{\tau}(x)},
    \]
    where
    \[
        \theta_{\tau}(z) = \sum_{n \in \mathbb{Z}}(-1)^n q^{\frac12\left(n + \frac12\right)^2}e^{ 2\pi i(n + \frac12)z}\text{, }\qquad q = e^{2\pi i \tau},
    \]
    denotes Jacobi's odd theta function. Consider the functions $\varphi^{(n)}_{\tau}(z,w)$ defined by the generating series
    \[
        e^{wx}F_{\tau}(z,x) = \sum_{n \ge 0}\varphi^{(n)}_{\tau}(z,w) x^{n-1}.
    \]
    Then the Kronecker differentials associated to $\nu=dw$ (Theorem \ref{thm:kronecker-differentials}) are given by
    \[
        \omega^{(n)} = \varphi^{(n)}_{\tau}(z,w)dz\text{, }\qquad n\ge 0.
    \]
\end{proposition}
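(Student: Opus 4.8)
The plan is to verify directly that the explicit analytic forms $\varphi^{(n)}_{\tau}(z,w)\,dz$ are global logarithmic $1$-forms on $E^{\natural,\an}$ satisfying the three characterising properties (i)--(iii) of Theorem \ref{thm:kronecker-differentials}, and then to identify them with the analytifications of the algebraically defined Kronecker differentials. Throughout I use the uniformisation \eqref{eq:explicit-uve-analytic}, under which $\pi^{-1}O$ is the divisor $\{z=0\}$; since the only element of $L_{\tau}$ stabilising this divisor is trivial, $\pi^{-1}O\cong\mathbb{G}_a$ with coordinate $w$, so the coordinate $t$ of Theorem \ref{thm:kronecker-differentials} is identified with $w|_{\pi^{-1}O}$ and $\nu=dw$ indeed restricts to $dt$.

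First I would record the bookkeeping coming from the generating series. Writing $F_{\tau}(z,x)=\sum_{m\ge 0}g^{(m)}(z)x^{m-1}$, the factor $e^{wx}$ yields
\[
  \varphi^{(n)}_{\tau}(z,w)=\sum_{m=0}^{n}\frac{w^{\,n-m}}{(n-m)!}\,g^{(m)}(z),
\]
and differentiating the defining series in $w$ gives at once $\partial_w\varphi^{(n)}_{\tau}=\varphi^{(n-1)}_{\tau}$. This is precisely property (iii), since $d(\varphi^{(n)}_{\tau}\,dz)=(\partial_w\varphi^{(n)}_{\tau})\,dw\wedge dz=\nu\wedge(\varphi^{(n-1)}_{\tau}\,dz)$; property (ii) is immediate as $\varphi^{(n)}_{\tau}\,dz\wedge dz=0$. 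For property (i) and the logarithmic nature of the forms I would use $\theta_{\tau}(z)=\theta'_{\tau}(0)z+O(z^3)$ to obtain $F_{\tau}(z,x)=\tfrac1z+(\text{holomorphic in }z\text{ near }0)$, whence $g^{(m)}$ is regular at $z=0$ for $m\ne 1$ while $g^{(1)}$ has a simple pole with $\Res_{z=0}g^{(1)}=1$. Consequently $\varphi^{(n)}_{\tau}\,dz$ has only a simple pole along $z=0$, and $\Res(\varphi^{(n)}_{\tau}\,dz)=\tfrac{w^{n-1}}{(n-1)!}\,\Res_{z=0}g^{(1)}=t^{n-1}/(n-1)!$.

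Next I would check that these forms descend to $E^{\natural,\an}=\mathbb{C}^2/L_{\tau}$; this is the geometric heart of the argument. It rests on the quasi-periodicity $\theta_{\tau}(z+1)=-\theta_{\tau}(z)$ and $\theta_{\tau}(z+\tau)=-q^{-1/2}e^{-2\pi i z}\theta_{\tau}(z)$, which give $F_{\tau}(z+1,x)=F_{\tau}(z,x)$ and $F_{\tau}(z+\tau,x)=e^{-2\pi i x}F_{\tau}(z,x)$. Combined with the factor $e^{wx}$, the generating series $e^{wx}F_{\tau}(z,x)$ is invariant under the generators $(z,w)\mapsto(z+1,w)$ and $(z,w)\mapsto(z+\tau,w+2\pi i)$ of $L_{\tau}$, the factor $e^{2\pi i x}$ produced by the shift of $w$ cancelling the $e^{-2\pi i x}$ coming from $F_{\tau}$. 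Hence each $\varphi^{(n)}_{\tau}$ is $L_{\tau}$-invariant and, since $dz$ is invariant, the $\varphi^{(n)}_{\tau}\,dz$ are genuine global sections of $\Omega^1_{E^{\natural,\an}}(\log\pi^{-1}O)$, with $\omega^{(0)}=dz$ trivialising $p_*\Omega^1_{E/\mathbb{C}}$.

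Finally, to conclude I would compare with the analytification $\omega^{(n),\an}$ of the algebraic Kronecker differentials of Theorem \ref{thm:kronecker-differentials}, which by construction satisfy the same relations (i)--(iii) with $\nu=dw$, $\omega^{(0)}=dz$. Setting $\beta_n=\varphi^{(n)}_{\tau}\,dz-\omega^{(n),\an}$, properties (i) and (ii) force $\beta_n=c_n\,dz$ with $c_n$ holomorphic (no poles), and property (iii) gives $\partial_w c_n=c_{n-1}$ with $c_0=0$. An induction then shows every $c_n$ vanishes: once $c_{n-1}=0$, integrating gives $c_n=c_n(z)$ independent of $w$, hence a doubly periodic holomorphic function, hence constant on the compact curve $E^{\an}$; and the invariance of $c_{n+1}\,dz$ under $(z,w)\mapsto(z+\tau,w+2\pi i)$ forces this constant to be $0$ (a holomorphic function of $z$ that is $1$-periodic cannot acquire a nonzero additive period under $z\mapsto z+\tau$, as one sees from a Fourier expansion in $e^{2\pi i z}$ together with $|q|<1$). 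I expect this identification to be the main obstacle: it is exactly where the cohomological obstruction $H^1(E,\mathcal{O}_E)\ne 0$ of \eqref{eq:intro-obstruction} enters, and it is what prevents a naive appeal to GAGA, since by Remark \ref{rmk:gaga} the analytic space $E^{\natural,\an}$ carries many more global functions and forms than $E^{\natural}$. By uniqueness in Theorem \ref{thm:kronecker-differentials}, $\beta_n=0$ identifies $\varphi^{(n)}_{\tau}\,dz$ with the Kronecker differentials.
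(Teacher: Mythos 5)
Your proposal is correct, and its first two steps are exactly the paper's: the paper likewise deduces descent of $\varphi^{(n)}_{\tau}(z,w)\,dz$ to $E^{\natural,\an}$ from the transformation property \eqref{eq:transformation-Kronecker} of the Kronecker function, and it dismisses properties (i)--(iii) of Theorem \ref{thm:kronecker-differentials} as a ``straightforward computation'', which your generating-series bookkeeping makes explicit. Where you genuinely differ is the identification step. The paper concludes in one line ``by unicity'', appealing to the uniqueness clause of Theorem \ref{thm:kronecker-differentials}; but that clause is a statement about \emph{algebraic} families, while $\varphi^{(n)}_{\tau}(z,w)\,dz$ is a priori only analytic, and the algebraic uniqueness proof does not transfer verbatim: it uses that $\Gamma(E^{\natural},\Omega^1_{E^{\natural}/\mathbb{C}})$ is two-dimensional, which fails analytically because $E^{\natural,\an}\cong\mathbb{C}^{\times}\times\mathbb{C}^{\times}$ carries many global holomorphic forms (Remark \ref{rmk:gaga}). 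Your Liouville-plus-Fourier argument supplies precisely the missing analytic uniqueness statement, in the same spirit as the analytic unicity statements the paper does prove by hand later, in Propositions \ref{prop:lift-regular-analytic} and \ref{prop:lift-kronecker-analytic}. So your route is the same in outline but more self-contained at the one point where real care is required; what the paper's terseness buys is brevity, at the cost of leaving implicit the GAGA issue that you correctly identify as the heart of the matter.

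One small repair to your last step: the base case $c_0=0$ presumes $\omega^{(0),\an}=dz$ on the nose, whereas Theorem \ref{thm:kronecker-differentials} only guarantees that $\omega^{(0)}$ trivialises $p_*\Omega^1_{E/\mathbb{C}}$, so a priori $\omega^{(0),\an}=c\,dz$ with $c\in\mathbb{C}^{\times}$, and $c_0$ is merely a constant. This costs nothing: your own Fourier argument, applied to the relation $\partial_w c_1=c_0$ together with the $L_{\tau}$-invariance of $c_1$, already forces $c_0=0$, and your induction then runs unchanged.
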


\begin{proof}
    It follows from the transformation property
    \begin{equation}\label{eq:transformation-Kronecker}
        F_{\tau}(z+m+n\tau,x) = e^{-2\pi i n x}F_{\tau}(z,x)
    \end{equation}
    of the Kronecker function (cf. \cite[Equations (10) and (11)]{LR07}) that $\varphi^{(n)}_{\tau}(z,w)dz$ are well-defined 1-forms on $E^{\natural}$ with logarithmic poles along the divisor $\pi^{-1}O$ (which is explicitly given by the equation $z=0$ under \eqref{eq:explicit-uve-analytic}). A straightforward computation shows that they satisfy properties (i), (ii), and (iii) of Theorem \ref{thm:kronecker-differentials}. By unicity, we conclude that $\omega^{(n)} = \varphi^{(n)}_{\tau}(z,w)dz$. 
\end{proof}

\begin{corollary}\label{coro:omegaP-analytic}
    Let $P \in E(\mathbb{C})$ be a torsion point represented by $\alpha+\beta\tau$, for some $\alpha,\beta \in \mathbb{Q}$, under $E^{\an} \cong \mathbb{C}/(\mathbb{Z} + \tau \mathbb{Z})$. Then, for every $n\ge 0$,
    \[
        \omega_P^{(n)} = \varphi^{(n)}_{\tau}(z-\alpha-\beta\tau, w-2\pi i \beta)dz.
    \]
\end{corollary}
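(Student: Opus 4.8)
The plan is to reduce the claim to two explicit computations: the determination of the torsion lift $P^{\natural}\in E^{\natural}(\mathbb{C})$ of $P$ under the uniformisation \eqref{eq:explicit-uve-analytic}, and the evaluation of the pullback of $\omega^{(n)}$ by the translation $\tau_{-P^{\natural}}$. Recall from the proof of Theorem \ref{thm:kronecker-subbundle} that, by definition, $\omega^{(n)}_P = \tau^*_{-P^{\natural}}\omega^{(n)}$, where $P^{\natural}$ is the unique torsion section of $E^{\natural}$ lying over $P$ (cf. \cite[Lemma C.1.1]{katz77}). Combining this with the analytic expression for $\omega^{(n)}$ from Proposition \ref{prop:analytic-kronecker-diff}, the corollary will follow once $P^{\natural}$ is identified and the pullback is carried out.

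First I would identify $P^{\natural}$. Writing $N$ for the order of $P$, so that $N\alpha, N\beta \in \mathbb{Z}$, any lift of $P = \alpha+\beta\tau$ to $\mathbb{C}^2/L_{\tau}$ has the form $(\alpha+\beta\tau, w_0)$ for some $w_0 \in \mathbb{C}$, since $\pi$ is induced by the projection $(z,w)\mapsto z$. Multiplying by $N$ and using $N(\alpha+\beta\tau) = N\alpha + N\beta\tau \in \mathbb{Z}+\tau\mathbb{Z}$, the torsion condition $N\cdot(\alpha+\beta\tau, w_0) \in L_{\tau}$ becomes $N w_0 = 2\pi i (N\beta)$ in view of the explicit description $L_{\tau} = \{(m+n\tau, 2\pi i n)\}$, forcing $w_0 = 2\pi i \beta$. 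Hence $P^{\natural}$ is represented by $(\alpha+\beta\tau, 2\pi i\beta)$, and by the cited uniqueness this is the torsion lift.

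It then remains to compute the pullback. The translation $\tau_{-P^{\natural}}$ acts on $\mathbb{C}^2/L_{\tau}$ by $(z,w)\mapsto (z-\alpha-\beta\tau,\, w - 2\pi i\beta)$, and since $dz$ is invariant under translation, pulling back $\omega^{(n)} = \varphi^{(n)}_{\tau}(z,w)\,dz$ yields exactly $\varphi^{(n)}_{\tau}(z-\alpha-\beta\tau,\, w-2\pi i\beta)\,dz$, which is the asserted formula.

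The only genuine subtlety --- and thus the main point to get right --- is the identification of the second coordinate of $P^{\natural}$ as $2\pi i\beta$: this is where the specific shape of the period lattice $L_{\tau}$, in particular the factor $2\pi i$ in its second coordinate, enters, and where one must invoke the uniqueness of the torsion lift to rule out any other admissible $w_0$. Everything else is a formal manipulation of the uniformisation.
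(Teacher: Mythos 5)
Your proof is correct and follows exactly the paper's argument: invoke the definition $\omega^{(n)}_P = \tau^*_{-P^{\natural}}\omega^{(n)}$ from \eqref{eq:defn-omegaP} together with the identification of $P^{\natural}$ as $(\alpha+\beta\tau,2\pi i\beta)$ under \eqref{eq:explicit-uve-analytic}. The only difference is that you spell out the verification (via the torsion condition and the shape of $L_{\tau}$) that the paper leaves implicit, which is a welcome addition but not a different method.
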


\begin{proof}
    It suffices to use \eqref{eq:defn-omegaP} and to note that $P^{\natural} \in E^{\natural}(\mathbb{C})$ is represented by $(\alpha+\beta\tau,2\pi i\beta)$ under \eqref{eq:explicit-uve-analytic}.
\end{proof}

\subsection{Analytic canonical lifts}\label{par:analytic-canonical-lifts}

We work in the category of complex analytic spaces. Let $p:\mathcal{E}_{\mathfrak{H}} \to \mathfrak{H}$ be the universal framed elliptic curve over the upper half-plane $\mathfrak{H}$, with fibre at $\tau \in \mathfrak{H}$ given by $p^{-1}(\tau) = \mathbb{C}/(\mathbb{Z} + \tau \mathbb{Z}) \eqdef \mathcal{E}_{\tau} $, and let $\mathcal{E}^{\natural}$ be its universal vector extension. Explicitly, we can write
\[
    \mathcal{E}_{\mathfrak{H}}^{\natural} \cong (\mathbb{C}^2 \times \mathfrak{H}) / L,
\]
where $L \to \mathfrak{H}$ is the `relative lattice' with fibre at $\tau \in \mathfrak{H}$ given by $L_{\tau}$ as in \eqref{eq:explicit-uve-analytic}.  We denote by $f: \mathcal{E}_{\mathfrak{H}}^{\natural} \to \mathfrak{H}$ the structure map, and by $\pi : \mathcal{E}_{\mathfrak{H}}^{\natural} \to \mathcal{E}_{\mathfrak{H}}$ is the canonical projection, which in this case is induced by $(z,w)\mapsto z$. Note that the divisor $\pi^{-1}O$ is given by $z=0$, and the identity section $e\in \mathcal{E}_{\mathfrak{H}}^{\natural}(\mathfrak{H})$ is given by $z=w=0$.

In  \S\ref{par:analytic-kronecker}, we have described the differentials
\begin{equation}\label{eq:relative-diff-analytic}
    \nu = dw\text{, }\qquad  \omega^{(0)} = dz\text{, } \qquad  \omega^{(1)} = \varphi_{\tau}^{(1)}(z,w)dz\text{, }\qquad \ldots 
\end{equation}
which we now regard as global sections of $f_*\Omega^1_{\mathcal{E}_{\mathfrak{H}}^{\natural}/\mathfrak{H}}(\log \pi^{-1}O)$. Next, we describe their canonical lifts to `absolute differentials' on $\mathcal{E}_{\mathfrak{H}}^{\natural}$, i.e., global sections of $f_*\Omega^1_{\mathcal{E}_{\mathfrak{H}}^{\natural}}(\log \pi^{-1}O)$. As our algebraic results do not immediately apply to the analytic category due to the failure of GAGA (cf. Remark \ref{rmk:gaga}), we shall first define these canonical lifts by explicit formulas, and then characterise them via unicity statements which mimic their algebraic counterparts. Set
\[
    \widetilde{\nu} \defeq dw\text{, }\qquad \widetilde{\omega}^{(0)} \defeq dz - w\frac{d\tau}{2\pi i}.
\]

\begin{proposition}\label{prop:lift-regular-analytic}
    The above formulas yield well-defined absolute 1-forms $\widetilde{\nu},\widetilde{\omega}^{(0)} \in \Gamma(\mathcal{E}_{\mathfrak{H}}^{\natural}, \Omega^1_{\mathcal{E}_{\mathfrak{H}}^{\natural}})$ lifting the relative 1-forms $\nu,\omega^{(0)} \in \Gamma(\mathcal{E}_{\mathfrak{H}}^{\natural},\Omega^1_{\mathcal{E}^{\natural}_{\mathfrak{H}}/\mathfrak{H}})$. Moreover, we have
    \begin{equation}\label{eq:restriction-analytic}
        e^*\widetilde{\omega}^{(0)} = e^*\widetilde{\nu} = 0
    \end{equation}
    and
    \begin{equation}\label{eq:GM-analytic}
        d\widetilde{\omega}^{(0)} = \frac{d\tau}{2\pi i} \wedge \widetilde{\nu}\text{, } \qquad d\widetilde{\nu} = 0,
    \end{equation}
    and these properties characterise $\widetilde{\nu},\widetilde{\omega}^{(0)}$ uniquely among lifts of $\nu,\omega^{(0)}$.
\end{proposition}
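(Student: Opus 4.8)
The plan is to work on the universal cover $\mathbb{C}^2 \times \mathfrak{H}$ and descend, making explicit the action of the relative lattice $L \to \mathfrak{H}$: a section $(m,n) \in \mathbb{Z}^2$ of $L$ acts by the fibrewise translation $(z,w,\tau) \mapsto (z+m+n\tau,\, w+2\pi i n,\, \tau)$. First I would establish well-definedness by checking $L$-invariance of the two explicit expressions. Under this translation one computes $dw \mapsto dw$ and $dz \mapsto dz + n\,d\tau$, while $w\,\tfrac{d\tau}{2\pi i} \mapsto (w+2\pi i n)\tfrac{d\tau}{2\pi i} = w\,\tfrac{d\tau}{2\pi i} + n\,d\tau$; hence $dz - w\,\tfrac{d\tau}{2\pi i}$ is invariant, the correction term cancelling precisely the anomaly of $dz$. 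This shows that $\widetilde{\nu}$ and $\widetilde{\omega}^{(0)}$ descend to holomorphic $1$-forms on $\mathcal{E}^{\natural}_{\mathfrak{H}}$. The lifting property is then immediate: reducing modulo $f^*\Omega^1_{\mathfrak{H}} = \mathcal{O}\cdot d\tau$ sends $dw \mapsto \nu$ and $dz - w\,\tfrac{d\tau}{2\pi i} \mapsto dz = \omega^{(0)}$.

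Properties \eqref{eq:restriction-analytic} and \eqref{eq:GM-analytic} are routine local computations. Since $e$ is the section $\tau \mapsto (0,0,\tau)$, along which $z$ and $w$ vanish, one gets $e^*\widetilde{\nu} = e^*\widetilde{\omega}^{(0)} = 0$; and $d\widetilde{\nu} = d(dw) = 0$, while $d\widetilde{\omega}^{(0)} = -\,dw \wedge \tfrac{d\tau}{2\pi i} = \tfrac{d\tau}{2\pi i} \wedge \widetilde{\nu}$.

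The substantive point, and the place where the analytic setting genuinely departs from its algebraic counterpart, is the uniqueness assertion. Here I would stress that, because GAGA fails (Remark \ref{rmk:gaga}), the total space $\mathcal{E}^{\natural}_{\mathfrak{H}}$ carries many non-constant holomorphic functions, so the lifts of a fixed relative form constitute a torsor not under $\Gamma(\mathfrak{H},\Omega^1_{\mathfrak{H}})$, as in the algebraic theory, but under the much larger module $\{\,g\,d\tau : g \in \Gamma(\mathcal{E}^{\natural}_{\mathfrak{H}},\mathcal{O})\,\}$. In particular, the vanishing $e^* = 0$ no longer suffices by itself to pin down the lift. Given a competing lift $\widetilde{\nu} + g\,d\tau$, the closedness requirement $d(\widetilde{\nu}+g\,d\tau) = dg \wedge d\tau = 0$ forces $\partial_z g = \partial_w g = 0$, so that $g$ depends on $\tau$ alone; the condition $e^*(\widetilde{\nu}+g\,d\tau) = g(\tau)\,d\tau = 0$ then yields $g = 0$. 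The identical argument applies to $\widetilde{\omega}^{(0)}$, using that $d\widetilde{\omega}^{(0)}$ is already the prescribed value. Thus it is precisely the combination of \eqref{eq:restriction-analytic} and \eqref{eq:GM-analytic}, rather than the vanishing at the identity alone, that characterises these lifts.

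The main obstacle is therefore conceptual rather than computational: one must recognise and control the enlargement of the space of lifts caused by the abundance of holomorphic functions on $\mathcal{E}^{\natural,\an}_{\mathfrak{H}}$, and see that it is exactly the closedness/Gauss--Manin condition that compensates for this enlargement. All remaining verifications are elementary manipulations with $dz$, $dw$, $d\tau$ on the cover.
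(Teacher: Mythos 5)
Your proposal is correct and follows essentially the same route as the paper's proof: lattice-invariance on the cover $\mathbb{C}^2\times\mathfrak{H}$ for well-definedness, direct computation of \eqref{eq:restriction-analytic} and \eqref{eq:GM-analytic}, and uniqueness by noting that a competing lift differs by $g\,d\tau$ with $g$ a \emph{global} holomorphic function on $\mathcal{E}^{\natural}_{\mathfrak{H}}$, whose closedness forces $\partial_z g=\partial_w g=0$ and whose vanishing along $e$ then forces $g=0$. Your emphasis on the failure of GAGA enlarging the space of lifts is exactly the point the paper makes just before stating the proposition, so nothing is missing.
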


\begin{proof}
    For the first claim, it suffices to check that the above formulas for $\widetilde{\nu}$ and $\widetilde{\omega}^{(0)}$ are invariant under the action of the lattice $L$. For instance:
    \[
        d(z + m+ n \tau) - (w + 2\pi i n) \frac{d\tau}{2\pi i} = dz - w \frac{d\tau}{2\pi i}.
    \]
    Equations \eqref{eq:restriction-analytic} and \eqref{eq:GM-analytic} are straightforward from the explicit formulas. The last claim follows from the following fact: if $\gamma$ is a global section of $\Omega^1_{\mathcal{E}^{\natural}}$ of the form
    \[
        \gamma = f(z,w,\tau)d\tau\text{, }\qquad f \in \Gamma(\mathcal{E}_{\mathfrak{H}}^{\natural},\mathcal{O}_{\mathcal{E}_{\mathfrak{H}}^{\natural}}),
    \]
    satisfying
    \[
        0 = e^*\gamma = f(0,0,\tau)d\tau
    \]
    and
    \[
        0 = d\gamma = \frac{\partial f}{\partial z}(z,w,\tau)dz\wedge d\tau + \frac{\partial f}{\partial w}(z,w,\tau)dw\wedge d\tau
    \]
    then $\gamma =0$. 
\end{proof}

\begin{remark}\label{rmk:gauss-manin-analytic-frame}
    Note that $(\omega^{(0)},\nu)$ corresponds to the frame $(\omega,\eta) = (dz, (\wp_{\tau}(z) + G_2(\tau))dz)$ of the analytic de Rham cohomology $H^1_{\dR}(\mathcal{E}_{\mathfrak{H}}/\mathfrak{H})$. The Gauss--Manin connection in this frame is given by (cf. \cite[\S A1]{katz72})
    \[
        \nabla \omega = \frac{d\tau}{2\pi i}\otimes \eta\text{, }\qquad \nabla \eta = 0,
    \]
    so that equations \eqref{eq:GM-analytic} are the analytic versions of \eqref{eq:GM-canonical-lifts}.
\end{remark}

Define lifts of $\omega^{(n)}$ ($n\ge 1$) to absolute logarithmic 1-forms by
\[
    \widetilde{\omega}^{(n)} \defeq \varphi^{(n)}_{\tau}(z,w)\left(dz - w \frac{d\tau}{2\pi i} \right) + n\varphi^{(n+1)}_{\tau}(z,w) \frac{d\tau}{2\pi i},
\]
where $\varphi^{(n)}_{\tau}(z,w)$ is as in Proposition \ref{prop:analytic-kronecker-diff}.

\begin{proposition}\label{prop:lift-kronecker-analytic}
    For every $n\ge 1$, the above formula yields a well-defined absolute logarithmic 1-form $\widetilde{\omega}^{(n)} \in \Gamma(\mathcal{E}_{\mathfrak{H}}^{\natural},\Omega^1_{\mathcal{E}_{\mathfrak{H}}^{\natural}}(\log \pi^{-1}O))$ which lifts the relative logarithmic 1-form $\omega^{(n)}$. Moreover, we have
    \begin{equation}\label{eq:lift-kronecker-analytic}
        \widetilde{\omega}^{(n)} \wedge \widetilde{\nu}\wedge \widetilde{\omega}^{(0)} = n\frac{d\tau}{2\pi i}\wedge \widetilde{\nu}\wedge \widetilde{\omega}^{(n+1)}
    \end{equation}
    and this property characterises $\widetilde{\omega}^{(n)}$ uniquely among lifts of $\omega^{(n)}$.
\end{proposition}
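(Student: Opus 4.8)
The plan is to prove the three assertions in turn: that the displayed formula defines a genuine logarithmic $1$-form on $\mathcal{E}^{\natural}_{\mathfrak{H}}$ lifting $\omega^{(n)}$, the wedge identity \eqref{eq:lift-kronecker-analytic}, and the uniqueness. It is convenient to set $u \defeq \frac{d\tau}{2\pi i}$ and to record the rewriting $\widetilde{\omega}^{(n)} = \varphi^{(n)}_{\tau}\,\widetilde{\omega}^{(0)} + n\,\varphi^{(n+1)}_{\tau}\,u$, which is immediate from $\widetilde{\omega}^{(0)} = dz - wu$ (Proposition \ref{prop:lift-regular-analytic}); all three steps will be run through this decomposition.

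For well-definedness I would argue as follows. Substituting $(z,w)\mapsto(z+m+k\tau,\,w+2\pi i k)$ into the generating series $e^{wx}F_{\tau}(z,x)$ and using the transformation law \eqref{eq:transformation-Kronecker} leaves it invariant, so each $\varphi^{(n)}_{\tau}(z,w)$ is $L$-invariant; since $\widetilde{\omega}^{(0)}$ and $u$ are $L$-invariant by Proposition \ref{prop:lift-regular-analytic}, the displayed decomposition shows $\widetilde{\omega}^{(n)}$ descends to $\mathcal{E}^{\natural}_{\mathfrak{H}}$. It lifts $\omega^{(n)}$ because reducing modulo $d\tau$ sends $\widetilde{\omega}^{(0)}\mapsto dz$ and $u\mapsto 0$, giving $\varphi^{(n)}_{\tau}\,dz = \omega^{(n)}$. \textbf{The crux of the whole proposition}, and the step I expect to cost real work, is checking that $\widetilde{\omega}^{(n)}$ has only \emph{logarithmic} poles along $\pi^{-1}O=\{z=0\}$: its $dz$-component $\varphi^{(n)}_{\tau}$ has a simple pole there (inherited from $F_{\tau}$), but its $d\tau$-component $\frac{1}{2\pi i}\big(n\varphi^{(n+1)}_{\tau}-w\varphi^{(n)}_{\tau}\big)$ must be shown \emph{holomorphic}. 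Using the leading behaviour $\varphi^{(n)}_{\tau}(z,w)=\frac{w^{n-1}}{(n-1)!\,z}+O(1)$ near $z=0$ (Theorem \ref{thm:kronecker-differentials}(i), with $t=w$), the residues of $n\varphi^{(n+1)}_{\tau}$ and of $w\varphi^{(n)}_{\tau}$ both equal $\frac{w^{n}}{(n-1)!}$ and therefore cancel; this is exactly the cancellation that the particular combination in the definition is designed to produce.

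The wedge identity is then a short computation through the decomposition. Using $\widetilde{\omega}^{(0)}\wedge\widetilde{\omega}^{(0)}=0$ and $u\wedge u=0$ one finds
\[
\widetilde{\omega}^{(n)}\wedge\widetilde{\nu}\wedge\widetilde{\omega}^{(0)} = n\,\varphi^{(n+1)}_{\tau}\,u\wedge\widetilde{\nu}\wedge\widetilde{\omega}^{(0)} = n\,u\wedge\widetilde{\nu}\wedge\widetilde{\omega}^{(n+1)},
\]
which is \eqref{eq:lift-kronecker-analytic}.

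For uniqueness, any lift of $\omega^{(n)}$ differs from $\widetilde{\omega}^{(n)}$ by a section $\beta_n$ of the kernel of the projection $\Omega^1_{\mathcal{E}^{\natural}_{\mathfrak{H}}}(\log\pi^{-1}O)\to\Omega^1_{\mathcal{E}^{\natural}_{\mathfrak{H}}/\mathfrak{H}}(\log\pi^{-1}O)$, hence $\beta_n=g\,d\tau$ for some global holomorphic $g$ on $\mathcal{E}^{\natural}_{\mathfrak{H}}$. Subtracting the identity \eqref{eq:lift-kronecker-analytic} for the two lifts kills the right-hand sides (the correction term is a multiple of $u\wedge\widetilde{\nu}\wedge d\tau=0$, since $u\propto d\tau$), leaving $\beta_n\wedge\widetilde{\nu}\wedge\widetilde{\omega}^{(0)}=0$. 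Expanding, $\beta_n\wedge\widetilde{\nu}\wedge\widetilde{\omega}^{(0)}=g\,d\tau\wedge dw\wedge dz$, and since $dz\wedge dw\wedge d\tau$ is a nowhere-vanishing (and $L$-invariant, as $dz\mapsto dz+k\,d\tau$) holomorphic $3$-form on $\mathcal{E}^{\natural}_{\mathfrak{H}}$, this forces $g=0$, i.e. $\beta_n=0$. Thus the main obstacle is purely the logarithmicity check in the first step; the identity and the uniqueness then follow from elementary multilinear algebra in $dz,dw,d\tau$.
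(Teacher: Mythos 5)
Your proof is correct and takes essentially the same route as the paper's: well-definedness from the transformation law \eqref{eq:transformation-Kronecker} together with Proposition \ref{prop:lift-regular-analytic}, the identity \eqref{eq:lift-kronecker-analytic} by direct computation with the explicit formulas, and uniqueness by exactly the paper's argument that a competing family of lifts differs by terms $g_n\,d\tau$, which are forced to vanish because $g_n\,d\tau\wedge dw\wedge dz=0$ implies $g_n=0$. The one point where you go beyond the paper's terser write-up is the explicit residue cancellation showing that the $d\tau$-coefficient $\tfrac{1}{2\pi i}\bigl(n\varphi^{(n+1)}_{\tau}-w\varphi^{(n)}_{\tau}\bigr)$ is holomorphic along $z=0$, i.e.\ that the lift is genuinely logarithmic; this check is correct and is precisely the content hidden behind the paper's appeal to the explicit formulas.
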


\begin{proof}
    The first claim follows from Proposition \ref{prop:lift-regular-analytic} and equation \eqref{eq:transformation-Kronecker}. Equation \eqref{eq:lift-kronecker-analytic} follows immediately from the explicit formulas. To show that it characterises $\widetilde{\omega}^{(n)}$ uniquely among lifts of $\omega^{(n)}$, we let
    \[
        \widetilde{\omega}^{(n)}{}' = \widetilde{\omega}^{(n)} + f_n(z,w,\tau)d\tau\text{, }\qquad f_n(z,w,\tau) \in \Gamma(\mathcal{E}_{\mathfrak{H}}^{\natural},\mathcal{O}_{\mathcal{E}_{\mathfrak{H}}^{\natural}}),
    \]
    be other lifts of $\omega^{(n)}$ satisfying \eqref{eq:lift-kronecker-analytic}. Then
    \[
        (\widetilde{\omega}^{(n)} +f_n(z,w,\tau)d\tau)\wedge \widetilde{\nu}\wedge \widetilde{\omega}^{(0)} = n\frac{d\tau}{2\pi i}\wedge \widetilde{\nu}\wedge (\widetilde{\omega}^{(n+1)}+f_{n+1}(z,w,\tau)d\tau),
    \]
    if and only if
    \[
         f_n(z,w,\tau)d\tau\wedge dw\wedge dz = 0,
    \]
    so that $f_n =0$ for every $n\ge 1$.
\end{proof}

As an application, we can use the above unicity statements to prove an algebraicity result.

\begin{proposition}\label{prop:algebraicity-kronecker}
    Consider the pullback diagram
    \begin{equation}\label{eq:pullback-weierstrass-EC}
        \begin{tikzcd}
            \mathcal{E}_{\mathfrak{H}}\arrow{d} \arrow{r}{\psi}& E^{\an}\arrow{d}\\
            \mathfrak{H} \arrow{r}[swap]{s} & S^{\an}\arrow[phantom]{lu}{\square}
        \end{tikzcd}
    \end{equation}
    where
    \begin{itemize}
        \item $S = \Spec \mathbb{C}[g_2,g_3,(g_2^3-27g_3^2)^{-1}]$, and $E/S$ is the universal Weierstrass elliptic curve given by the equation $y^2z = 4x^3-g_2xz^2-g_3z^3$,
        \item the map $s:\mathfrak{H} \to S^{\an}$ is given by $s(\tau) = (g_2(\tau),g_3(\tau))$, where $g_2(\tau) = 60\sum_{m,n}'(m+n\tau)^{-4}$ and $g_3(\tau) = 140\sum_{m,n}'(m+n\tau)^{-6}$,
        \item the map $\psi:\mathcal{E}_{\mathfrak{H}}\to E^{\an}$ is given on each fibre by
        \[
            \psi_{\tau} :\mathcal{E}_{\tau} = \mathbb{C}/(\mathbb{Z} + \tau \mathbb{Z}) \stackrel{\sim}{\To} E^{\an}_{s(\tau)}\text{, }\qquad [z] \longmapsto 
            \begin{cases}
                (\wp_{\tau}(z):\wp_{\tau}'(z):1) & [z]\neq 0\\
                (0:1:0) & [z]= 0. 
            \end{cases}
        \] 
\end{itemize}
Consider the frame $(\omega_{\alg},\eta_{\alg}) = (dx/y,xdx/y)$ of the algebraic de Rham cohomology $H^1_{\dR}(E/S)$, and let $\omega_{\alg}^{(n)} \in \Gamma(E^{\natural},\Omega^1_{E^{\natural}/S}(\log \pi^{-1}O))$ and $\widetilde{\omega}_{\alg}^{(n)} \in \Gamma(E^{\natural}, \Omega^1_{E^{\natural}/\mathbb{C}}(\log \pi^{-1}O))$ ($n\ge 1$) be the corresponding Kronecker differentials and their canonical lifts (cf. Remark \ref{rmk:pairing}). Then,
\[
    \psi^*\omega_{\alg}^{(n)} = \omega^{(n)}\text{, } \qquad \psi^*\widetilde{\omega}_{\alg}^{(n)} = \widetilde{\omega}^{(n)}
\]
for every $n\ge 1$.
\end{proposition}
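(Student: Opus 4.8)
The plan is to reduce both equalities to the analytic uniqueness statements already available --- Proposition~\ref{prop:analytic-kronecker-diff} for the $\omega^{(n)}$ and Proposition~\ref{prop:lift-kronecker-analytic} for the lifts --- after transporting the algebraic differentials along $\psi$. Since the universal vector extension commutes with base change (\S\ref{par:def-uve}), $\psi$ lifts canonically to a map $\mathcal{E}^{\natural}_{\mathfrak{H}}\cong E^{\natural,\an}\times_{S^{\an}}\mathfrak{H}\to E^{\natural,\an}$, still denoted $\psi$, and $\psi^*$ commutes with $\Res$, with $\wedge$, and with the relative differential; the algebraic identities of Theorems~\ref{thm:kronecker-differentials} and~\ref{thm:lift-kronecker} analytify and may then be pulled back. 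First I would record the effect on regular differentials: from $x=\wp_\tau(z)$, $y=\wp'_\tau(z)$ one gets $\psi^*(dx/y)=dz$ and $\psi^*(x\,dx/y)=\wp_\tau(z)\,dz$, and since $\wp_\tau\,dz=\eta-G_2(\tau)\omega$ by \eqref{eq:analytic-omega-eta}, naturality of \eqref{eq:isom-cohomology-E} together with the identifications $\omega\leftrightarrow\omega^{(0)}=dz$, $\eta\leftrightarrow\nu=dw$ yields
\[
  \psi^*\omega^{(0)}_{\alg}=\omega^{(0)},\qquad \psi^*\nu_{\alg}=\nu-G_2(\tau)\omega^{(0)},
\]
and, restricting to $\pi^{-1}O=\{z=0\}$, $\psi^*t_{\alg}=w$, where $t_{\alg}$ is the trivialisation of $R^1p_*\mathcal{O}_E$ associated to $\nu_{\alg}$.

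For the Kronecker differentials I would check that $\psi^*\omega^{(n)}_{\alg}$ satisfies properties (i)--(iii) of Theorem~\ref{thm:kronecker-differentials} relative to the pair $(\nu',\omega^{(0)})$ with $\nu'\defeq\nu-G_2\omega^{(0)}$: the residue matches because $\psi^*t_{\alg}=w$, property (ii) is visibly preserved, and property (iii) pulls back to $d(\psi^*\omega^{(n)}_{\alg})=\nu'\wedge\psi^*\omega^{(n-1)}_{\alg}$. By Remark~\ref{eq:change-basis} (with $u=1$, $v=-G_2$) the Kronecker differentials attached to $\nu'$ coincide with those attached to $\nu$, so the uniqueness of Proposition~\ref{prop:analytic-kronecker-diff}, applied fibrewise and combined with holomorphy in $\tau$, forces $\psi^*\omega^{(n)}_{\alg}=\omega^{(n)}$.

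For the lifts the decisive observation is that $\dim\mathfrak{H}=1$, so $\Omega^2_{\mathfrak{H}/\mathbb{C}}=0$ and hence $\psi^*f_*\mathcal{F}^2=0$: pulling back the congruence of Theorem~\ref{thm:lift-kronecker} produces an \emph{exact} identity on $\mathcal{E}^{\natural}_{\mathfrak{H}}$. I would first pin down the pulled-back absolute frame. The analytic Gauss--Manin computation of Remark~\ref{rmk:gauss-manin-analytic-frame}, rewritten in the pulled-back frame $(\omega^{(0)},\nu-G_2\omega^{(0)})$, gives $s^*\alpha_{21}=d\tau/2\pi i$, and solving the pulled-back \eqref{eq:GM-canonical-lifts} (equivalently, matching the characterisation of Proposition~\ref{prop:lift-regular-analytic}) gives $\psi^*\widetilde{\omega}^{(0)}_{\alg}=\widetilde{\omega}^{(0)}$ and $\psi^*\widetilde{\nu}_{\alg}=\widetilde{\nu}-G_2\widetilde{\omega}^{(0)}$. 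Substituting these into the pulled-back relation, the $G_2$-terms drop out: on the left because $\widetilde{\omega}^{(0)}\wedge\widetilde{\omega}^{(0)}=0$, on the right because $\psi^*\widetilde{\omega}^{(n+1)}_{\alg}$ has relative part $\varphi^{(n+1)}_\tau\,dz$ and $d\tau\wedge dz\wedge(\varphi^{(n+1)}_\tau\,dz+(\ast)\,d\tau)=0$. What remains is exactly \eqref{eq:lift-kronecker-analytic} for $\psi^*\widetilde{\omega}^{(n)}_{\alg}$, with $\psi^*\widetilde{\omega}^{(n+1)}_{\alg}$ in place of $\widetilde{\omega}^{(n+1)}$; since altering the right-hand lift by a multiple of $d\tau$ leaves the right-hand side unchanged, the uniqueness of Proposition~\ref{prop:lift-kronecker-analytic} applies and gives $\psi^*\widetilde{\omega}^{(n)}_{\alg}=\widetilde{\omega}^{(n)}$.

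The main obstacle I anticipate is the bookkeeping of the quasimodular $G_2$, which enters through $\psi^*\nu_{\alg}$ and must cancel in the lift relation; the conceptual reason it does is precisely the dimension drop to $\mathfrak{H}$ (which removes the $\bmod f_*\mathcal{F}^2$ ambiguity) combined with the vanishing of $dz\wedge dz$ and $d\tau\wedge d\tau$. A secondary point requiring care is that, because GAGA fails here (Remark~\ref{rmk:gaga}), the uniqueness of Propositions~\ref{prop:analytic-kronecker-diff} and~\ref{prop:lift-kronecker-analytic} must be invoked over the family, for which fibrewise uniqueness together with holomorphy in $\tau$ is enough.
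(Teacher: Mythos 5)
Your proposal is correct and follows essentially the same route as the paper's own proof: compute $\psi^*\omega^{(0)}_{\alg}=\omega^{(0)}$ and $\psi^*\nu_{\alg}=\nu-G_2\omega^{(0)}$, deduce $\psi^*\omega^{(n)}_{\alg}=\omega^{(n)}$ from Remark \ref{eq:change-basis} and fibrewise unicity, identify the pulled-back canonical lifts of the regular frame via the Gauss--Manin matrix and Proposition \ref{prop:lift-regular-analytic}, and then pull back the defining relation of Theorem \ref{thm:lift-kronecker} (which is exact because $\dim \mathfrak{H}=1$ kills $f_*\mathcal{F}^2$) and conclude by the uniqueness of Proposition \ref{prop:lift-kronecker-analytic}, with the same cancellation of the $G_2$-terms. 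The only cosmetic difference is that you attribute the fibrewise unicity to Proposition \ref{prop:analytic-kronecker-diff}, whereas it is really the unicity clause of Theorem \ref{thm:kronecker-differentials} applied on each fibre; this does not affect the argument.
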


\begin{proof}
    Let $(\omega_{\alg}^{(0)},\nu_{\alg})$ be the frame of $f_*\Omega^1_{E^{\natural}/S}$ corresponding to $(\omega_{\alg},\eta_{\alg})$. Since the frame $(\omega^{(0)},\nu)$ of $f_*\Omega^1_{\mathcal{E}_{\mathfrak{H}}/\mathfrak{H}}$ corresponds to $(\omega,\eta)$, which is given by the formula \eqref{eq:analytic-omega-eta}, we obtain
    \[
        \psi^*\omega_{\alg}^{(0)} = dz = \omega^{(0)}\text{, }\qquad \psi^*\nu_{\alg} = dw - G_2dz = \nu - G_2\omega^{(0)}.
    \]
    It follows from Remark \ref{eq:change-basis} and from the unicity part of Theorem \ref{thm:kronecker-differentials} that $\psi^*\omega_{\alg}^{(n)}$ and $\omega^{(n)}$ agree in every fibre of $f:\mathcal{E}_{\mathfrak{H}}^{\natural} \to \mathfrak{H}$, so that
    \[
        \psi^*\omega_{\alg}^{(n)} = \omega^{(n)}
    \]
    for every $n\ge 1$.

    Let $(\alpha_{ij})_{1\le i,j\le 2}$ be the matrix of the Gauss--Manin connection $\nabla$ on $H^1_{\dR}(E/S)$ in the frame $(\omega_{\alg},\eta_{\alg})$, so that
    \[
        \nabla \omega_{\alg} = \alpha_{11}\otimes \omega_{\alg} + \alpha_{21}\otimes \eta_{\alg}
    \]
    \[
        \nabla \eta_{\alg} = \alpha_{12}\otimes \omega_{\alg} + \alpha_{22}\otimes \eta_{\alg}.
    \]
    It follows from Remark \ref{rmk:gauss-manin-analytic-frame} that the Gauss--Manin connection $\nabla$ on $H^1_{\dR}(\mathcal{E}_{\mathfrak{H}}/\mathfrak{H})$ satisfies
    \[
        \nabla \psi^*\omega_{\alg} = \nabla \omega = \frac{d\tau}{2\pi i}\otimes \eta = \frac{d\tau}{2\pi i} \otimes (\psi^*\eta_{\alg} + G_2 \psi^*\omega_{\alg}) = G_2 \frac{d\tau}{2\pi i}\otimes \psi^*\omega_{\alg} + \frac{d\tau}{2\pi i}\otimes \psi^*\eta_{\alg}
    \]
    \[
        \nabla \psi^*\eta_{\alg} = \nabla (\eta - G_2\omega) = -dG_2\otimes \omega -G_2\frac{d\tau}{2\pi i}\otimes \eta = \left(-dG_2 -G_2^2\frac{d\tau}{2\pi i} \right)\otimes \psi^*\omega_{\alg} - G_2\frac{d\tau}{2\pi i}\otimes \psi^*\eta_{\alg}.
    \]
    Since the formation of the Gauss--Manin connection commutes with base change, we conclude from the above equations that
    \[
        \begin{pmatrix}
            s^*\alpha_{11} & s^*\alpha_{12}\\
            s^*\alpha_{21} & s^*\alpha_{22}
        \end{pmatrix}
        =
        \begin{pmatrix}
          G_2\frac{d\tau}{2\pi i} & -dG_2 - G_2^2\frac{d\tau}{2\pi i}\\
            \frac{d\tau}{2\pi i} & -G_2\frac{d\tau}{2\pi i}
        \end{pmatrix}.
    \]
    Then, using the equations of Example \ref{ex:gauss--manin}, one can check that
    \[
        d(\psi^*\widetilde{\omega}^{(0)}_{\alg}) = \frac{d\tau}{2\pi i}\wedge (\psi^*\widetilde{\nu}_{\alg} + G_2\widetilde{\omega}^{(0)})\text{, } \qquad d(\psi^*\widetilde{\nu}_{\alg} + G_2\widetilde{\omega}^{(0)}) = 0,
    \]
    and we conclude from Proposition \ref{prop:lift-regular-analytic} that
    \[
        \psi^*\widetilde{\omega}^{(0)}_{\alg} = \widetilde{\omega}^{(0)}\text{, }\qquad \psi^*\widetilde{\nu}_{\alg} = \widetilde{\nu} - G_2 \widetilde{\omega}^{(0)}.
    \]
  
    By pulling back the defining equation for $\widetilde{\omega}_{\alg}^{(n)}$ in Theorem \ref{thm:lift-kronecker}, we obtain
    \[
        \psi^*\widetilde{\omega}_{\alg}^{(n)} \wedge (\widetilde{\nu} - G_2 \widetilde{\omega}^{(0)})\wedge \widetilde{\omega}^{(0)} = n\frac{d\tau}{2\pi i} \wedge (\widetilde{\nu} - G_2 \widetilde{\omega}^{(0)}) \wedge \psi^*\widetilde{\omega}_{\alg}^{(n+1)}
    \]
    (note that $f_*\mathcal{F}^2=0$ since $\mathfrak{H}$ is 1-dimensional). Clearly,
    \[
        \psi^*\widetilde{\omega}_{\alg}^{(n)} \wedge (\widetilde{\nu} - G_2 \widetilde{\omega}^{(0)})\wedge \widetilde{\omega}^{(0)} = \psi^*\widetilde{\omega}_{\alg}^{(n)} \wedge \widetilde{\nu}\wedge \widetilde{\omega}^{(0)}.
    \]
    Since $\psi^*\widetilde{\omega}_{\alg}^{(n+1)}$ differs from $\widetilde{\omega}^{(n+1)}$ by an element of $\Gamma(\mathcal{E}_{\mathfrak{H}}^{\natural},\mathcal{O}_{\mathcal{E}_{\mathfrak{H}}^{\natural}})d\tau$ and $\widetilde{\omega}^{(0)} \wedge \widetilde{\omega}^{(n+1)}$ is a multiple of $d\tau$, we obtain
    \[
        n\frac{d\tau}{2\pi i} \wedge (\widetilde{\nu} - G_2 \widetilde{\omega}^{(0)}) \wedge \psi^*\widetilde{\omega}_{\alg}^{(n+1)} = n\frac{d\tau}{2\pi i} \wedge \widetilde{\nu}\wedge \psi^*\widetilde{\omega}_{\alg}^{(n+1)}.
    \]
    Thus, by Proposition \ref{prop:lift-kronecker-analytic}, we conclude that $\psi^*\widetilde{\omega}^{(n)}_{\alg} = \widetilde{\omega}^{(n)}$ for every $n\ge 1$.
\end{proof}

Let $P \in \mathcal{E}_{\mathfrak{H}}(\mathfrak{H})$ be a torsion section. Then, there are $\alpha,\beta \in \mathbb{Q}$ such that $P(\tau)$ is represented by $\alpha+\beta\tau$ under $\mathcal{E}_{\tau} \cong \mathbb{C}/(\mathbb{Z} + \tau \mathbb{Z})$ for every $\tau \in \mathfrak{H}$. The next result gives the canonical lift of $\omega^{(n)}_P$, defined in Corollary \ref{coro:omegaP-analytic}.

\begin{corollary}\label{coro:canonical-lift-omega-P}
    Set
    \[
         \widetilde{\omega}^{(n)}_{P} \defeq  \varphi^{(n)}_{\tau}(z - \alpha - \beta\tau, w-2\pi i \beta) \left(dz - w \frac{d\tau}{2\pi i} \right) + n\varphi^{(n+1)}_{\tau}(z-\alpha-\beta\tau,w-2\pi i \beta) \frac{d\tau}{2\pi i}.
    \]
    Let $T$ be an $S$-scheme and assume that the diagram \eqref{eq:pullback-weierstrass-EC} factors as
    \[
        \begin{tikzcd}
            \mathcal{E}_{\mathfrak{H}}\arrow{d} \arrow{r}{\psi_t}\arrow[bend left]{rr}{\psi}& E^{\an}_T \arrow{r}\arrow{d} & E^{\an}\arrow{d}\\
            \mathfrak{H} \arrow{r}[swap]{t}\arrow[bend right]{rr}[swap]{s} &T^{\an}\arrow{r}\arrow[phantom]{lu}{\square} & S^{\an}\arrow[phantom]{lu}{\square}
        \end{tikzcd}
    \]
    Let $P_{\alg} \in E_T(T)$ be a torsion section and $\widetilde{\omega}^{(n)}_{\alg, P_{\alg}}$ be the corresponding Kronecker differentials on $E_T$ with logarithmic singularities along $\pi^{-1}P_{\alg}$. If $P = t^*P_{\alg}$, then $\psi_t^*\widetilde{\omega}^{(n)}_{\alg, P_{\alg}}=\widetilde{\omega}^{(n)}_P$.
\end{corollary}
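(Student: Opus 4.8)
The plan is to reduce the statement to the level-$1$ algebraicity result of Proposition~\ref{prop:algebraicity-kronecker}, exploiting the fact that both $\widetilde{\omega}^{(n)}_{\alg,P_{\alg}}$ and $\widetilde{\omega}^{(n)}_P$ are defined as translates of the corresponding differentials at the origin. On the algebraic side, $\widetilde{\omega}^{(n)}_{\alg,P_{\alg}}=\tau_{-P_{\alg}^{\natural}}^{*}\widetilde{\omega}^{(n)}_{\alg}$ by \eqref{eq:def-omegaP-lift} (pulled back to $E_T$), while on the analytic side I would recognise the explicit formula for $\widetilde{\omega}^{(n)}_P$ as $\tau_{-P^{\natural}}^{*}\widetilde{\omega}^{(n)}$, the pullback of the level-$1$ canonical lift of Proposition~\ref{prop:lift-kronecker-analytic} along translation by $-P^{\natural}$. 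Here $P^{\natural}$ and $P_{\alg}^{\natural}$ denote the canonical torsion lifts to the respective universal vector extensions, and the hypothesis $P=t^{*}P_{\alg}$ will be used to compare them.

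First I would establish the equivariance $\psi_t\circ \tau_{-P^{\natural}}=\tau_{-P^{\natural}_{\alg}}\circ \psi_t$ for the induced maps on universal vector extensions. This holds because $\psi_t$ is fibrewise a group isomorphism (the Weierstrass uniformisation respects the group law), hence induces a homomorphism on universal vector extensions by functoriality, and because $P=t^{*}P_{\alg}$ forces $\psi_t$ to send the torsion lift $P^{\natural}$ to $P^{\natural}_{\alg}$, such lifts being unique and compatible with homomorphisms. Passing to pullbacks of differentials gives $\psi_t^{*}\circ \tau_{-P^{\natural}_{\alg}}^{*}=\tau_{-P^{\natural}}^{*}\circ \psi_t^{*}$. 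Combining this with $\psi_t^{*}\widetilde{\omega}^{(n)}_{\alg}=\psi^{*}\widetilde{\omega}^{(n)}_{\alg}=\widetilde{\omega}^{(n)}$---which follows from Proposition~\ref{prop:algebraicity-kronecker} together with the compatibility of the canonical lifts with base change along $t$---yields
\[
    \psi_t^{*}\widetilde{\omega}^{(n)}_{\alg,P_{\alg}}=\psi_t^{*}\tau_{-P^{\natural}_{\alg}}^{*}\widetilde{\omega}^{(n)}_{\alg}=\tau_{-P^{\natural}}^{*}\psi_t^{*}\widetilde{\omega}^{(n)}_{\alg}=\tau_{-P^{\natural}}^{*}\widetilde{\omega}^{(n)}.
\]
It then remains to identify $\tau_{-P^{\natural}}^{*}\widetilde{\omega}^{(n)}$ with the explicit formula in the statement.

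This last identification is where the main subtlety lies. Under \eqref{eq:explicit-uve-analytic} the section $P^{\natural}$ is represented by $(\alpha+\beta\tau,2\pi i\beta)$ (as in the proof of Corollary~\ref{coro:omegaP-analytic}), so $\tau_{-P^{\natural}}$ acts on coordinates by $(z,w)\mapsto (z-\alpha-\beta\tau,\,w-2\pi i\beta)$. Because the translating section varies holomorphically with the modular parameter, the pullback of the \emph{absolute} form $\widetilde{\omega}^{(n)}$ picks up extra $d\tau$-contributions: $dz\mapsto dz-\beta\,d\tau$ while $dw\mapsto dw$. The hard part is to verify that these cross terms cancel. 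Concretely, the bracket $dz-w\frac{d\tau}{2\pi i}$ transforms into $(dz-\beta\,d\tau)-(w-2\pi i\beta)\frac{d\tau}{2\pi i}$, and the two occurrences of $\beta\,d\tau$ cancel, leaving $dz-w\frac{d\tau}{2\pi i}$ unchanged---this is precisely the torsion-translation invariance of $\widetilde{\omega}^{(0)}$ already recorded in Lemma~\ref{lem:invariance}. After this cancellation $\tau_{-P^{\natural}}^{*}\widetilde{\omega}^{(n)}$ reduces to the stated formula for $\widetilde{\omega}^{(n)}_P$, which completes the proof. Alternatively, one could avoid the explicit substitution altogether by checking directly that the displayed $\widetilde{\omega}^{(n)}_P$ lifts $\omega^{(n)}_P$ and satisfies the translate of \eqref{eq:lift-kronecker-analytic}, and then invoking the uniqueness part of Proposition~\ref{prop:lift-kronecker-analytic}.
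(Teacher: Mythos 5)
Your proposal is correct and follows essentially the same route as the paper's proof: identify $P^{\natural}$ analytically as $(\alpha+\beta\tau,2\pi i\beta)$, observe that $\widetilde{\nu}$ and $\widetilde{\omega}^{(0)}$ (hence the bracket $dz-w\tfrac{d\tau}{2\pi i}$) are invariant under translation by $-P^{\natural}$ so that the displayed formula is $\tau_{-P^{\natural}}^{*}\widetilde{\omega}^{(n)}$, and then conclude from the algebraic definition \eqref{eq:def-omegaP-lift} together with Proposition \ref{prop:algebraicity-kronecker}. Your write-up merely makes explicit two points the paper leaves implicit (the intertwining $\psi_t\circ\tau_{-P^{\natural}}=\tau_{-P^{\natural}_{\alg}}\circ\psi_t$ via uniqueness of torsion lifts, and base-change compatibility of the canonical lifts along $t$), which is a faithful elaboration rather than a different argument.
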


\begin{proof}
    The unique lift to a torsion section $P^{\natural} \in \mathcal{E}_{\mathfrak{H}}^{\natural}(\mathfrak{H})$ is such that $P^{\natural}(\tau)$ is represented by $(\alpha+\beta\tau,2\pi i \beta)$ under $\mathcal{E}^{\natural}_{\tau}\cong \mathbb{C}^2/L_{\tau}$ for every $\tau \in \mathfrak{H}$. One can check that that $\widetilde{\omega}^{(0)}$ and $\widetilde{\nu}$ are invariant under translation by $-P^{\natural}$ (cf. Lemma \ref{lem:invariance}), so that $\widetilde{\omega}^{(n)}_P = \tau^*_{-P^{\natural}}\widetilde{\omega}^{(n)}$. Then, the statement follows immediately from \eqref{eq:def-omegaP-lift} and from the previous theorem.
\end{proof}

\subsection{Level $N$ elliptic KZB connection}

Let $N\ge 1$ be an integer and denote by $\Gamma_N \defeq \mathcal{E}_{\mathfrak{H}}[N](\mathfrak{H})$ the group of $N$-torsion sections of $p:\mathcal{E}_{\mathfrak{H}} \to \mathfrak{H}$. Note that $\Gamma_N \cong (N^{-1}\mathbb{Z})^2/\mathbb{Z}^2$.

Consider the completed Hopf algebra over $\mathcal{O}_{\mathfrak{H}}$ given by
\[
    \mathcal{A}_N\defeq \frac{\mathcal{O}_{\mathfrak{H}}\langle \! \langle a,b,c_P : P \in \Gamma_N \rangle \! \rangle}{\langle \sum_{P \in \Gamma_N}c_P - [a,b]\rangle}.
\]
If $E$ is an (algebraic) complex elliptic curve such that $E^{\an} \cong \mathcal{E}_{\tau}$, then it follows from the discussion in \S\ref{par:kzb-connection} that the fibre of $\mathcal{A}_{N}$ at $\tau$ is isomorphic to the dual of the fundamental Hopf algebra of $E/\mathbb{C}$ punctured at $Z=E[N]$:
\[
    \mathcal{A}_{N,\tau} \cong \mathcal{H}_{E/\mathbb{C},E[N]}^{\vee}.
\]
Then, the relative elliptic KZB connection is given by
\[
    \overline{\nabla}_{N} : f^*\mathcal{A}_N \longrightarrow \Omega^1_{\mathcal{E}_{\mathfrak{H}}^{\natural}/\mathfrak{H}}(\log \pi^{-1}\mathcal{E}_{\mathfrak{H}}[N]) \hat{\otimes} f^*\mathcal{A}_N\text{, }\qquad \overline{\nabla}_{N} = d + \omega_{N},
\]
where
\[
    \omega_{N} = -\nu \otimes a -\omega^{(0)}\otimes b - \sum_{n\ge 1}\sum_{P \in \Gamma_N}\omega^{(n)}_P\otimes \ad_a^{n-1}c_P,
\]
with $\nu$, $\omega^{(0)}$, and $\omega_P^{(n)}$ as in the above paragraphs. If $P(\tau)$ is represented by $\alpha+\beta\tau$, let us denote
\[
    k_P(z,w,x) \defeq e^{(w-2\pi i \beta)x}F_{\tau}(z-\alpha-\beta\tau,x) - \frac{1}{x},
\]
the dependence on $\tau$ being omitted in the notation for simplicity. Then, it follows from the results of \S\ref{par:analytic-kronecker} that we can rewrite
\[
    \omega_{N} = -dw\otimes a -dz \otimes \left(b + \sum_{P \in \Gamma_N}k_P(z,w,\ad_a)c_P \right).
\]

According to \S\ref{par:universal-kzb}, the relative connection $\overline{\nabla}_N$ lifts to an absolute elliptic KZB connection, which we shall denote more simply as
\[
    \nabla_{N} : f^*\mathcal{A}_{N} \longrightarrow \Omega^1_{\mathcal{E}_{\mathfrak{H}}^{\natural}}(\log \pi^{-1}\mathcal{E}_{\mathfrak{H}}[N]) \hat{\otimes} f^*\mathcal{A}_N,
\]
and which is an integrable connection explicitly given by (cf. proof of Theorem \ref{thm:integrability})
\[
    \nabla_N = d + \widetilde{\omega}_N + \Phi_N,
\]
where $\widetilde{\omega}_N$ is the canonical lift of the KZB form $\omega_N$, and $\Phi_N$ is the matrix of the dual Gauss--Manin connection $\delta^{\vee}$ (cf. \S\ref{par:GM-connection-fundamental}). Next, we determine $\nabla_N$ explicitly.

\begin{lemma}
    For every $P \in \Gamma_N$, set
    \[
        g_P(z,w,x) \defeq \frac{\partial}{\partial x}k_P(z,w,x) - wk_P(z,w,x).
    \]
    We have
    \[
        \widetilde{\omega}_N = -dw\otimes a -dz\otimes \left(b + \sum_{P \in \Gamma_N}k_P(z,w,\ad_a)c_P\right) - \frac{d\tau}{2\pi i}\otimes \left(-wb + \sum_{P \in \Gamma_N}g_P(z,w,\ad_a)c_P\right).
    \]
\end{lemma}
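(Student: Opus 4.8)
The plan is to compute $\widetilde{\omega}_N$ directly from its definition as the canonical lift $(s\otimes\id)(\omega_N)$ of the KZB form, that is, by replacing each relative Kronecker differential occurring in $\omega_N$ by its \emph{analytic} canonical lift from \S\ref{par:analytic-canonical-lifts}. Starting from
\[
    \widetilde{\omega}_N = -\widetilde{\nu}\otimes a - \widetilde{\omega}^{(0)}\otimes b - \sum_{n\ge 1}\sum_{P\in\Gamma_N}\widetilde{\omega}^{(n)}_P\otimes \ad_a^{n-1}c_P,
\]
I would substitute $\widetilde{\nu}=dw$ and $\widetilde{\omega}^{(0)}=dz-w\frac{d\tau}{2\pi i}$ from Proposition~\ref{prop:lift-regular-analytic}, together with the formula for $\widetilde{\omega}^{(n)}_P$ from Corollary~\ref{coro:canonical-lift-omega-P}, and then regroup all terms according to the three covectors $dw$, $dz-w\frac{d\tau}{2\pi i}$, and $\frac{d\tau}{2\pi i}$.

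The key computational input is the generating series of Proposition~\ref{prop:analytic-kronecker-diff}. With the shifted arguments as above, the normalisation $\varphi^{(0)}_\tau=1$ gives $k_P(z,w,x)=\sum_{n\ge 1}\varphi^{(n)}_\tau(z-\alpha-\beta\tau,w-2\pi i\beta)x^{n-1}$, so that substituting $x=\ad_a$ and collecting the coefficient of $dz-w\frac{d\tau}{2\pi i}$ produces exactly $-\bigl(dz-w\frac{d\tau}{2\pi i}\bigr)\otimes\sum_P k_P(z,w,\ad_a)c_P$ (alongside the $b$-term from $\widetilde{\omega}^{(0)}$). Differentiating the same series in $x$ yields $\frac{\partial}{\partial x}k_P(z,w,x)=\sum_{n\ge 1}n\,\varphi^{(n+1)}_\tau(\dots)x^{n-1}$, which identifies the contribution of the last summand $n\varphi^{(n+1)}_\tau\frac{d\tau}{2\pi i}$ of $\widetilde{\omega}^{(n)}_P$ as $-\frac{d\tau}{2\pi i}\otimes\sum_P \frac{\partial}{\partial x}k_P(z,w,\ad_a)c_P$.

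The final step, and the only place requiring care, is to combine the three sources of $\frac{d\tau}{2\pi i}$-terms: the $+w\frac{d\tau}{2\pi i}\otimes b$ coming from $\widetilde{\omega}^{(0)}$, the $+w\frac{d\tau}{2\pi i}\otimes\sum_P k_P c_P$ coming from the $-w\frac{d\tau}{2\pi i}$ part of each $\widetilde{\omega}^{(n)}_P$, and the $-\frac{d\tau}{2\pi i}\otimes\sum_P \frac{\partial}{\partial x}k_P\, c_P$ just computed. Their sum is $\frac{d\tau}{2\pi i}\otimes\bigl(wb-\sum_P(\frac{\partial}{\partial x}k_P - w k_P)c_P\bigr)$, which by the very definition $g_P=\frac{\partial}{\partial x}k_P-wk_P$ equals $-\frac{d\tau}{2\pi i}\otimes(-wb+\sum_P g_P c_P)$, yielding the claimed formula. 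The main obstacle is thus purely bookkeeping: one must recognise that the term $-wk_P$ needed to assemble $g_P$ is supplied precisely by the $w\frac{d\tau}{2\pi i}$ contributions hidden inside $\widetilde{\omega}^{(0)}$ and inside the lifts $\widetilde{\omega}^{(n)}_P$, so that no genuine analytic input beyond the two generating-series identities is required.
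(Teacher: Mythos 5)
Your proof is correct and follows essentially the same route as the paper: both substitute the explicit analytic canonical lifts $\widetilde{\nu}=dw$, $\widetilde{\omega}^{(0)}=dz-w\tfrac{d\tau}{2\pi i}$, and $\widetilde{\omega}^{(n)}_P$ from Corollary \ref{coro:canonical-lift-omega-P} into the defining expression $\widetilde{\omega}_N = -\widetilde{\nu}\otimes a - \widetilde{\omega}^{(0)}\otimes b - \sum_{n,P}\widetilde{\omega}^{(n)}_P\otimes\ad_a^{n-1}c_P$ and resum the generating series, using $k_P=\sum_{n\ge 1}\varphi^{(n)}_\tau x^{n-1}$ and $\tfrac{\partial}{\partial x}k_P=\sum_{n\ge 1}n\varphi^{(n+1)}_\tau x^{n-1}$. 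Your final regrouping of the $\tfrac{d\tau}{2\pi i}$-terms into $g_P=\tfrac{\partial}{\partial x}k_P-wk_P$ is exactly the paper's computation.
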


\begin{proof}
    By definition,
    \[
        \widetilde{\omega}_N = -\widetilde{\nu} \otimes a -\widetilde{\omega}^{(0)}\otimes b - \sum_{n\ge 1}\sum_{P \in \Gamma_N}\widetilde{\omega}^{(n)}_P\otimes \ad_a^{n-1}c_P.
    \]
    We put together the explicit expressions for the canonical lifts: $\widetilde{\nu} = dw$, $\widetilde{\omega}^{(0)} = dz - w \frac{d\tau}{2\pi i}$, and, by Corollary \ref{coro:canonical-lift-omega-P},
    \begin{align*}
        \sum_{n\ge 1}\widetilde{\omega}^{(n)}_Px^{n-1} =& 
        \sum_{n\ge 1}  \varphi^{(n)}_{\tau}(z - \alpha - \beta\tau, w-2\pi i \beta)x^{n-1}dz \\
        &+ \sum_{n\ge 1}\left(n\varphi^{(n+1)}_{\tau}(z-\alpha-\beta\tau,w-2\pi i \beta)x^{n-1} -  w \varphi^{(n)}_{\tau}(z - \alpha - \beta\tau, w-2\pi i \beta)x^{n-1}\right)\frac{d\tau}{2\pi i}\\
        =& k_P(z,w,x)dz + g_P(z,w,x)\frac{d\tau}{2\pi i}.\qedhere
    \end{align*}
\end{proof}

For $Q \in \Gamma_N$, let us define $A_{m,Q}(\tau)$ by the generating series
\[
    g_{-Q}(0,0,x) = k'_{-Q}(0,0,x) = \sum_{m\ge 0}A_{m,Q}(\tau)x^m,
\]
where $k'_{-Q}(z,w,x) = \frac{\partial}{\partial x}k_{-Q}(z,w,x)$. When $Q = O$, we have
\[
    g_O(0,0,x) = \frac{\partial}{\partial x}\left(\frac{\partial}{\partial x}\log \theta_{\tau}(x)\right)  + \frac{1}{x^2} = -\left(\wp_{\tau}(x) -\frac{1}{x^2}\right) = \sum_{k\ge 2}(2k-1)G_{2k}(\tau)x^{2k-2},
\]
so that $A_{m,O}$ are level 1 Eisenstein series. For general $Q$, the functions $A_{m,Q}$ are Eisenstein series of level $N$; see \cite[Proposition 10.1]{hopper21}. 

\begin{theorem}\label{thm:computation-universal-KZB}
    We have
    \[
        \Phi_N = -\frac{d\tau}{2\pi i}\otimes \left(b\frac{\partial}{\partial a} + \frac{1}{2}\sum_{Q \in \Gamma_N}\sum_{m\ge 0}A_{m,Q}(\tau)\delta_{m,Q} \right)
    \]
    where
    \[
        \delta_{m,Q} \defeq \sum_{\substack{i,j\ge 0 \\ i+j = m-1}}\sum_{P \in \Gamma_N}[(-\ad_a)^ic_P,\ad_a^jc_{P-Q}]\frac{\partial}{\partial b} + \sum_{P \in \Gamma_N}[c_P,\ad_a^mc_{P-Q}+(-\ad_a)^mc_{P+Q}]\frac{\partial}{\partial c_P}.
    \]
\end{theorem}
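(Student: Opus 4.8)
The plan is to read off $\Phi_N$ directly from the integrability identity established in Theorem \ref{thm:integrability}. Since $\mathfrak{H}$ is one-dimensional we have $f_*\mathcal{F}^2 = 0$, so the projection $\pi$ is injective on $f_*\mathcal{F}^1$ and the integrability of $\nabla_N$ is equivalent to the honest equation
\[
  d\widetilde{\omega}_N + \widetilde{\omega}_N\wedge\widetilde{\omega}_N + \Phi_N(\widetilde{\omega}_N) = 0.
\]
By Theorem \ref{thm:non-abelian-gm}(iii) the dual Gauss--Manin connection is a derivation, so $\Phi_N = -\frac{d\tau}{2\pi i}\otimes D_N$ for a single $\tau$-dependent $k$-derivation $D_N$ of $\mathcal{A}_N$; as a derivation, $D_N$ is determined by the three values $D_N a$, $D_N b$, $D_N c_P$. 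Thus it suffices to compute the curvature $F \defeq d\widetilde{\omega}_N + \widetilde{\omega}_N\wedge\widetilde{\omega}_N$ of the canonical lift and to solve $\Phi_N(\widetilde{\omega}_N) = -F$ for these values.

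Writing $\widetilde{\omega}_N = \xi_w\,dw + \xi_z\,dz + \xi_\tau\,\frac{d\tau}{2\pi i}$ with $\xi_w = -a$, $\xi_z = -(b + \sum_P k_P(z,w,\ad_a)c_P)$ and $\xi_\tau = wb - \sum_P g_P(z,w,\ad_a)c_P$ (from the lemma above), the $dw\wedge dz$-component of $F$ is the curvature of the relative KZB connection, which vanishes by Proposition \ref{prop:univ-property-kzb}. Hence $F$ lives in the two components along $\frac{d\tau}{2\pi i}$, and one computes
\[
  F = \big(\partial_w\xi_\tau + [\xi_w,\xi_\tau]\big)\,dw\wedge\tfrac{d\tau}{2\pi i} + \big(\partial_z\xi_\tau - 2\pi i\,\partial_\tau\xi_z + [\xi_z,\xi_\tau]\big)\,dz\wedge\tfrac{d\tau}{2\pi i},
\]
while $\Phi_N(\widetilde{\omega}_N) = D_N\xi_w\,dw\wedge\frac{d\tau}{2\pi i} + D_N\xi_z\,dz\wedge\frac{d\tau}{2\pi i}$. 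Matching the $dw\wedge\frac{d\tau}{2\pi i}$ components gives $D_N a = \partial_w\xi_\tau + [\xi_w,\xi_\tau]$, which collapses to $D_N a = b$ upon using the elementary Kronecker relations $\partial_w k_P = x k_P + 1$ and $x g_P - \partial_w g_P = w$ together with $\sum_P c_P = [a,b]$. This accounts for the $b\frac{\partial}{\partial a}$ term of the claimed $D_N$.

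The substantive content is the $dz\wedge\frac{d\tau}{2\pi i}$ component, which yields $D_N\xi_z = -\partial_z\xi_\tau + 2\pi i\,\partial_\tau\xi_z - [\xi_z,\xi_\tau]$ and from which $D_N b$ and the $D_N c_P$ must be extracted. Here two analytic inputs are decisive. First, the combination $2\pi i\,\partial_\tau\xi_z - \partial_z\xi_\tau$ is handled by the mixed heat equation for the Kronecker function $F_\tau$ (relating $\partial_\tau F_\tau$ to $\partial_z\partial_x F_\tau$), which turns the $\tau$-derivatives into $z$- and $x$-derivatives of the $g_P$; evaluating the resulting scalar functions at $z = w = 0$ and invoking the defining expansion $g_{-Q}(0,0,x) = \sum_m A_{m,Q}(\tau)x^m$ produces the Eisenstein coefficients, with $A_{m,O}$ the classical level-$1$ series as in the Weierstrass display. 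Second, the commutator $[\xi_z,\xi_\tau]$ is a double sum over $P,P' \in \Gamma_N$ of products $k_P\,g_{P'}$ times $[\ad_a^{\,n}c_P,\ad_a^{\,m}c_{P'}]$; applying the bilinear Fay-type identity for $F_\tau(z,x)F_\tau(z,y)$ rewrites these products in terms of single Kronecker functions with \emph{shifted} torsion arguments. Tracking these shifts --- which arise through the translations $P^\natural$ of Corollary \ref{coro:canonical-lift-omega-P}, i.e. the arguments $z - \alpha_P - \beta_P\tau$ --- is exactly what reorganises the double sum into the combination $\sum_{i+j=m-1}[(-\ad_a)^ic_P,\ad_a^jc_{P-Q}] + [c_P,\ad_a^mc_{P-Q} + (-\ad_a)^mc_{P+Q}]$ defining $\delta_{m,Q}$, the symmetrisation factor $\tfrac12$ coming from the double counting in the sum.

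The main obstacle will be precisely this Fay-identity bookkeeping at level $N$: simultaneously keeping track of the torsion shifts in the scalar factors and the non-commutativity of the $c_P$, and verifying that the double sum collapses onto the exact combination in $\delta_{m,Q}$ --- with the correct index shifts $P-Q$, $P+Q$ and the correct relative signs $(-\ad_a)^i$ --- rather than merely a term of the same general shape. Once the scalar coefficients have been isolated, their identification as the Eisenstein series $A_{m,Q}$ of level $N$ is a direct appeal to \cite[Proposition 10.1]{hopper21}.
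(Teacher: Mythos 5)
Your skeleton is the same as the paper's: use the one-dimensionality of $\mathfrak{H}$ (so $f_*\mathcal{F}^2=0$) and the fact that $\Phi_N$ has coefficients in $k$-derivations to reduce everything to the single equation $d\widetilde{\omega}_N + \widetilde{\omega}_N\wedge\widetilde{\omega}_N + \Phi_N(\widetilde{\omega}_N)=0$, then compute the curvature of the canonical lift using the mixed heat equation and Fay-type identities. You run the argument in the opposite direction (solving for $D_N$ on the generators $a,b,c_P$ rather than verifying the stated candidate), but by the uniqueness statement --- the paper's Lemma \ref{lemma:computation-universal-KZB}, which your reduction implicitly reproves --- the two directions are equivalent; moreover your computation of the $dw\wedge\frac{d\tau}{2\pi i}$ component, giving $D_Na=b$, is correct and agrees with \eqref{eq:curvature-omega-tilde}.

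There is, however, a concrete error in your treatment of the $dz\wedge\frac{d\tau}{2\pi i}$ component, which is where the entire content of the theorem lies. The mixed heat equation gives the \emph{exact} identity $2\pi i\,\partial_\tau k_P(z,w,x) = \partial_z g_P(z,w,x)$ (the torsion-shift terms coming from the chain rule match on both sides), so the combination $2\pi i\,\partial_\tau\xi_z - \partial_z\xi_\tau$ that you propose to evaluate at $z=w=0$ vanishes identically: it produces no Eisenstein coefficients, and nothing survives of your ``first analytic input''. Consequently the $dz\wedge\frac{d\tau}{2\pi i}$ component of the curvature is exactly the commutator $[\xi_z,\xi_\tau]$, and \emph{both} the coefficients $A_{m,Q}$ and the bracket structure of $\delta_{m,Q}$ --- the $\partial/\partial b$ part as well as the $\partial/\partial c_P$ part --- must be extracted from this single term. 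That extraction is more than Fay bookkeeping: expanding the left-hand side $D_N\bigl(b+\sum_P k_P(z,w,\ad_a)c_P\bigr)$ requires identities for a derivation acting through powers of $\ad_a$ (the paper uses \cite[Lemma 3.1.4]{LR07}, the symmetry $g_{-Q}(0,0,x)=g_Q(0,0,-x)$, and the antisymmetry \eqref{eq:useful-identity}), and the resulting scalar identity is settled by \cite[Equation (3.3)]{BL11} when $P=Q$ and by the Fay identity \cite[Proposition 5.(iii)]{BL11} when $P\neq Q$; it is at this stage, through the expansions $g_{P-Q}(0,0,\cdot)$, that the level-$N$ Eisenstein series actually appear. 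Since your plan both defers this computation and mislocates where the $A_{m,Q}$ arise, it would not go through as written; carrying it out forces a full analysis of the commutator, which is precisely the body of the paper's proof.
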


The proof is based on the following lemma.

\begin{lemma}\label{lemma:computation-universal-KZB}
    Let $A_N = \mathbb{C}\langle \!\langle a,b,c_P : P \in \Gamma_N\rangle \! \rangle /\langle \sum_{P \in \Gamma_N}c_P - [a,b]\rangle$, so that $\mathcal{A}_N = \mathcal{O}_{\mathfrak{H}} \hat{\otimes}A_N$, and let $\Psi \in \Gamma(\mathfrak{H},\Omega^1_{\mathfrak{H}})\otimes \operatorname{Der}_{\mathbb{C}}(A_N)$. The following are equivalent:
    \begin{enumerate}[(i)]
        \item The connection $d + \widetilde{\omega}_N + \Psi$ is integrable,
        \item $d\widetilde{\omega}_N + \widetilde{\omega}_N\wedge \widetilde{\omega}_N + \Psi(\widetilde{\omega}_N) = 0$,
        \item $\Psi = \Phi_N$.
    \end{enumerate}
\end{lemma}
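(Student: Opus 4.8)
The plan is to prove the three‑way equivalence through the cycle $(iii)\Rightarrow(i)\Leftrightarrow(ii)\Rightarrow(iii)$: the equivalence $(i)\Leftrightarrow(ii)$ is a formal consequence of the curvature formula in Lemma \ref{lemma:integrability}, the implication $(iii)\Rightarrow(i)$ is the integrability Theorem \ref{thm:integrability} transported to the analytic family, and the implication $(ii)\Rightarrow(iii)$ is a uniqueness statement that I would prove by taking residues along the divisors $\pi^{-1}P$. The expectation is that the first two steps are essentially bookkeeping, and that the real content sits in the uniqueness step.

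For $(i)\Leftrightarrow(ii)$ I would first observe that $\Psi$ is pulled back from $\mathfrak{H}$ along $f$. Since $\mathfrak{H}$ is one‑dimensional, write $\Psi=d\tau\otimes\partial$ for a ($\tau$‑dependent) derivation $\partial$ whose coefficients are constant in the fibre directions; then both $d(f^{*}\Psi)$ and $f^{*}\Psi\wedge f^{*}\Psi$ vanish because each carries a factor $d\tau\wedge d\tau=0$. Applying Lemma \ref{lemma:integrability} with $\varphi=f^{*}\Psi\in\Omega^{1}\otimes\operatorname{Der}_{\mathbb{C}}(A_{N})$ and $\alpha=\widetilde{\omega}_{N}\in\Omega^{1}\otimes A_{N}$ (the latter viewed inside $\End_{\mathbb{C}}(A_{N})$ by left multiplication), the curvature of $d+\widetilde{\omega}_{N}+\Psi$ equals $(d\Psi+\Psi\wedge\Psi)+(d\widetilde{\omega}_{N}+\widetilde{\omega}_{N}\wedge\widetilde{\omega}_{N})+\Psi(\widetilde{\omega}_{N})=d\widetilde{\omega}_{N}+\widetilde{\omega}_{N}\wedge\widetilde{\omega}_{N}+\Psi(\widetilde{\omega}_{N})$, so flatness is exactly condition $(ii)$. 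For $(iii)\Rightarrow(i)$ I would invoke Theorem \ref{thm:integrability}: in the analytic universal family the lifted differentials and the dual Gauss--Manin form $\Phi_{N}$ are pullbacks of their algebraic counterparts (Proposition \ref{prop:algebraicity-kronecker}, Corollary \ref{coro:canonical-lift-omega-P}, and the base‑change compatibility of $\delta$ from Theorem \ref{thm:non-abelian-gm}), and integrability is preserved under analytic pullback; in particular $\Phi_{N}$ itself satisfies $(ii)$.

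The substantive implication is $(ii)\Rightarrow(iii)$, which I would phrase as a uniqueness assertion. Set $D\defeq\Psi-\Phi_{N}=d\tau\otimes\partial$. Subtracting the two instances of $(ii)$ (for $\Psi$ and for $\Phi_{N}$) yields $D(\widetilde{\omega}_{N})=0$. Using the explicit expression for $\widetilde{\omega}_{N}$ obtained above, namely that its $dw$‑coefficient is $-a$ and its $dz$‑coefficient is $-\bigl(b+\sum_{P}k_{P}(z,w,\ad_a)c_{P}\bigr)$, and using that $D$ wedged with the $d\tau$‑component of $\widetilde{\omega}_{N}$ vanishes, the identity $D(\widetilde{\omega}_{N})=0$ splits, by linear independence of $d\tau\wedge dw$ and $d\tau\wedge dz$, into $\partial(a)=0$ together with $\partial\bigl(b+\sum_{P}k_{P}(z,w,\ad_a)c_{P}\bigr)=0$ identically in $(z,w)$. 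From $\partial(a)=0$ one gets $\partial\circ\ad_a=\ad_a\circ\partial$, so I would then take the residue of the second identity along the component $\pi^{-1}P_{0}$ of the disjoint divisor $\pi^{-1}\mathcal{E}_{\mathfrak{H}}[N]$: only the $P_{0}$‑summand is singular there, and by the residue normalisation of the Kronecker differentials (Theorem \ref{thm:kronecker-differentials}, property (i), applied to $\omega^{(n)}_{P_0}$) one computes $\Res_{\pi^{-1}P_{0}}k_{P_{0}}(z,w,\ad_a)=e^{(w-w_{P_{0}})\ad_a}$. Hence $e^{(w-w_{P_{0}})\ad_a}\partial(c_{P_{0}})=0$, and evaluating at $w=w_{P_{0}}$ forces $\partial(c_{P_{0}})=0$. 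As $P_{0}$ is arbitrary, $\partial(c_{P})=0$ for all $P$, and the remaining identity then gives $\partial(b)=0$. A derivation annihilating the generators $a,b,c_{P}$ is zero, so $D=0$ and $\Psi=\Phi_{N}$.

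The main obstacle I anticipate is precisely this last step: making the residue extraction cleanly separate the generators $c_{P}$. This rests on the fact that the residues are supported on pairwise disjoint divisors $\pi^{-1}P$ and, crucially, on the normalisation $\Res_{\pi^{-1}P_0}k_{P_0}=e^{(w-w_{P_0})\ad_a}$, whose nondegeneracy in $w$ is what lets me pass from $e^{(w-w_{P_0})\ad_a}\partial(c_{P_0})=0$ to $\partial(c_{P_0})=0$. Everything else is formal once the explicit shape of $\widetilde{\omega}_{N}$ and the curvature identity of Lemma \ref{lemma:integrability} are in hand.
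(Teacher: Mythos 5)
Your proposal is correct, and its overall architecture coincides with the paper's: both establish (i)$\Leftrightarrow$(ii) from Lemma \ref{lemma:integrability} (with $d\Psi+\Psi\wedge\Psi=0$ forced by $\dim\mathfrak{H}=1$, which you make explicit and the paper leaves implicit), both use the already-established integrability of $\nabla_N=d+\widetilde{\omega}_N+\Phi_N$ to reduce (ii) to the identity $\Psi(\widetilde{\omega}_N)=\Phi_N(\widetilde{\omega}_N)$, and both finish with a uniqueness argument. The genuine difference is how that identity forces $\Psi=\Phi_N$. Writing $\Psi=-\frac{d\tau}{2\pi i}\otimes D$ and $\Phi_N=-\frac{d\tau}{2\pi i}\otimes\partial_N$, the paper expands both sides in the 2-forms $\frac{d\tau}{2\pi i}\wedge\widetilde{\nu}$, $\frac{d\tau}{2\pi i}\wedge\widetilde{\omega}^{(0)}$, $\frac{d\tau}{2\pi i}\wedge\widetilde{\omega}^{(n)}_P$, which trivialise $\mathcal{F}^{1,2}\cong f^*\Omega^1_{\mathfrak{H}}\otimes\Omega^1_{\mathcal{E}^{\natural}_{\mathfrak{H}}/\mathfrak{H}}(\log\pi^{-1}\mathcal{E}_{\mathfrak{H}}[N])$ by \eqref{eqn:KoszulGraded}; comparing coefficients gives at once $Da=\partial_Na$, $Db=\partial_Nb$, and $D\ad_a^{n-1}c_P=\partial_N\ad_a^{n-1}c_P$ (the case $n=1$ giving $Dc_P=\partial_Nc_P$), so the two derivations agree on generators. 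You instead expand in the coframe $dz,dw,d\tau$, deduce $\partial(a)=0$ and the generating-series identity $\partial(b)+\sum_{P}k_P(z,w,\ad_a)\,\partial(c_P)=0$ for the difference derivation $\partial$, and then isolate each $\partial(c_{P_0})$ by taking residues along the pairwise disjoint divisors $\pi^{-1}P_0$, using $\Res_{\pi^{-1}P_0}k_{P_0}=e^{(w-2\pi i\beta_0)\ad_a}$ (with $P_0$ represented by $\alpha_0+\beta_0\tau$) and the invertibility of this exponential. Both separations are valid: the paper's is shorter because the trivialisation of $\mathcal{F}^{1,2}$ packages exactly the linear independence that your residue computation re-derives by hand, while your version is more self-contained on the analytic side, relying only on the explicit formulas of Section \ref{sec:analytic-formuli} and the residue normalisation rather than on the Koszul-graded structure. (Both arguments implicitly apply $D$ and $\partial_N$ termwise to infinite sums, i.e.\ use continuity of the derivations for the adic topology; this is a shared convention of the whole section, not a gap in your proof.)
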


\begin{proof}
    The equivalence between (i) and (ii) follows immediately from Lemma \ref{lemma:integrability}. Since $\nabla_N = d + \widetilde{\omega}_N + \Phi_N$ is integrable, (ii) is equivalent to
    \begin{equation}\label{eq:derivation-KZB-form}
        \Psi(\widetilde{\omega}_N) = \Phi_N(\widetilde{\omega}_N).
    \end{equation}
    To finish, it is enough to verify that \eqref{eq:derivation-KZB-form} implies (iii). Let us write $\Phi_N = -\frac{d\tau}{2\pi i}\otimes \partial_N$ and $\Psi = -\frac{d\tau}{2\pi i}\otimes D$. Then, \eqref{eq:derivation-KZB-form} is equivalent to
    \begin{equation}\label{eq:derivation-KZB-form-equiv}
        \begin{aligned}
            &\frac{d\tau}{2\pi i}\wedge \widetilde{\nu}\otimes Da + \frac{d\tau}{2\pi i}\wedge \widetilde{\omega}^{(0)}\otimes Db + \sum_{n\ge 1}\sum_{P \in \Gamma_N}\frac{d\tau}{2\pi i}\wedge \widetilde{\omega}_P^{(n)}\otimes D \ad_a^{n-1}c_P\\
            =&\frac{d\tau}{2\pi i}\wedge \widetilde{\nu}\otimes \partial_Na + \frac{d\tau}{2\pi i}\wedge \widetilde{\omega}^{(0)}\otimes \partial_Nb + \sum_{n\ge 1}\sum_{P \in \Gamma_N}\frac{d\tau}{2\pi i}\wedge \widetilde{\omega}_P^{(n)}\otimes \partial_N \ad_a^{n-1}c_P
        \end{aligned}
    \end{equation}
    Since
    \[
        \frac{d\tau}{2\pi i}\wedge \widetilde{\nu}, \frac{d\tau}{2\pi i}\wedge \widetilde{\omega}^{(0)}, \frac{d\tau}{2\pi i}\wedge \widetilde{\omega}^{(n)}_P\text{, }\qquad P \in \Gamma_N, \qquad n \ge 1,
    \]
    trivialise $\mathcal{F}^{1,2} \cong f^*\Omega^1_{\mathfrak{H}}\otimes \Omega^1_{\mathcal{E}_{\mathfrak{H}}^{\natural}/\mathfrak{H}}(\log \pi^{-1}\mathcal{E}_{\mathfrak{H}}[N])$ (cf. \eqref{eqn:KoszulGraded}), it follows from \eqref{eq:derivation-KZB-form-equiv} that $Da = \partial_Na$, $Db=\partial_Nb$, and $Dc_P=\partial_Nc_P$ for all $P \in \Gamma_N$; thus $\Psi = \Phi_N$.
\end{proof}

\begin{proof}[Proof of Theorem \ref{thm:computation-universal-KZB}]
    The proof is a long computation; we merely indicate the main steps. Let
    \[
        \Psi \defeq -\frac{d\tau}{2\pi i}\otimes D\text{, }\qquad  D \defeq \left(b\frac{\partial}{\partial a} + \frac{1}{2}\sum_{Q \in \Gamma_N}\sum_{m\ge 0}A_{m,Q}(\tau)\delta_{m,Q} \right)
    \]
    By Lemma \ref{lemma:computation-universal-KZB}, it suffices to prove that
    \[
        d\widetilde{\omega}_N + \widetilde{\omega}_N\wedge \widetilde{\omega}_N + \Psi(\widetilde{\omega}_N) = 0.
    \]
  
    It follows from the `mixed heat equation' \cite[Proposition 5.(ii)]{BL11} that
    \[
        2\pi i \frac{\partial}{\partial \tau}k_P(z,w,x) = \frac{\partial }{\partial z}g_P(z,w,x),
    \]
    so that
    \begin{equation}\label{eq:computation-domega}
        \begin{aligned}
            d\widetilde{\omega}_N =& -dw\wedge dz \otimes \left(\sum_{P \in \Gamma_N}\frac{\partial }{\partial w}k_P(z,w,\ad_a)c_P \right)\\
            &- dw\wedge \frac{d\tau}{2\pi i}\otimes \left(-b + \sum_{P \in \Gamma_N}\frac{\partial }{\partial w}g_P(z,w,\ad_a)c_P \right)\\
            =& -dw\wedge dz \otimes \left( \ad_ab + \sum_{P \in \Gamma_N}\ad_ak_P(z,w,\ad_a)c_P \right)\\
            &- dw\wedge \frac{d\tau}{2\pi i}\otimes \left( -b - w\ad_ab + \sum_{P \in \Gamma_N}\ad_ag_P(z,w,\ad_a)c_P \right),
        \end{aligned}
    \end{equation}
    where in the last equality we used the equations $\frac{\partial }{\partial w }k_P(z,w,x) = xk_P(x,w,x)+1$, $\frac{\partial }{\partial x}g_P(z,w,x) = xg_P(z,w,x) -w$, and $\sum_{P \in \Gamma_N}c_P = \ad_ab$. Now, we have
    \begin{equation}\label{eq:computation-omegasquared}
        \begin{aligned}
            \widetilde{\omega}_N\wedge \widetilde{\omega}_N = & dw\wedge dz \otimes \left(\ad_ab + \sum_{P\in \Gamma_N} \ad_ak_P(z,w,\ad_a)c_P \right) \\
            &+ dw\wedge \frac{d\tau}{2\pi i} \otimes \left(-w\ad_ab + \sum_{P\in \Gamma_N} \ad_ag_P(z,w,\ad_a)c_P \right)\\
            &+dz\wedge \frac{d\tau}{2\pi i} \otimes \left[b + \sum_{P\in \Gamma_N} k_P(z,w,\ad_a)c_P,-wb + \sum_{Q\in \Gamma_N} g_Q(z,w,\ad_a)c_Q \right].
        \end{aligned}
    \end{equation}
    By putting \eqref{eq:computation-domega} and \eqref{eq:computation-omegasquared} together, we get
    \begin{equation}\label{eq:curvature-omega-tilde}
        \begin{aligned}
            d\widetilde{\omega}_N+ \widetilde{\omega}_N\wedge \widetilde{\omega}_N =& -\frac{d\tau}{2\pi i}\wedge dw \otimes b \\
            &-\frac{d\tau}{2\pi i}\wedge dz \otimes \left[b + \sum_{P\in \Gamma_N} k_P(z,w,\ad_a)c_P,-wb + \sum_{Q\in \Gamma_N} g_Q(z,w,\ad_a)c_Q \right]
        \end{aligned}
    \end{equation}

    We borrow the following notation from \cite{LR07}: for  $f(x,y) = \sum_{i,j\ge 0}f_{i,j}x^iy^j \in \mathbb{C}[\![x,y]\!]$ and $t,r,s \in A_N$, we set
    \[
        f(x,y)[\![r,s]\!]_t \defeq \sum_{i,j\ge 0}f_{i,j}[\ad^i_tr,\ad_t^js].
    \]
    Note that
    \begin{equation}\label{eq:useful-identity}
        f(x,y)[\![r,s]\!]_t = -f(y,x)[\![s,r]\!]_t.
    \end{equation}
    To finish the proof, we are left to show that the right-hand side of \eqref{eq:curvature-omega-tilde} is equal to $-\Psi(\widetilde{\omega}_N)$, or equivalently that
    \begin{equation}\label{eq:main-equation}
        \begin{aligned}
            D\left(b + \sum_{P\in \Gamma_N}k_P(z,w,\ad_a)c_P \right) &= \left[b + \sum_{P\in \Gamma_N} k_P(z,w,\ad_a)c_P,-wb + \sum_{Q\in \Gamma_N} g_Q(z,w,\ad_a)c_Q \right]\\
            & = \ad_b\sum_{P\in \Gamma_N}k_P'(z,w,\ad_a)c_P + \frac{1}{2}\sum_{P,Q \in \Gamma_N}f_{P,Q}(x,y)[\![c_Q,c_P]\!]_a,
        \end{aligned}
    \end{equation}
    where
    \[
        f_{P,Q}(x,y) = k_Q(z,w,x) k'_P(z,w,y) - k_P(z,w,y)k'_Q(z,w,x).
    \]

    We now compute the left-hand side of \eqref{eq:main-equation}. Using the symmetry $g_{-Q}(0,0,x) = g_Q(0,0,-x)$, we get by direct computation:
    \[
        D(b) = \frac{1}{2}\sum_{P,Q \in \Gamma_N}\frac{g_{P-Q}(0,0,y)-g_{Q-P}(0,0,x)}{x+y}[\![c_Q,c_P]\!]_a.
    \]
    Applications of \cite[Lemma 3.1.4]{LR07} and \eqref{eq:useful-identity} show that
    \[
        D\left(\sum_{P\in \Gamma_N}k_P(z,w,\ad_a)c_P \right) = \ad_b\sum_{P \in \Gamma_N}k'_P(z,w,\ad_a)c_P + \frac{1}{2}\sum_{P,Q \in \Gamma_N}h_{P,Q}(x,y)[\![c_Q,c_P]\!]_a,
    \]    
    where
    \begin{align*}
        h_{P,Q}(x,y) =& \frac{k_P(z,w,x+y) - k_P(z,w,y) - xk_P'(z,w,y)}{x^2}\\
        &- \frac{k_Q(z,w,x+y) - k_Q(z,w,x) - yk_Q'(z,w,x)}{y^2}\\
        &+k_Q(z,w,x+y)g_{P-Q}(0,0,y) - k_P(z,w,x+y)g_{Q-P}(0,0,x).
    \end{align*}
    Thus, to establish \eqref{eq:main-equation}, it suffices to prove that, for every $P,Q \in \Gamma_N$, we have
    \begin{align*}
        f_{P,Q}(x,y) - h_{P,Q}(x,y) - \frac{g_{P-Q}(0,0,y)-g_{Q-P}(0,0,x)}{x+y} = 0,
    \end{align*}
    which is equivalent to
    \begin{align*}
        &\left(k'_{P-Q}(0,0,y) - \frac{1}{y^2} \right)\left(k_Q(z,w,x+y) + \frac{1}{x+y} \right) - \left(k'_{Q-P}(0,0,x) - \frac{1}{x^2} \right)\left(k_P(z,w,x+y)+ \frac{1}{x+y} \right) \\
        &+\left(k'_Q(z,w,x)-\frac{1}{x^2} \right)\left(k_P(z,w,y) + \frac{1}{y} \right) -\left(k'_P(z,w,y)-\frac{1}{y^2} \right)\left(k_Q(z,w,x) + \frac{1}{x} \right) = 0.
    \end{align*}
    This, in turn, follows immediately from the formula \cite[Equation (3.3)]{BL11} in the case $P=Q$, and from the Fay identity \cite[Proposition 5.(iii)]{BL11} in the case $P\neq Q$.
\end{proof}

\appendix

\section{Unipotent connections and Tannakian theory}\label{appendix:tannakian}

We fix a base field $k$ of characteristic zero, which will be implicit throughout this appendix.

\subsection{Unipotent connections}\label{app:unipotent}

Let $X$ be a smooth $k$-scheme and $D$ be a normal crossings divisor in $X$. Recall that a \emph{vector bundle with integrable connection over $X$ with logarithmic singularities along $D$} is a pair $(\mathcal{E},\nabla)$, where $\mathcal{E}$ is a vector bundle over $X$ and $\nabla : \mathcal{E} \to \Omega^1_{X/k}(\log D) \otimes_{\mathcal{O}_X} \mathcal{E}$ is an integrable logarithmic $k$-connection on $\mathcal{E}$. A morphism $(\mathcal{E},\nabla) \to (\mathcal{E}',\nabla')$ is a horizontal $\mathcal{O}_X$-linear map $f:\mathcal{E} \to \mathcal{E}'$, meaning that  $\nabla' \circ f = (\id \otimes f)\circ \nabla$. We thus obtain a category which we denote by $\VIC(X,\log D)$. When $D$ is empty, it is denoted by $\VIC(X)$.

\begin{example}
    If $\mathcal{E} =\mathcal{O}_X\otimes_kF$ for some $k$-vector space $F$, then we can write $\nabla = d + \omega$ for a unique $\omega \in \Gamma(X,\Omega^1_{X/k}(\log D)) \otimes_k \End_k(F)$. Integrability amounts to the equation $d\omega + \omega \wedge \omega =0$ in $\Gamma(X,\Omega^2_{X/k}(\log D)) \otimes_k \End_k(F)$. In general, a \emph{frame} of a connection $(\mathcal{E},\nabla)$ is an isomorphism $(\mathcal{O}_X^{\oplus n},d + \omega) \cong (\mathcal{E},\nabla)$, and $\omega \in M_{n\times n}(\Gamma(X,\Omega^1_{X/k}(\log D)))$ is the \emph{matrix of $\nabla$} in this frame. 
\end{example}

Recall that the category $\VIC(X,\log D)$ is $k$-linear and admits the usual multilinear operations, such as duals and tensor products. A \emph{horizontal section} of $(\mathcal{E},\nabla)$ is some $s \in \Gamma(X,\mathcal{E})$ satisfying $\nabla s = 0$; it can also be thought as a morphism $(\mathcal{O}_X,d) \to (\mathcal{E},\nabla)$. In particular, a morphism $(\mathcal{E},\nabla) \to (\mathcal{E}',\nabla')$ is the same as a horizontal section of $(\mathcal{E},\nabla)^{\vee}\otimes (\mathcal{E}',\nabla')$.

\begin{definition}
    We say that an object $(\mathcal{E},\nabla)$ of $\VIC(X,\log D)$ is \emph{unipotent} if it admits a finite filtration
    \[
        0 = (\mathcal{E}_0,\nabla_0) \subseteq (\mathcal{E}_1,\nabla_1) \subseteq \cdots \subseteq (\mathcal{E}_n,\nabla_n) = (\mathcal{E},\nabla)
    \]
    such that, for every $1\le i \le n$, the quotient $(\mathcal{E}_i,\nabla_i)/(\mathcal{E}_{i-1},\nabla_{i-1})$ is isomorphic to $(\mathcal{O}_X \otimes F_i, d \otimes \id)$ for some $k$-vector space $F_i$. The full subcategory of $\VIC(X,\log D)$ given by unipotent objects is denoted by $\UVIC(X,\log D)$ (or $\UVIC(X)$, when $D$ is empty). The smallest $n$ for which such a filtration exists is the \emph{index of unipotency of $(\mathcal{E},\nabla)$}.
\end{definition}

\subsection{Local form}

In this paragraph, we give a local characterisation of unipotent connections.  

\begin{lemma}\label{lemma:subcomplex}
    Let $A$ be a commutative $k$-algebra and $\Omega^{\bullet}\hookrightarrow \Omega_{A/k}^{\bullet}$ be a subcomplex of $k$-modules such that
    \begin{equation}\tag{$\mathrm{C}_n$}
        \begin{split}
            H^n(\Omega^{\bullet}) \stackrel{\sim}{\rightarrow} H^{n}(\Omega^{\bullet}_{A/k}) \text{ is an isomorphism and } H^{n+1}(\Omega^{\bullet}) \hookrightarrow H^{n+1}(\Omega^{\bullet}_{A/k}) \text{ is injective}
        \end{split}
    \end{equation}
    for some $n\ge 0$. Given $\omega \in \Omega^n_{A/k}$, the following are equivalent:
    \begin{enumerate}
        \item $d\omega \in \Omega^{n+1}$.
        \item There exists $\nu \in \Omega_{A/k}^{n-1}$ such that $\omega + d\nu \in \Omega^n$.
    \end{enumerate}
\end{lemma}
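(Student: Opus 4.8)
The plan is to treat this as a piece of elementary homological algebra about the inclusion of complexes $\Omega^{\bullet} \hookrightarrow \Omega^{\bullet}_{A/k}$, proving the two implications separately and using the two clauses of $(\mathrm{C}_n)$ in different degrees.

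First I would dispatch the easy direction $(2) \Rightarrow (1)$. If $\nu \in \Omega^{n-1}_{A/k}$ satisfies $\omega + d\nu \in \Omega^n$, then since $\Omega^{\bullet}$ is a subcomplex, and hence stable under $d$, applying $d$ gives $d(\omega + d\nu) \in \Omega^{n+1}$; because $d^2 = 0$ this is just $d\omega$, so $d\omega \in \Omega^{n+1}$. No hypothesis on cohomology is needed for this direction.

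For the harder direction $(1) \Rightarrow (2)$, suppose $d\omega \in \Omega^{n+1}$. The key observation is that $d\omega$ is closed in $\Omega^{\bullet}$, since $d(d\omega) = 0$, and therefore defines a class $[d\omega] \in H^{n+1}(\Omega^{\bullet})$; but this class maps to $0$ in $H^{n+1}(\Omega^{\bullet}_{A/k})$ because $d\omega$ is visibly a coboundary there. The injectivity clause of $(\mathrm{C}_n)$ then forces $[d\omega] = 0$ already in $H^{n+1}(\Omega^{\bullet})$, so there exists $\eta \in \Omega^n$ with $d\eta = d\omega$. Consequently $\omega - \eta \in \Omega^n_{A/k}$ is closed, and I would next invoke the surjectivity half of the isomorphism in $(\mathrm{C}_n)$: the class $[\omega - \eta] \in H^n(\Omega^{\bullet}_{A/k})$ is the image of a class represented by some closed $\zeta \in \Omega^n$. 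This yields $\nu_0 \in \Omega^{n-1}_{A/k}$ with $d\nu_0 = \omega - \eta - \zeta$, whence $\omega - d\nu_0 = \eta + \zeta \in \Omega^n$; setting $\nu = -\nu_0$ gives $\omega + d\nu \in \Omega^n$, which is precisely statement $(2)$.

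The argument is entirely formal, so there is no genuine obstacle. The only points requiring care are the disciplined use of the two separate cohomological hypotheses in $(\mathrm{C}_n)$ — injectivity in degree $n+1$ to produce $\eta$, and surjectivity in degree $n$ to produce $\zeta$ and hence $\nu$ — together with the sign bookkeeping that ensures the final witness appears as $\omega + d\nu$ rather than $\omega - d\nu$.
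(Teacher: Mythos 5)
Your proof is correct and follows essentially the same route as the paper's: injectivity of $H^{n+1}(\Omega^{\bullet}) \hookrightarrow H^{n+1}(\Omega^{\bullet}_{A/k})$ produces $\eta \in \Omega^n$ with $d\eta = d\omega$, and then surjectivity of $H^n(\Omega^{\bullet}) \to H^n(\Omega^{\bullet}_{A/k})$ applied to the closed form $\omega - \eta$ produces the required primitive. Your version merely makes explicit the closed representative $\zeta$ and the sign of $\nu$, which the paper leaves implicit.
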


In practice, we only use this lemma when $\Omega^{\bullet}\hookrightarrow \Omega^{\bullet}_{A/k}$ is a quasi-isomorphism, so that the conditions ($\mathrm{C}_n$) are satisfied for every $n\ge 0$.

\begin{proof}
    Only the direction $(1) \Rightarrow (2)$ is non-trivial. The form $d\omega \in \Omega^{n+1}$ is closed and defines a cohomology class in $H^{n+1}(\Omega^{\bullet})$. As $H^{n+1}(\Omega^{\bullet}) \hookrightarrow H^{n+1}(\Omega^{\bullet}_{A/k})$ is injective, and $d\omega$ is exact in $\Omega^{\bullet}_{A/k}$, it must also be exact in $\Omega^{\bullet}$. Thus, there exists $\eta \in \Omega^n$ such that $d\eta = d\omega$. Since $\omega-\eta \in \Omega^n_{A/k}$ is closed, it defines a cohomology class in $H^n(\Omega^{\bullet}_{A/k})$. Finally, from the isomorphism $H^n(\Omega^{\bullet}) \stackrel{\sim}{\to} H^{n}(\Omega^{\bullet}_{A/k})$, we obtain $\nu \in \Omega^{n-1}_{A/k}$ such that $\omega - \eta + d\nu \in \Omega^n$; as $\eta \in \Omega^n$, we conclude that $\omega + d\nu \in \Omega^n$.
\end{proof}

\begin{theorem}\label{thm:local-form}
    Let $X=\Spec A$ be a smooth affine $k$-scheme, $(\mathcal{E},\nabla)$ be an object of $\UVIC(X)$, and  $\Omega^{\bullet}\hookrightarrow \Omega^{\bullet}_{A/k}$ be a subcomplex of $k$-modules satisfying condition $(\mathrm{C}_1)$. Then $\mathcal{E}$ is trivial and there exists a frame
    \[
        (\mathcal{O}_X^{\oplus n},d + \omega) \cong (\mathcal{E},\nabla)
    \]
    in which the matrix $\omega$ is strictly upper triangular and has all of its entries in $\Omega^1$.
\end{theorem}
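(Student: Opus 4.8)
The plan is to combine the unipotent filtration with a sequence of gauge transformations that successively push the entries of the connection matrix into the subcomplex $\Omega^{\bullet}$, using Lemma~\ref{lemma:subcomplex} as the basic solvability input.

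First I would settle the underlying bundle and produce an initial frame. By definition $(\mathcal{E},\nabla)$ carries a filtration $0 = \mathcal{E}_0 \subsetneq \mathcal{E}_1 \subsetneq \cdots \subsetneq \mathcal{E}_r = \mathcal{E}$ by sub-connections whose successive quotients are trivial, $(\mathcal{E}_i/\mathcal{E}_{i-1},\overline{\nabla}_i)\cong(\mathcal{O}_X\otimes_k F_i, d\otimes\id)$. Since $X$ is affine, $\Ext^1_{\mathcal{O}_X}$ between vector bundles vanishes (it is computed by $H^1(X,-)$ of a vector bundle), so the filtration splits as $\mathcal{O}_X$-modules; in particular $\mathcal{E}$ is free. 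Choosing a basis adapted to the filtration and refining it to a $k$-basis inside each $F_i$, the connection takes the form $\nabla = d + \omega$ with $\omega$ strictly upper triangular and all entries in $\Omega^1_{A/k}$: the vanishing of the diagonal blocks records precisely that each graded piece is the trivial connection $d$.

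Next comes the heart of the argument: normalizing $\omega$ by superdiagonals. Write $\omega = \sum_{m\ge 1}\omega^{[m]}$, where $\omega^{[m]}$ collects the entries $\omega_{i,i+m}$ on the $m$-th superdiagonal. I would prove by induction on $m$ that, after a suitable gauge transformation, the entries of $\omega^{[1]},\dots,\omega^{[m]}$ all lie in $\Omega^1$. The key computation is that conjugating by $g = \id + N$, with $N$ supported on the $m$-th superdiagonal, leaves $\omega^{[1]},\dots,\omega^{[m-1]}$ unchanged and replaces $\omega^{[m]}$ by $\omega^{[m]} + dN$ (all the correction terms $[\omega,N]$ and $N\,dN$ land on superdiagonals $>m$, and the sign can be absorbed into the choice of $N$). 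Thus fixing the $m$-th superdiagonal cannot spoil the earlier ones. To apply Lemma~\ref{lemma:subcomplex} to each entry $\omega_{i,i+m}\in\Omega^1_{A/k}$ I must verify hypothesis (1), namely $d\,\omega_{i,i+m}\in\Omega^2$. Integrability $d\omega+\omega\wedge\omega=0$ gives $d\,\omega_{i,i+m} = -\sum_{i<k<i+m}\omega_{i,k}\wedge\omega_{k,i+m}$, a sum of wedges of entries already normalized into $\Omega^1$ by the inductive hypothesis (the sum is empty for $m=1$, giving the base case $d\,\omega_{i,i+1}=0$); since $\Omega^{\bullet}$ is closed under the wedge product this lies in $\Omega^2$. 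Lemma~\ref{lemma:subcomplex} (with $n=1$, using condition $(\mathrm{C}_1)$) then yields functions $\nu_{i}\in A=\Omega^0_{A/k}$ with $\omega_{i,i+m}+d\nu_i\in\Omega^1$; taking $N = (\nu_i)$ on the $m$-th superdiagonal completes the inductive step. As $\omega$ is strictly upper triangular of size $n$, the process terminates after $m=n-1$, leaving a frame with $\omega$ strictly upper triangular and all entries in $\Omega^1$, as required.

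The main obstacle is the verification that $d\,\omega_{i,i+m}\in\Omega^2$ at each stage: this is exactly the point that forces one to have already placed the lower superdiagonals inside $\Omega^1$ and to know that products of such forms remain in the subcomplex $\Omega^{\bullet}$. In the situations of interest this multiplicativity is automatic, since the relevant $\Omega^{\bullet}$ are sub-dg-algebras (for the Kronecker differentials even the sharper vanishing of Corollary~\ref{coro:local-kronecker} holds). A secondary, purely bookkeeping, difficulty is tracking the triangular supports carefully enough to guarantee that each gauge transformation acts trivially on the already-corrected superdiagonals; organizing the induction one superdiagonal at a time, rather than all at once, is what makes this transparent.
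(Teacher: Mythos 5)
Your proposal is correct, and it runs on the same fuel as the paper's proof — affineness to split, integrability plus Lemma \ref{lemma:subcomplex} to push entries into $\Omega^{1}$, and careful triangular bookkeeping — but the induction is organized differently. The paper inducts on the rank: it uses unipotency to peel off a rank-one quotient $0 \to (\mathcal{E}',\nabla') \to (\mathcal{E},\nabla) \to (\mathcal{O}_X,d) \to 0$, frames $\mathcal{E}'$ by the inductive hypothesis, splits off a section $e_n$, and then repairs the last column entry by entry (descending in the row index $i$, replacing $e_n$ by $e_n + g e_i'$); at each step the modification only disturbs rows above $i$, which are repaired later. You instead split the entire unipotent filtration in one stroke to get a strictly upper triangular matrix over $\Omega^1_{A/k}$, and then normalize superdiagonal by superdiagonal via unipotent gauge transformations $g = \id + N$; your degree computation (entries graded by $j-i$, so $[\omega,N]$ and $N\,dN$ live in degrees $>m$ while $g^{-1}dg$ contributes $dN$ in degree $m$) is the correct analogue of the paper's observation that $e_n \mapsto e_n + ge_i'$ only perturbs not-yet-corrected entries. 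Each route buys something: yours is a single-pass normalization whose invariance bookkeeping is transparent from the grading, while the paper's avoids expanding $g^{-1}\omega g + g^{-1}dg$ altogether by only ever moving one section. One point worth making explicit: both arguments need $\Omega^1 \wedge \Omega^1 \subseteq \Omega^2$ when feeding integrability into Lemma \ref{lemma:subcomplex}, which the stated hypothesis (a subcomplex of $k$-modules satisfying $(\mathrm{C}_1)$) does not literally grant. You flag this honestly; the paper uses it tacitly in the line ``we have $\sum_{k}\omega_{ik}\wedge\omega_{kn}\in\Omega^2$.'' Since every $\Omega^{\bullet}$ the paper applies the theorem to (logarithmic forms inside $j_*\Omega^{\bullet}_{(X\setminus D)/k}$, and $\Omega^{\bullet}_{B/k}\subset\Omega^{\bullet}_{B[t]/k}$) is a sub-dg-algebra, this is a defect of the stated hypothesis rather than of either proof.
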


\begin{proof}
    We proceed by induction on the rank $n$ of $\mathcal{E}$. The base case $n=1$ is trivial, since $(\mathcal{E},\nabla)$ must be isomorphic to $(\mathcal{O}_X,d)$. Assume that the statement holds in rank $\le n-1$. By unipotency of $(\mathcal{E},\nabla)$, there is an exact sequence
    \[
        \begin{tikzcd}
            0 \rar & (\mathcal{E}',\nabla') \rar & (\mathcal{E},\nabla) \rar & (\mathcal{O}_X,d) \rar & 0,
        \end{tikzcd}
    \]
    where $(\mathcal{E}',\nabla')$ is an object of $\UVIC(X)$, with $\mathcal{E}'$ of rank $n-1$. By induction hypothesis, $\mathcal{E}'$ is trivial and admits a frame $e':(\mathcal{O}_{X}^{\oplus n-1},d + \omega') \stackrel{\sim}{\to} (\mathcal{E}',\nabla|_{\mathcal{E}'})$ in which  $\omega'$ is strictly upper-triangular and has all of its entries in $\Omega^1$.

    As $X$ is affine, there is a splitting
    \[
        \begin{tikzcd}
            0 \rar & \mathcal{E}' \rar & \mathcal{E} \rar & \mathcal{O}_X \rar \arrow[bend right]{l}[above]{e_n}  & 0    ,  
        \end{tikzcd}
    \]
    so that $\mathcal{E}$ is trivial and admits the frame $(e',e_n) : (\mathcal{O}_{X}^{\oplus n}, d+ \omega) \stackrel{\sim}{\to} (\mathcal{E},\nabla)$, in which 
    \[
        \omega = \left(\begin{array}{ccc|c}
                     &  & & \omega_{1\, n}\\
                     & \omega'_{ij} & & \vdots \\
                     &  & & \omega_{n-1\, n}\\\hline
                     0& \cdots  & 0 & 0
                   \end{array}\right)
    \]
    where $\omega_{ij} = \omega_{ij}' \in \Omega^1$ whenever $j<n$.
    
    To finish, we explain how to inductively modify $e_n$ so that $\omega_{in} \in \Omega^1$ for all $1\le i \le n$. Note that $\omega_{nn} = 0 \in \Omega^1$. By descending induction in $i$, assume that $\omega_{kn} \in \Omega^1$ for all $i+1\le k \le n$. The integrability equation $d\omega + \omega \wedge \omega = 0$ at the entry $(i,n)$ means that
    \[
        d\omega_{in} + \sum_{k=i+1}^{n-1}\omega_{ik}\wedge \omega_{kn}=0.
    \]
    Since, for every $i+1\le k \le n-1$, both $\omega_{ik}$ and $\omega_{kn}$ belong to $\Omega^1$, we have $\sum_{k=i+1}^{n-1}\omega_{ik}\wedge \omega_{kn} \in \Omega^2$. By Lemma \ref{lemma:subcomplex}, there exists $g \in A$ such that $\omega_{in} + dg \in \Omega^1$. Thus
    \[
        \nabla(e_n + ge_i') = \sum_{k=1}^{i-1}(\omega_{kn} + g\omega_{ki})\otimes e'_k  + (\omega_{in} + dg)\otimes e'_{i}  + \sum_{k=i+1}^{n-1}\omega_{kn} \otimes e_k' 
    \]
    and we conclude that the matrix $\widetilde{\omega}$ of $\nabla$ in the frame $(e', e_n + ge'_i)$ satisfies $\widetilde{\omega}_{kn} \in \Omega^1$ for all $i\le k \le n$.
\end{proof}

\subsection{Canonical extension}

Let $X$ be a smooth $k$-scheme and $D$ be a normal crossings divisor in $X$. We say that an object $(\mathcal{E},\nabla)$ of $\VIC(X,\log D)$ (resp. $\VIC(X\setminus D)$) is \emph{locally unipotent along $D$} if, for every $x \in D$, there exists an open neighbourhood $V$ of $x$ such that the restriction $(\mathcal{E},\nabla)|_V$ (resp. $(\mathcal{E},\nabla)|_{V\setminus D}$) is unipotent. For simplicity, let us denote the corresponding full subcategories of $\VIC(X,\log D)$ and $\VIC(X\setminus D)$ by $L_1$ and $L_2$.

\begin{theorem}[cf. {\cite[Proposition 5.2]{deligne70}}]\label{thm:canonical-extension}
    If $j: X\setminus D \to X$ denotes the inclusion, then the restriction functor
    \[
        j^*: L_1 \longrightarrow L_2\text{, }\qquad (\mathcal{E},\nabla)\longmapsto (\mathcal{E},\nabla)|_{X\setminus D}
    \]
    is an equivalence of tensor categories.
\end{theorem}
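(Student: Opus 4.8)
The plan is to prove that the restriction functor $j^*$ is an equivalence of tensor categories by producing an explicit quasi-inverse, the \emph{canonical extension} (or Deligne extension), and verifying its properties. First I would recall the cited result of Deligne \cite[Proposition 5.2]{deligne70}: for an arbitrary object of $\VIC(X\setminus D)$ with regular singularities along $D$, there is a canonical extension to an object of $\VIC(X,\log D)$ whose residues along the components of $D$ have eigenvalues in a fixed fundamental domain for $\mathbb{Z}$ acting on $\mathbb{C}/\mathbb{Z}$ (e.g. with real part in $[0,1)$). The key simplifying observation here is that for objects in $L_2$, i.e.\ those that are \emph{locally unipotent} along $D$, the local monodromy is unipotent, so the residues are nilpotent; in particular all residue eigenvalues are zero. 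This means there is a genuinely unique choice of extension with nilpotent residues, which both pins down the functor and makes the tensor compatibility transparent.

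The key steps, in order, are as follows. First I would define the candidate quasi-inverse $j_!^{\can}: L_2 \to L_1$ sending $(\mathcal{E},\nabla)$ to its Deligne canonical extension with nilpotent residues, and check that this extension is again locally unipotent along $D$ (so that it lands in $L_1$ rather than merely in $\VIC(X,\log D)$); this follows from the local normal form of a unipotent connection, where the connection matrix is strictly upper triangular with entries having at worst logarithmic poles and nilpotent residue. Second, I would verify the two natural isomorphisms $j^* \circ j_!^{\can} \cong \id_{L_2}$ and $j_!^{\can}\circ j^* \cong \id_{L_1}$. The first is immediate since restriction undoes extension. The second is the crux: given $(\mathcal{E},\nabla)$ in $L_1$, I must show that $(\mathcal{E},\nabla)$ already \emph{is} the canonical extension of its restriction, i.e.\ that an object of $\VIC(X,\log D)$ that is locally unipotent along $D$ automatically has nilpotent residues and hence agrees with the Deligne extension characterized by that property. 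Third, I would check tensor compatibility: the canonical extension of a tensor product is the tensor product of canonical extensions, which reduces to the additivity of residues under tensor product together with the fact that a sum of nilpotent (commuting) residues is nilpotent, and similarly for duals and the unit object.

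The hard part will be the uniqueness statement underlying the full faithfulness and essential surjectivity, namely establishing that the canonical extension with nilpotent residues is the \emph{unique} logarithmic extension in $L_1$ restricting to a given object of $L_2$. Concretely, I would argue locally: two such extensions differ by an automorphism of the restricted connection over $X\setminus D$ that must extend meromorphically across $D$, and the nilpotence of the residues on both sides forces the comparison matrix to have no poles (any pole would produce a nonzero non-nilpotent contribution to the difference of residues, contradicting that both residues are nilpotent and that the extensions are compatible). Here the local form of Theorem \ref{thm:local-form} is the essential input, as it lets me reduce to strictly upper-triangular connection matrices and argue entry by entry by induction on the filtration length, exactly as in the proof of that theorem. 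Full faithfulness of $j^*$ then follows because a morphism of connections over $X\setminus D$ is a horizontal section of an internal Hom, and horizontal sections of a locally unipotent logarithmic connection extend across $D$ by the same pole-vanishing argument; this reduces the morphism statement to the object statement via the identification of $\Hom$ with horizontal sections of $(\mathcal{E},\nabla)^\vee \otimes (\mathcal{E}',\nabla')$ recalled in \S\ref{app:unipotent}.
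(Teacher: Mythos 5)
Your existence step does not work in the paper's setting, and this is the heart of the matter. Deligne's Proposition 5.2 is a theorem about holomorphic connections on complex manifolds: the canonical extension there is built from multivalued horizontal sections and a branch of the logarithm of the local monodromy, normalised by a choice of section of $\mathbb{C}\to\mathbb{C}/\mathbb{Z}$. Theorem \ref{thm:canonical-extension}, by contrast, is a statement in the algebraic category over an arbitrary field $k$ of characteristic zero (fixed at the start of Appendix \ref{appendix:tannakian}), where monodromy has no meaning and Deligne's construction is unavailable; the paper's ``cf.''\ citation signals an analogy, not a quotable black box. The same issue undercuts your justification that objects of $L_2$ have nilpotent residues ``because the local monodromy is unipotent'': residues of an object of $L_2$ are not even defined until an extension has been produced, so nilpotency is a property you must establish for the extension you construct, not an input. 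In this setting the existence of a logarithmic extension is precisely the nontrivial content, and the paper proves it algebraically: on a small affine $V$ one applies Theorem \ref{thm:local-form} with $\Omega^{\bullet}=\Gamma(V,\Omega^{\bullet}_{V/k}(\log D))$, which satisfies condition $(\mathrm{C}_1)$ because $\Omega^{\bullet}_{X/k}(\log D)\hookrightarrow j_*\Omega^{\bullet}_{(X\setminus D)/k}$ is a quasi-isomorphism in characteristic zero; the resulting strictly upper triangular frame with logarithmic entries \emph{is} the extension. You invoke this ``local normal form \ldots\ with at worst logarithmic poles'' as if already known, but Theorem \ref{thm:local-form} only yields entries in a given subcomplex once that quasi-isomorphism input is supplied --- the very input your proposal never names.

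The rest of your architecture is sound and genuinely different from the paper's, and the gap above is repairable inside it. Construct the extension locally by Theorem \ref{thm:local-form} as just described; nilpotency of its residues is then visible from the strictly triangular matrix, and more generally every object of $L_1$ has nilpotent residues because the residues of the graded quotients of a local unipotent filtration vanish. Your uniqueness argument then goes through algebraically: a comparison isomorphism over $X\setminus D$ is a horizontal section of the internal Hom $(\mathcal{E},\nabla)^{\vee}\otimes(\mathcal{E}',\nabla')$, it is automatically meromorphic along $D$ by coherence, and a pole of order $m\ge 1$ along a component of $D$ would force a nonzero integer to be an eigenvalue of the (nilpotent) residue of the internal Hom --- a contradiction. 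This uniqueness glues the local extensions, replacing the paper's device of proving full faithfulness first and using it to globalise essential surjectivity; likewise, your pole-order argument for extending horizontal sections across $D$ is a clean alternative to the paper's componentwise descending induction in a strictly triangular frame. Finally, the tensor-compatibility discussion is unnecessary: $j^*$ is visibly a tensor functor, and a quasi-inverse of a tensor equivalence is canonically a tensor functor.
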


\begin{proof}
    Since
    \[
        \Hom((\mathcal{E}',\nabla'),(\mathcal{E},\nabla)) \cong \Hom((\mathcal{O}_X,d), (\mathcal{E}',\nabla')^{\vee} \otimes (\mathcal{E},\nabla))
    \]
    and since $\Hom((\mathcal{O}_X,d), (\mathcal{E},\nabla))$ is canonically isomorphic to $\Gamma(X,\mathcal{E}^{\nabla})$, where $\mathcal{E}^{\nabla}$ denotes the subsheaf of horizontal sections of $\mathcal{E}$, to show that $j^* : L_1 \to L_2$ is fully faithful, it is enough to prove that
    \begin{equation}\label{eq:restriction-horizontal}
        \Gamma(X, \mathcal{E}^{\nabla}) \longrightarrow \Gamma(X\setminus D, \mathcal{E}^{\nabla})\text{, }\qquad s \longmapsto s|_{X\setminus D}
    \end{equation}
    is bijective for every $(\mathcal{E},\nabla)$ in $L_1$.
    
    As $\mathcal{E}$ is locally free, the injectivity of \eqref{eq:restriction-horizontal} follows from the fact that $D$ is locally defined by a torsion-free section of $\mathcal{O}_X$. Granted the injectivity, we can prove the surjectivity of \eqref{eq:restriction-horizontal} locally. We can thus assume that $(\mathcal{E},\nabla)$ admits a frame $e: (\mathcal{O}_X^{\oplus n}, d+\omega) \stackrel{\sim}{\to} (\mathcal{E},\nabla)$ in which $\omega$ is strictly upper triangular (Theorem \ref{thm:local-form}). We must prove that a horizontal section $s \in \Gamma(X\setminus D, \mathcal{E}^{\nabla})$ extends to $X$. By writing $s = \sum_{j=1}^ns_j \otimes e_j$, with $s_j \in \Gamma(X\setminus D,\mathcal{O}_X)$, we get
    \[
         0 = \nabla(s) = \sum_{i=1}^n (ds_i + \sum_{j=i+1}^ns_j \omega_{ij}) \otimes e_i.
    \]
    Since $ds_n=0$, $s_n$ extends to $X$. By descending induction on $i$, it follows from the above equation that $ds_i$ has at most logarithmic singularities along $D$, so that $s_i$ extends to $X$ for every $1\le i \le n$. This finishes the proof that \eqref{eq:restriction-horizontal} is bijective.
    
    We are left to prove that $j^*:L_1 \to L_2$ is essentially surjective, we use that it is fully faithful to reduce it to a local statement. We can thus assume that $X$ is affine and that $(\mathcal{E},\nabla)$ is in $\UVIC(X\setminus D)$. Since we are in characteristic zero, the injection $\Omega^{\bullet}_{X/k}(\log D) \hookrightarrow j_*\Omega^{\bullet}_{(X\setminus D)/k}$ is a quasi-isomorphism (see \cite[Corollaire 3.14, Remarque 3.16]{deligne70}), so that we can apply Theorem \ref{thm:local-form} to find a frame $e: (\mathcal{O}_{X\setminus D}^{\oplus n}, d+\omega) \stackrel{\sim}{\to}(\mathcal{E},\nabla)$ in which $\omega_{ij} \in \Gamma(X,\Omega^1_{X/k}(\log D))$ for every $1\le i,j\le n$. Then, $(\mathcal{O}_X^{\oplus n}, d + \omega)$ is an extension of $(\mathcal{E},\nabla)$.
\end{proof}

Given an object $(\mathcal{E},\nabla)$ of $L_2$, the unique object $(\overline{\mathcal{E}},\overline{\nabla})$ of $L_1$ such that $(\overline{\mathcal{E}},\overline{\nabla})|_{X\setminus D} = (\mathcal{E},\nabla)$ is called the \emph{canonical extension} of $(\mathcal{E},\nabla)$. This yields a quasi-inverse to the restriction $j^*: L_1 \to L_2$, which can be checked to be additive, exact, and tensor.

\begin{corollary}\label{coro:canonical-ext}
    With the above notation, the restriction functor
    \begin{equation}\label{eq:restriction-uvic}
        \UVIC(X,\log D) \longrightarrow \UVIC(X\setminus D)\text{, }\qquad (\mathcal{E},\nabla)\longmapsto (\mathcal{E},\nabla)|_{X\setminus D}
    \end{equation}
    is an equivalence of tensor categories.
\end{corollary}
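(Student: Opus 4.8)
The plan is to deduce the corollary directly from Theorem \ref{thm:canonical-extension} by restricting the tensor equivalence $j^*: L_1 \to L_2$ to the full subcategories of globally unipotent objects. First I would observe that a unipotent object is in particular locally unipotent along $D$, so that $\UVIC(X,\log D)$ is a full subcategory of $L_1$ and $\UVIC(X\setminus D)$ is a full subcategory of $L_2$. It then suffices to check that both the restriction functor $j^*$ and its quasi-inverse, the canonical extension, respect these subcategories. Once this is established, the unit and counit isomorphisms of Theorem \ref{thm:canonical-extension} restrict to natural isomorphisms between the subcategories, yielding the desired equivalence \eqref{eq:restriction-uvic}; it is automatically a tensor equivalence because the ambient one is.

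The restriction direction is immediate. If $(\mathcal{E},\nabla)$ lies in $\UVIC(X,\log D)$, then a unipotent filtration $0 = (\mathcal{E}_0,\nabla_0) \subseteq \cdots \subseteq (\mathcal{E}_n,\nabla_n) = (\mathcal{E},\nabla)$ with subquotients $(\mathcal{O}_X \otimes F_i, d\otimes \id)$ restricts termwise to a filtration of $(\mathcal{E},\nabla)|_{X\setminus D}$ whose subquotients are $(\mathcal{O}_{X\setminus D}\otimes F_i, d\otimes \id)$, so the restriction is again unipotent. For the converse direction I would use that the canonical extension functor is exact and tensor, as recorded after Theorem \ref{thm:canonical-extension}, together with the fact that it sends the trivial connection to the trivial connection: since $(\mathcal{O}_X, d)$ is an object of $L_1$ restricting to $(\mathcal{O}_{X\setminus D}, d)$, uniqueness of the canonical extension forces the canonical extension of $(\mathcal{O}_{X\setminus D}, d)$ to equal $(\mathcal{O}_X, d)$, and hence, by additivity, the canonical extension of $(\mathcal{O}_{X\setminus D}\otimes F, d\otimes \id)$ to be $(\mathcal{O}_X\otimes F, d\otimes \id)$. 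Now given $(\mathcal{E},\nabla)$ in $\UVIC(X\setminus D)$ with a unipotent filtration, applying the exact canonical extension functor produces a filtration of the canonical extension whose subquotients are the canonical extensions of the trivial pieces, namely the $(\mathcal{O}_X\otimes F_i, d\otimes \id)$; thus the canonical extension lands in $\UVIC(X,\log D)$.

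The only point that requires genuine care — and hence the main obstacle — is this preservation of unipotency under canonical extension, since it is what forces the quasi-inverse to respect the unipotent subcategories. It rests entirely on the exactness of the canonical extension functor and on its compatibility with the trivial object, both of which are already available from Theorem \ref{thm:canonical-extension} and the discussion following it. With these in hand the proof reduces to the two filtration arguments above, and no further input is needed.
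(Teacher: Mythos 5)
Your proposal is correct and follows essentially the same route as the paper: full faithfulness is inherited from the ambient equivalence of Theorem \ref{thm:canonical-extension} since the unipotent categories are full subcategories of $L_1$ and $L_2$, and essential surjectivity reduces to showing the canonical extension preserves unipotency, using uniqueness for the trivial connection together with exactness of the canonical extension functor applied along the unipotent filtration (the paper phrases this as induction on the index of unipotency, which is the same argument). The only cosmetic difference is that you spell out the easy direction (restriction preserves unipotency), which the paper leaves implicit.
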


\begin{proof}
    Since $\UVIC(X,\log D)$ (resp. $\UVIC(X\setminus D)$) is a full subcategory of $L_1$ (resp. $L_2$), it follows immediately from Theorem \ref{thm:canonical-extension} that \eqref{eq:restriction-uvic} is fully faithful. To see that it is essentially surjective, we must check that, for any object $(\mathcal{E},\nabla)$ of $\UVIC(X\setminus D)$, its canonical extension $(\overline{\mathcal{E}},\overline{\nabla})$ is in $\UVIC(X,\log D)$. If $(\mathcal{E},\nabla) = (\mathcal{O}_{X\setminus D}\otimes F, d \otimes \id)$ for some $k$-vector space $F$, then it follows from the unicity of the canonical extension that $(\overline{\mathcal{E}},\overline{\nabla}) = (\mathcal{O}_X\otimes F,d \otimes \id)$. Since the canonical extension functor is exact, the general case follows by induction on the index of unipotency of $(\mathcal{E},\nabla)$.
\end{proof}

\subsection{$\mathbb{A}^1$-invariance}\label{par:A1-invariance}

By an \emph{affine bundle} we mean a morphism of $k$-schemes $Y \to X$ which is, locally over $X$, of the form $\mathbb{A}_U^m \to U$.

\begin{theorem}\label{thm:A1-invariance}
    If $X$ is a smooth $k$-scheme and $\pi:Y \to X$ is an affine bundle, then the pullback functor
    \[
        \pi^* : \UVIC(X) \longrightarrow \UVIC(Y),\qquad (\mathcal{E},\nabla) \longmapsto (\pi^*\mathcal{E},\pi^*\nabla),
    \]
    is an equivalence of tensor categories.
\end{theorem}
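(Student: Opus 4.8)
The plan is to deduce the statement from the \emph{homotopy invariance} of algebraic de Rham cohomology with unipotent coefficients: for every object $(\mathcal{V},\nabla)$ of $\UVIC(X)$, the pullback $\pi^*$ should induce isomorphisms
\[
\mathbb{H}^i_{\dR}(X,(\mathcal{V},\nabla)) \stackrel{\sim}{\To} \mathbb{H}^i_{\dR}(Y,\pi^*(\mathcal{V},\nabla)), \qquad i\ge 0,
\]
where $\mathbb{H}^\bullet_{\dR}$ denotes hypercohomology of the de Rham complex of a connection. Granting this, both halves of the equivalence become formal. For full faithfulness, I would argue as in the proof of Theorem \ref{thm:canonical-extension}: the group $\Hom_{\UVIC}((\mathcal{E}',\nabla'),(\mathcal{E},\nabla))$ is the space of horizontal sections of the internal Hom $\mathcal{H}om((\mathcal{E}',\nabla'),(\mathcal{E},\nabla)) = (\mathcal{E}',\nabla')^{\vee}\otimes(\mathcal{E},\nabla)$, which is again unipotent, i.e. it is its $\mathbb{H}^0_{\dR}$; since $\pi^*$ commutes with $\mathcal{H}om$, the case $i=0$ of the displayed isomorphism gives bijectivity on morphisms.

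The homotopy-invariance step, which I expect to be the technical heart, I would establish by filtering the absolute de Rham complex of $\pi^*(\mathcal{V},\nabla)$ by the Koszul filtration $\mathcal{F}^p = \im\!\big(\pi^*\Omega^p_{X/k}\otimes \Omega^{\bullet}_{Y/k}[-p]\To \Omega^{\bullet}_{Y/k}\big)$ attached to the exact sequence $0\to \pi^*\Omega^1_{X/k}\to \Omega^1_{Y/k}\to \Omega^1_{Y/X}\to 0$. The graded pieces are $\pi^*\Omega^p_{X/k}\otimes(\Omega^{\bullet}_{Y/X}\otimes \pi^*\mathcal{V})[-p]$ equipped with the relative connection $\nabla_{Y/X}$, and the key input is the relative Poincaré lemma: because $\pi$ is an affine bundle and $\pi^*\mathcal{V}$ is relatively trivial (a frame pulled back from $X$ is annihilated by $\nabla_{Y/X}$), the projection formula yields $R\pi_*\big(\Omega^{\bullet}_{Y/X}\otimes \pi^*\mathcal{V}\big)\cong \mathcal{V}$, concentrated in degree $0$. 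Since $\pi$ is affine, $R\pi_*=\pi_*$ on coherent sheaves, so pushing forward and reassembling the filtration identifies $R\pi_*(\Omega^{\bullet}_{Y/k}\otimes \pi^*\mathcal{V})$ with the de Rham complex $(\Omega^{\bullet}_{X/k}\otimes \mathcal{V},\nabla)$; here one must verify that the differential induced on the associated graded reproduces $\nabla$, which is precisely how the connection on $\mathcal{V}$ re-emerges. Taking hypercohomology and invoking the Leray spectral sequence then gives the asserted isomorphisms.

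Essential surjectivity I would prove by induction on the index of unipotency of an object $(\mathcal{F},\nabla_Y)$ of $\UVIC(Y)$. The base case is a trivial connection $(\mathcal{O}_Y\otimes F,d)=\pi^*(\mathcal{O}_X\otimes F,d)$. For the inductive step, write $(\mathcal{F},\nabla_Y)$ as an extension of $(\mathcal{O}_Y\otimes F,d)$ by a unipotent object of smaller index, which by hypothesis is $\pi^*(\mathcal{G},\mu)$. An extension of a unipotent object by a unipotent object is unipotent, so its class lies in $\Ext^1_{\VIC(Y)}((\mathcal{O}_Y\otimes F,d),\pi^*(\mathcal{G},\mu))\cong \mathbb{H}^1_{\dR}(Y,\pi^*(\mathcal{G},\mu)\otimes F^{\vee})$; by the case $i=1$ of homotopy invariance this group is identified, via $\pi^*$, with the corresponding $\Ext^1$ on $X$. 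Hence the extension descends, and the exactness and full faithfulness of $\pi^*$ force $\pi^*$ of the resulting $X$-extension to be isomorphic to $(\mathcal{F},\nabla_Y)$. Finally, $\pi^*$ is visibly a $k$-linear tensor functor (it commutes with $\otimes$ and duals and sends the unit to the unit), so being an equivalence it is automatically an equivalence of tensor categories. The main obstacle is the homotopy-invariance step: making the relative Poincaré lemma with coefficients precise for a general, not necessarily trivial, affine bundle, and checking that the differential on the graded pieces of the Koszul filtration recovers $\nabla$ rather than a twist of it; the remaining bookkeeping with the Leray spectral sequence is routine since $\pi$ is affine.
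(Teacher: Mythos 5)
Your proposal is correct, but it takes a genuinely different route from the paper's. The paper argues directly at the level of connections: full faithfulness is reduced (by faithful flatness and locality) to $Y=\mathbb{A}^1_X$, where a horizontal section $q=\sum_n x_nt^n$ of $\pi^*\nabla$ is shown, by inspecting the $dt$-component, to have $x_n=0$ for all $n\ge 1$; essential surjectivity is obtained locally from Theorem \ref{thm:local-form} applied to the quasi-isomorphic subcomplex $\Omega^{\bullet}_{B/k}\hookrightarrow\Omega^{\bullet}_{B[t]/k}$, which produces a frame whose strictly upper triangular connection matrix has all entries pulled back from $X$; the local descents are then glued using full faithfulness, and unipotency of the descended object is checked by induction on the index of unipotency. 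You instead route everything through homotopy invariance of de Rham hypercohomology with unipotent coefficients, deducing full faithfulness from degree $0$ and essential surjectivity from degree $1$ via the identification $\Ext^1_{\VIC(Y)}((\mathcal{O}_Y\otimes F,d),\mathcal{W})\cong\mathbb{H}^1_{\dR}(Y,\mathcal{W}\otimes F^{\vee})$ together with induction on the unipotency index. The steps you flag do go through: the relative Poincar\'e lemma for an affine bundle is exactly the characteristic-zero statement that the relative de Rham cohomology of $\mathbb{A}^m_U/U$ is $\mathcal{O}_U$ in degree $0$; the Leray step is immediate since $\pi$ is affine; and the verification that the induced differential on the associated graded of the Koszul filtration is $\nabla$ is the standard Katz--Oda computation (in a local frame $(e_i)$ pulled back from $X$, the absolute differential of $\sum \pi^*f_i\otimes\pi^*e_i$ equals $\pi^*\bigl(\nabla\sum f_ie_i\bigr)$ modulo $\mathcal{F}^2$). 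Note that both arguments ultimately rest on the same input, the algebraic Poincar\'e lemma for polynomial rings in characteristic zero: the paper channels it through Lemma \ref{lemma:subcomplex} and the explicit frames of Theorem \ref{thm:local-form}, which it reuses elsewhere (e.g.\ in Theorems \ref{thm:canonical-extension} and \ref{thm:pullback-equiv}), and thereby stays elementary, avoiding spectral sequences and the $\Ext$ formalism; your version is more structural, is the standard argument in the unipotent fundamental group literature, and yields homotopy invariance of de Rham cohomology with unipotent coefficients as a byproduct of independent interest.
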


\begin{proof}
    By the same initial argument of the proof of Theorem \ref{thm:canonical-extension}, to verify that $\pi^*$ is fully faithful, it is enough to check that, for every object $(\mathcal{E},\nabla)$ of $\UVIC(X)$, the pullback map on horizontal sections
    \begin{equation}\label{eq:pullback-horizontal}
        \Gamma(X,\mathcal{E}^{\nabla}) \longrightarrow \Gamma(Y, (\pi^*\mathcal{E})^{\pi^*\nabla})
    \end{equation}
    is bijective.
    
    The injectivity of \eqref{eq:pullback-horizontal} immediately follows from the faithful flatness of $\pi$. As in Theorem \ref{thm:canonical-extension}, granted the injectivity, we can reduce the proof of surjectivity to a local statement. Thus, we can assume that $X= \Spec B$ is affine, and that $Y = \mathbb{A}^m_X$. By induction on $m$, we can further assume that $m=1$, so that $Y = \Spec A$, with $A=B[t]$. Set $M = \Gamma(X,\mathcal{E})$; the map \eqref{eq:pullback-horizontal} then becomes the inclusion of $k$-vector spaces
    \[
        M^{\nabla} \to M[t]^{\pi^*\nabla}.
    \]
    An element of $M[t]$ is of the form $q = \sum_{n\ge 0}x_nt^n$ for some $x_n \in M$. If $q$ is horizontal for $\pi^*\nabla$, then
    \[
        0 = \pi^*\nabla(q) = \sum_{n\ge 0}t^n\nabla(x_n) + \sum_{n\ge 1}dt\otimes nt^{n-1}x_n ,
    \]
    and we must have $\sum_{n\ge 1}dt\otimes nt^{n-1}x_n = 0$. Since $k$ is of characteristic zero, we get $x_n=0$ for every $n\ge 1$. Thus, $q=x_0$ is in the image of \eqref{eq:pullback-horizontal}.
    
    We first prove that $\pi^*:\UVIC(X) \to \UVIC(Y)$ is essentially surjective locally on $X$. Let $(\mathcal{E},\nabla)$ be an object of $\UVIC(Y)$. We use the notation from the last paragraph: $X=\Spec B$ and $Y=\Spec A$, with $A=B[t]$. Since $\Omega^{\bullet}_{B/k} \hookrightarrow \Omega^{\bullet}_{A/k}$ is a quasi-isomorphism, we can apply Theorem \ref{thm:local-form} to find a frame $(\mathcal{O}_Y^{\oplus n},d+\omega) \stackrel{\sim}{\to} (\mathcal{E},\nabla)$ in which $\omega$ has all of its entries in $\Omega^1_{B/k}$. Thus, $\pi^*(\mathcal{O}_X^{\oplus n}, d + \omega) \cong (\mathcal{E},\nabla)$.
    
    In general, let $(\mathcal{E},\nabla)$ be an object of $\UVIC(Y)$. Since we already know that $\pi^*$ is fully faithful, the above local constructions glue, yielding a (locally unipotent) vector bundle with integrable $k$-connection $(\mathcal{E}',\nabla')$ on $X$ satisfying $\pi^*(\mathcal{E}',\nabla') \cong (\mathcal{E},\nabla)$. We are left to check that $(\mathcal{E}',\nabla')$ is unipotent. If $(\mathcal{E},\nabla) = (\mathcal{O}_X\otimes F,d \otimes \id)$ for some $k$-vector space $F$, then $(\mathcal{E}',\nabla') \cong (\mathcal{O}_Y\otimes F, d\otimes \id)$ by the fully faithfulness of $\pi^*$. Since $\pi$ is faithfully flat, the pullback $\pi^*$ is an exact functor, so that the general case follows by induction on the index of relative unipotency of $(\mathcal{E},\nabla)$.
\end{proof}

The above statement also admits a logarithmic version. We keep the above notation and let $D$ be a normal crossings divisor in $X$.

\begin{theorem}\label{thm:pullback-equiv}
    With the above notation, the pullback functor
    \[
        \pi^*: \UVIC(X, \log D) \longrightarrow \UVIC(Y, \log \pi^{-1}D)\text{, }\qquad (\mathcal{E},\nabla) \longmapsto (\pi^*\mathcal{E},\pi^*\nabla)
    \]
    is an equivalence of tensor categories.
\end{theorem}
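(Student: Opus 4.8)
The plan is to deduce the theorem formally from the two equivalences already established: the non-logarithmic $\mathbb{A}^1$-invariance of Theorem \ref{thm:A1-invariance} and the canonical extension equivalence of Corollary \ref{coro:canonical-ext}. The idea is to fit all four relevant functors into a commutative square of tensor functors and invoke a two-out-of-three argument.

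First I would put the cited results in a position to be applied. Since $\pi\colon Y\to X$ is an affine bundle it is smooth, so $Y$ is a smooth $k$-scheme and $\pi^{-1}D$ is again a normal crossings divisor: locally $\pi$ is $\mathbb{A}^m_U\to U$, and if $D$ is cut out by $x_1\cdots x_r$ in local coordinates $x_1,\dots,x_n$ on $U$, then the same equation cuts out $\pi^{-1}D$ in the coordinates $x_1,\dots,x_n,t_1,\dots,t_m$ on $Y$. Writing $U=X\setminus D$, the restriction $\pi|\colon \pi^{-1}U=Y\setminus\pi^{-1}D\to U$ is an affine bundle over the smooth scheme $U$, so Theorem \ref{thm:A1-invariance} applies verbatim to it. Likewise, Corollary \ref{coro:canonical-ext} applies both to the pair $(X,D)$ and to the pair $(Y,\pi^{-1}D)$.

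Next I would assemble the square
\[
\begin{tikzcd}
\UVIC(X,\log D) \arrow{r}{\pi^*} \arrow{d}{j_X^*} & \UVIC(Y,\log \pi^{-1}D) \arrow{d}{j_Y^*}\\
\UVIC(X\setminus D) \arrow{r}{\pi^*} & \UVIC(Y\setminus \pi^{-1}D),
\end{tikzcd}
\]
in which the vertical arrows are the restriction functors to the open complements and the horizontal arrows are pullback along $\pi$ (resp. along $\pi|$). This square commutes up to a canonical natural isomorphism, since restricting and then pulling back agrees with pulling back and then restricting. By Corollary \ref{coro:canonical-ext} the two vertical arrows $j_X^*$ and $j_Y^*$ are equivalences of tensor categories, and by Theorem \ref{thm:A1-invariance} (applied to $\pi|$) the bottom horizontal arrow is an equivalence of tensor categories. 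Hence the composite $j_Y^*\circ\pi^*$ agrees with $\pi^*\circ j_X^*$, which is an equivalence; since $j_Y^*$ is an equivalence, the top functor $\pi^*\cong (j_Y^*)^{-1}\circ(j_Y^*\circ\pi^*)$ is an equivalence as well. As $\pi^*$ is a symmetric monoidal functor and an equivalence of underlying categories, it is automatically an equivalence of tensor categories.

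I do not expect any serious obstacle here: the argument is purely formal once the inputs are available. The only points requiring genuine care are the two verifications of the first step, namely that $\pi^{-1}D$ is a normal crossings divisor and that $\pi|\colon\pi^{-1}U\to U$ is still an affine bundle over a smooth base, so that Theorem \ref{thm:A1-invariance} and Corollary \ref{coro:canonical-ext} can be legitimately invoked; both follow from the smoothness of $\pi$ and the local product structure of an affine bundle.
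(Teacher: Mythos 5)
Your proof is correct and follows essentially the same route as the paper: the paper likewise derives the theorem formally from Theorem \ref{thm:A1-invariance} and the canonical extension equivalence by placing the four pullback/restriction functors in a commutative square and concluding that $\pi^*$ is an equivalence because the other three arrows are. Your additional verifications (that $\pi^{-1}D$ is normal crossings and that $\pi$ restricts to an affine bundle over $X\setminus D$) are sound and only make explicit what the paper leaves implicit.
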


\begin{proof}
    One can prove it directly, as in the proof of Theorem \ref{thm:A1-invariance}, or derive it as a corollary Theorems \ref{thm:canonical-extension} and \ref{thm:A1-invariance}. Indeed, let $j:X\setminus D \to X$ be the inclusion. Then, the diagram of pullback functors
    \[
        \begin{tikzcd}
            \UVIC(X,\log D) \arrow{d}[swap]{\pi^*} \arrow{r}{j^*} & \UVIC(X\setminus D) \arrow{d}\\
            \UVIC(Y,\log \pi^{-1}D) \arrow{r} & \UVIC(Y \setminus \pi^{-1}D)
        \end{tikzcd}  
    \]
    commutes, so that $\pi^*$ is fully faithful and essentially surjective because all other arrows are.
\end{proof}

\subsection{De Rham fundamental group of a punctured elliptic curve}\label{par:de-Rham-group}

Recall that, if $X$ is a smooth and geometrically connected $k$-scheme, $\UVIC(X)$ is a neutral Tannakian category over $k$ (cf. \cite[\S 10.26]{deligne89}). Given a fibre functor $b : \UVIC(X) \to \Vect_k$, the \emph{unipotent de Rham fundamental group} of $X$ at $b$ is the Tannakian fundamental group
\[
    \pi_1^{\dR}(X,b) \defeq \underline{\Aut}_{\UVIC(X)}^{\otimes}(b).
\]
It is a pro-unipotent affine group scheme over $k$. 

Let $E$ be an elliptic curve over $k$, $Z\subset E$ be a divisor as in \S\ref{par:kronecker-subbundle}, and $\pi: E^{\natural} \to E$ be the canonical projection from the universal vector extension. The following two results can be attributed to Deligne (cf. \cite{EE18}).

\begin{lemma}\label{lemma:deligne}
    If $\mathcal{V}$ is a unipotent vector bundle over $E^{\natural}$, then the natural map $\Gamma(E^{\natural},\mathcal{V}) \otimes_k \mathcal{O}_{E^{\natural}} \to \mathcal{V}$ is an isomorphism.
\end{lemma}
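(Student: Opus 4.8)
The plan is to prove the slightly stronger statement that, for every unipotent vector bundle $\mathcal{V}$ on $E^{\natural}$, the evaluation map $\Gamma(E^{\natural},\mathcal{V})\otimes_k\mathcal{O}_{E^{\natural}}\to\mathcal{V}$ is an isomorphism \emph{and} $H^1(E^{\natural},\mathcal{V})=0$. Here I take ``unipotent vector bundle'' to mean an $\mathcal{O}_{E^{\natural}}$-module admitting a finite filtration by subbundles whose successive quotients are isomorphic to the trivial bundle, and I would argue by induction on the length $n$ of such a filtration. The only geometric input is the Coleman--Laumon vanishing (Theorem~\ref{thm:coleman-laumon}) applied to $S=\Spec k$: since $f:E^{\natural}\to\Spec k$ satisfies $f_*=\Gamma(E^{\natural},-)$, it gives $\Gamma(E^{\natural},\mathcal{O}_{E^{\natural}})=k$ and $H^1(E^{\natural},\mathcal{O}_{E^{\natural}})=0$. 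This immediately settles the base case $\mathcal{V}=\mathcal{O}_{E^{\natural}}^{\oplus r}$, where $\Gamma(E^{\natural},\mathcal{V})=k^{\oplus r}$ and the evaluation map is the identity.

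For the inductive step I would choose a short exact sequence $0\to\mathcal{V}'\to\mathcal{V}\to\mathcal{O}_{E^{\natural}}^{\oplus r}\to 0$ with $\mathcal{V}'$ unipotent of smaller length. Writing $\Gamma(-)$ for $\Gamma(E^{\natural},-)$, the associated long exact cohomology sequence, combined with $H^1(\mathcal{V}')=0$ (induction hypothesis) and $H^1(\mathcal{O}_{E^{\natural}}^{\oplus r})=0$, yields on one hand $H^1(\mathcal{V})=0$, and on the other hand the short exact sequence of $k$-vector spaces $0\to\Gamma(\mathcal{V}')\to\Gamma(\mathcal{V})\to\Gamma(\mathcal{O}_{E^{\natural}}^{\oplus r})\to 0$. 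This is precisely where carrying the vanishing $H^1(\mathcal{V}')=0$ through the induction is essential: it is what guarantees that taking global sections remains right exact on our sequence.

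Tensoring this exact sequence of $k$-vector spaces with $\mathcal{O}_{E^{\natural}}$ over $k$ (exact, as $k$ is a field) and comparing with the defining sequence of $\mathcal{V}$ via the evaluation maps produces the commutative diagram with exact rows
\[
\begin{tikzcd}[column sep=small]
0\arrow{r}&\Gamma(\mathcal{V}')\otimes_k\mathcal{O}_{E^{\natural}}\arrow{r}\arrow{d}&\Gamma(\mathcal{V})\otimes_k\mathcal{O}_{E^{\natural}}\arrow{r}\arrow{d}&\Gamma(\mathcal{O}_{E^{\natural}}^{\oplus r})\otimes_k\mathcal{O}_{E^{\natural}}\arrow{r}\arrow{d}&0\\
0\arrow{r}&\mathcal{V}'\arrow{r}&\mathcal{V}\arrow{r}&\mathcal{O}_{E^{\natural}}^{\oplus r}\arrow{r}&0.
\end{tikzcd}
\]
The left vertical arrow is an isomorphism by the induction hypothesis, and the right vertical arrow is the canonical identification of $k^{\oplus r}\otimes_k\mathcal{O}_{E^{\natural}}$ with $\mathcal{O}_{E^{\natural}}^{\oplus r}$, hence an isomorphism. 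The five lemma then forces the middle evaluation map to be an isomorphism, completing the induction.

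The argument is essentially formal once the vanishing $H^1(E^{\natural},\mathcal{O}_{E^{\natural}})=0$ is available, so there is no serious geometric obstacle; the crucial feature of $E^{\natural}$ (as opposed to $E$, where $H^1(E,\mathcal{O}_E)\neq 0$) is exactly this vanishing. The only point demanding genuine care is the bookkeeping: one must build the cohomology vanishing $H^1(\mathcal{V})=0$ into the inductive statement, since it is what keeps the top row of the five-lemma diagram exact and prevents the need for any flatness or projectivity input beyond $\mathcal{O}_{E^{\natural}}$ being flat over $k$.
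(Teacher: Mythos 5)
Your proof is correct and is essentially the paper's own argument: the paper proves this lemma by the same induction (citing Deligne's Proposition 12.3), using exactly the Coleman--Laumon input $H^0(E^{\natural},\mathcal{O}_{E^{\natural}})=k$ and $H^1(E^{\natural},\mathcal{O}_{E^{\natural}})=0$. The only cosmetic difference is that you carry $H^1(E^{\natural},\mathcal{V})=0$ as an explicit part of the inductive statement, whereas it is automatic once the induction shows $\mathcal{V}'$ is trivial (so $H^1(E^{\natural},\mathcal{V}')\cong H^1(E^{\natural},\mathcal{O}_{E^{\natural}})^{\oplus m}=0$); this is harmless bookkeeping.
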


\begin{proof}
    This follows, as in \cite[Proposition 12.3]{deligne89}, by an inductive argument on the rank of $\mathcal{V}$, using that $H^0(E^{\natural},\mathcal{O}_{E^{\natural}}) = k$ and $H^1(E^{\natural},\mathcal{O}_{E^{\natural}}) = \Ext^1(\mathcal{O}_{E^{\natural}},\mathcal{O}_{E^{\natural}}) = 0$ (Theorem \ref{thm:coleman-laumon}).
\end{proof}

\begin{proposition}\label{prop:fibre-functor}
    The functor
    \[
        b_{\can} : \UVIC(E\setminus Z) \longrightarrow \Vect_k\text{, }\qquad (\mathcal{E},\nabla) \longmapsto \Gamma(E^{\natural},\pi^*\overline{\mathcal{E}})  
    \]
    is a fibre functor over $k$.
\end{proposition}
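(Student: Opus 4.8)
The plan is to exhibit $b_{\can}$ as a composite of three $k$-linear functors and to check that it is exact, faithful, and tensor with finite-dimensional target; by definition this is precisely what it means to be a fibre functor over $k$. First I would factor $b_{\can}$ as the canonical-extension functor $(\mathcal{E},\nabla)\mapsto (\overline{\mathcal{E}},\overline{\nabla})$ from $\UVIC(E\setminus Z)$ to $\UVIC(E,\log Z)$, followed by pullback along $\pi$, followed by $\Gamma(E^{\natural},-)$ applied to the underlying vector bundle. The first functor is an exact tensor equivalence by Corollary \ref{coro:canonical-ext} (and Theorem \ref{thm:canonical-extension}). The underlying bundle of any object of $\UVIC(E,\log Z)$ is a \emph{unipotent vector bundle} on $E$, since the filtration witnessing unipotency of the connection has trivial quotients $\mathcal{O}_E\otimes F_i$; as $\pi$ is flat, $\pi^*$ is exact, tensor, and carries such a bundle to a unipotent vector bundle on $E^{\natural}$.

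The heart of the argument is Lemma \ref{lemma:deligne}, which says that for every unipotent vector bundle $\mathcal{V}$ on $E^{\natural}$ the evaluation map $\Gamma(E^{\natural},\mathcal{V})\otimes_k\mathcal{O}_{E^{\natural}}\to\mathcal{V}$ is an isomorphism. From this I would read off, at one stroke, most of the required properties of the third functor. For finiteness, $\dim_k\Gamma(E^{\natural},\mathcal{V})=\mathrm{rank}(\mathcal{V})<\infty$, so $b_{\can}$ indeed takes values in $\Vect_k$. For monoidality, given unipotent $\mathcal{V}_1,\mathcal{V}_2$ the lemma yields
\[
\mathcal{V}_1\otimes\mathcal{V}_2\cong\bigl(\Gamma(E^{\natural},\mathcal{V}_1)\otimes_k\Gamma(E^{\natural},\mathcal{V}_2)\bigr)\otimes_k\mathcal{O}_{E^{\natural}},
\]
a trivial bundle whose global sections, using $\Gamma(E^{\natural},\mathcal{O}_{E^{\natural}})=k$ from Theorem \ref{thm:coleman-laumon}, give a natural isomorphism $\Gamma(E^{\natural},\mathcal{V}_1\otimes\mathcal{V}_2)\cong\Gamma(E^{\natural},\mathcal{V}_1)\otimes_k\Gamma(E^{\natural},\mathcal{V}_2)$; the same computation sends the unit $(\mathcal{O}_{E\setminus Z},d)$ to $k$. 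For faithfulness, if $(\mathcal{E},\nabla)\neq 0$ then $\pi^*\overline{\mathcal{E}}\neq 0$, and the isomorphism of Lemma \ref{lemma:deligne} forces $\Gamma(E^{\natural},\pi^*\overline{\mathcal{E}})\neq 0$.

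The point I expect to require the most care is the exactness of $\Gamma(E^{\natural},-)$ on unipotent bundles, since global sections is only left exact in general. Given a short exact sequence $0\to\mathcal{V}'\to\mathcal{V}\to\mathcal{V}''\to 0$ of unipotent bundles on $E^{\natural}$ (obtained by applying the exact functors canonical extension and $\pi^*$ to an exact sequence in $\UVIC(E\setminus Z)$), I would establish the vanishing $H^1(E^{\natural},\mathcal{V}')=0$: this holds for $\mathcal{V}'=\mathcal{O}_{E^{\natural}}$ by Theorem \ref{thm:coleman-laumon}, and propagates to all unipotent bundles by induction on the rank through the long exact cohomology sequences of the defining extensions. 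With this vanishing the connecting homomorphism is zero, so $\Gamma(E^{\natural},-)$ is exact on unipotent bundles, and hence $b_{\can}$ is exact.

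Assembling these verifications, $b_{\can}$ is a $k$-linear, exact, faithful, tensor functor $\UVIC(E\setminus Z)\to\Vect_k$ with finite-dimensional values, which is exactly a fibre functor over $k$. I note that all the nontrivial input is the pair of cohomological facts $H^0(E^{\natural},\mathcal{O}_{E^{\natural}})=k$ and $H^1(E^{\natural},\mathcal{O}_{E^{\natural}})=0$ encoded in Theorem \ref{thm:coleman-laumon} and repackaged through Lemma \ref{lemma:deligne}; everything else is the formal bookkeeping of composing exact tensor functors.
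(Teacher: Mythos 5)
Your proposal is correct and follows essentially the same route as the paper: both factor $b_{\can}$ through the canonical extension, the pullback along $\pi$, and global sections, with Lemma \ref{lemma:deligne} (hence Theorem \ref{thm:coleman-laumon}) carrying all the real content. The differences are organizational. You forget the connection already on $E$ and pull back the underlying unipotent bundle, so you only need flatness of $\pi$; the paper instead keeps the connection, invokes the equivalence $\pi^*:\UVIC(E,\log Z)\to\UVIC(E^{\natural},\log \pi^{-1}Z)$ of Theorem \ref{thm:pullback-equiv}, and forgets afterwards. The two composites agree, so your variant is harmless and in fact uses slightly less machinery. You also verify exactness of $\Gamma(E^{\natural},-)$ on unipotent bundles explicitly, by propagating $H^1(E^{\natural},\mathcal{O}_{E^{\natural}})=0$ along the unipotency filtration; the paper leaves this implicit (it also follows at once from Lemma \ref{lemma:deligne}, since any short exact sequence of unipotent bundles is a sequence of trivial bundles, so that $\Gamma$ identifies $\operatorname{UV}(E^{\natural})$ with $\Vect_k$). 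One small caveat: your faithfulness check (nonzero objects go to nonzero spaces) yields faithfulness of $b_{\can}$ only in combination with exactness, which is the standard criterion for exact functors between abelian categories; since you prove exactness anyway, this is a matter of phrasing, not a gap. Your spelled-out monoidality computation is exactly what the paper compresses into the assertion that the last arrow is ``$k$-linear, tensor, and faithful by Lemma \ref{lemma:deligne}.''
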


\begin{proof}
    Let $\operatorname{UV}(E^{\natural})$ be the category of unipotent vector bundles on $E^{\natural}$. The functor $b_{\can}$ is the composition
    \[
        \UVIC(E\setminus Z) \longrightarrow  \UVIC(E,\log Z) \stackrel{\pi^*}{\longrightarrow} \UVIC(E^{\natural},\log \pi^{-1}Z) \longrightarrow \operatorname{UV}(E^{\natural}) \stackrel{\Gamma(E^{\natural}, -)}{\longrightarrow} \Vect_k,
    \]
    where the first arrow is the canonical extension and the third arrow is the forgetful functor $(\mathcal{V},\nabla) \mapsto \mathcal{V}$. By Corollary \ref{coro:canonical-ext} and Theorem \ref{thm:pullback-equiv}, the first two arrows are $k$-linear equivalences of tensor categories. The third is trivially a $k$-linear tensor faithful functor. Finally, the last arrow is $k$-linear, tensor, and faithful by Lemma \ref{lemma:deligne}.
\end{proof}

Our next goal is to relate the fundamental group $\pi_1^{\dR}(E\setminus Z,b_{\can})$ with the Hopf algebra $\mathcal{H}_{E/k,Z}$ constructed in \S\ref{subsec:fundamental-Hopf}. Let $(\mathcal{E},\nabla)$ be an object of $\UVIC(E\setminus Z)$ and write $V \defeq b_{\can}(\mathcal{E},\nabla)$. It follows from Lemma \ref{lemma:deligne} that
\[
    (\pi^*\overline{\mathcal{E}},\pi^*\overline{\nabla}) \cong (\mathcal{O}_{E^{\natural}}\otimes V, d + \omega)
\]
for a unique nilpotent (in the sense of Proposition \ref{prop:comodule-bar}) $\omega \in \Gamma(E^{\natural},\Omega^1_{E^{\natural}/k}(\log \pi^{-1}Z)) \otimes \End(V)$ satisfying $d \omega + \omega\wedge \omega = 0$. Thus, it defines a $\mathcal{H}_{E/k,Z}$-comodule structure $\rho = \sum_{n\ge 0}[\omega]^n$ on $V$. These constructions are natural, so that we obtain a functor
\begin{equation}\label{eq:functor-uvic-comod}
    \UVIC(E\setminus Z) \longrightarrow \operatorname{Comod}(\mathcal{H}_{E/k,Z})\text{, }\qquad (\mathcal{E},\nabla) \longmapsto (V,\rho)
\end{equation}
extending $b_{\can}$.

\begin{theorem}\label{thm:pi1-derham}
    The functor \eqref{eq:functor-uvic-comod} is an equivalence of tensor categories over $k$. In particular, it induces an isomorphism of affine group schemes over $k$:
    \[
        \pi_1^{\dR}(E\setminus Z,b_{\can}) \cong \Spec \mathcal{H}_{E/k,Z}
    \]
\end{theorem}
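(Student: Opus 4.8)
The plan is to reduce the statement to a comodule–connection dictionary on $E^\natural$ itself and then apply Tannakian reconstruction. By the very definition of $b_{\can}$ in Proposition \ref{prop:fibre-functor}, the functor \eqref{eq:functor-uvic-comod} factors as
\[
  \UVIC(E\setminus Z)\xrightarrow{\ \sim\ }\UVIC(E,\log Z)\xrightarrow[\ \sim\ ]{\pi^*}\UVIC(E^\natural,\log\pi^{-1}Z)\xrightarrow{\ G\ }\operatorname{Comod}(\mathcal{H}_{E/k,Z}),
\]
where the first arrow is the quasi-inverse of the canonical extension (Corollary \ref{coro:canonical-ext}) and the second is the pullback equivalence along the $\mathbb{G}_a$-bundle $\pi$, which is an affine bundle (Theorem \ref{thm:pullback-equiv}); both are $k$-linear tensor equivalences. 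So I would first reduce to proving that the remaining functor $G$—which sends a unipotent log connection $(\mathcal{V},\nabla)$ to the comodule $(V,\rho)$ with $V=\Gamma(E^\natural,\mathcal{V})$ and $\rho=\sum_{n\ge 0}[\omega]^n$—is a tensor equivalence.

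For $G$, the key input is Lemma \ref{lemma:deligne}: every unipotent $\mathcal{V}$ is canonically trivial, $\mathcal{V}\cong\mathcal{O}_{E^\natural}\otimes V$ with $V=\Gamma(E^\natural,\mathcal{V})$, and since $\Gamma(E^\natural,\mathcal{O}_{E^\natural})=k$ (Theorem \ref{thm:coleman-laumon}) the connection becomes $\nabla=d+\omega$ for a unique $\omega\in\mathcal{A}^1\otimes\End(V)$ with $d\omega+\omega\wedge\omega=0$, where $\mathcal{A}^1=\Gamma(E^\natural,\Omega^1_{E^\natural/k}(\log\pi^{-1}Z))$. Taking global sections of a unipotent filtration (an exact operation because $H^{\ge 1}(E^\natural,\mathcal{O}_{E^\natural})=0$) shows $\omega$ is nilpotent, hence locally nilpotent in the sense of Proposition \ref{prop:comodule-bar}; that proposition then produces the comodule $\rho=\sum_n[\omega]^n$ and, run backwards, builds from any comodule $(V,\rho)$ a nilpotent integrable $\omega=\rho_1$, i.e.\ a unipotent connection on $E^\natural$, giving essential surjectivity. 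For full faithfulness I would use $\Gamma(E^\natural,\mathcal{O}_{E^\natural})=k$ once more: an $\mathcal{O}_{E^\natural}$-linear map between trivial bundles is a constant $\phi\in\Hom(V_1,V_2)$, horizontality reads $\omega_2\phi=\phi\omega_1$, and a comodule morphism is exactly a $\phi$ with $(\id\otimes\phi)\rho_1=\rho_2\phi$, which—via $\rho=\sum_n[\omega]^n$—is equivalent to the same length-one identity. The tensor compatibility is the routine check that $d+(\omega_1\otimes 1+1\otimes\omega_2)$ corresponds to the tensor-product comodule, using that the shuffle product and deconcatenation coproduct of $\mathcal{H}_{E/k,Z}$ (\S\ref{par:bar-construction}) are dual to concatenation of connection forms.

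Finally, having shown \eqref{eq:functor-uvic-comod} is a $k$-linear tensor equivalence, I would observe that it transports $b_{\can}$ to the forgetful fibre functor $(V,\rho)\mapsto V$, since by construction $b_{\can}(\mathcal{E},\nabla)=V$. As $\mathcal{H}_{E/k,Z}$ is a commutative Hopf algebra (Theorem \ref{thm:tensor-coalgebra}), standard Tannakian reconstruction identifies $\underline{\Aut}^\otimes$ of the forgetful functor on $\operatorname{Comod}(\mathcal{H}_{E/k,Z})$ with $\Spec\mathcal{H}_{E/k,Z}$, whence $\pi_1^{\dR}(E\setminus Z,b_{\can})\cong\Spec\mathcal{H}_{E/k,Z}$. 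The hard part will be securing the \emph{global} normal form of the second paragraph—namely that a unipotent connection on the non-affine $E^\natural$ admits a nilpotent connection matrix with entries in the \emph{global} logarithmic forms $\mathcal{A}^1$, rather than merely the local frames produced by Theorem \ref{thm:local-form}. This is precisely where the cohomological vanishing of Theorem \ref{thm:coleman-laumon} (through Lemma \ref{lemma:deligne}) is indispensable, and once it is in hand the comparison against Proposition \ref{prop:comodule-bar} becomes formal.
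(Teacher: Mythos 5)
Your proposal is correct and follows essentially the same route as the paper: factor the functor through the canonical-extension and pullback equivalences (Corollary \ref{coro:canonical-ext}, Theorem \ref{thm:pullback-equiv}), use Lemma \ref{lemma:deligne} together with Proposition \ref{prop:comodule-bar} for the connection--comodule dictionary on $E^{\natural}$, check the tensor structure via the shuffle/deconcatenation duality, and conclude by Tannakian reconstruction. The only difference is one of detail: you spell out the full faithfulness, essential surjectivity, and nilpotency arguments that the paper compresses into the phrase ``immediate consequence'' of the cited results.
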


\begin{proof}
    That \eqref{eq:functor-uvic-comod} is a $k$-linear equivalence of categories is an immediate consequence of Corollary \ref{coro:canonical-ext}, Theorem \ref{thm:pullback-equiv}, and Proposition \ref{prop:comodule-bar}. We are left to show that $\eqref{eq:functor-uvic-comod}$ is a tensor functor.

    We already know that $b_{\can}$ is tensor by Proposition \ref{prop:fibre-functor}. Now, the tensor structure on the category $\operatorname{Comod}(\mathcal{H}_{E/k,Z})$ is induced by the shuffle product: given comodules $(V,\rho), (V,\rho')$, the tensor comodule structure $\rho\shuffle \rho'$ on $V\otimes V'$ is given by
    \[
        V\otimes V'\stackrel{\rho \otimes \rho'}{\longrightarrow} (\mathcal{H}_{E/k,Z}\otimes V)\otimes (\mathcal{H}_{E/k,Z}\otimes V') \cong (\mathcal{H}_{E/k,Z}\otimes \mathcal{H}_{E/k,Z})\otimes (V\otimes V') \stackrel{\shuffle \otimes \id}{\longrightarrow} \mathcal{H}_{E/k,Z}\otimes (V\otimes V'),
    \]
    where all of the above tensor products are over $k$. By Proposition \ref{prop:comodule-bar}, if $\rho = \sum_{i\ge 0}[\omega]^i$ and $\rho'= \sum_{j\ge 0}[\omega']^j$, then
    \begin{align*}
        \rho\shuffle \rho' = \sum_{i,j\ge 0}[\omega]^i\shuffle [\omega']^j  = \sum_{n\ge 0} [\omega \otimes \id + \id \otimes \omega']^n.
    \end{align*}
    To finish, we simply remark that the tensor structure of $\UVIC(E^{\natural},\log \pi^{-1}Z)$ is given by
    \[
        (\mathcal{O}_{E^{\natural}}\otimes V, d + \omega) \otimes (\mathcal{O}_{E^{\natural}}\otimes V', d + \omega') \cong (\mathcal{O}_{E^{\natural}}\otimes V\otimes V', d+ \omega\otimes \id + \id \otimes \omega').\qedhere
    \]
\end{proof}

\begin{corollary}
    There is a canonical isomorphism $\pi_1^{\dR}(E\setminus Z,b_{\can}) \cong \Spec T^cH^1_{\dR}((E\setminus Z)/k)$.
\end{corollary}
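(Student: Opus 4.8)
The statement is an immediate consequence of combining the two theorems already established, so the proof will be a short composition rather than a fresh argument. The plan is as follows. First I would invoke Theorem \ref{thm:pi1-derham}, which provides a canonical isomorphism of affine group schemes $\pi_1^{\dR}(E\setminus Z,b_{\can}) \cong \Spec \mathcal{H}_{E/k,Z}$. Recall that this arises from the tensor equivalence \eqref{eq:functor-uvic-comod} together with Tannakian reconstruction: the Tannakian fundamental group of $\operatorname{Comod}(\mathcal{H}_{E/k,Z})$ at the forgetful fibre functor is $\Spec \mathcal{H}_{E/k,Z}$, and the equivalence \eqref{eq:functor-uvic-comod} identifies this category, compatibly with $b_{\can}$, with $\UVIC(E\setminus Z)$.

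Next I would specialise Theorem \ref{thm:tensor-coalgebra} to the base $S = \Spec k$. That theorem asserts that the projector $\rho$ of Theorem \ref{thm:projector} induces a \emph{canonical} isomorphism of filtered Hopf algebras $\mathcal{H}_{E/S,Z} \stackrel{\sim}{\To} T^cH^1_{\dR}((E\setminus Z)/S)$ over $\mathcal{O}_S$. Taking $S = \Spec k$ yields a canonical isomorphism of filtered Hopf algebras over $k$
\[
    \mathcal{H}_{E/k,Z} \stackrel{\sim}{\To} T^cH^1_{\dR}((E\setminus Z)/k).
\]

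Finally, since $\Spec$ is a contravariant equivalence between commutative Hopf algebras over $k$ and affine group schemes over $k$, this Hopf-algebra isomorphism induces an isomorphism of affine group schemes $\Spec \mathcal{H}_{E/k,Z} \cong \Spec T^cH^1_{\dR}((E\setminus Z)/k)$. Composing with the isomorphism of the first step gives the desired canonical isomorphism, and canonicity is inherited from the canonicity of both ingredients. There is essentially no computational content here; the only point meriting attention is to confirm that the identification of Theorem \ref{thm:tensor-coalgebra} is one of \emph{Hopf} algebras (and not merely of coalgebras or of algebras), so that it is compatible with the group structure under $\Spec$. This, however, is already part of the statement of that theorem, so the corollary follows at once.
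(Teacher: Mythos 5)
Your proof is correct and is essentially the paper's own argument: the paper proves the corollary by combining Theorem \ref{thm:pi1-derham} with Theorem \ref{thm:projector}, and your route through Theorem \ref{thm:tensor-coalgebra} is the same thing, since that theorem is precisely the packaged Hopf-algebra consequence of the formality quasi-isomorphism of Theorem \ref{thm:projector}. Your extra remark that the identification must be one of Hopf algebras (so that $\Spec$ yields a group scheme isomorphism) is exactly the right point to check, and it is indeed part of the statement of Theorem \ref{thm:tensor-coalgebra}.
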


\begin{proof}
    This is an immediate consequence of Theorem \ref{thm:pi1-derham} and Theorem \ref{thm:projector}.
\end{proof}

Note that $\mathcal{H}_{E/k,Z}^{\vee}$ is a limit of finite dimensional $k$-vector spaces, and the pullback $f^*\mathcal{H}_{E/k,Z}^{\vee}$ is simply the base change $\mathcal{O}_{E^{\natural}}\hat{\otimes} \mathcal{H}_{E/k,Z}^{\vee}$. Let
\[
    \nabla_{E^{\natural}/k,Z} : \mathcal{O}_{E^{\natural}}\hat{\otimes} \mathcal{H}_{E/k,Z}^{\vee}  \longrightarrow \Omega^1_{E^{\natural}/k}(\log \pi^{-1}Z) \hat{\otimes} \mathcal{H}^{\vee}_{E/k,Z}\text{, }\qquad \nabla_{E^{\natural}/k,Z} = d + \omega_{E^{\natural}/k,Z}
\]
be the elliptic KZB connection of $E/k$ punctured at $Z$ constructed in \S \ref{par:kzb-connection}. It is a pro-object of $\UVIC(E^{\natural},\log \pi^{-1}Z)$. By Theorem \ref{thm:pullback-equiv}, it corresponds to a pro-object $(\mathcal{V}_{\KZB},\nabla_{\KZB})$ of $\UVIC(E,\log Z)$. Note that $\Gamma(E^{\natural},\pi^*\mathcal{V}_{\KZB})$ is the complete Hopf algebra $\mathcal{H}_{E/k,Z}^{\vee}$, and we denote by $1 \in \mathcal{H}_{E/k,Z}^{\vee}$ its unit. 

\begin{proposition}
    The pro-vector bundle with logarithmic connection $(\mathcal{V}_{\KZB},\nabla_{\KZB})$ (resp. its restriction $(\mathcal{V}_{\KZB},\nabla_{\KZB})|_{E\setminus Z}$)  satisfies the following universal property: for every triple $(\mathcal{V},\nabla, v)$, where $(\mathcal{V},\nabla)$ is an object of $\UVIC(E,\log Z)$ (resp. $\UVIC(E\setminus Z)$), and $v \in \Gamma(E^{\natural}, \pi^*\mathcal{V})$ (resp. $v \in \Gamma(E^{\natural}, \pi^*\overline{\mathcal{V}})$), there is a unique horizontal map
    \[
        \varphi : (\mathcal{V}_{\KZB},\nabla_{\KZB}) \longrightarrow    (\mathcal{V},\nabla)\qquad \text{(resp. }\varphi : (\mathcal{V}_{\KZB},\nabla_{\KZB})|_{E\setminus Z} \longrightarrow (\mathcal{V},\nabla)\text{)}
    \]
    satisfying $\varphi(1) = v$.
\end{proposition}

\begin{proof}
    It suffices to combine Proposition \ref{prop:univ-property-kzb} with the equivalence of Theorem \ref{thm:pi1-derham}.
\end{proof}

\end{document}